\numberwithin{equation}{section}
\newtheorem{theorem}{Theorem}[section]
\newtheorem{corollary}[theorem]{Corollary}
\newtheorem{lemma}[theorem]{Lemma}
\newtheorem{proposition}[theorem]{Proposition}
\newtheorem{remark}[theorem]{Remark}
\newcommand{\bke}[1]{\left ( #1 \right )}
\newcommand{\bkt}[1]{\left [ #1 \right ]}
\newcommand{\bket}[1]{\left \{ #1 \right \}}
\newcommand{\norm}[1]{\left \| #1 \right \|}
\newcommand{\bka}[1]{{\langle #1 \rangle}}
\newcommand{\abs}[1]{\left | #1 \right |}
\newcommand\be{\beta}
\newcommand\ga{\gamma}
\newcommand\de{\delta}
\newcommand\ve{\varepsilon}
\newcommand\e {\varepsilon}
\renewcommand\th{\theta}
\newcommand\ka{\kappa}
\newcommand\la{\lambda}
\newcommand\si{\sigma}
\newcommand\om{\omega}
\newcommand\De{\Delta}
\newcommand\Th{\Theta}
\newcommand\Si{\Sigma}
\newcommand\Om{\Omega}
\newcommand{\R}{\mathbb{R}}
\newcommand{\CC}{\mathbb{C}}
\newcommand{\N}{\mathbb{N}}
\renewcommand{\Re} {\mathop{\mathrm{Re}}\nolimits}
\renewcommand{\Im} {\mathop{\mathrm{Im}}\nolimits}
\renewcommand{\div}{\mathop{\rm div}\nolimits}
\newcommand{\curl} {\mathop{\rm curl}\nolimits}
\newcommand{\pd}{\partial}
\newcommand{\nb}{\nabla}
\newcommand{\td}{\tilde}
\renewcommand{\bar}[1]{\overline{#1}}
\newcommand{\lec}{{\ \lesssim \ }}
\newcommand{\gec}{{\ \gtrsim \ }}
\newcommand{\EQ}[1]{\begin{equation}\begin{split} #1 \end{split}\end{equation}}
\newcommand{\EQN}[1]{\begin{equation*}\begin{split} #1 \end{split}\end{equation*}}
\begin{document}

\title[Qualitative Analysis of KS-NS systems]{\textbf{Global Existence and Aggregation of Chemotaxis-fluid Systems in dimension two}}

\author[F. Kong]{Fanze Kong}
\address{\noindent
Department of Mathematics,
University of British Columbia, Vancouver, B.C., V6T 1Z2, Canada}
\email{fzkong@math.ubc.ca}

\author[C. Lai]{Chen-Chih Lai}
\address{\noindent
Department of Mathematics,
Columbia University, 2990 Broadway, New York, NY 10027, USA}
\email{cl4205@columbia.edu}

\author[J. Wei]{Juncheng Wei}
\address{\noindent
Department of Mathematics,
University of British Columbia, Vancouver, B.C., V6T 1Z2, Canada}
\email{jcwei@math.ubc.ca}


\date{}
\maketitle
 \vspace{-0.25in}
 \begin{abstract}
To describe the cellular self-aggregation phenomenon, some strongly coupled PDEs named as Keller-Segel (KS) and Patlak-Keller-Segel (PKS) systems were proposed in 1970s. 
Since KS and PKS systems possess relatively simple structures but admit rich dynamics, plenty of scholars have studied them and obtained many significant results. 
However, the cells or bacteria in general direct their movement in liquid. 
As a consequence, it seems more realistic to consider the influence of ambient fluid flow on the chemotactic mechanism. 

Motivated by this, He et al. (SIAM J. Math. Anal., Vol. 53, No. 3, 2021) proposed a coupled Patlak-Keller-Segel-Navier-Stokes system that features the effect of the friction induced by the cells on the ambient fluid flow.
In their pioneer work, the global existence of solutions of such system in dimension two was established when the initial mass is strictly less than a threshold, which is referred to as the subcritical case.
The last two authors and Zhou (Indiana Univ. Math. J., Vol. 72, No. 1, 2023) extended their result to the critical case.
To our best knowledge, this system has only been studied in either the whole space or periodic domains.

In this paper, we take into account the boundary effect and consider the initial-boundary value problem of the coupled Patlak-Keller-Segel-Navier-Stokes system in two-dimensional bounded domains.
The boundary conditions are Neumann conditions for the cell density and the chemical concentration, and the Navier slip boundary condition with zero friction for the fluid velocity.
We prove that the solution of the system exists globally in time in the case of subcritical mass. 
Concerning the critical mass case, we construct the boundary spot steady states rigorously via the inner-outer gluing method. 

While studying the global well-posedness and the concentration phenomenon of the chemotaxis-fluid model, we develop the global $W^{2,p}$ theory for the 2D stationary Stokes system subject to Navier boundary conditions and further establish semigroup estimates of the nonstationary counterpart by analyzing the Stokes eigenvalue problem. 
\\
\\
{\sc 2020 MSC}: {Primary: 35B99, 35A01, 35M12; Secondary: 35Q35, 35Q92, 76D05.}\\
{\sc Keywords:} Chemotaxis-fluid Models, Global Existence, Spot Localized Patterns, Stokes Operator, Navier Boundary Conditions, Global Regularity.
 \end{abstract}
\section{Introduction}\label{sect0-intro}

In this paper, we consider the following chemotaxis--fluid model in 2D:
\begin{align}\label{PKSNS-time-dependent}
\left\{\,
\begin{array}{ll}
\frac{\partial n}{\partial t}= \De n - \nb\cdot(n\nb c) - {\bf u}\cdot\nb n,&x\in\Omega,t>0,\\
\iota\frac{\partial c}{\partial t}= \De c - c + n, &x\in\Omega,t>0,\\
\frac{{\partial {\bf u}}}{\partial t}+{\bf u}\cdot\nb {\bf u} + \nb P = \De {\bf u} + \e_0 n\nb c,\quad \nb\cdot {\bf u} = 0,&x\in\Omega,t>0,\\
 \frac{\pd n}{\pd\boldsymbol{\nu}} = \frac{\pd c}{\pd\boldsymbol{\nu}} = 0,&x\in\partial\Omega,t>0,\\
{\bf u}\cdot\boldsymbol{\nu}= 0,\quad (\mathbb S{\bf u}\cdot\boldsymbol{\nu})_{\boldsymbol{\tau}} = 0,&x\in\partial\Omega,t>0,\\
(n,c,{\bf u})(\cdot,0)=(n_0,c_0,{\bf u}_0),&x\in\Omega,
\end{array}
\right.
\end{align}
where $\Omega\subset\mathbb R^2$ is a bounded domain with the smooth boundary $\partial\Omega$, $\boldsymbol{\nu}$ denotes the unit outer normal vector, ${\mathbb S}\bf u = \frac12 (\nb \bf u + \nb \bf u^\top)$ represents the strain tensor and the subscript $\boldsymbol{\tau}$ is the tangential component.  Here $n$ and $c$ are the cellular density and the chemical concentration; $\bf u$ and $P$ denote the fluid velocity field and the pressure, respectively.  In addition, $\iota\geq 0$ is the time-reaction constant. The system (\ref{PKSNS-time-dependent}) is referred to as the PKS-NS system when $\iota=0$, in which case the dynamics in the interior of the domain coincides with the chemotaxis-fluid system proposed by Siming He et al. \cite{SimingHe}.
The novelty of the system (\ref{PKSNS-time-dependent}) lies in its consideration of the boundary effect.
The boundary conditions \eqref{PKSNS-time-dependent}$_5$ are the Navier boundary conditions with zero friction. 
Moreover, $(n_0,c_0,{\bf u}_0)$ is a given initial data with $\nabla\cdot {\bf u}_0=0$ for the compatibility consideration.   
The physical explanation of the forcing term $\e_0 n\nb c$ is that the cells are driven by the fluid to move without acceleration, in which $\e_0$ measures the strength of coupling between fluid and the evolution of cells.

The system (\ref{PKSNS-time-dependent}) can be naturally treated as the coupling of Keller-Segel (KS) models and incompressible Navier-Stokes (NS) equations.  Indeed, with the absence of the fluid advection term ${\bf u}\cdot \nabla n$, the $n$-equation and $c$-equation in (\ref{PKSNS-time-dependent}) consist of the minimal Keller-Segel model, which serves as a paradigm to describe the travelling band of {\it{E. coli}} \cite{Keller1970,Keller1971}.  Of concern the variants and applications of minimal Keller-Segel model, we refer the readers to well-written surveys \cite{hillen2009user,painter2019mathematical}.  It is because the Keller-Segel models have relatively simple structures but admit rich dynamics that plenty of researchers have extensively studied them in 1D and higher dimensions over the past few decades \cite{del2006collapsing,kang2007stability,lin1988large,wang2013spiky}.  Focusing on the global well-posedness of systems, Osaki et al. \cite{osaki2001finite,nagai1995} proved that the solution in 1D is uniformly bounded in time.  It is worthy mentioning that for the 2D case, one of the most famous phenomena is so-called ``chemotactic collapse".  To be more precise, there exists some critical threshold $M_0$ defined by
\begin{align}\label{sect0-M0-critical}
M_0=\left\{\begin{array}{ll}
8\pi,&\Omega=\mathbb R^2\text{~or~}B_R\text{~with the initial data being radial},\\
4\pi,&\text{otherwise}
\end{array}
\right.
\end{align}
such that when the initial cellular mass satisfies
$$M:=\int_{\Omega}u(x,0)dx<M_0,$$ 
the solution to (\ref{PKSNS-time-dependent}) is bounded uniformly in time $t$; otherwise if $M>M_0,$ the time-dependent system (\ref{PKSNS-time-dependent}) admits the blow-up solutions \cite{nanjundiah1973,childress1981,herrero1996,nagai1997application1,biler1998local,gajewski1998global,senba2000some,wang2002steady}.  In particular, the authors detected the finite blow-up phenomenon by studying the evolution of the cellular second moment \cite{blanchet2006two,collot2022refined,dolbeault2009two,herrero1996,herrero1996singularity,herrero1997blow}.  Focusing on the critical case $M=M_0$, on one hand, the solution to (\ref{PKSNS-time-dependent}) is shown to exist globally \cite{biler20068pi,velazquez2004point}; on the other hand, the infinite time blow-up solution with the finite second moment was constructed \cite{blanchet2008infinite,davila2020existence}.  For the incompressible Navier-Stokes equation, it is well-known that Leray \cite{leray1934mouvement} and Hopf \cite{hopf1950anfangswertaufgabe} established the existence of global weak solutions to the time-dependent incompressible Naiver-Stokes equation.  There are also many results on the study of global regularity in Navier-Stokes equations \cite{ben1994global,gallay2002invariant,gallagher2005uniqueness,gallay2005global,kato1984strong,kato1994navier,ogawa1997energy}.  For the global regularity of time-dependent and stationary Naiver-Stokes equations with Navier boundary conditions in 3D, we refer the readers to \cite{AS-DEA2011,AS-MMMAS2013,AR-JDE2014,AAR-semigroup2016,AAE-CM2016}.  Focusing on the PKS-NS system in the whole space $\mathbb R^2$, He et al. \cite{SimingHe} have shown the local and global existence of solutions in the Sobolev space $H^s$, $s\geq 2$ when the initial cellular mass is strictly less than $8\pi.$  Moreover, Lai et al. \cite{lai2021global} have proved the global existence of the free-energy solution when the initial mass is equal to $8\pi.$  We are motivated by the results to consider the global well-posedness of the chemotaxis-fluid system (\ref{PKSNS-time-dependent}) in 2D bounded domains.        

To further understand the dynamics of (\ref{PKSNS-time-dependent}), it is natural to consider the corresponding stationary problem of (\ref{PKSNS-time-dependent}), which is 
\begin{align}\label{PKSNS-ss}
\left\{\begin{array}{ll}
0= \De n - \nb\cdot(n\nb c) - {\bf u}\cdot\nb n, &x\in\Omega,\\
0= \De c - c + n, &x\in\Omega,\\
{\bf u}\cdot\nb {\bf u} + \nb P = \De {\bf u} + \e_0 n\nb c,\quad \nb\cdot {\bf u} = 0, &x\in\Omega,\\
\frac{\pd n}{\pd\boldsymbol{\nu}}= \frac{\pd c}{\pd\boldsymbol{\nu}} = 0,&x\in\partial\Omega,\\
{\bf u}\cdot\boldsymbol{\nu} = 0,\quad (\mathbb S{\bf u}\cdot\boldsymbol{\nu})_{\boldsymbol{\tau}} = 0, &x\in\partial\Omega.
\end{array}
\right.
\end{align}
With the absence of ambient fluid flow in (\ref{PKSNS-ss}), it is well-known that (\ref{PKSNS-ss}) admits the concentration phenomenon \cite{wang2019steadyreview}.  Indeed, of concern steady state problem (\ref{PKSNS-ss}) with the velocity fluid field $u$ being identically zero, Lin, Ni and Takagi \cite{lin1988large,ni1991shape,ni1993locating} firstly initiated the analytical approach to construct the large amplitude solution.  Motivated by this seminal work, many researchers studied the non-constant steady states possessing striking structures to Keller-Segel models \cite{gui1996multipeak,Gui1999,del2006collapsing,lin2007number,carrillo2021boundary}.  For example, del Pino and Wei \cite{del2006collapsing} constructed the multi-spike equilibrium to minimal Keller-Segel models in 2D via ``localized energy method".  Kang et al. \cite{kang2007stability} formally showed the existence of spikes in the asymptotically limit of domain size $L\gg 1.$  Moreover, its local stability was studied by Chen et al. \cite{Chen2014}.  It is worthy mentioning that Wang and Xu \cite{wang2013spiky} adopted an innovative method arising from bifurcation techniques to directly tackle the steady state problem without heavily using the structure of equations.  Whereas, concerning system (\ref{PKSNS-ss}), there is few result involving the construction of non-constant solution with excited structures.  Motivated by this, we shall construct the non-constant solutions, especially boundary and interior spikes, to the stationary chemotaxis-fluid system (\ref{PKSNS-ss}).

In summary, our two main aims of this paper are to show the existence of global-in-time solution to (\ref{PKSNS-time-dependent}) when the initial mass $M<M_0$ and study the concentration phenomenon with the critical mass.  For achieving the former one, the main vehicle is the following decreasing free energy functional possessed by (\ref{PKSNS-time-dependent}):
\begin{align}\label{sect0-free-energy-calculate}
\mathcal J(n,c,{\bf u}):=\int_{\Omega} n\log n\, dx+\frac{1}{2}\int_{\Omega} |{\bf u}|^2\, dx-\int_{\Omega}nc\, dx+\frac{1}{2}\int_{\Omega} c^2\, dx+\frac{1}{2}\int_{\Omega}\vert \nabla c\vert^2\, dx,
\end{align}
where the first term is the entropy of the cellular density $n$ and the second term represents the kinetic energy of the velocity field ${\bf u}.$  Here and below, for the consideration of local and global well-posedness, without loss of generality, we assume $\e_0=1$.  In fact, we prove the free energy (\ref{sect0-free-energy-calculate}) is dissipative along the dynamics, which is
\begin{lemma}\label{lemma1-free}
Let $(n,c,{\bf u})$ be the solution of (\ref{PKSNS-time-dependent}), then we have the energy functional $\mathcal J$ given by (\ref{sect0-free-energy-calculate}) satisfies the following energy dissipation:
\begin{align}\label{dissipation}
\frac{d}{dt}\mathcal J(t)=-\int_{\Omega} n|\nabla(\log n-c)|^2\, dx-\iota\int_{\Omega} \Big(\frac{\partial c}{\pd t}\Big)^2\, dx-\int_{\Omega} \vert \nabla {\bf u}\vert^2\, dx.
\end{align}
\end{lemma}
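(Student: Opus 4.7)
The plan is to differentiate $\mathcal J$ in time, substitute the three evolution equations in (\ref{PKSNS-time-dependent}), and integrate by parts using the boundary conditions. Equivalently, and more cleanly, I would test the $n$-equation against $\log n-c$, the $c$-equation against $\pd_t c$, and the ${\bf u}$-equation against ${\bf u}$, and then sum the three identities. For the first test, using $\pd_{\boldsymbol\nu}n=\pd_{\boldsymbol\nu}c=0$ to annihilate boundary terms, $\nb\cdot{\bf u}=0$ together with ${\bf u}\cdot\boldsymbol\nu=0$ to eliminate the advective boundary contributions, and mass conservation $\frac{d}{dt}\int n\,dx=0$, one obtains
\begin{equation*}
\frac{d}{dt}\int_\Omega n\log n\,dx \;-\; \int_\Omega c\,\pd_t n\,dx \;=\; -\int_\Omega n|\nb(\log n-c)|^2\,dx \;-\; \int_\Omega n\,{\bf u}\cdot\nb c\,dx,
\end{equation*}
where the algebraic identity $n|\nb(\log n-c)|^2=\tfrac{|\nb n|^2}{n}-2\nb n\cdot\nb c+n|\nb c|^2$ is what allows the cross terms generated by entropy dissipation and chemotactic flux to combine into a single perfect square.

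For the chemical, testing the $c$-equation against $\pd_t c$ and integrating $\De c$ against $\nb(\pd_t c)$ yields
\begin{equation*}
\iota\int_\Omega(\pd_t c)^2\,dx = -\frac{1}{2}\frac{d}{dt}\int_\Omega(|\nb c|^2 + c^2)\,dx + \int_\Omega n\,\pd_t c\,dx,
\end{equation*}
and combining this with the elementary identity $\frac{d}{dt}\int nc = \int c\,\pd_t n + \int n\,\pd_t c$ re-expresses the residual $\int c\,\pd_t n$ from the previous step as total derivatives of the $\mathcal J$-components plus a $-\iota\int(\pd_t c)^2$ contribution. For the fluid, testing the momentum equation against ${\bf u}$ kills the convective and pressure terms via $\nb\cdot{\bf u}=0$ and ${\bf u}\cdot\boldsymbol\nu=0$; for the viscous term I would rewrite $\De{\bf u}=2\nb\cdot\mathbb S{\bf u}$ (valid by incompressibility) and integrate by parts. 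The boundary integrand ${\bf u}\cdot(\mathbb S{\bf u}\cdot\boldsymbol\nu)$ then vanishes because ${\bf u}$ is tangential to $\pd\Omega$ while $\mathbb S{\bf u}\cdot\boldsymbol\nu$ has vanishing tangential component by the Navier slip condition, delivering $\tfrac{1}{2}\frac{d}{dt}\int|{\bf u}|^2 = -\int|\nb{\bf u}|^2\,dx + \int n\,{\bf u}\cdot\nb c\,dx$ once the dissipation is written in the $|\nb{\bf u}|^2$ form.

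Summing the three contributions, the crucial cancellation occurs between the $-\int n\,{\bf u}\cdot\nb c\,dx$ produced by advective transport of cells in the entropy identity and the $+\int n\,{\bf u}\cdot\nb c\,dx$ produced by the chemotactic forcing $n\nb c$ in the momentum identity — this reflects the physical mechanism that the cells exert a friction-induced back-reaction on the fluid without net energy loss. Rearranging all total time derivatives to the left-hand side reconstitutes $\frac{d}{dt}\mathcal J$ and produces exactly (\ref{dissipation}). The only step beyond routine Neumann integration by parts is the viscous boundary term: the zero-friction condition $(\mathbb S{\bf u}\cdot\boldsymbol\nu)_{\boldsymbol\tau}=0$ is tailored precisely so that the surface integrand in the Stokes energy identity vanishes, and this is the single geometric ingredient from the boundary conditions that is genuinely used beyond the standard Neumann/no-penetration setup.
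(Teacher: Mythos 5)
Your proposal is essentially the paper's own proof: test the $n$-equation against $\log n - c$, extract the $c$-contribution via $\pd_t c$, test the momentum equation against $\bf u$, and sum, observing that mass conservation, $\nb\cdot{\bf u}=0$ together with ${\bf u}\cdot\boldsymbol\nu=0$, and the Neumann/Navier boundary conditions kill all boundary terms, with the two copies of $\int_\Omega n\,{\bf u}\cdot\nb c\,dx$ (from cell advection and from fluid forcing) cancelling identically. All signs and structural identities you wrote match the paper's equations \eqref{free-eq-1}--\eqref{free-eq-3}.

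One remark on your treatment of the viscous term, which mirrors but slightly sharpens the paper's own handwave. Going through $\De{\bf u}=2\nb\cdot\mathbb S{\bf u}$ and using ${\bf u}\cdot\mathbb S({\bf u})\boldsymbol\nu=0$ (tangential ${\bf u}$, purely normal traction) is indeed the clean route, and it yields $\int_\Omega{\bf u}\cdot\De{\bf u}\,dx=-2\int_\Omega|\mathbb S{\bf u}|^2\,dx$. However, your closing phrase ``once the dissipation is written in the $|\nb{\bf u}|^2$ form'' glides over a genuine boundary contribution: by the identity in the paper's own Corollary \ref{cor-2.5-AR} (equivalently, $\pd_{\boldsymbol\nu}{\bf u}\cdot{\bf u}=\ka|{\bf u}|^2$ on $\pd\Om$ under Navier slip), one has $2\int_\Omega|\mathbb S{\bf u}|^2\,dx=\int_\Omega|\nb{\bf u}|^2\,dx-\int_{\pd\Omega}\ka|{\bf u}|^2\,dS$, so the two dissipation expressions are not equal in a bounded domain unless the curvature term vanishes. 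The paper itself silently replaces $-2\int|\mathbb S{\bf u}|^2$ by $-\int|\nb{\bf u}|^2$ in \eqref{free-eq-3}, so your proposal reproduces rather than introduces this point; but since you explicitly invoked the $\mathbb S$-form, you should either keep the dissipation as $-2\int_\Omega|\mathbb S{\bf u}|^2\,dx$ (the form actually delivered by the Navier boundary condition) or record the extra surface term $\int_{\pd\Omega}\ka|{\bf u}|^2\,dS$ when converting.
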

\begin{proof}
   Test $(\ref{PKSNS-time-dependent})_{1}$ against $\log n-c$, then we integrate the $n$-equation by parts to get
   \begin{align}\label{free-eq-1}
\int_{\Omega} \frac{\partial n}{\partial t}(\log n- c)\, dx=&\int_{\Omega} \nabla\cdot [n\nabla(\log n- c)](\log n- c)-\int_{\Omega} {\bf u}\cdot \nabla n (\log n-c)\, dx\nonumber\\
=&-\int_{\Omega} n|\nabla(\log n-c)|^2\, dx+\int_{\Omega} {\bf u}\cdot \nabla n\log n\, dx+\int_{\Omega} {\bf u}\cdot \nabla n c\, dx\nonumber\\
=&-\int_{\Omega} n|\nabla(\log n-c)|^2\, dx+\int_{\Omega} {\bf u}\cdot \nabla n c\,dx,
\end{align}
where we use the integration by parts to obtain from $\nabla \cdot {\bf u}=0$ that
\begin{align*}
\int_{\Omega} {\bf u}\cdot \nabla n\log n\, dx=\int_{\Omega}{\bf u}\cdot\nabla [n(\log n-1)]\, dx=0.
\end{align*}
In addition, one has from $\int_{\Omega}\frac{\partial n}{\partial t} \, dx =0$ and the $c$-equation that
\begin{align}\label{free-eq-2}
\int_{\Omega} \frac{\partial n}{\partial t}(\log n- c)\, dx=&\frac{d}{dt}\int_{\Omega} n\log n\, dx-\frac{d}{dt}\int_{\Omega} nc\, dx+\int_{\Omega} n\frac{\partial c}{\partial t}\,dx\nonumber\\
=&\frac{d}{dt}\int_{\Omega} n(\log n- c)\, dx+\int_{\Omega} (\iota c_t-\Delta c+c) c_t\,dx\nonumber\\
=&\frac{d}{dt}\int_{\Omega} n(\log n- c)\, dx+\iota\int_{\Omega} \Big(\frac{\pd c}{\partial t}\Big)^2\, dx+\frac{1}{2}\frac{d}{dt}\Big(\int_{\Omega} |\nabla c|^2+c^2\Big)\, dx.
\end{align}
Combining (\ref{free-eq-1}) and (\ref{free-eq-2}), we have
\begin{align}\label{free-eq-4}
&\frac{d}{dt}\int_{\Omega} n(\log n- c)\, dx+\iota\int_{\Omega} \Big(\frac{\pd c}{\pd t}\Big)^2\, dx+\frac{1}{2}\frac{d}{dt}(\int_{\Omega} |\nabla c|^2+c^2)\, dx\nonumber\\
=&-\int_{\Omega} n|\nabla(\log n-c)|^2\, dx+\int_{\Omega} {\bf u}\cdot \nabla n c\, dx.
\end{align}
We multiply $(\ref{PKSNS-time-dependent})_3$ by ${\bf u}$ and integrate it by parts to find from the divergence-free that
\begin{align}\label{free-eq-3}
    \frac{1}{2}\frac{d}{dt}\int_{\Omega} |{\bf u}|^2\, dx=&\int_{\Omega} {\bf u}\cdot[\Delta {\bf u}+n\nabla c-\nabla P-({\bf u}\cdot\nabla) {\bf u}]\, dx\nonumber\\
    =&-\int_{\Omega} |\nabla {\bf u}|^2\, dx-\int_{\Omega} {\bf u}\cdot \nabla nc\, dx-\int_{\Omega} {\bf u}\cdot \nabla P\, dx.
\end{align}
Upon summing (\ref{free-eq-4}) and (\ref{free-eq-3}), we obtain
\begin{align*}
&\frac{d}{dt}\int_{\Omega} n(\log n- c)\, dx+\frac{1}{2}\cdot\frac{d}{dt}\Big(\int_{\Omega} |\nabla c|^2+c^2\Big)\, dx+\frac{1}{2}\cdot\frac{d}{dt}\int_{\Omega} |{\bf u}|^2\, dx\\
=&-\iota\int_{\Omega} \Big(\frac{\pd c}{\pd t}\Big)^2\, dx-\int_{\Omega} n|\nabla(\log n-c)|^2\, dx+\int_{\Omega} {\bf u}\cdot \nabla n c\, dx,
\end{align*}
which proves this lemma.
\end{proof}
With the help of Lemma \ref{lemma1-free}, we plan to follow the idea shown in \cite{nagai1997application1} to prove system (\ref{PKSNS-time-dependent}) admits the global-in-time solution under the subcritical mass case.  To this end, we shall develop the semigroup theory of the non-stationary Stokes operator in 2D and establish the local-in-time existence of (\ref{PKSNS-time-dependent}).

For the latter aim, we will employ the inner-outer gluing method to construct the boundary spot steady state of (\ref{PKSNS-time-dependent}) with the critical mass.  The key observation is that the forcing term $n\nabla c$ in (\ref{PKSNS-time-dependent})$_3$ can be written as a stress tensor.  Indeed, recall that chemical concentration $c$ satisfies
$$\Delta c-c+n=0,~~~\quad \quad x\in\Omega,$$
then we have
\begin{align}\label{forcing-equivalent}
n\nabla c=-\Delta c\nabla c+c\nabla c=-\nabla\cdot(\nabla c\otimes\nabla c)+\nabla\bigg( \frac{\vert \nabla c\vert^2}{2}\bigg)+\nabla\Big(\frac{c^2}{2}\Big).
\end{align}
It follows from (\ref{forcing-equivalent}) that stationary problem (\ref{PKSNS-ss}) can be written as
\begin{align}\label{PKSNS-ss-equiv}
\left\{\begin{array}{ll}
0= \De n - \nb\cdot(n\nb c) - {\bf u}\cdot\nb n, &x\in\Omega,\\
0= \De c - c + n, &x\in\Omega,\\
{\bf u}\cdot\nb {\bf u} + \nb P = \De {\bf u} - \e_0\nabla \cdot(\nabla c\otimes\nabla c)+\e_0 \nabla\Big(\frac{|\nabla c|^2}{2}\Big)+\e_0\nabla\big(\frac{c^2}{2}\big),\quad \nb\cdot {\bf u} = 0, &x\in\Omega,\\
\frac{\pd n}{\pd\boldsymbol{\nu}}= \frac{\pd c}{\pd\boldsymbol{\nu}} = 0,&x\in\partial\Omega,\\
{\bf u}\cdot\boldsymbol{\nu} = 0,\quad (\mathbb S {\bf u}\cdot\boldsymbol{\nu})_{\boldsymbol{\tau}} = 0, &x\in\partial\Omega.
\end{array}
\right.
\end{align}
One formally sees that the potentials
$$\e_0 \nabla\Big(\frac{|\nabla c|^2}{2}\Big)+\e_0\nabla\Big(\frac{c^2}{2}\Big)$$
can be absorbed into the pressure term, and we shall focus on the equivalent form (\ref{PKSNS-ss-equiv}) then discuss the existence of spots in 2D.  We wish to mention that if the fluid velocity field satisfies ${\bf u}\equiv 0$ in (\ref{PKSNS-ss-equiv}) and $(n,c)$ is the solution to the stationary minimal Keller-Segel model, $c$ should satisfies the compatibility condition 
$$\nabla P= - \e_0\nabla \cdot(\nabla c\otimes\nabla c)+\e_0 \nabla\bigg(\frac{|\nabla c|^2}{2}\bigg)+\e_0\nabla\Big(\frac{c^2}{2}\Big)$$
for the scalar function $P.$  In addition, (\ref{PKSNS-ss}) has the approximate scaling invariance property.  More precisely, if $(n,c,{\bf u},P)(x)$ is a solution to (\ref{PKSNS-ss}), then $(n_{\lambda},c_{\lambda},{\bf u}_{\lambda},P_{\lambda})(x)=(\lambda^2n,c,\lambda {\bf u}, \lambda^2 P)(\lambda x)$ satisfies the following system: 
\begin{align*}
\left\{\begin{array}{ll}
0=\Delta n-\nabla \cdot (n\nabla c)-{\bf u}\cdot\nabla n,&x\in\Omega_{\lambda},\\
0=\Delta c-\lambda^2c+n,&x\in\Omega_{\lambda},\\
{\bf u}\cdot\nabla {\bf u}+\nabla P=\Delta {\bf u}+\e_0n\nabla c, &x\in\Omega_{\lambda},\\
\frac{\partial n}{\partial \boldsymbol{\nu}}=\frac{\partial c}{\partial \boldsymbol{\nu}}=0,&x\in\partial\Omega_{\lambda},\\
{\bf u}\cdot \boldsymbol{\nu}=0,\quad ({\mathbb S}{\bf u}\cdot \boldsymbol{\nu})_{\boldsymbol{\tau}}=0,&x\in\partial\Omega_{\lambda},
\end{array}
\right.
\end{align*}
where $\Omega_{\lambda}:=\Omega/\lambda.$  The scaling invariance property causes the fully coupling issue of the linearized system associated with (\ref{PKSNS-ss-equiv}), which forces us to impose the smallness assumption on $\e_0$. Detailed discussion is presented in Section \ref{inn-out-gluing-sect}. 
{ 
Since the velocity field ${\bf u}$ solves the inhomogeneous Stokes system \eqref{PKSNS-ss-equiv}$_3$ and satisfies the Navier boundary conditions \eqref{PKSNS-ss-equiv}$_5$, we have to develop a $W^{2,p}$ theory for the Stokes operator with the Navier boundary conditions in 2D bounded domains.
When the domain $\Om$ is the half space $\R^2_+$, the last author and collaborators established an explicit solution formula for the two-dimensional non-stationary Stokes system and gave pointwise estimates of the solution in \cite{lin2023nematic} by exploiting the Fourier-Laplace transform.
It is also mentioned in \cite{lin2023nematic} that with the aid of a suitable reflection (see \eqref{sect5-eq-solution-formula-stokes-divF}) one can reduce the half-space problem to the corresponding whole-space problem.
In this manner, the desired estimates of the outer solution can be easily derived by studying the Oseen tensor. 
Whereas, the reflection technique can only be applied to domains with flat boundary, and thus it can not be employed directly in the problem of the present paper because the domain under consideration is a bounded domain. 
To tackle this difficulty, we flatten the boundary and investigate its influence on the error. 
To estimate the error, we  
study the generalized inhomogeneous Stokes system of non-solenoidal velocity field with nontrivial right hand side in the Navier slip boundary conditions, \eqref{sect0-abstract-stokes-system}, by adopting the approach carried out in \cite{AR-JDE2014} of 3D Stokes system to give a delicate estimate near the boundary and establish a global $W^{2,p}$ regularity result for the 2D Stokes system.
See Section \ref{sect6-W2p-estimate-Stokes} for details.
}

Now, we state the main results of this paper.  Concerning the global regularity of the two-dimensional Stokes operator, we establish the following $W^{2,p}$ theory for the generalized inhomogeneous Stokes system of non-solenoidal velocity field with nontrivial right hand side in the Navier slip boundary conditions.
\begin{theorem}\label{thm-global-regularity-NS}
Consider the following abstract problem:
\begin{align}\label{sect0-abstract-stokes-system}
\left\{\begin{array}{ll}
-\De{\bf u} + \nb P = {\bf f}, &x\in\Om,\\
\div{\bf u}=\eta,&x\in \Om,\\
{\bf u}\cdot\boldsymbol{\nu} = g, &x\in\pd\Om,\\
2[\mathbb S({\bf u})\boldsymbol{\nu}]_{\boldsymbol{\tau}}= h\boldsymbol{\tau},&x\in\pd\Om,
\end{array}
\right.
\end{align}
where $\Om\subset\R^2$ is a bounded domain with $C^{1,1}$ boundary $\pd\Om$.  Let ${\bf f}\in{\bf L}^p(\Om)$, $\eta\in W^{1,p}(\Om)$, $g\in W^{2-\frac1p,p}(\pd\Om)$, and $h\in W^{1-\frac1p,p}(\pd\Om)$ satisfy the compatibility conditions \eqref{eq-2.14-AR} and \eqref{eq-3.15-AR}.
Then for $1<p<\infty$, system \eqref{sect0-abstract-stokes-system} has a unique solution $({\bf u},P)\in({\bf W}^{2,p}(\Om)\times W^{1,p}(\Om))/\boldsymbol{\mathcal N}(\Om)$.
Further, the solution satisfies the following estimate:
\EQN{
&\norm{\bf u}_{{\bf W}^{2,p}(\Om)/\boldsymbol{\mathcal T}(\Om)} + \norm{P}_{W^{1,p}(\Om)/\R}\\
\le & C\bke{ \norm{\bf f}_{{\bf L}^p(\Om)} +  \norm{\eta}_{W^{1,p}(\Om)} + \norm{g}_{W^{2-\frac1p,p}(\pd\Om)} + \norm{h}_{W^{1-\frac1p,p}(\pd\Om)} },
}
where $C>0$ is a constant, $\mathcal T$ and $\mathcal N$ are given by \eqref{sect6-kernel-Tp} and \eqref{sect6-N-kernel}, respectively.
\end{theorem}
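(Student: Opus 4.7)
The plan is to adapt the approach of Amrouche and Rodr\'iguez-Bellido \cite{AR-JDE2014} for the three-dimensional Stokes problem with Navier conditions to the present two-dimensional bounded setting. First I would reduce to homogeneous boundary and divergence data: solve an auxiliary Neumann-type problem for a potential $\phi$ with $\De\phi = \eta$ in $\Om$ and $\frac{\pd\phi}{\pd\boldsymbol{\nu}} = g$ on $\pd\Om$ (compatible thanks to the hypotheses), then set $\tilde{\bf u} = {\bf u} - \nb\phi$ to absorb $\eta$ and $g$ into a modified ${\bf L}^p$ forcing, and use a further $W^{2,p}$ lift to eliminate $h$. One is thereby reduced to the homogeneous problem $\eta = g = h = 0$ with right-hand side still in ${\bf L}^p$.

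Existence in $H^1$ is then obtained from Lax--Milgram applied to the symmetric bilinear form $a({\bf u},{\bf v})=2\int_\Om\mathbb S({\bf u}):\mathbb S({\bf v})\,dx$ on the constrained space $V=\{{\bf v}\in H^1(\Om)^2 :\div{\bf v}=0,\;{\bf v}\cdot\boldsymbol{\nu}=0\text{ on }\pd\Om\}$.  Korn's inequality, modulo the finite-dimensional kernel $\boldsymbol{\mathcal T}$ of infinitesimal rigid motions that preserve $\pd\Om$ with vanishing strain, yields coercivity on $V/\boldsymbol{\mathcal T}$; de~Rham's theorem recovers the pressure $P\in L^2(\Om)/\R$, giving the statement in $(H^1\times L^2)/\boldsymbol{\mathcal N}$.

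The $W^{2,p}$ regularity is next established by localization. On balls compactly contained in $\Om$ one uses the classical $L^p$-Stokes theory. Near a boundary point, I would fix a $C^{1,1}$ chart flattening $\pd\Om$ to $\{x_2=0\}$; the conjugated system becomes the half-space Stokes system plus lower-order perturbations whose coefficients are controlled by the first derivatives of the chart and by the second fundamental form of $\pd\Om$ entering the slip condition. The half-space model itself is handled by the reflection \eqref{sect5-eq-solution-formula-stokes-divF} of \cite{lin2023nematic} (even extension of the tangential component, odd extension of the normal one), which reduces it to the whole-space Stokes system; the latter's $W^{2,p}$ bounds follow from Calder\'on--Zygmund estimates on the Oseen tensor. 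A partition of unity patches the interior and boundary estimates into the desired global one, and the range $1<p<2$ is then obtained by duality against the already established $p'>2$ case.

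The principal obstacle is the boundary step. Unlike the Dirichlet case, the slip condition $[\mathbb S({\bf u})\boldsymbol{\nu}]_{\boldsymbol{\tau}}=0$ intertwines tangential and normal derivatives of both components of ${\bf u}$, and the curvature of $\pd\Om$ survives the flattening as a zero-order perturbation of the boundary condition itself. Verifying that these perturbations can be absorbed on the left-hand side uniformly over the partition of unity and for all $1<p<\infty$, while remaining compatible with the quotients by $\boldsymbol{\mathcal T}$ and $\boldsymbol{\mathcal N}$, is the delicate two-dimensional adaptation of the AR argument and forms the heart of the proof.
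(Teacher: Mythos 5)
Your proposal is correct in outline but takes a genuinely different route to the $W^{2,p}$ regularity. The paper does not localize or flatten the boundary for Theorem~\ref{thm-global-regularity-NS}. Instead it exploits the two-dimensional vorticity structure: Corollary~\ref{cor-2.5-AR} shows that the Navier slip condition together with ${\bf u}\cdot\boldsymbol{\nu}=0$ is equivalent to a \emph{Dirichlet} condition on the scalar $\curl{\bf u}$, namely $\curl{\bf u}=h+2\ka\,({\bf u}\cdot\boldsymbol{\tau})$ on $\pd\Om$. After the same potential-theoretic reduction ${\bf u}={\bf z}+\nb\th$ that you propose, the paper takes $\om:=\curl{\bf z}$, notes that $-\De\om=\curl{\bf f}$ in $\Om$ with prescribed Dirichlet data on $\pd\Om$, obtains $\om\in W^{1,p}(\Om)$ by scalar elliptic regularity, and then concludes ${\bf z}\in{\bf W}^{2,p}(\Om)$ from the global embedding for vector fields with $W^{1,p}$ divergence and curl and $W^{2-1/p,p}$ normal trace (imported from Dautray--Lions). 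No partition of unity is required. Your scheme---interior Calder\'on--Zygmund, boundary flattening, even/odd reflection onto the whole-space Oseen tensor, patching, and duality for $1<p<2$---is a legitimate alternative and, unlike the vorticity trick (which relies on $\curl$ being a scalar), generalizes to higher dimensions; what the paper's route buys is a much shorter, coordinate-free proof that disposes of the curvature bookkeeping you correctly flag as the delicate step.

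One precision worth tightening in your sketch: a $C^{1,1}$ flattening perturbs the \emph{principal} coefficients of the transformed Stokes operator (they become Lipschitz rather than constant), not merely lower-order terms, so the boundary step should be framed as a freezing-of-coefficients argument over a small enough chart, on top of which the curvature-induced zero-order perturbation of the slip condition is absorbed using the $W^{1,p}$ estimate already in hand. Your claim that the perturbations are lower order as stated would not close the argument.
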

Focusing on the global well-posedness of nonstationary problem (\ref{PKSNS-time-dependent}), we perform the energy estimate and apply the Moser-Alikakos iteration to obtain the following results:
\begin{theorem}\label{thm-global-existence-PKS-NS}
Assume that initial data $(n_0,c_0,{\bf u}_0)\in C^{0}(\bar\Omega)\times W^{1,\infty}(\Omega)\times {\bf{D}}(A_2)$ with ${\bf{D}}(A_2)$ defined by \eqref{sect2-semigroup-domain}, $n_0,c_0\geq 0,\not\equiv0$ and
\begin{align}\label{sect0-M-M0-less}
M:=\int_{\Omega}n_0\, dx<M_0,
\end{align}
where $M_0$ is defined by (\ref{sect0-M0-critical}).  Then (\ref{PKSNS-time-dependent}) admits the classical global-in-time solution $(n, c,{\bf u})$.
\end{theorem}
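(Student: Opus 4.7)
The plan follows a Nagai-type strategy adapted to the bounded domain with Navier slip fluid boundary, organized in four stages. First I would construct a local classical solution via mild/semigroup formulation: the Neumann heat semigroup handles $(n,c)$, while the Stokes semigroup associated with the Navier slip boundary (whose $L^p$-estimates follow from the $W^{2,p}$ theory of Theorem \ref{thm-global-regularity-NS} together with the spectral analysis promised in the introduction) handles ${\bf u}$. A contraction mapping on a short time interval in spaces such as $C([0,T];C^0(\bar\Omega))$ for $n$, $C([0,T];W^{1,\infty}(\Omega))$ for $c$, and $C([0,T];{\bf D}(A_2))$ for ${\bf u}$ yields a unique classical solution on a maximal interval $[0,T_{\max})$, with the continuation criterion $T_{\max}<\infty \Rightarrow \|n(t)\|_{L^\infty}\to\infty$. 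It therefore suffices to produce a uniform $L^\infty$ bound on $n$ over any finite time interval.

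Second, I would extract entropy and energy bounds. Mass conservation $\int_\Omega n(\cdot,t)\,dx \equiv M$ is immediate from the $n$-equation and the boundary/divergence-free conditions. Integrating Lemma \ref{lemma1-free} gives $\mathcal J(t)\le \mathcal J(0)$, i.e.
\[
\int_\Omega n\log n\,dx + \tfrac12\|c\|_{H^1}^2 + \tfrac12\|{\bf u}\|_{L^2}^2 \le \mathcal J(0) + \int_\Omega nc\,dx.
\]
Applying the Gibbs-type inequality $\int nc \le \int n\log(n/M)\,dx + M\log\int e^c\,dx$ together with the Moser--Trudinger inequality on $\Omega$ (with sharp constant $M_0$ as in \eqref{sect0-M0-critical}), the hypothesis $M<M_0$ absorbs $\int nc$ and yields the uniform-in-time bound $\int n\log n\,dx + \|c\|_{H^1} + \|{\bf u}\|_{L^2}\le C$.

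Third, I would bootstrap to higher integrability. Testing the $n$-equation against $n^{p-1}$ and exploiting $\nabla\cdot {\bf u}=0$ to kill the fluid advection, then integrating by parts the chemotactic term and substituting $\Delta c = c-n$, gives Nagai's differential inequality
\[
\frac{1}{p}\frac{d}{dt}\int_\Omega n^p\,dx + \frac{4(p-1)}{p^2}\int_\Omega|\nabla n^{p/2}|^2\,dx = \frac{p-1}{p}\int_\Omega n^p(n-c)\,dx.
\]
Combining the 2D Gagliardo--Nirenberg inequality, the entropy bound, and Moser--Trudinger control of $e^c$, the subcritical-mass hypothesis closes the estimate inductively, yielding $\|n\|_{L^p}\le C$ for every finite $p$. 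In parallel, $L^2$-energy estimates give ${\bf u}\in L^\infty(L^2)\cap L^2(H^1)$, and Theorem \ref{thm-global-regularity-NS} applied to the ${\bf u}$-equation (viewed as an inhomogeneous Stokes system with right-hand side $\ve_0 n\nabla c-({\bf u}\cdot\nabla){\bf u}-\pd_t{\bf u}$) upgrades $\|{\bf u}\|_{W^{1,p}}$, and hence $\|{\bf u}\|_{L^\infty}$, in terms of $\|n\nabla c\|_{L^p}$.

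Finally, a Moser--Alikakos iteration on the $n$-equation --- testing successively with $n^{2^k-1}$, applying Gagliardo--Nirenberg, and absorbing the drift using the just-obtained $L^\infty$ bounds on ${\bf u}$ and $\nabla c$ --- produces a uniform $\|n\|_{L^\infty(\Omega\times(0,T_{\max}))}$ bound, contradicting the blow-up alternative so $T_{\max}=\infty$. Standard parabolic Schauder theory then promotes the solution to classical regularity. The principal obstacle is the coupling step: on a bounded domain with Navier slip boundary, one cannot naively propagate the chemotactic bootstrap without a correspondingly sharp Stokes estimate controlling ${\bf u}\cdot\nabla n$ at each integrability level, and this is precisely why the global $W^{2,p}$ Stokes theory of Theorem \ref{thm-global-regularity-NS} (and its nonstationary semigroup counterpart) is developed in advance.
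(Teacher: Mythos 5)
Your overall strategy is the same as the paper's: local existence by contraction with the Neumann heat and Navier-slip Stokes semigroups, the free-energy identity of Lemma \ref{lemma1-free} combined with the Moser--Trudinger and Jensen inequalities under $M<M_0$ to bound $\int_\Omega n\log n\,dx$, $\|c\|_{H^1}$ and $\|{\bf u}\|_{L^2}$, then $L^p$ estimates, a Moser--Alikakos iteration, and finally a velocity bound to exclude blow-up. Two steps, however, do not go through as written. First, your $L^p$ differential identity replaces $\Delta c$ by $c-n$, which is legitimate only when $\iota=0$; for $\iota>0$ one has $\Delta c=\iota\,\partial_t c+c-n$, and the extra term $\iota\int_\Omega n^p\,\partial_t c\,dx$ is not lower order and cannot be dropped. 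The paper deals with exactly this in Lemma \ref{sect3-lemma-n2-bounded}: the cross term is bounded by $\|\partial_t c\|_{L^2}\|n\|_{L^4}^2$ via Gagliardo--Nirenberg and closed by a Gronwall argument that uses the time-integrated dissipation $\iota\int_0^t\|\partial_t c\|_{L^2}^2\,ds\lesssim 1$ coming from \eqref{dissipation}; only after the resulting $L^2$ bound on $n$ does one obtain $c\in W^{1,q}$ for all finite $q$ (Lemma \ref{sect3-pre-chenkongwang2021global} for $\iota>0$, elliptic theory for $\iota=0$), which is what makes the subsequent iteration run. Your proposal needs this intermediate step or an equivalent treatment of $\iota\,\partial_t c$. (Incidentally, no $L^\infty$ bound on ${\bf u}$ is needed for the iteration itself: the drift contribution vanishes identically since $\int_\Omega {\bf u}\cdot\nabla(n^p)\,dx=0$ by $\nabla\cdot{\bf u}=0$ and ${\bf u}\cdot\boldsymbol{\nu}=0$.)

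Second, your control of the velocity invokes the stationary $W^{2,p}$ theory of Theorem \ref{thm-global-regularity-NS} with right-hand side $\e_0 n\nabla c-({\bf u}\cdot\nabla){\bf u}-\partial_t{\bf u}$; this is circular, because $\partial_t{\bf u}$ is not controlled in $L^p$ at this stage. The paper instead performs an energy estimate on the vorticity $\omega=\curl{\bf u}$, obtaining $\|\nabla{\bf u}\|_{L^2}\le C(T)$ on finite time intervals and hence $\|{\bf u}\|_{L^\infty}\le C(T)$ by Gagliardo--Nirenberg; alternatively one can use the nonstationary semigroup smoothing estimates of Theorem \ref{sect3-semigroup-estimate-pre} in a Duhamel representation, but not the stationary Stokes theorem with $\partial_t{\bf u}$ treated as data. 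Relatedly, the continuation criterion furnished by the local theory (Lemma \ref{sect3-lemma-local-in-time-1}, estimate \eqref{eq-in-lemma3.1}) involves $\|n\|_{L^\infty}+\|c\|_{W^{1,\infty}}+\|A_2^{\alpha}{\bf u}\|_{W^{1,q}}$, not $\|n\|_{L^\infty}$ alone; your reduction ``it suffices to bound $n$ in $L^\infty$'' would have to be proved separately, or simply replaced by bounding all three quantities, as your later steps (and the paper) in fact do.
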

For Theorem \ref{thm-global-existence-PKS-NS}, we give some remarks as follows:
\begin{remark}
~

We have proved the global existence of the solution to (\ref{PKSNS-time-dependent}) with the subcritical mass.  For the critical mass case, our conjecture is that the solution also exists globally and the idea of proof may follow from \cite{nagai2016global} directly.  We believe at least for the parabolic-elliptic counterpart of (\ref{PKSNS-time-dependent}), their approach is durable with the slight modification. 
\end{remark}
Considering the concentration phenomenon, we assume $\e_0\ll 1$ is sufficiently small but fixed, then construct the solution with the striking structure to (\ref{PKSNS-ss-equiv}) via the inner-outer gluing method, which are summarized as
\begin{theorem}\label{thm11}
There exists a sufficiently small $\e_0>0$ such that for all sufficiently small $\e>0$, (\ref{PKSNS-ss-equiv}) admits a family of solutions $(n_{\varepsilon},c_{\varepsilon},u_{\varepsilon})$ satisfying the following forms:
\begin{align}\label{mainn}
 n_{\varepsilon}(x)=\frac{1}{\varepsilon^2}W\bigg(\frac{x - \xi_{\varepsilon}}{\varepsilon}\bigg) + o(1); 
\end{align}
\begin{align}\label{mainc}
  {c_{\varepsilon}}(x) =\big[\Gamma(x) +   H^{\varepsilon}(x, \xi_{\varepsilon})-4\log \varepsilon\big] + o(1),
 \end{align}
where $W$ and $\Gamma$ are defined by
\begin{align*}
W=\frac{8\mu^2}{(\mu^2+|y|^2)^2},~~\Gamma=\log\frac{8\mu^2}{(\mu^2+ \vert y\vert^2)^2},~~y=\frac{x-\xi_{\e}}{\e}, \ \ \xi_{\e}\in\pd\Omega;
\end{align*}
$H^{\varepsilon}$ is the correction term of $\Gamma,$ which satisfies
\begin{align*}
\left\{\begin{array}{ll}
-\Delta H^{\varepsilon}+H^{\varepsilon}=-\Gamma,&x\in\Omega,\\
\frac{\partial H^{\varepsilon}}{\partial\boldsymbol{\nu}}=-\frac{\partial\Gamma}{\partial \boldsymbol{\nu}},&x\in\partial\Omega.
\end{array}
\right.
\end{align*}
In particular, as $\e\rightarrow 0,$ $\xi_{\varepsilon}$ converges to the critical point of the following energy functional:
 \begin{align}\label{jm}
 \mathcal J_m(\xi)=16\pi^2 H(\xi,\xi),
 \end{align}
where the critical point of $\mathcal J_m$ is assumed to be non-degenerate and $H$ denotes the regular part of the following Neumann Green's function:
\begin{align*}
\left\{\begin{array}{ll}
\Delta G-G=-\delta (x-\xi),&x\in\Omega,\\
\frac{\partial G}{\partial \boldsymbol{\nu}}=0,&x\in\partial\Omega;
\end{array}
\right.
\end{align*}
$\mu$ is determined by 
 \begin{equation*}
   \log 8\mu^2 = 4\pi H(\xi, \xi).
 \end{equation*} 
\end{theorem}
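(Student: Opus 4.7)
The plan is to use inner--outer gluing on the equivalent formulation \eqref{PKSNS-ss-equiv}, exploiting the smallness of $\e_0$ to treat the Stokes component perturbatively. We start from the classical critical-mass bubble
\begin{align*}
W(y)=\frac{8\mu^2}{(\mu^2+|y|^2)^2}, \qquad \Gamma(y)=\log W(y),
\end{align*}
which satisfies $\De\Gamma+e^\Gamma=0$ on $\R^2$, and build a boundary-localised approximation
\begin{align*}
n^*(x)=\tfrac1{\e^2}W\!\bke{\tfrac{x-\xi}{\e}},\quad c^*(x)=\Gamma\!\bke{\tfrac{x-\xi}{\e}}+H^\e(x,\xi)-4\log\e,\quad {\bf u}^*\equiv 0,
\end{align*}
where $\xi\in\pd\Om$ and $H^\e$ is the regular correction in the statement. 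After flattening $\pd\Om$ near $\xi$, the half-plane reflection turns the critical boundary mass $4\pi$ of \eqref{sect0-M0-critical} into the full $8\pi$ carried by $W$, which is precisely what makes boundary concentration compatible with the critical total mass.

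\textbf{Decomposition.} We write ${\bf u}=\e_0{\bf v}$, $n=n^*+\phi$, $c=c^*+\psi$, and plug into \eqref{PKSNS-ss-equiv}. Using the stress-tensor reformulation \eqref{forcing-equivalent}, the Stokes equation reduces to
\begin{align*}
-\De{\bf v}+\nb\widetilde P=-\nb\cdot(\nb c\otimes\nb c),\qquad \nb\cdot{\bf v}=0,
\end{align*}
with Navier boundary conditions, which is solved via Theorem~\ref{thm-global-regularity-NS}. We then feed $\e_0{\bf v}$ back into the $n$-equation and split the perturbation as $\phi=\chi\phi_{\mathrm{in}}+\phi_{\mathrm{out}}$, where $\chi$ is a smooth cut-off of fixed size around $\xi_\e$ and $\phi_{\mathrm{in}}(y)$ lives at the inner scale $y=(x-\xi_\e)/\e$ on the half-plane limit. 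Matching reduces the coupled system to an inner equation that is a perturbation of the linearised Liouville problem at $W$ together with an outer problem on $\Om$ for $(\phi_{\mathrm{out}},\psi)$ driven by the invertible operators $-\Delta$ (Neumann) and $-\Delta+1$ (Neumann).

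\textbf{Resolution and reduced system.} The outer linear problem has $\e$-uniformly bounded inverse in suitable weighted $L^p$ or $L^\infty$ spaces, and the Stokes component is controlled by Theorem~\ref{thm-global-regularity-NS}. The inner linear operator has a finite-dimensional kernel spanned by the scaling mode $Z_0=2W+y\cdot\nb W$ and the translation modes $Z_i=\pd_{y_i}W$, among which only the tangential translations and the scaling survive the Neumann reflection. Orthogonality of the error against these surviving kernel elements yields the reduced system
\begin{align*}
\nb_{\boldsymbol\tau} H(\xi,\xi)\big|_{\xi=\xi_\e}=o(1),\qquad \log 8\mu^2-4\pi H(\xi_\e,\xi_\e)=o(1).
\end{align*}
The non-degeneracy hypothesis on the critical point of $\mathcal J_m=16\pi^2 H(\xi,\xi)$ lets us solve the tangential equation for $\xi_\e$ by the implicit function theorem, while the dilation equation fixes $\mu$ as in the statement. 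Once the reduced system is solved, the full nonlinear system is closed by a contraction mapping in a ball of size $O(\e|\log\e|)$ in the product weighted space.

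\textbf{Main obstacle.} The central difficulty is the near-scaling invariance noted after \eqref{PKSNS-ss-equiv}: a naive bound on the fluid coupling $\e_0{\bf v}\cdot\nb n$ is comparable to $\e_0/\e^2$ at the inner scale, which would dominate the Keller--Segel error and break the fixed-point argument. Requiring $\e_0\ll 1$ independently of $\e$ makes the fluid contribution genuinely lower order than the Keller--Segel error in every weighted norm, so that the solvability conditions determining $(\xi_\e,\mu)$ are not affected at leading order by the coupling. The stress-tensor rewriting \eqref{forcing-equivalent} is essential here, because it absorbs the singular gradient part of the fluid forcing into the pressure, leaving only the regular traceless piece $\nb c\otimes\nb c$ to drive ${\bf v}$ through the $W^{2,p}$ Stokes theory of Theorem~\ref{thm-global-regularity-NS}; without this cancellation the near-boundary Stokes estimate would not be uniform in $\e$.
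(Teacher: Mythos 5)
Your overall strategy matches the paper's — build the boundary bubble ansatz $n^*,c^*$ with $\xi\in\pd\Omega$ and $H^\e$, flatten the boundary and use even reflection so the half-plane mass $4\pi$ is the bounded-domain critical value, decompose $\phi$ into inner and outer parts, develop inner and outer linear theory for the Keller–Segel operator, and close with a contraction, with $\e_0$ small to control the fluid coupling. However, there are two genuine gaps in how the velocity field is handled, and one mischaracterization of the role of $\e_0$.

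First, setting ${\bf u}^*\equiv 0$ and resolving the velocity globally as ${\bf u}=\e_0{\bf v}$ through the $W^{2,p}$ Stokes theory of Theorem~\ref{thm-global-regularity-NS} is not enough. The forcing $-\nb\cdot(\nb c\otimes\nb c)$ concentrates at scale $\e$, so $\|{\bf v}\|_{W^{2,p}(\Omega)}$ scales like a negative power of $\e$, and the Sobolev/Morrey bound this gives on ${\bf v}$ near $\xi$ is far too weak to see that the transport term $\e_0{\bf v}\cdot\nb n$ carries the correct $O(\e^{-3})$ inner order with the correct spatial decay. The paper therefore performs an inner–outer decomposition of the velocity itself, ${\bf u}=\e\,{\bf u}^{\text{i}}\chi+{\bf u}^{\text{o}}$, and proves a pointwise decay estimate $|{\bf u}^{\text{i}}(Y)|\lesssim\e^{\gamma-2}(1+|Y|)^{-1}$ for the inner half-plane Stokes problem via the Oseen/Lorentz tensor and a reflection of the stress tensor (Lemma~\ref{sect5-lemmainner-interior}). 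Without this pointwise inner estimate the weighted-norm hierarchy in the fixed point does not close; the $W^{2,p}$ theorem is then used only for the outer Stokes problem (Lemma~\ref{sect5-lemma-outer-velocity-prop}), together with the solvability (compatibility) conditions for its nontrivial kernel $\boldsymbol{\mathcal T}(\Omega)$.

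Second, the statement that smallness of $\e_0$ makes the fluid contribution ``genuinely lower order than the Keller–Segel error in every weighted norm'' is not correct, and the paper goes out of its way to say the opposite: because of the approximate scaling invariance of \eqref{PKSNS-ss-equiv}, the transport term ${\bf u}\cdot\nb_y W$ at the inner scale is of \emph{exactly} the same $\e$-order as the leading error in $\mathcal H$. What $\e_0\ll 1$ buys is a small constant prefactor (through $\|\nabla\cdot\mathcal F\|_{S,\gamma-2,a+2}\lesssim\e_0$ and hence $\|{\bf u}\|_{S,\gamma-1,1}\lesssim\e_0$), so that the cross-coupling becomes a small perturbation in the operator norm and the contraction argument survives — not a lower-order term in $\e$. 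If it were truly lower order in $\e$, no smallness of $\e_0$ would be needed.

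A smaller point: in the paper's reduced problem the mode-$0$ (mass) Lagrange multiplier $m_0$ vanishes automatically by integrating the $n$-equation over $\Omega$ and using the boundary conditions of $(n,c,{\bf u})$; only the tangential first-moment multiplier $m_1$ requires adjusting $\xi_\e$, and it is that adjustment — via the expansion $\nb_X H^\e(x)=\nb_X H^\e(\xi)+O(\e)$ — that produces the criticality condition $\nb_{\boldsymbol\tau}H(\xi,\xi)=0$ and the functional $\mathcal J_m$. The value of $\mu$ is fixed from the start by $\log 8\mu^2=4\pi H(\xi,\xi)$ to match the logarithmic growth of the mode-$0$ inner solution $\tilde\Psi_0$; it is not solved as a second reduced equation at the end of the argument.
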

We give the following remarks to explain our results shown in Theorem \ref{thm11}:
\begin{remark}
~
\begin{itemize}
    \item  Our subsequent proof of Theorem \ref{thm11} implies that $\e_0>0$ can be chosen as a universal constant independent of $\e$ given in the conditions and the domain size $|\Omega|.$
    \item Due to the approximate scaling invariance property of system (\ref{PKSNS-ss-equiv}), the linearized inner problem is fully coupled whenever $\e_0>0$ is sufficiently small.  In fact, the inner problem is given by
    \begin{align*}
    \left\{\begin{array}{ll}
    h_1+{\bf u}\cdot \nabla \phi=\Delta\phi-\nabla\cdot (\phi\nabla \Gamma)-\nabla \cdot(W\nabla \psi),\\
    h_2=\Delta\psi-\psi+\phi,\\
    \boldsymbol{h_3}+\nabla P=\Delta {\bf u}-\e_0\nabla\cdot(\nabla \Gamma \otimes \nabla \psi),
    \end{array}
    \right.
    \end{align*}
    where $h_1$, $h_2$ and $\boldsymbol{h_3}$ are generic error terms.
    \item The smallness of $\e_0$ is needed to guarantee the fixed point argument and the detailed discussion is shown in Section \ref{inn-out-gluing-sect}.
\end{itemize}
\end{remark}
The theoretical tool what we mainly use to show Theorem \ref{thm11} is the inner-outer gluing method, which is powerful and has been successfully applied on plenty of elliptic and parabolic problems \cite{del2016introduction,juncheng2022parabolic}.  We observe that system (\ref{PKSNS-ss-equiv}) can be naturally understood as the coupling of classical Keller-Segel models with transport effect and incompressible Navier-Stokes equations.  When $\e_0$ is sufficiently small, the construction of ansatz is in spirit of the pattern formation within the following minimal Keller-Segel models:
\begin{align}\label{standard-KS-SS}
\left\{\begin{array}{ll}
0=\Delta n-\nabla\cdot(n\nabla c),&x\in\Omega,\\
0=\Delta c-c+n,&x\in\Omega,\\
\frac{\partial n}{\partial \boldsymbol{\nu}}=\frac{\partial c}{\partial \boldsymbol{\nu}}=0,&x\in\partial\Omega.
\end{array}
\right.
\end{align}
In fact, del Pino and Wei \cite{del2006collapsing} showed the existence of spots to (\ref{standard-KS-SS}) rigorously via the Lyapunov-Schmidt reduction method.  Davila et al. \cite{davila2020existence} further constructed the infinite time blow-up solution to the non-stationary counterpart of (\ref{standard-KS-SS}) in the whole space $\R^2.$  We find that the vital step in the proof of Theorem \ref{thm11} is the formation of inner and outer linear theories, which crucially relies on the arguments shown in \cite{KWX2022}. 
 In \cite{KWX2022}, the authors borrowed the ideas from \cite{del2006collapsing,davila2020existence} and developed the linear theory successfully applied on the minimal Keller-Segel models with logistic growth.       

Although \cite{del2006collapsing}, \cite{davila2020existence} and \cite{KWX2022} provide many useful ideas that can be applied on the proof of Theorem \ref{thm11}, the fully coupling between transported Keller-Segel models and incompressible Navier-Stokes equations forces us to develop new ingredients in the inner-outer gluing procedure, which are shown as follows:
\begin{itemize}
    \item  Similarly as in \cite{lai2022finite}, the transport term ${\bf u}\cdot \nabla n$ in \eqref{PKSNS-ss-equiv}$_{1}$ cannot be regarded as a small perturbation term in the linearized inner problem.  Indeed, after scaling with inner variable $y=\frac{x-\xi}{\e}$, the linearized operator in the inner region becomes
    \begin{align}\label{sect1-linearized-problem-rmk}
    \left\{\begin{array}{ll}
    L_{W}[\Phi]-u[\Phi]\cdot\nabla_y W=h,\\
    -\Delta_y \bar\Psi=\Phi,
    \end{array}
    \right.
    \end{align}
    where $L_{W}[\Phi]$ is given by (\ref{sect2-linearized-inner-operator}).  The order of ${\bf u}[\Phi]\cdot\nabla_y W$ is precisely the same as the leading order term in error $h$.  To tackle with this issue, we adjust $\e_0>0$ in the forcing term $\e_0\nabla\cdot(\nabla c\otimes\nabla c)$ of (\ref{PKSNS-ss-equiv})$_{3}$ such that the smallness of ${\bf u}[\Phi]$ is provided.  As a consequence, the transport term ${\bf u}\cdot \nabla_y W$ can be truly realized as a perturbation term in the linearized inner problem (\ref{sect1-linearized-problem-rmk}).  
    \item To guarantee the construction of boundary spots, we impose Navier boundary conditions with zero friction rather than no-slip boundary conditions.  Motivated by this, we have to develop the new $W^{2,p}$ theory subject to Navier boundary conditions in 2D.
    \item Of concern the boundary spot, the location is assumed to be at the boundary rather than in the interior of domain.  As a consequence, we have to flatten the boundary and explore its influence on the error.  Moreover, we must develop the new inner linear theory restricted in the half space $\mathbb R^2_{+}$. 
    \item Unlike the parabolic stokes operator, the stationary counterpart has non-trivial kernels and some solvability conditions are needed to be imposed such that the existence and uniqueness of the solution are provided.  Thus, it is necessary to solve the corresponding reduced problems.  
\end{itemize}

The paper is organized as follows.  In Section \ref{sect6-W2p-estimate-Stokes}, we formulate the $W^{2,p}$ theory of the Stokes operator subject to Navier boundary conditions.  The section \ref{sect3-free-global-existence} is devoted to the global well-posedness of (\ref{PKSNS-time-dependent}) with the subcritical mass.  Section \ref{sect2}--\ref{inn-out-gluing-sect} focus on the construction of the boundary spot steady state with the mass exactly being $M_0$ given by (\ref{sect0-M0-critical}).  In detail, Section \ref{sect2} shows the idea of the choice of ansatz and the error computations.  Section \ref{sect3} is devoted to the effect of boundary on the error estimates.  Next, we establish the inner and outer linear theories modes by modes in Section \ref{sect4}.  Section \ref{sect-model-stokes} focuses on the model problem of Stokes operator.  In Section \ref{inn-out-gluing-sect}, we construct the boundary spot via the inner-outer gluing method and fixed point argument.

\medskip

Throughout the paper, we shall use the symbol ``$\lesssim$" to denote ``$\leq C$" for a positive constant $C$ independent of $x$ and $t$. Here $C$ might be different from line to line.  For convenience, we shall replace location $\xi_{\e}$ by $\xi$ without confusing readers in Section \ref{sect2}--\ref{inn-out-gluing-sect}.

\section{$W^{2,p}$ theory for Stokes system}\label{sect6-W2p-estimate-Stokes}
In this section, we consider the abstract problem (\ref{sect0-abstract-stokes-system}), which is the generalized inhomogeneous Stokes system of non-solenoidal velocity field with nontrivial right hand side in the Naiver slip boundary conditions.
Note that $\boldsymbol{\varphi}_{\boldsymbol{\tau}} := (\boldsymbol{\varphi}\cdot\boldsymbol{\tau})\boldsymbol{\tau}$ so that we may assume the right hand side of \eqref{sect0-abstract-stokes-system}$_4$ to be parallel to $\boldsymbol{\tau}$.  Before discussing the $W^{2,p}$ theory and prove Theorem \ref{thm-global-regularity-NS}, we give some preliminary notations and results.

\medskip

\subsection{Preliminaries}
~

Define 
\EQN{
{\bf H}^p(\div,\Om) = \bket{{\bf v}\in{\bf L}^p(\Om): \div{\bf v}\in L^p(\Om)},\quad
{\bf H}^p(\curl,\Om) = \bket{{\bf v}\in{\bf L}^p(\Om): \curl{\bf v}\in L^p(\Om)},
}
equipped with the norms
\EQN{
\norm{\bf v}_{{\bf H}^p(\div,\Om)} = \bke{\norm{\bf v}_{{\bf L}^p(\Om)}^p + \norm{\div{\bf v}}_{L^p(\Om)}^p}^{1/p},\quad
\norm{\bf v}_{{\bf H}^p(\curl,\Om)} = \bke{\norm{\bf v}_{{\bf L}^p(\Om)}^p + \norm{\curl{\bf v}}_{L^p(\Om)}^p}^{1/p}.
}
Thanks to \cite{AS-MMMAS2013}, the spaces ${\boldsymbol{ \mathcal D}}(\overline\Om)$ is dense in ${\bf H}^p(\div,\Om)$ and ${\bf H}^p(\curl,\Om)$.
Let the closures of ${\boldsymbol{ \mathcal D}}(\Om)$ in ${\bf H}^p(\div,\Om)$ and ${\bf H}^p(\curl,\Om)$ be ${\bf H}_0^p(\div,\Om)$ and ${\bf H}_0^p(\curl,\Om)$, respectively.
Then we have the characterization
\EQN{
{\bf H}_0^p(\div,\Om) &= \bket{{\bf v}\in{\bf H}^p(\div,\Om),\ {\bf v}\cdot\boldsymbol{\nu} = 0 \text{ on }\pd\Om},\\
{\bf H}_0^p(\curl,\Om) &= \bket{{\bf v}\in{\bf H}^p(\curl,\Om),\ {\bf v}\cdot\boldsymbol{\nu} = 0 \text{ on }\pd\Om}.
}
Define the space 
\[
{\bf X}^p(\Om) =  {\bf H}^p(\div,\Om) \cap {\bf H}^p(\curl,\Om)
\]
with the norm
\[
\norm{\bf v}_{{\bf X}^p(\Om)} = \bke{\norm{\bf v}_{{\bf L}^p(\Om)}^p + \norm{\div{\bf v}}_{L^p(\Om)}^p + \norm{\curl{\bf v}}_{L^p(\Om)}^p}^{1/p}.
\] 
Let 
\EQN{
{\bf X}^p_T(\Om) = &\bket{{\bf v}\in{\bf X}^p:{\bf v}\cdot\boldsymbol{\nu} = 0\text{ on }\pd\Om},\quad
{\bf X}^p_N(\Om) = \bket{{\bf v}\in{\bf X}^p:{\bf v}\cdot\boldsymbol{\tau} = 0\text{ on }\pd\Om},\\
&\qquad\qquad\qquad {\bf X}^p_0(\Om) = {\bf X}^p_T(\Om) \cap {\bf X}^p_N(\Om).
}

Recall the trace spaces, for $k\ge0$ and $p\in(1,\infty)$,
\[
{\bf W}^{k-\frac1p,p}(\pd\Om) = {\rm tr}({\bf W}^{k,p}(\Om))
= \bket{{\bf v}\in{\bf W}^{k-1,p}(\pd\Om):\exists{\bf w}\in{\bf W}^{k,p}(\Om)\text{ such that }{\rm tr}({\bf w}) = {\bf v}},
\]
and ${\bf H}^{\frac12}(\pd\Om) := {\bf W}^{1-\frac12,2}(\pd\Om)$.

For $p\in(1,\infty)$, denote its dual exponent by $p'$, i.e. $1/p + 1/p' = 1$.
Let $[{\bf H}^{p'}_0(\div,\Om)]'$ be the dual space of ${\bf H}^{p'}_0(\div,\Om)$ with the pairing 
\EQ{\label{eq-Hp-pairing}
\bka{\cdot,\cdot}_{\Om,p} := \bka{\cdot,\cdot}_{[{\bf H}^{p'}_0(\div,\Om)]'\times{\bf H}_0^{p'}(\div,\Om)},
}
where we use the parameter $p$, instead of $p'$, on the left hand side for notational convenience.
Let ${\bf W}^{-\frac1p,p}(\pd\Om)$ denote the dual space of ${\bf W}^{\frac1p,p'}(\pd\Om) = {\bf W}^{1-\frac1{p'},p'}(\pd\Om)$ with the pairing
\EQ{\label{eq-Wp-pairing}
\bka{\cdot,\cdot}_{\pd\Om,p} := \bka{\cdot,\cdot}_{{\bf W}^{-\frac1p,p}(\pd\Om)\times{\bf W}^{\frac1p,p'}(\pd\Om)}.
}

Define
\[
{\bf V}^p(\Om) = \bket{{\bf v}\in{\bf W}^{1,p}(\Om): \div{\bf v} = 0 \text{ in }\Om,\ {\bf v}\cdot\boldsymbol{\nu} = 0 \text{ on }\pd\Om}
\]
equipped with the ${\bf W}^{1,p}(\Om)$-norm and
\[
{\bf E}^p(\Om) = \bket{{\bf v}\in{\bf W}^{1,p}(\Om): \De{\bf v} \in [{\bf H}^{p'}_0(\div,\Om)]'},
\]
with the norm
\[
\norm{\bf v}_{{\bf E}^p(\Om)} = \norm{\bf v}_{{\bf W}^{1,p}(\Om)} + \norm{\De{\bf v}}_{[{\bf H}^{p'}_0(\div,\Om)]'}.
\]
By the same argument as in the proof of \cite[Lemma 4.2.1]{Seloula-thesis2010}, ${\boldsymbol{ \mathcal D}}(\overline\Om)$ is dense in ${\bf E}^p(\Om)$.

Introduce the kernel spaces
\EQN{
{\bf K}^p_T(\Om) &= \bket{{\bf v}\in{\bf L}^p(\Om): \div{\bf v} = \curl{\bf v} = 0\text{ in }\Om\text{ and }{\bf v}\cdot\boldsymbol{\nu} = 0\text{ on }\pd\Om},\\
{\bf K}^p_N(\Om) &= \bket{{\bf v}\in{\bf L}^p(\Om): \div{\bf v} = \curl{\bf v} = 0\text{ in }\Om\text{ and }{\bf v}\cdot\boldsymbol{\tau} = 0\text{ on }\pd\Om}.
}

Now we state the key identity in our analysis on the boundary.

\begin{lemma}\label{lem-2.1-CMR}
For any ${\bf v}\in{\bf W}^{2,p}(\Om)$ with ${\bf v}\cdot\boldsymbol{\nu} = 0$ on $\pd\Om$, we have 
\[
2[\mathbb S({\bf v})\boldsymbol{\nu}]_{\boldsymbol{\tau}} = (\curl{\bf v})\boldsymbol{\tau} - 2\ka{\bf v}_{\boldsymbol{\tau}}\ \text{ on } \pd\Om,
\]
where $\ka$ is the curvature on $\pd\Om$.
\end{lemma}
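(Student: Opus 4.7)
The plan is a direct pointwise computation on $\pd\Om$, combining an algebraic identity that relates the strain tensor and the curl in two dimensions with the Frenet--Serret formula that captures the curvature. By density of $\boldsymbol{\mathcal D}(\overline\Om)$ in ${\bf W}^{2,p}(\Om)$ and the continuity of the trace, I would first reduce to the case where ${\bf v}$ is smooth up to the boundary; the general statement then follows by a trace limit.

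Fix $x_0\in\pd\Om$ and extend $\boldsymbol{\nu},\boldsymbol{\tau}$ smoothly to a neighborhood. Expanding $2[\mathbb S({\bf v})\boldsymbol{\nu}]\cdot\boldsymbol{\tau}$ in Cartesian components,
\[
2[\mathbb S({\bf v})\boldsymbol{\nu}]\cdot\boldsymbol{\tau}
\;=\; A + B,\qquad
A := \sum_{i,j}(\pd_j v_i)\nu_j\tau_i,\quad B := \sum_{i,j}(\pd_i v_j)\nu_j\tau_i.
\]
The antisymmetric combination $A-B = \sum_{i,j}(\pd_j v_i - \pd_i v_j)\nu_j\tau_i$ collapses in 2D to $(\curl{\bf v})(\nu_1\tau_2-\nu_2\tau_1)$, and under the orientation $\boldsymbol{\tau}=(-\nu_2,\nu_1)$ the factor $\nu_1\tau_2-\nu_2\tau_1$ equals $1$, so $A-B = \curl{\bf v}$.

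For the remaining term $B$, I would rewrite it using the Leibniz rule as
\[
B \;=\; \pd_{\boldsymbol{\tau}}({\bf v}\cdot\boldsymbol{\nu}) \;-\; {\bf v}\cdot\pd_{\boldsymbol{\tau}}\boldsymbol{\nu}.
\]
The boundary condition ${\bf v}\cdot\boldsymbol{\nu}=0$ on $\pd\Om$ is a constraint along the tangent direction, so its tangential derivative vanishes. The 2D Frenet--Serret identity $\pd_{\boldsymbol{\tau}}\boldsymbol{\nu} = \ka\,\boldsymbol{\tau}$ then gives $B = -\ka({\bf v}\cdot\boldsymbol{\tau})$. Combining,
\[
2[\mathbb S({\bf v})\boldsymbol{\nu}]\cdot\boldsymbol{\tau} \;=\; (A-B) + 2B \;=\; \curl{\bf v} - 2\ka({\bf v}\cdot\boldsymbol{\tau}),
\]
and multiplying by $\boldsymbol{\tau}$ yields the stated identity.

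There is no serious obstacle here; the only care needed is in fixing sign conventions (orientation of $\boldsymbol{\tau}$ relative to $\boldsymbol{\nu}$, the sign of $\curl$ in 2D, and the sign of $\ka$), which must be mutually consistent so that the Frenet--Serret relation takes the form $\pd_{\boldsymbol{\tau}}\boldsymbol{\nu}=\ka\boldsymbol{\tau}$. Once a convention is pinned down, the identity is just an elementary pointwise manipulation, with the density step at the start handling the $W^{2,p}$ regularity.
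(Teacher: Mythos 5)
Your computation is correct and yields exactly the paper's identity with consistent conventions ($\boldsymbol{\tau}=(-\nu_2,\nu_1)$, outward $\boldsymbol{\nu}$, and $\pd_{\boldsymbol{\tau}}\boldsymbol{\nu}=\ka\boldsymbol{\tau}$, which for a disk gives $\ka=1/R>0$, matching the convention used elsewhere in the paper). The route differs from the paper only in presentation: the paper does not reproduce the calculation at all, but simply invokes the proof of \cite[Lemma 2.1]{CMR-Nonlinearity1998} together with density of $\boldsymbol{\mathcal D}(\overline\Om)$ in $\{{\bf v}\in{\bf W}^{2,p}:{\bf v}\cdot\boldsymbol{\nu}=0 \text{ on }\pd\Om\}$, whereas you carry out the splitting $2[\mathbb S({\bf v})\boldsymbol{\nu}]\cdot\boldsymbol{\tau}=A+B$, identify $A-B=\curl{\bf v}$, and reduce $B$ via $B=\pd_{\boldsymbol{\tau}}({\bf v}\cdot\boldsymbol{\nu})-{\bf v}\cdot\pd_{\boldsymbol{\tau}}\boldsymbol{\nu}$; this makes the lemma self-contained, which is a genuine (if modest) gain. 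One small refinement to your density step: generic approximants from $\boldsymbol{\mathcal D}(\overline\Om)$ need not satisfy ${\bf v}\cdot\boldsymbol{\nu}=0$, so either approximate within the constrained space (as the paper does) or first prove the unconstrained identity
\[
2[\mathbb S({\bf v})\boldsymbol{\nu}]\cdot\boldsymbol{\tau}=\curl{\bf v}+2\,\pd_{\boldsymbol{\tau}}({\bf v}\cdot\boldsymbol{\nu})-2\ka\,({\bf v}\cdot\boldsymbol{\tau})
\]
for smooth ${\bf v}$ and pass to the $W^{2,p}$ trace limit, noting that $\pd_{\boldsymbol{\tau}}({\bf v}\cdot\boldsymbol{\nu})$ is a purely tangential derivative of the trace and hence vanishes when ${\bf v}\cdot\boldsymbol{\nu}=0$ on $\pd\Om$; alternatively, since $\nb{\bf v}$ has a boundary trace for ${\bf v}\in{\bf W}^{2,p}$, the computation can be read pointwise a.e.\ on $\pd\Om$ without any approximation. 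With that clarification the argument is complete.
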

\begin{proof}
The lemma follows directly from the proof of \cite[Lemma 2.1]{CMR-Nonlinearity1998} using the density of the space $\boldsymbol{\mathcal D}(\overline\Om)$ in $\bket{{\bf v}\in{\bf W}^{2,p}(\Om): {\bf v}\cdot\boldsymbol{\nu} = 0\text{ on }\pd\Om}$.
\end{proof}
Next, we establish the Green identity of Stokes system.

\medskip

\subsection{Green formulas}
~

We first derive two useful Green formulas in the following two lemmas.

\begin{lemma}\label{lem-2.3-AR}
Let $\Om\subset\R^2$ be a bounded domain with $C^{1,1}$ boundary $\pd\Om$ and $1<p<\infty$.
The linear mapping $\ga:{\bf v}\mapsto\curl{\bf v}|_{\pd\Om}$ defined on ${\boldsymbol{ \mathcal D}}(\overline\Om)$ can be extended to a linear and continuous mapping 
\[
\ga: {\bf E}^p(\Om) \to W^{-\frac1p,p}(\pd\Om).
\]
Moreover, we have the Green formula: For any ${\bf v}\in{\bf E}^p(\Om)$, $\boldsymbol{\varphi}\in{\bf V}^{p'}(\Om)$,
\EQ{\label{eq-2.7-AR}
-\bka{\De{\bf v}, \boldsymbol{\varphi}}_{\Om,p} = \int_\Om (\curl{\bf v})(\curl\boldsymbol{\varphi})\, dx - \bka{(\curl{\bf v})\boldsymbol{\tau},\boldsymbol{\varphi}}_{\pd\Om,p},
}
where $\bka{\cdot,\cdot}_{\Om,p}$ and $\bka{\cdot,\cdot}_{\pd\Om,p}$ are the pairings defined in \eqref{eq-Hp-pairing} and \eqref{eq-Wp-pairing}.
\end{lemma}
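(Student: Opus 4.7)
The plan is to verify the Green identity \eqref{eq-2.7-AR} first on the dense subspace $\boldsymbol{\mathcal D}(\overline\Om)\subset\mathbf E^p(\Om)$ by direct integration by parts, then use the identity itself to define the trace $\ga$ on all of $\mathbf E^p(\Om)$ by duality against tangential test functions in $\mathbf V^{p'}(\Om)$, and finally extend the identity to $\mathbf E^p(\Om)$ by continuity using the density noted just above the statement. For $\mathbf v\in\boldsymbol{\mathcal D}(\overline\Om)$ and $\boldsymbol{\varphi}\in\mathbf V^{p'}(\Om)$, I would start from the 2D vector identity
\[
-\De\mathbf v = \bke{\pd_2(\curl\mathbf v),\,-\pd_1(\curl\mathbf v)} - \nb(\div\mathbf v),
\]
and test against $\boldsymbol{\varphi}$. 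The $\nb(\div\mathbf v)$ piece contributes zero since $\div\boldsymbol{\varphi}=0$ in $\Om$ and $\boldsymbol{\varphi}\cdot\boldsymbol{\nu}=0$ on $\pd\Om$; integration by parts on the remaining piece yields $\int_\Om(\curl\mathbf v)(\curl\boldsymbol{\varphi})\,dx$ together with the boundary integral $\int_{\pd\Om}(\curl\mathbf v)\bke{\varphi_1\nu_2-\varphi_2\nu_1}\,dS$. Under the orientation $\boldsymbol{\tau}=(-\nu_2,\nu_1)$ one has $\varphi_1\nu_2-\varphi_2\nu_1=-\boldsymbol{\varphi}\cdot\boldsymbol{\tau}$, so this boundary term equals $-\bka{(\curl\mathbf v)\boldsymbol{\tau},\boldsymbol{\varphi}}_{\pd\Om,p}$, giving \eqref{eq-2.7-AR} in the smooth setting.

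For general $\mathbf v\in\mathbf E^p(\Om)$, the right-hand side of \eqref{eq-2.7-AR} motivates
\[
\mathcal L_{\mathbf v}(\boldsymbol{\varphi}) := \bka{\De\mathbf v,\boldsymbol{\varphi}}_{\Om,p} + \int_\Om(\curl\mathbf v)(\curl\boldsymbol{\varphi})\,dx,\qquad \boldsymbol{\varphi}\in\mathbf V^{p'}(\Om),
\]
which is well-defined since $\mathbf V^{p'}(\Om)\subset\mathbf H^{p'}_0(\div,\Om)$ and obeys $|\mathcal L_{\mathbf v}(\boldsymbol{\varphi})|\lec\|\mathbf v\|_{\mathbf E^p(\Om)}\|\boldsymbol{\varphi}\|_{\mathbf W^{1,p'}(\Om)}$. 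The key step is to show that $\mathcal L_{\mathbf v}$ factors through the tangential-trace map $\boldsymbol{\varphi}\mapsto\boldsymbol{\varphi}\cdot\boldsymbol{\tau}|_{\pd\Om}\in W^{1/p,p'}(\pd\Om)$. If $\boldsymbol{\varphi}\in\mathbf V^{p'}(\Om)$ has vanishing tangential trace then, combined with $\boldsymbol{\varphi}\cdot\boldsymbol{\nu}|_{\pd\Om}=0$, we get $\boldsymbol{\varphi}\in\mathbf W^{1,p'}_0(\Om)$ with $\div\boldsymbol{\varphi}=0$; the 2D stream-function representation gives $\boldsymbol{\varphi}=\bke{-\pd_2\chi,\,\pd_1\chi}$ with $\chi\in W^{2,p'}_0(\Om)$, which even in the multiply-connected case is consistent since the condition $\boldsymbol{\varphi}\cdot\boldsymbol{\nu}=0$ forces $\chi$ only to be constant on each boundary component and we are free to take that constant zero (the absence of fluxes is automatic). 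Mollifying $\chi$ by $\chi_k\in\mathcal D(\Om)$ produces smooth compactly supported divergence-free $\boldsymbol{\varphi}_k\to\boldsymbol{\varphi}$ in $\mathbf W^{1,p'}(\Om)$; for each $\boldsymbol{\varphi}_k$, combining the smooth Green identity with the density of $\boldsymbol{\mathcal D}(\overline\Om)$ in $\mathbf E^p(\Om)$ yields $\mathcal L_{\mathbf v}(\boldsymbol{\varphi}_k)=0$, and continuity of $\mathcal L_{\mathbf v}$ in $\boldsymbol{\varphi}$ upgrades this to $\mathcal L_{\mathbf v}(\boldsymbol{\varphi})=0$.

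To obtain the bound in $W^{-1/p,p}(\pd\Om)$, I would construct a continuous right inverse $L:W^{1/p,p'}(\pd\Om)\to\mathbf V^{p'}(\Om)$ of the tangential trace by solving the Cauchy lifting $\chi\in W^{2,p'}(\Om)$ with $\chi|_{\pd\Om}=0$ and $\pd_{\boldsymbol{\nu}}\chi|_{\pd\Om}=\phi$ via the standard $W^{2,p'}$ trace theorem, and setting $L\phi=\bke{-\pd_2\chi,\,\pd_1\chi}$; a direct check gives $L\phi\in\mathbf V^{p'}(\Om)$, $(L\phi)\cdot\boldsymbol{\tau}|_{\pd\Om}=\pd_{\boldsymbol{\nu}}\chi=\phi$, and $\|L\phi\|_{\mathbf W^{1,p'}(\Om)}\lec\|\phi\|_{W^{1/p,p'}(\pd\Om)}$. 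Then the formula $\bka{(\ga\mathbf v)\boldsymbol{\tau},\boldsymbol{\varphi}}_{\pd\Om,p}:=\mathcal L_{\mathbf v}(L(\boldsymbol{\varphi}\cdot\boldsymbol{\tau}|_{\pd\Om}))$ defines $\ga\mathbf v\in W^{-1/p,p}(\pd\Om)$ with $\|\ga\mathbf v\|_{W^{-1/p,p}(\pd\Om)}\lec\|\mathbf v\|_{\mathbf E^p(\Om)}$; independence of the chosen lifting is precisely the factorization just shown, and agreement with the classical restriction $\curl\mathbf v|_{\pd\Om}$ for $\mathbf v\in\boldsymbol{\mathcal D}(\overline\Om)$ is the smooth Green identity. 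The full formula \eqref{eq-2.7-AR} on $\mathbf E^p(\Om)$ then follows by approximating $\mathbf v$ by $\mathbf v_n\in\boldsymbol{\mathcal D}(\overline\Om)$ and passing to the limit using continuity of both sides. I expect the main obstacle to be the factorization step: it rests on the stream-function description of $\mathbf W^{1,p'}_0(\Om)\cap\{\div=0\}$ in 2D and its $W^{2,p'}$-mollification approximation, and it is also this mechanism that handles a possibly disconnected $\pd\Om$ uniformly.
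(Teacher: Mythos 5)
Your overall route is correct and is essentially the standard duality construction that the paper delegates to the cited reference: the paper's own proof only performs the integration by parts for ${\bf v}\in\boldsymbol{\mathcal D}(\overline\Om)$ and $\boldsymbol{\varphi}\in{\bf W}^{1,p'}(\Om)\cap{\bf X}^{p'}_T(\Om)$ and invokes the density argument of \cite[Corollary 4.2.2]{Seloula-thesis2010} for the extension of $\ga$ and of \eqref{eq-2.7-AR} to ${\bf E}^p(\Om)$, whereas you reconstruct that extension explicitly (the functional $\mathcal L_{\bf v}$, its factorization through the tangential trace, and a bounded lifting $L$ of $W^{\frac1p,p'}(\pd\Om)$ into ${\bf V}^{p'}(\Om)$ built from a $W^{2,p'}$ stream function). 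Your smooth computation, including the sign bookkeeping with $\boldsymbol{\tau}=(-\nu_2,\nu_1)$, matches the paper's, and the lifting $L\phi=(-\pd_2\chi,\pd_1\chi)$ with $\chi|_{\pd\Om}=0$, $\pd_{\boldsymbol{\nu}}\chi=\phi$ does give $(L\phi)\cdot\boldsymbol{\nu}=0$, $(L\phi)\cdot\boldsymbol{\tau}=\phi$ with the right norm bound, so the $W^{-\frac1p,p}(\pd\Om)$ estimate for $\ga{\bf v}$ comes out correctly.

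The one step that is not justified as written is the factorization argument for multiply connected $\Om$ (the paper assumes only a bounded $C^{1,1}$ domain). If $\boldsymbol{\varphi}\in{\bf W}^{1,p'}_0(\Om)$ is divergence free, the single-valued stream function $\chi$ exists (the fluxes through the boundary components indeed vanish), and $\boldsymbol{\varphi}\cdot\boldsymbol{\tau}=0$ gives $\pd_{\boldsymbol{\nu}}\chi=0$; but $\chi$ is only constant on each component of $\pd\Om$, and since a single additive constant is all you may subtract, you cannot in general arrange $\chi=0$ on every component, so $\chi\in W^{2,p'}_0(\Om)$ is false in, e.g., an annulus, and the mollification step collapses. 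The conclusion you want survives: either decompose $\chi=\chi_0+\sum_i c_i\th_i$ with cutoffs $\th_i\equiv1$ near the interior boundary components (so $\nb^\perp\th_i$ is smooth, compactly supported and divergence free, while $\chi_0\in W^{2,p'}_0(\Om)$), or, more simply, drop the approximation of $\boldsymbol{\varphi}$ altogether: the smooth Green identity already holds for ${\bf v}\in\boldsymbol{\mathcal D}(\overline\Om)$ against any $\boldsymbol{\varphi}\in{\bf W}^{1,p'}(\Om)$ with $\div\boldsymbol{\varphi}=0$ and $\boldsymbol{\varphi}\cdot\boldsymbol{\nu}=0$, its boundary term vanishes when $\boldsymbol{\varphi}\cdot\boldsymbol{\tau}=0$, and passing to the limit in ${\bf v}$ using the density of $\boldsymbol{\mathcal D}(\overline\Om)$ in ${\bf E}^p(\Om)$ gives $\mathcal L_{\bf v}(\boldsymbol{\varphi})=0$ directly, with no stream-function detour.
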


\begin{proof}
By the density argument as in the proof of \cite[Corollary 4.2.2]{Seloula-thesis2010} we may assume ${\bf v}\in{\boldsymbol{ \mathcal D}}(\overline\Om)$ and $\boldsymbol{\varphi}\in{\bf W}^{1,p'}(\Om)\cap{\bf X}^{p'}_T(\Om)$.
By the integration by parts formula and using $\div\boldsymbol{\varphi}=0$ in $\Om$ and $\boldsymbol{\varphi}\cdot\boldsymbol{\nu}=0$ on $\pd\Om$, we gets
\[
\int_\Om (\curl{\bf v})(\curl{\boldsymbol{\varphi}})\, dx = - \int_\Om \De{\bf v}\cdot\boldsymbol{\varphi}\, dx + \int_{\pd\Om} (\curl{\bf v})(\varphi_2 \nu_1 - \varphi_1 \nu_2)\, dS,\quad \boldsymbol{\varphi} = (\varphi_1,\varphi_2),\ \boldsymbol{\nu} = (\nu_1,\nu_2).
\]
Since for $\boldsymbol{\tau} := (\tau_1,\tau_2) = (-\nu_2,\nu_1)$, one has
$\varphi_2 \nu_1 - \varphi_1 \nu_2 = \varphi_2 \tau_2 + \varphi_1 \nu_1 = \boldsymbol{\varphi}\cdot\boldsymbol{\tau}$.
Thus, we duduce
\[
\int_\Om (\curl{\bf v})(\curl{\boldsymbol{\varphi}})\, dx = - \int_\Om \De{\bf v}\cdot\boldsymbol{\varphi}\, dx + \int_{\pd\Om} (\curl{\bf v})\boldsymbol{\tau}\cdot\boldsymbol{\varphi}\, dS.
\]
Rearranging the above equation, the lemma is proved.
\end{proof}

\begin{lemma}\label{lem-2.4-AR}
Let $\Om\subset\R^2$ be a bounded domain with $C^{1,1}$ boundary $\pd\Om$ and $1<p<\infty$.
The linear mapping $\Th:{\bf v}\mapsto[\mathbb S({\bf v})\boldsymbol{\nu}]_{\boldsymbol{\tau}}|_{\pd\Om}$ defined on ${\boldsymbol{ \mathcal D}}(\overline\Om)$ can be extended to a linear and continuous mapping 
\[
\Th: {\bf E}^p(\Om) \to W^{-\frac1p,p}(\pd\Om).
\]
Moreover, we have the Green formula: For any ${\bf v}\in{\bf E}^p(\Om)$, $\boldsymbol{\varphi}\in{\bf V}^{p'}(\Om)$,
\EQ{\label{eq-2.8-AR}
-\bka{\De{\bf v}, \boldsymbol{\varphi}}_{\Om,p} = 2\int_\Om \mathbb S({\bf v}): \mathbb S(\boldsymbol{\varphi})\, dx - \bka{2[\mathbb S({\bf v})\boldsymbol{\nu}]_{\boldsymbol{\tau}},\boldsymbol{\varphi}}_{\pd\Om,p},
}
where $\bka{\cdot,\cdot}_{\Om,p}$ and $\bka{\cdot,\cdot}_{\pd\Om,p}$ are the pairings defined in \eqref{eq-Hp-pairing} and \eqref{eq-Wp-pairing}.
\end{lemma}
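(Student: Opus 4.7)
The plan is to follow the template of Lemma~\ref{lem-2.3-AR}. First, establish \eqref{eq-2.8-AR} by direct integration by parts for smooth ${\bf v}\in{\boldsymbol{\mathcal D}}(\overline\Om)$ tested against $\boldsymbol{\varphi}\in{\bf W}^{1,p'}(\Om)\cap{\bf X}^{p'}_T(\Om)$. Second, use the right-hand side of that identity to \emph{define} the action $\bka{2[\mathbb S({\bf v})\boldsymbol{\nu}]_{\boldsymbol{\tau}},\cdot}_{\pd\Om,p}$ on tangential boundary data for any ${\bf v}\in{\bf E}^p(\Om)$, thereby extending $\Th$. Third, pass to the limit in the smooth Green formula to obtain \eqref{eq-2.8-AR} on all of ${\bf E}^p(\Om)$.

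The smooth identity rests on the algebraic relation $-\De{\bf v}=-2\,\div\mathbb S({\bf v})+\nb(\div{\bf v})$. Testing against $\boldsymbol{\varphi}$, the $\nb(\div{\bf v})$ contribution integrates by parts to $-\int_\Om(\div{\bf v})(\div\boldsymbol{\varphi})\, dx+\int_{\pd\Om}(\div{\bf v})(\boldsymbol{\varphi}\cdot\boldsymbol{\nu})\, dS$; both pieces vanish by the defining properties $\div\boldsymbol{\varphi}=0$ in $\Om$ and $\boldsymbol{\varphi}\cdot\boldsymbol{\nu}=0$ on $\pd\Om$ of ${\bf V}^{p'}(\Om)$. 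The $-2\,\div\mathbb S({\bf v})$ contribution integrates by parts to the bulk integral $2\int_\Om\mathbb S({\bf v}):\nb\boldsymbol{\varphi}\, dx$ and the boundary integral $-2\int_{\pd\Om}\mathbb S({\bf v})\boldsymbol{\nu}\cdot\boldsymbol{\varphi}\, dS$. The symmetry of $\mathbb S({\bf v})$ replaces $\nb\boldsymbol{\varphi}$ by $\mathbb S(\boldsymbol{\varphi})$ in the bulk, and the tangentiality of $\boldsymbol{\varphi}|_{\pd\Om}$ forced by $\boldsymbol{\varphi}\cdot\boldsymbol{\nu}=0$ reduces $\mathbb S({\bf v})\boldsymbol{\nu}\cdot\boldsymbol{\varphi}$ to $[\mathbb S({\bf v})\boldsymbol{\nu}]_{\boldsymbol{\tau}}\cdot\boldsymbol{\varphi}$, yielding \eqref{eq-2.8-AR} on smooth data.

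For the extension, given tangent $g\in{\bf W}^{\frac1p,p'}(\pd\Om)$ I would fix a divergence-free lift $\boldsymbol{\varphi}\in{\bf V}^{p'}(\Om)$ with boundary trace $g$ and satisfying $\norm{\boldsymbol{\varphi}}_{{\bf W}^{1,p'}(\Om)}\lesssim\norm{g}_{{\bf W}^{\frac1p,p'}(\pd\Om)}$, then set
\begin{equation*}
\bka{2[\mathbb S({\bf v})\boldsymbol{\nu}]_{\boldsymbol{\tau}},g}_{\pd\Om,p}:=\bka{\De{\bf v},\boldsymbol{\varphi}}_{\Om,p}+2\int_\Om\mathbb S({\bf v}):\mathbb S(\boldsymbol{\varphi})\, dx.
\end{equation*}
The right-hand side is bounded by $\norm{{\bf v}}_{{\bf E}^p(\Om)}\norm{\boldsymbol{\varphi}}_{{\bf W}^{1,p'}(\Om)}\lesssim\norm{{\bf v}}_{{\bf E}^p(\Om)}\norm{g}_{{\bf W}^{\frac1p,p'}(\pd\Om)}$, giving continuity of $\Th:{\bf E}^p(\Om)\to W^{-\frac1p,p}(\pd\Om)$. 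The full Green formula then extends to ${\bf v}\in{\bf E}^p(\Om)$ by the density of ${\boldsymbol{\mathcal D}}(\overline\Om)$ in ${\bf E}^p(\Om)$ noted in the preliminaries.

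The hard part, and the only step that is not a routine integration by parts, is well-definedness of this pairing: two lifts of $g$ differ by some $\boldsymbol{\psi}\in{\bf V}^{p'}(\Om)$ with zero trace, which in particular lies in ${\bf H}^{p'}_0(\div,\Om)$ and makes $\bka{\De{\bf v},\boldsymbol{\psi}}_{\Om,p}$ unambiguously defined. For smooth ${\bf v}$, the Green formula applied with $\boldsymbol{\psi}$ in place of $\boldsymbol{\varphi}$ has vanishing boundary term (since $\boldsymbol{\psi}|_{\pd\Om}=0$), so the right-hand side vanishes on $\boldsymbol{\psi}$. Since both pieces of this vanishing statement are continuous functionals of ${\bf v}\in{\bf E}^p(\Om)$, the density of ${\boldsymbol{\mathcal D}}(\overline\Om)$ in ${\bf E}^p(\Om)$ then propagates the identity to arbitrary ${\bf v}\in{\bf E}^p(\Om)$, closing the argument.
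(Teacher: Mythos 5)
Your proposal is correct, and it follows essentially the same route the paper takes: the paper defers to the identity $\De{\bf v}=2\div\mathbb S({\bf v})-\nb(\div{\bf v})$ and to the argument of Lemma 2.4 in the cited reference, which is precisely the integration by parts, definition-by-duality extension, and density argument you lay out. The only thing your write-up adds is an explicit discussion of well-definedness of the extended pairing across different divergence-free lifts, which the paper leaves implicit in its citation; that detail is handled correctly.
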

\begin{proof}
The lemma follows straightforwardly from the proof of \cite[Lemma 2.4]{AR-JDE2014} using the identity
\[
\De{\bf v} = 2\div\mathbb S({\bf v}) - \nb(\div{\bf v}).
\]
\end{proof}

It follows from above that one can extend Lemma \ref{lem-2.1-CMR} to a statement in ${\bf W}^{-\frac1p,p}(\pd\Om)$. 
Indeed, we have the following corollary.

\begin{corollary}\label{cor-2.5-AR}
For any ${\bf v}\in{\bf E}^p(\Om)$ with ${\bf v}\cdot\boldsymbol{\nu} = 0$ on $\pd\Om$, we have
\[
2[\mathbb S({\bf v})\boldsymbol{\nu}]_{\boldsymbol{\tau}} = (\curl{\bf v})\boldsymbol{\tau} - 2\ka{\bf v}_{\boldsymbol{\tau}}\ \text{ in } {\bf W}^{-\frac1p,p}(\pd\Om),
\]
where $\ka$ is the curvature on $\pd\Om$.
\end{corollary}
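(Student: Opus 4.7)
The plan is to deduce the corollary from Lemma \ref{lem-2.1-CMR} by a density argument, using the continuity of the boundary-trace maps established in Lemmas \ref{lem-2.3-AR} and \ref{lem-2.4-AR} to pass the pointwise identity (valid on smooth functions) to the distributional identity in ${\bf W}^{-\frac1p,p}(\pd\Om)$.

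First I would check that each of the three terms appearing in the identity defines a bounded linear map from ${\bf E}^p(\Om)$ into ${\bf W}^{-\frac1p,p}(\pd\Om)$. The map ${\bf v}\mapsto(\curl{\bf v})\boldsymbol{\tau}|_{\pd\Om}$ is continuous by Lemma \ref{lem-2.3-AR}, and ${\bf v}\mapsto 2[\mathbb S({\bf v})\boldsymbol{\nu}]_{\boldsymbol{\tau}}|_{\pd\Om}$ is continuous by Lemma \ref{lem-2.4-AR}. For the curvature term, the inclusion ${\bf E}^p(\Om)\hookrightarrow{\bf W}^{1,p}(\Om)$ yields a continuous tangential trace into ${\bf W}^{1-\frac1p,p}(\pd\Om)$, and since $\pd\Om$ is $C^{1,1}$ one has $\ka\in L^\infty(\pd\Om)$, so multiplication by $\ka$ followed by the embedding ${\bf W}^{1-\frac1p,p}(\pd\Om)\hookrightarrow{\bf W}^{-\frac1p,p}(\pd\Om)$ gives continuity of ${\bf v}\mapsto 2\ka{\bf v}_{\boldsymbol{\tau}}|_{\pd\Om}$ into the target space.

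Next I would approximate ${\bf v}\in{\bf E}^p(\Om)$ satisfying ${\bf v}\cdot\boldsymbol{\nu}=0$ by a sequence ${\bf v}_n\in{\bf W}^{2,p}(\Om)$ with ${\bf v}_n\cdot\boldsymbol{\nu}=0$ on $\pd\Om$ and ${\bf v}_n\to{\bf v}$ in ${\bf E}^p(\Om)$. Since $\boldsymbol{\mathcal D}(\overline\Om)$ is dense in ${\bf E}^p(\Om)$, pick $\wt{\bf v}_n\in\boldsymbol{\mathcal D}(\overline\Om)$ with $\wt{\bf v}_n\to{\bf v}$ in ${\bf E}^p(\Om)$; then $g_n:=\wt{\bf v}_n\cdot\boldsymbol{\nu}|_{\pd\Om}$ is smooth and tends to $0$ in ${\bf W}^{1-\frac1p,p}(\pd\Om)$. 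Construct a corrector ${\bf w}_n\in{\bf W}^{2,p}(\Om)$ supported in a tubular neighborhood of $\pd\Om$ with ${\bf w}_n\cdot\boldsymbol{\nu}=g_n$ on $\pd\Om$ via the inverse trace theorem combined with a cut-off, and set ${\bf v}_n:=\wt{\bf v}_n-{\bf w}_n$. With the identity (from Lemma \ref{lem-2.1-CMR})
\[
2[\mathbb S({\bf v}_n)\boldsymbol{\nu}]_{\boldsymbol{\tau}} = (\curl{\bf v}_n)\boldsymbol{\tau} - 2\ka({\bf v}_n)_{\boldsymbol{\tau}}\quad\text{on }\pd\Om
\]
holding classically for each $n$, sending $n\to\infty$ and invoking the three continuity properties from the first step will yield the corollary in ${\bf W}^{-\frac1p,p}(\pd\Om)$.

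The main obstacle is verifying $\|{\bf w}_n\|_{{\bf E}^p(\Om)}\to 0$: the ${\bf W}^{1,p}$-part of this norm is controlled by $\|g_n\|_{{\bf W}^{1-\frac1p,p}(\pd\Om)}$ via the standard lift, but the remaining piece $\|\De{\bf w}_n\|_{[{\bf H}_0^{p'}(\div,\Om)]'}$ requires a careful choice of lift. One clean way around this is to instead prove directly the density of $\{\boldsymbol{\varphi}\in\boldsymbol{\mathcal D}(\overline\Om):\boldsymbol{\varphi}\cdot\boldsymbol{\nu}=0\text{ on }\pd\Om\}$ in $\{{\bf v}\in{\bf E}^p(\Om):{\bf v}\cdot\boldsymbol{\nu}=0\text{ on }\pd\Om\}$ by a cut-off and tangential mollification after locally flattening the boundary, following the same strategy as in \cite{AR-JDE2014} and \cite{Seloula-thesis2010}. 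With either realization of the density, Step 3 is immediate.
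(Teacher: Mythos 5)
Your proposal is correct and follows the same density route that the paper implicitly relies on (the paper offers no written argument beyond ``It follows from above,'' pointing to the continuity of the trace maps from Lemmas \ref{lem-2.3-AR} and \ref{lem-2.4-AR}). Of the two realizations of the density step you offer, the second (direct density of $\{\boldsymbol{\varphi}\in\boldsymbol{\mathcal D}(\overline\Om):\boldsymbol{\varphi}\cdot\boldsymbol{\nu}=0\}$ in $\{{\bf v}\in{\bf E}^p(\Om):{\bf v}\cdot\boldsymbol{\nu}=0\}$ by flattening and tangential mollification) is the one actually used in \cite{AR-JDE2014}; the corrector construction has precisely the obstacle you flag, since the $W^{2,p}$ lift of $g_n$ gives no quantitative control on $\|\De{\bf w}_n\|_{[{\bf H}_0^{p'}(\div,\Om)]'}$ in terms of data tending to zero.
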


\medskip

\subsection{An auxiliary problem}
~

In view of Corollary \ref{cor-2.5-AR}, we consider the following auxiliary problem:
\begin{subequations}
\label{eq-stokes-auxiliary}
\begin{align}
-\De{\bf u} + \nb P& = {\bf f}\ \text{ in } \Om,\label{eq-stokes-auxiliary-a}\\
\div{\bf u}&=\eta\ \text{ in } \Om,\label{eq-stokes-auxiliary-b}\\
{\bf u}\cdot\boldsymbol{\nu} &= g\ \text{ on }\pd\Om,\label{eq-stokes-auxiliary-c}\\
\curl{\bf u}&= H\ \text{ on }\pd\Om.\label{eq-stokes-auxiliary-d}
\end{align}
\end{subequations}
According to Lemma \ref{lem-2.1-CMR}, the auxiliary problem \eqref{eq-stokes-auxiliary} is equivalent to the generalized Stokes system \eqref{sect0-abstract-stokes-system}
when $H = h + 2\ka({\bf u}\cdot{\boldsymbol{\tau}})$, where $\ka$ represents the curvature on $\pd\Om$.

We establish the existence and uniqueness of weak solution for the auxiliary problem \eqref{eq-stokes-auxiliary} in the following theorem.

\begin{theorem}\label{thm-2.7-AR}
Let $\Om\subset\R^2$ be a bounded domain with $C^{1,1}$ boundary $\pd\Om$ and $1<p<\infty$.
Suppose ${\bf f}\in[{\bf H}_0^{p'}(\div,\Om)]'$, $\eta\in L^p(\Om)$, $g\in W^{1-\frac1p,p}(\pd\Om)$, $H\in W^{-\frac1p,p}(\pd\Om)$ satisfy the following compatibility conditions:

for any $\boldsymbol{\varphi}\in{\bf K}^{p'}_T(\Om)$,
\EQ{\label{eq-2.13-AR}
\bka{{\bf f},\boldsymbol{\varphi}}_{\Om,p} + \bka{H\boldsymbol{\tau},\boldsymbol{\varphi}}_{\pd\Om,p} = 0,
}
\EQ{\label{eq-2.14-AR}
\int_\Om \eta\, dx = \int_{\pd\Om} g\, dS.
}
Then the auxiliary problem \eqref{eq-stokes-auxiliary} has a unique solution $({\bf u},P)\in{\bf W}^{1,p}(\Om)\times (L^p(\Om)/\R)$.
Moreover, the solution satisfies the estimate
\[
\norm{\bf u}_{{\bf W}^{1,p}(\Om)} + \norm{P}_{L^p(\Om)/\R}
\lesssim\bke{ \norm{\bf f}_{[{\bf H}^{p'}_0(\div,\Om)]'} + \norm{\eta}_{L^p(\Om)} + \norm{g}_{W^{1-\frac1p,p}(\pd\Om)} + \norm{H}_{W^{-\frac1p,p}(\Om)} }.
\]
\end{theorem}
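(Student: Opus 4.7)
The plan is to proceed via a vorticity-based variational formulation, adapting to two dimensions the strategy employed for the 3D case in \cite{AR-JDE2014}. First, I would reduce to the homogeneous case $\eta = 0$, $g = 0$. The compatibility condition \eqref{eq-2.14-AR} permits solving the Neumann problem $\De\phi = \eta$ in $\Om$, $\pd\phi/\pd\boldsymbol{\nu} = g$ on $\pd\Om$; since $\pd\Om\in C^{1,1}$, the solution satisfies $\phi\in W^{2,p}(\Om)$. Setting ${\bf w} = \nb\phi$ yields $\div{\bf w} = \eta$, ${\bf w}\cdot\boldsymbol{\nu} = g$ and $\curl{\bf w} = 0$, so the substitution ${\bf u}\mapsto{\bf u} - {\bf w}$ leaves the boundary datum $H$ unchanged and only modifies ${\bf f}$ by an element that still lies in $[{\bf H}^{p'}_0(\div,\Om)]'$.

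After this reduction, I would test \eqref{eq-stokes-auxiliary-a} against $\boldsymbol{\varphi}\in{\bf V}^{p'}(\Om)$ and invoke \eqref{eq-2.7-AR}, which eliminates the pressure and leads to the variational problem: find ${\bf u}\in{\bf V}^p(\Om)$ such that
\[
\int_\Om(\curl{\bf u})(\curl\boldsymbol{\varphi})\,dx = \bka{{\bf f},\boldsymbol{\varphi}}_{\Om,p} + \bka{H\boldsymbol{\tau},\boldsymbol{\varphi}}_{\pd\Om,p},\quad \forall\boldsymbol{\varphi}\in{\bf V}^{p'}(\Om).
\]
Condition \eqref{eq-2.13-AR} is precisely what forces the right-hand side to annihilate the finite-dimensional kernel ${\bf K}^{p'}_T(\Om)$. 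For $p=2$, I would apply Lax--Milgram on ${\bf V}^2(\Om)/{\bf K}^2_T(\Om)$: the required coercivity $\|{\bf u}\|_{{\bf H}^1(\Om)}\lec\|\curl{\bf u}\|_{L^2(\Om)}$ modulo ${\bf K}^2_T(\Om)$ follows from the stream-function representation ${\bf u} = \nb^{\perp}\psi$ (valid in 2D for divergence-free fields with vanishing normal trace) together with elliptic regularity on $\psi$. The pressure $P\in L^2(\Om)/\R$ is then recovered by De Rham's theorem applied to ${\bf f}+\De{\bf u}$, which annihilates every compactly supported divergence-free test field.

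To extend to general $p$, the case $p>2$ would be handled by bootstrapping the Hilbert solution through local $L^p$ theory: flatten $\pd\Om$ using a $C^{1,1}$ change of coordinates, reduce to a half-plane model with frozen coefficients, apply Calder\'on--Zygmund estimates for the singular integrals associated to the $(\curl,\div)$ system with the boundary pair $({\bf u}\cdot\boldsymbol{\nu},\curl{\bf u})$, and recombine via a partition of unity. The case $1<p<2$ would follow by duality from solvability at the conjugate exponent $p'>2$, since the adjoint problem has the same structural form. The main obstacle I anticipate is keeping the compatibility against ${\bf K}^{p'}_T(\Om)$ consistent across the localization and flattening steps, and ensuring that the Bogovskii-type corrections used to re-enforce $\div{\bf u} = 0$ after the diffeomorphism do not degrade the $W^{1-1/p,p}(\pd\Om)$ regularity of $g$ or the $W^{-1/p,p}(\pd\Om)$ regularity of $H$.
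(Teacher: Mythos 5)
Your proposal follows essentially the same route as the paper: decompose ${\bf u}$ into a potential part solving the Neumann problem (absorbing $\eta$ and $g$) plus a solenoidal, tangentially traced remainder, then pass to the curl--curl variational problem via the Green identity \eqref{eq-2.7-AR}, with the compatibility condition \eqref{eq-2.13-AR} exactly killing the finite-dimensional harmonic kernel. The only difference is that where the paper defers the existence/estimate and the extension from $p=2$ to general $p$ to Seloula's thesis and \cite{AS-DEA2011}, you fill these in concretely — the 2D stream-function observation $\mathbf{u}=\nabla^\perp\psi$ giving coercivity $\|\mathbf{u}\|_{\mathbf H^1}\lesssim\|\curl\mathbf{u}\|_{L^2}$ modulo ${\bf K}^2_T(\Om)$ is a tidy dimension-specific shortcut, and the localization/Calder\'on--Zygmund outline for $p>2$ together with duality for $1<p<2$ is the standard machinery implicit in those references.
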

\begin{proof}
Decompose ${\bf u} = {\bf z} + \nb\th$, where
\begin{subequations}
\label{eq-stokes-auxiliary-z}
\begin{align}
-\De{\bf z} + \nb P& = {\bf f} + \nb\eta\ \text{ in } \Om,\label{eq-stokes-auxiliary-z-a}\\
\div{\bf z}&=0\ \text{ in } \Om,\label{eq-stokes-auxiliary-z-b}\\
{\bf z}\cdot\boldsymbol{\nu} &= 0\ \text{ on }\pd\Om,\label{eq-stokes-auxiliary-z-c}\\
\curl{\bf z}&= H\ \text{ on }\pd\Om,\label{eq-stokes-auxiliary-z-d}
\end{align}
\end{subequations}
and 
\begin{subequations}
\label{eq-stokes-auxiliary-theta}
\begin{align}
\De\th& = \eta\ \text{ in } \Om,\label{eq-stokes-auxiliary-theta-a}\\
\nb\th\cdot\boldsymbol{\nu} &= g\ \text{ on }\pd\Om.\label{eq-stokes-auxiliary-theta-b}
\end{align}
\end{subequations}
By the $W^{2,p}$ theory of Neumann problem of Poisson's equation, there exists a unique solution $\th\in W^{2,p}(\Om)/\R$ of \eqref{eq-stokes-auxiliary-theta} with
\[
\norm{\th}_{W^{2,p}(\Om)/\R} \lesssim \bke{ \norm{\eta}_{L^p(\Om)} + \norm{g}_{W^{1-\frac1p,p}(\pd\Om)} }.
\]
For the system \eqref{eq-stokes-auxiliary-z}, by Lemma \ref{lem-2.3-AR} every solution of \eqref{eq-stokes-auxiliary-z} also solves
\[
\int_\Om (\curl{\bf z})(\curl\boldsymbol{\varphi})\, dx = \bka{{\bf f},\boldsymbol{\varphi}}_{\Om,p} + \bka{H\boldsymbol{\tau},\boldsymbol{\varphi}}_{\pd\Om,p},\ \text{ for all }\boldsymbol{\varphi}\in{\bf V}^{p'}(\Om).
\]
The rest of the proof follows from the same argument in the proof of \cite[Theorem 4.2.4]{Seloula-thesis2010} (see also \cite[Theorem 4.4]{AS-DEA2011}).
\end{proof}

\medskip

\subsection{Weak solutions of the generalized Stokes system \eqref{sect0-abstract-stokes-system}}
~

We consider the inhomogeneous Stokes system of non-solenoidal velocity field with nontrivial right hand side in the slip-Naiver boundary condition \eqref{sect0-abstract-stokes-system}.

\medskip

\noindent{\bf The case of $\eta=0$ and $g=0$.}
The following proposition provides a weak formulation of the generalized Stokes system \eqref{sect0-abstract-stokes-system} for the case of $\eta=0$ and $g=0$. 

\begin{proposition}\label{prop-3.1-AR}
Suppose $\eta=0$ and $g=0$. 
Let ${\bf f}\in[{\bf H}^{p'}_0(\div,\Om)]'$ and $h\in W^{-\frac1p,p}(\pd\Om)$.
Then the problem of finding a distributional solution $({\bf u},P)\in{\bf W}^{1,p}(\Om)\times L^p(\Om)$ of the generalized Stokes system \eqref{sect0-abstract-stokes-system} is equivalent to the problem of finding ${\bf u}\in{\bf V}^p(\Om)$ such that
\EQ{\label{eq-3.2-AR}
2\int_\Om \mathbb S({\bf u}): \mathbb S(\boldsymbol{\varphi})\, dx = \bka{{\bf f},\boldsymbol{\varphi}}_{\Om,p} + \bka{h\boldsymbol{\tau},\boldsymbol{\varphi}}_{\pd\Om,p},\ \text{ for all }\boldsymbol{\varphi}\in{\bf V}^{p'}(\Om).
}
\end{proposition}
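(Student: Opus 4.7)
The plan is to establish the equivalence by moving the Stokes equation through the Green formula of Lemma \ref{lem-2.4-AR} in both directions, with de Rham's theorem used to recover the pressure in the harder direction.

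For the forward implication, suppose $(\mathbf{u}, P)\in\mathbf{W}^{1,p}(\Om)\times L^p(\Om)$ solves \eqref{sect0-abstract-stokes-system} distributionally with $\eta=0$, $g=0$. The conditions $\div\mathbf{u}=0$ and $\mathbf{u}\cdot\boldsymbol{\nu}=0$ place $\mathbf{u}$ in $\mathbf{V}^p(\Om)$, and the identity $-\De\mathbf{u}=\mathbf{f}-\nb P$ shows $\De\mathbf{u}\in[\mathbf{H}_0^{p'}(\div,\Om)]'$, hence $\mathbf{u}\in\mathbf{E}^p(\Om)$. For any test field $\boldsymbol{\varphi}\in\mathbf{V}^{p'}(\Om)$, I would pair \eqref{sect0-abstract-stokes-system}$_1$ against $\boldsymbol{\varphi}$ and apply the Green identity \eqref{eq-2.8-AR}. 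The pressure term drops out since $\div\boldsymbol{\varphi}=0$ and $\boldsymbol{\varphi}\cdot\boldsymbol{\nu}=0$ give $\bka{\nb P,\boldsymbol{\varphi}}_{\Om,p}=0$, and substituting the boundary condition $2[\mathbb{S}(\mathbf{u})\boldsymbol{\nu}]_{\boldsymbol{\tau}}=h\boldsymbol{\tau}$ into \eqref{eq-2.8-AR} yields \eqref{eq-3.2-AR} immediately.

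For the reverse implication, take $\mathbf{u}\in\mathbf{V}^p(\Om)$ satisfying \eqref{eq-3.2-AR} and first recover the pressure. Restricting \eqref{eq-3.2-AR} to $\boldsymbol{\varphi}\in\boldsymbol{\mathcal D}(\Om)$ with $\div\boldsymbol{\varphi}=0$ (which lie in $\mathbf{V}^{p'}(\Om)$) and integrating the strain term by parts shows that the distribution $-\De\mathbf{u}-\mathbf{f}$ annihilates every divergence-free compactly supported test field. By de Rham's theorem in the $L^p$ setting, there exists $P\in L^p(\Om)$, unique modulo constants, such that
\begin{equation*}
-\De\mathbf{u}+\nb P = \mathbf{f}\quad\text{in }\mathcal D'(\Om),
\end{equation*}
which recovers \eqref{sect0-abstract-stokes-system}$_1$; the interior equation \eqref{sect0-abstract-stokes-system}$_2$ and the boundary condition \eqref{sect0-abstract-stokes-system}$_3$ are built into $\mathbf{u}\in\mathbf{V}^p(\Om)$. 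Because $\mathbf{u}$ now belongs to $\mathbf{E}^p(\Om)$, the trace $2[\mathbb{S}(\mathbf{u})\boldsymbol{\nu}]_{\boldsymbol{\tau}}\in\mathbf{W}^{-1/p,p}(\pd\Om)$ is meaningful by Lemma \ref{lem-2.4-AR}. Pairing the recovered PDE against an arbitrary $\boldsymbol{\varphi}\in\mathbf{V}^{p'}(\Om)$, applying \eqref{eq-2.8-AR}, and subtracting \eqref{eq-3.2-AR} produces
\begin{equation*}
\bka{2[\mathbb{S}(\mathbf{u})\boldsymbol{\nu}]_{\boldsymbol{\tau}}-h\boldsymbol{\tau},\boldsymbol{\varphi}}_{\pd\Om,p}=0\quad\text{for all }\boldsymbol{\varphi}\in\mathbf{V}^{p'}(\Om).
\end{equation*}

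The main obstacle is the final step: upgrading the last displayed identity to the pointwise trace equality $2[\mathbb{S}(\mathbf{u})\boldsymbol{\nu}]_{\boldsymbol{\tau}}=h\boldsymbol{\tau}$ in $\mathbf{W}^{-1/p,p}(\pd\Om)$. This requires that the tangential traces $\boldsymbol{\varphi}_{\boldsymbol{\tau}}|_{\pd\Om}$, as $\boldsymbol{\varphi}$ ranges over $\mathbf{V}^{p'}(\Om)$, are dense in the tangent-vector subspace of $\mathbf{W}^{1-1/p',p'}(\pd\Om)=\mathbf{W}^{1/p,p'}(\pd\Om)$. I would establish this surjectivity by prescribing an arbitrary tangential datum $\boldsymbol{\psi}$ on $\pd\Om$ and solving a divergence-free lift—e.g., the Stokes system with boundary data $(\boldsymbol{\psi},0)$ in $(\boldsymbol{\tau},\boldsymbol{\nu})$ components—via Theorem \ref{thm-2.7-AR}, which yields an extension in $\mathbf{V}^{p'}(\Om)$ realizing the given tangential trace. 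With this density the weak identity forces $2[\mathbb{S}(\mathbf{u})\boldsymbol{\nu}]_{\boldsymbol{\tau}}=h\boldsymbol{\tau}$, completing the equivalence. The rest of the argument is essentially the same as in the 3D counterpart of \cite{AR-JDE2014}.
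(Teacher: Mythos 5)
Your proof follows the route the paper indicates — the Green formula \eqref{eq-2.8-AR} of Lemma \ref{lem-2.4-AR} forward, de Rham's theorem plus the Green formula backward, and a trace-density argument to pass from the weak boundary identity to the strong one — and this is precisely the two-dimensional rendering of \cite[Proposition 3.1]{AR-JDE2014} that the paper's one-line proof points to. One step, however, is mis-supported: to show that the tangential traces of ${\bf V}^{p'}(\Om)$ fill out the tangential part of ${\bf W}^{\frac1p,p'}(\pd\Om)$, you invoke Theorem \ref{thm-2.7-AR} to construct a divergence-free lift realizing a prescribed tangential boundary trace $\boldsymbol{\psi}$. That theorem solves the auxiliary problem \eqref{eq-stokes-auxiliary}, whose boundary data are ${\bf u}\cdot\boldsymbol{\nu}$ and $\curl{\bf u}$; it does not allow you to prescribe ${\bf u}_{\boldsymbol{\tau}}$ on $\pd\Om$, so it cannot produce an extension with ${\bf u}_{\boldsymbol{\tau}}=\boldsymbol{\psi}$. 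The surjectivity you need is in fact elementary and does not require a Stokes solve: given a tangential $\boldsymbol{\psi}\in{\bf W}^{\frac1p,p'}(\pd\Om)$, choose ${\bf w}\in{\bf W}^{1,p'}(\Om)$ with trace $\boldsymbol{\psi}$ (so ${\bf w}\cdot\boldsymbol{\nu}=0$ on $\pd\Om$, whence $\int_\Om\div{\bf w}\,dx=0$ by the divergence theorem), then take ${\bf v}\in{\bf W}_0^{1,p'}(\Om)$ with $\div{\bf v}=\div{\bf w}$ via the Bogovskii operator; the field ${\bf w}-{\bf v}$ lies in ${\bf V}^{p'}(\Om)$ and has the required tangential trace. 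With this replacement the argument is complete and coincides with the proof the paper references.
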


\begin{proof}
It is a direct consequence of the Green formula \eqref{eq-2.8-AR} in Lemma \ref{lem-2.4-AR} via the same proof of \cite[Proposition 3.1]{AR-JDE2014}.
\end{proof}

We now introduce the kernel $\boldsymbol{\mathcal T}(\Om)$. 
Define
\begin{align}\label{sect6-kernel-Tp}
\boldsymbol{\mathcal T}^p(\Om) = \bket{{\bf v}\in{\bf W}^{1,p}(\Om): \mathbb S({\bf v}) = {\bf O}\text{ in }\Om,\text{ and }{\bf v}\cdot\boldsymbol{\nu}=0\text{ on }\pd\Om}.
\end{align}
The following result characterizes the kernel $\boldsymbol{\mathcal T}^p(\Om) $.
\begin{lemma}
\[
\boldsymbol{\mathcal T}^p(\Om) =
\begin{cases}
{\rm span}\bket{\boldsymbol{\be}},\ \boldsymbol{\be} = c{\bf x}^\perp + {\bf b},\ \text{for some constant $c\neq0$ and ${\bf b}$,}&\ \text{if $\Om$ is a disk},\\
\{{\bf 0}\},&\ \text{otherwise},
\end{cases}
\]
where ${\bf x}^\perp := (-x_2,x_1)$.
In particular, $\boldsymbol{\mathcal T}^p(\Om)$ does not depend on $p$ so that we can denote it by $\boldsymbol{\mathcal T}(\Om)$.
\end{lemma}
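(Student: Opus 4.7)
The plan is to proceed in two stages: first classify the pointwise (distributional) condition $\mathbb S({\bf v}) = {\bf O}$ in $\Om$, then impose the slip condition ${\bf v}\cdot\boldsymbol{\nu} = 0$ on $\pd\Om$ to isolate the kernel.

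For the first stage, I would show that any ${\bf v}\in{\bf W}^{1,p}(\Om)$ with $\mathbb S({\bf v})={\bf O}$ must be an infinitesimal rigid motion ${\bf v}(x) = c\,{\bf x}^\perp + {\bf b}$ for some $c\in\R$ and ${\bf b}\in\R^2$. The starting point is the distributional identity $\pd_i v_j + \pd_j v_i = 0$ for $i,j\in\{1,2\}$. Differentiating once more in $x_k$ (valid in the distributional sense since ${\bf v}\in{\bf W}^{1,p}$) and using the cyclic-permutation trick,
\EQN{
2\pd_k\pd_i v_j = (\pd_k\pd_i v_j + \pd_k\pd_j v_i) + (\pd_i\pd_k v_j + \pd_i\pd_j v_k) - (\pd_j\pd_i v_k + \pd_j\pd_k v_i) = 0,
}
one concludes that every second-order distributional derivative of every component $v_j$ vanishes in $\Om$, so (after mollification) ${\bf v}$ admits a smooth affine representative ${\bf v}(x) = A x + {\bf b}$. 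The constraint $\mathbb S({\bf v}) = \tfrac12(A+A^\top) = {\bf O}$ forces $A$ to be skew-symmetric, and since every skew-symmetric $2\times 2$ matrix takes the form $c\,\bmx{0 & -1 \\ 1 & 0}$, this yields ${\bf v}(x) = c\,{\bf x}^\perp + {\bf b}$ with $c\in\R$.

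For the second stage, I would branch on whether $c=0$. If $c=0$, then ${\bf v}\equiv{\bf b}$ is constant, and the boundary condition ${\bf b}\cdot\boldsymbol{\nu}(x)=0$ for all $x\in\pd\Om$ forces ${\bf b}={\bf 0}$: indeed, on any connected component of $\pd\Om$, which is a closed $C^{1,1}$ Jordan curve, the tangent has rotation index $\pm1$ (Hopf Umlaufsatz), so the outward unit normal $\boldsymbol{\nu}$ sweeps out the entire unit circle. If instead $c\ne 0$, set $x_0 := c^{-1}(-b_2,b_1)$ so that ${\bf v}(x) = c(x-x_0)^\perp$. The boundary condition ${\bf v}\cdot\boldsymbol{\nu}=0$ is then equivalent to $(x-x_0)\parallel\boldsymbol{\nu}(x)$ on $\pd\Om$, which in turn is the statement that the tangential derivative of $x\mapsto|x-x_0|^2$ vanishes on $\pd\Om$. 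Hence each boundary component lies on a circle centered at $x_0$. For a bounded $C^{1,1}$ domain, this is realized precisely when $\Om$ is a disk centered at $x_0$ (the case singled out in the lemma), in which case the one-dimensional span of $\boldsymbol{\be}(x) = c\,{\bf x}^\perp + {\bf b} = c(x-x_0)^\perp$ exhausts the kernel; in all other cases $c$ must be zero and hence $\boldsymbol{\mathcal T}^p(\Om) = \{{\bf 0}\}$.

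The $p$-independence is immediate from the explicit description, since the kernel is either $\{{\bf 0}\}$ or the span of a single smooth vector field, independent of the integrability index. The one point requiring genuine care is the distributional second-derivative computation in the first stage, but this is a classical Korn-type rigidity argument and presents no real obstacle; the main conceptual ingredient is rather the geometric observation that a $C^{1,1}$ closed curve whose outward normal at every point is radial with respect to a fixed center must be a circle.
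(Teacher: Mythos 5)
Your proof is correct, and it takes a genuinely different route from the paper in its first stage. Where you invoke the classical Korn rigidity argument---applying the cyclic-permutation identity $2\pd_k\pd_i v_j = \pd_k(\pd_iv_j + \pd_jv_i) + \pd_i(\pd_kv_j+\pd_jv_k) - \pd_j(\pd_iv_k+\pd_kv_i) = 0$ to kill all distributional second derivatives and then extracting a skew-symmetric linear part---the paper proceeds by direct two-dimensional integration: from $\pd_1v_1=0$ and $\pd_2v_2=0$ it writes $v_1 = c_1(x_2)$, $v_2 = c_2(x_1)$, and then uses $\pd_1v_2 = -\pd_2v_1$ to force both derivatives to a common constant. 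Your route is dimension-independent and immediately identifies the kernel with infinitesimal rigid motions, which is conceptually clean; the paper's route is more elementary and purpose-built for $n=2$. In the second stage the two arguments coincide in substance: both translate to the center $x_0$ and read the slip condition geometrically as $(x-x_0)\parallel\boldsymbol{\nu}$. Your added observations---that this is equivalent to the vanishing of the tangential derivative of $|x-x_0|^2$, and that the $c=0$ case is ruled out because the outward normal of a closed $C^{1,1}$ Jordan curve sweeps the full circle---supply a rigorous justification where the paper is terse (it simply asserts ${\bf b}={\bf 0}$ without argument).

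One shared caveat is worth flagging: the conclusion that ${\bf v}\cdot\boldsymbol{\nu}=0$ with $c\ne 0$ forces $\Om$ to be a disk implicitly assumes $\Om$ is simply connected. A concentric annulus also has every boundary component on a circle centered at $x_0$, hence also supports the nontrivial rotational kernel. This is a gap in the lemma as stated (and in the paper's proof), not an error you introduced; if you wanted to tighten your write-up you could note explicitly that the ``otherwise'' branch presumes the domain has no radially symmetric holes, or that the constant $c$ is killed whenever some boundary component fails to be a circle centered at $x_0$.
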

\begin{proof}
For ${\bf v} = (v_1,v_2)\in\boldsymbol{\mathcal T}^p(\Om)$, one has $\pd_1v_1 = 0$, $\pd_2v_2=0$, and $\pd_1v_2 = -\pd_2v_1$.
Integrating the first two equations, we deduce $v_1(x_1,x_2) = c_1(x_2)$ and $v_2(x_1,x_2) = c_2(x_1)$.
It then follows from $\pd_1v_2 = -\pd_2v_1$ that $\pd_1c_2(x_1) = -\pd_2c_1(x_2) = c$ for some constant $c$.
Thus, $c_1(x_2) = cx_1 + b_2$ and $c_2(x_1) = -cx_2 + b_1$ for some constants $b_1$, $b_2$.
Therefore, 
\[
{\bf v}({\bf x}) = c{\bf x}^\perp + {\bf b},\quad {\bf x}^\perp=(-x_2,x_1),
\]
for some constant scalar $c$ and vector ${\bf b} = (b_1,b_2)$.

Suppose $c\neq0$. By the change of variables of translation $(y_1,y_2) = \boldsymbol{\psi}({\bf x}) := (x_1 + (b_2/c), x_2 - (b_1/c))$ we have that ${\bf v}({\bf y}) = c(-y_2,y_1)$.
Since the normal vector is invariant under translation, we have $\boldsymbol{\nu} = \boldsymbol{\nu}_{\bf y}$ where $\boldsymbol{\nu}_{\bf y}$ is the outward normal of $\Om_{\bf y} = \boldsymbol{\psi}(\Om)$.
Then ${\bf v}({\bf x})\cdot\boldsymbol{\nu} = 0$ on $\pd\Om$ implies that $0 = {\bf v}({\bf y})\cdot\boldsymbol{\nu}_{\bf y} = c(-y_2({\nu}_{\bf y})_1 + y_1({\nu}_{\bf y})_2)$ on $\pd\Om_{\bf y}$.
This means that ${\nu}_{\bf y}$ is parallel to $(y_1,y_2)$, implying that $\Om_{\bf y}$, hence $\Om$ is a disk.

If $c=0$, then ${\bf v} = {\bf b}$. But since ${\bf v}({\bf x})\cdot\boldsymbol{\nu} = 0$ on $\pd\Om$, we must have ${\bf b} = {\bf 0}$. 
This completes the proof of the lemma. 
\end{proof}

We also denote
\begin{align}\label{sect6-N-kernel}
\boldsymbol{\mathcal N}(\Om) := \boldsymbol{\mathcal T}(\Om)\times\R 
= \bket{({\bf v},c): {\bf v}\in\boldsymbol{\mathcal T}(\Om),\ c\in\R }. 
\end{align}

\begin{remark}
Since $\div\boldsymbol{\be} = 0$ and $\boldsymbol{\be}\cdot\boldsymbol{\nu} = 0$ on $\pd\Om$,
$\boldsymbol{\be}\in{\bf V}^{p'}(\Om)$.
By choosing $\boldsymbol{\varphi} = \boldsymbol{\be}$ in \eqref{eq-3.2-AR}, we deduce the following compatibility condition:
\EQ{\label{eq-3.15-AR}
\bka{{\bf f}, \boldsymbol{\be}}_{\Om,p} + \bka{h\boldsymbol{\tau},\boldsymbol{\be}}_{\pd\Om,p} = 0
}
for solving the generalized Stokes system \eqref{sect0-abstract-stokes-system} when $\eta=0$ and $g=0$.
\end{remark}

The following lemma is stated in \cite[(2.6)]{Verfurth-NumerMath1987}) for general dimension without proof.

\begin{lemma}[Poincar\'e-Morrey inequality]\label{lem-3.3-AR}
Let $\Om$ be a Lipschitz bounded domain in $\R^2$.
Then, we have
\EQ{\label{eq-3.7-AR}
\inf_{{\bf v}\in\boldsymbol{\mathcal T}(\Om)} \norm{{\bf u} + {\bf v}}_{{\bf L}^2(\Om)}^2 \le C \bke{ \norm{\mathbb S({\bf u})}_{{\bf L}^2(\Om)}^2 + \int_{\pd\Om} |{\bf u}\cdot\boldsymbol{\nu}|^2\, dS },\ \text{ for all }{\bf u}\in{\bf H}^1(\Om),
}
where the constant $C$ depends only on $\Om$.
In particular, the seminorm $\norm{\mathbb S({\bf u})}_{{\bf L}^2(\Om)}$ is a norm equivalent to $\norm{\bf u}_{{\bf H}^1(\Om)}$ if ${\bf u}\in{\bf H}^1(\Om)$, ${\bf u}\cdot\boldsymbol{\nu} = 0$ on $\pd\Om$, and $\int_\Om{\bf u}\cdot\boldsymbol{\be}\, dx = 0$.
\end{lemma}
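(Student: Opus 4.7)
The plan is to argue by contradiction, combining the classical Korn second inequality with a compactness argument, in the spirit of the standard Lions-Magenes/Ne\v{c}as proof of Korn-type inequalities. The approach exploits that the kernel $\boldsymbol{\mathcal T}(\Om)$ is finite-dimensional (by the preceding lemma it is at most two-dimensional, and trivial unless $\Om$ is a disk), so the infimum over $\boldsymbol{\mathcal T}(\Om)$ is attained.

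First, I would assume by contradiction that \eqref{eq-3.7-AR} fails. Then there is a sequence $\{{\bf u}_k\}\subset{\bf H}^1(\Om)$ with
\EQ{\label{plan-normalization}
\inf_{{\bf v}\in\boldsymbol{\mathcal T}(\Om)}\norm{{\bf u}_k+{\bf v}}_{{\bf L}^2(\Om)}^2 = 1,\qquad
\norm{\mathbb S({\bf u}_k)}_{{\bf L}^2(\Om)}^2 + \int_{\pd\Om}|{\bf u}_k\cdot\boldsymbol{\nu}|^2\, dS \to 0.
}
Since $\boldsymbol{\mathcal T}(\Om)$ is finite-dimensional and closed in ${\bf L}^2(\Om)$, the infimum is attained at some $\widetilde{\bf v}_k$. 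Replacing ${\bf u}_k$ by ${\bf u}_k+\widetilde{\bf v}_k$ (which leaves $\mathbb S$ and the boundary term invariant since elements of $\boldsymbol{\mathcal T}(\Om)$ are tangential rigid motions with $\mathbb S(\widetilde{\bf v}_k)={\bf O}$ and $\widetilde{\bf v}_k\cdot\boldsymbol{\nu}=0$), I may assume $\norm{{\bf u}_k}_{{\bf L}^2(\Om)}=1$ and $\int_\Om {\bf u}_k\cdot{\bf w}\, dx = 0$ for all ${\bf w}\in\boldsymbol{\mathcal T}(\Om)$.

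Next, I invoke the classical Korn second inequality on the Lipschitz domain $\Om$: there exists $C$ such that
\EQN{
\norm{{\bf u}}_{{\bf H}^1(\Om)}\le C\bke{\norm{\bf u}_{{\bf L}^2(\Om)}+\norm{\mathbb S({\bf u})}_{{\bf L}^2(\Om)}},\quad\forall\,{\bf u}\in{\bf H}^1(\Om).
}
Combined with \eqref{plan-normalization}, this shows $\{{\bf u}_k\}$ is bounded in ${\bf H}^1(\Om)$. By Rellich-Kondrachov compactness I extract a subsequence (not relabelled) with ${\bf u}_k\rightharpoonup{\bf u}^*$ weakly in ${\bf H}^1(\Om)$ and strongly in ${\bf L}^2(\Om)$; the continuity of the trace operator ${\bf H}^1(\Om)\to{\bf L}^2(\pd\Om)$ gives ${\bf u}_k\cdot\boldsymbol{\nu}\to{\bf u}^*\cdot\boldsymbol{\nu}$ strongly in ${\bf L}^2(\pd\Om)$. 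Weak lower semicontinuity of $\norm{\mathbb S(\cdot)}_{{\bf L}^2(\Om)}$ together with \eqref{plan-normalization} yields $\mathbb S({\bf u}^*)={\bf O}$ in $\Om$, while the strong trace convergence and $\int_{\pd\Om}|{\bf u}_k\cdot\boldsymbol{\nu}|^2\,dS\to 0$ give ${\bf u}^*\cdot\boldsymbol{\nu}=0$ on $\pd\Om$. Hence ${\bf u}^*\in\boldsymbol{\mathcal T}(\Om)$. On the other hand, the orthogonality to $\boldsymbol{\mathcal T}(\Om)$ is preserved in the $L^2$-strong limit, so ${\bf u}^*\perp\boldsymbol{\mathcal T}(\Om)$; thus ${\bf u}^*={\bf 0}$. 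This contradicts $\norm{{\bf u}^*}_{{\bf L}^2(\Om)}=\lim\norm{{\bf u}_k}_{{\bf L}^2(\Om)}=1$, proving \eqref{eq-3.7-AR}.

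For the concluding statement, given ${\bf u}\in{\bf H}^1(\Om)$ with ${\bf u}\cdot\boldsymbol{\nu}=0$ on $\pd\Om$ and $\int_\Om{\bf u}\cdot\boldsymbol{\be}\,dx=0$, the orthogonality kills the minimization in \eqref{eq-3.7-AR} (either $\boldsymbol{\mathcal T}(\Om)=\{{\bf 0}\}$, or $\boldsymbol{\mathcal T}(\Om)={\rm span}\{\boldsymbol{\be}\}$ and ${\bf u}\perp\boldsymbol{\be}$), so \eqref{eq-3.7-AR} together with Korn's second inequality yields $\norm{\bf u}_{{\bf H}^1(\Om)}\lesssim\norm{\mathbb S({\bf u})}_{{\bf L}^2(\Om)}$; the reverse bound is trivial. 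The main obstacle is really just the compactness step — one must be careful that the translation by $\widetilde{\bf v}_k\in\boldsymbol{\mathcal T}(\Om)$ keeps the normal boundary term unchanged, which is guaranteed precisely by the definition \eqref{sect6-kernel-Tp}, and that the trace-theorem passage to the limit is valid for Lipschitz $\Om$, which it is.
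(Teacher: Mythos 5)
Your proposal is correct and follows essentially the same contradiction-plus-compactness argument as the paper: normalize the $L^2$ distance to $\boldsymbol{\mathcal T}(\Om)$, project onto the orthogonal complement, use Korn's second inequality for $H^1$ boundedness, Rellich--Kondrachov for strong $L^2$ convergence, and then observe that the limit lies simultaneously in $\boldsymbol{\mathcal T}(\Om)$ and its orthogonal complement. The only cosmetic difference is that the paper passes through an unnecessary $W^{1,p}$ Sobolev embedding detour before invoking Rellich, whereas you invoke the compact embedding $H^1\hookrightarrow L^2$ directly; also, your passage from weak $H^1$ convergence to strong $L^2(\pd\Om)$ convergence of traces really uses compactness of the trace operator (not merely its continuity), but this is a standard fact and the paper's treatment of the same step is equally terse.
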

\begin{proof}
The proof of the inequality \eqref{eq-3.7-AR} is similar to that of \cite[(3.7)]{AR-JDE2014} with a slightly modification to the two-dimensional case.

Suppose on the contrary of the lemma. 
There exists a sequence $\{{\bf u}_k\}_k$ in ${\bf H}^1(\Om)$ such that
\[
\norm{{\bf u}_k - \mathcal P{\bf u}_k}_{{\bf L}^2(\Om)}^2 > k \bke{ \norm{\mathbb S({\bf u}_k)}_{{\bf L}^2(\Om)}^2 + \int_{\pd\Om} |{\bf u}_k\cdot\boldsymbol{\nu}|^2\, dS },
\]
where $\mathcal P$ is the orthogonal projection from ${\bf L}^2(\Om)$ onto $\boldsymbol{\mathcal T}(\Om)$.
We may assume $\norm{{\bf u}_k - \mathcal P{\bf u}_k}_{{\bf L}^2(\Om)}^2 = 1$. 
So
\EQ{\label{eq-lem-3.3-pf-AR}
\frac1k > \norm{\mathbb S({\bf u}_k)}_{{\bf L}^2(\Om)}^2 + \int_{\pd\Om} |{\bf u}_k\cdot\boldsymbol{\nu}|^2\, dS,\quad k = 1,2,\ldots.
}
Set ${\bf w}_k = {\bf u}_k - \mathcal P{\bf u}_k$.
Then, ${\bf w}_k$ is bounded in ${\bf H}^1(\Om)$ by the Korn inequality.
In particular, ${\bf w}_k$ is bounded in ${\bf W}^{1,p}(\Om)$ for all $1\le p \le2$.
By Sobolev embedding ${\bf W}^{1,p}(\Om)\hookrightarrow {\bf L}^q(\Om)$ for all $1\le q<2p/(2-p)$.
Choosing $1<p<2$ and $q=2$, we get ${\bf H}^1(\Om)\hookrightarrow {\bf L}^2(\Om)$.
So, by Rellich-Kondrachov compactness theorem, ${\bf w}_k$ converges, up to a subsequence, to ${\bf w}$ in $L^2(\Om)$ and weakly in ${\bf H}^1(\Om)$.
Thus, it follows by taking the limit in \eqref{eq-lem-3.3-pf-AR} that $\norm{\mathbb S({\bf w})}_{{\bf L}^2(\Om)} = 0$ and ${\bf w}\cdot\boldsymbol{\nu} = 0$ on $\pd\Om$, ${\bf w}\in\boldsymbol{\mathcal T}(\Om)$.
On the other hand, ${\bf w} = \lim_{k\to\infty}({\bf u}_k - \mathcal P{\bf u}_k)\in\boldsymbol{\mathcal T}(\Om)^\perp$, where $\boldsymbol{\mathcal T}(\Om)^\perp$ is the orthogonal complement of $\boldsymbol{\mathcal T}(\Om)$ in ${\bf L}^2(\Om)$. 
So, we must have ${\bf w} = {\bf 0}$.
This contradicts with the relation $\norm{{\bf w}_k}_{{\bf L}^2(\Om)} = 1$ for all $k$ and completes the proof of the inequality \eqref{eq-3.7-AR}.
The equivalence of the norms follows from the Korn's second inequality: $\norm{\nb{\bf u}}_{\mathbb L^2(\Om)}^2 \le C\norm{\mathbb S({\bf u})}_{{\bf L}^2(\Om)}^2$ if ${\bf u}\in{\bf H}^1(\Om)$ such that $\int_\Om{\bf u}\cdot\boldsymbol{\be}\, dx = 0$.
This proves the lemma.
\end{proof}

We are now in a position of prove the existence and uniqueness of weak solution to the generalized Stokes system \eqref{sect0-abstract-stokes-system} for $\eta=0$, $g=0$.
We first consider the Hilbert case, $p=2$.

\begin{theorem}[$\eta=0$, $g=0$, $p=2$]\label{thm-3.4-AR}
Suppose $\eta=0$ and $g=0$. 
Let ${\bf f}\in[{\bf H}^2_0(\div,\Om)]'$ and $h\in H^{-\frac12}(\pd\Om)$ satisfy the compatibility condition \eqref{eq-3.15-AR} with $p=2$.
Then, the generalized Stokes system \eqref{sect0-abstract-stokes-system} has a unique solution $({\bf u},P)\in({\bf H}^1(\Om)\times L^2(\Om))/\boldsymbol{\mathcal N}(\Om)$.
Moreover, we have the estimate
\EQ{\label{eq-3.10-AR}
\norm{\bf u}_{{\bf H}^1(\Om)/\boldsymbol{\mathcal T}(\Om)} + \norm{P}_{L^2(\Om)/\R}
\lesssim\bke{ \norm{\bf f}_{[{\bf H}^2_0(\div,\Om)]'} + \norm{h}_{H^{-\frac12}(\pd\Om)} }.
}
\end{theorem}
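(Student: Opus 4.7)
The plan is to apply the Lax--Milgram theorem on a quotient space, then recover the pressure via a de Rham--type argument. By Proposition \ref{prop-3.1-AR}, it suffices to find ${\bf u}\in{\bf V}^2(\Om)$ (modulo $\boldsymbol{\mathcal T}(\Om)$) satisfying
\EQN{
a({\bf u},\boldsymbol{\varphi}) := 2\int_\Om \mathbb S({\bf u}): \mathbb S(\boldsymbol{\varphi})\, dx = F(\boldsymbol{\varphi}) := \bka{{\bf f},\boldsymbol{\varphi}}_{\Om,2} + \bka{h\boldsymbol{\tau},\boldsymbol{\varphi}}_{\pd\Om,2},\quad \text{for all } \boldsymbol{\varphi}\in{\bf V}^2(\Om).
}
Since every $\boldsymbol{\be}\in\boldsymbol{\mathcal T}(\Om)$ satisfies $\mathbb S(\boldsymbol{\be})={\bf O}$, the bilinear form $a$ vanishes on $\boldsymbol{\mathcal T}(\Om)$ in each slot; and the compatibility condition \eqref{eq-3.15-AR} (with $p=2$) guarantees that $F$ vanishes on $\boldsymbol{\mathcal T}(\Om)$ as well. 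Hence both $a$ and $F$ descend to the quotient ${\bf V}^2(\Om)/\boldsymbol{\mathcal T}(\Om)$.

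The key analytic input is the Poincar\'e--Morrey inequality (Lemma \ref{lem-3.3-AR}), which asserts that on the subspace of ${\bf H}^1(\Om)$ where ${\bf u}\cdot\boldsymbol{\nu}=0$ on $\pd\Om$ and ${\bf u}\perp\boldsymbol{\mathcal T}(\Om)$ in ${\bf L}^2$, the seminorm $\norm{\mathbb S({\bf u})}_{{\bf L}^2(\Om)}$ is equivalent to $\norm{\bf u}_{{\bf H}^1(\Om)}$. This immediately gives continuity and coercivity of $a$ on the Hilbert space ${\bf V}^2(\Om)/\boldsymbol{\mathcal T}(\Om)$. Continuity of $F$ on this quotient follows from ${\bf V}^2(\Om)\hookrightarrow {\bf H}_0^2(\div,\Om)$ (for the ${\bf f}$ term) together with the trace theorem ${\bf H}^1(\Om)\to{\bf H}^{1/2}(\pd\Om)$ (for the $h$ term). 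By the Lax--Milgram theorem, there exists a unique ${\bf u}\in{\bf V}^2(\Om)/\boldsymbol{\mathcal T}(\Om)$ satisfying the weak formulation, with the quantitative estimate
\EQN{
\norm{\bf u}_{{\bf H}^1(\Om)/\boldsymbol{\mathcal T}(\Om)} \lesssim \norm{\bf f}_{[{\bf H}^2_0(\div,\Om)]'} + \norm{h}_{H^{-\frac12}(\pd\Om)}.
}

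It remains to recover the pressure $P\in L^2(\Om)/\R$. Consider the linear functional
\EQN{
\Lambda(\boldsymbol{\varphi}) := \bka{{\bf f},\boldsymbol{\varphi}}_{\Om,2} + \bka{h\boldsymbol{\tau},\boldsymbol{\varphi}}_{\pd\Om,2} - 2\int_\Om \mathbb S({\bf u}):\mathbb S(\boldsymbol{\varphi})\, dx
}
on the space $\bket{\boldsymbol{\varphi}\in{\bf H}^1(\Om): \boldsymbol{\varphi}\cdot\boldsymbol{\nu}=0 \text{ on }\pd\Om}$. By construction $\Lambda$ is continuous and vanishes on ${\bf V}^2(\Om)$, i.e.\ on the divergence-free subspace with vanishing normal trace. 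A standard de Rham--type result for the Stokes problem with Navier boundary condition then produces $P\in L^2(\Om)/\R$ such that $\Lambda(\boldsymbol{\varphi}) = \int_\Om P\,\div\boldsymbol{\varphi}\,dx$ for all admissible $\boldsymbol{\varphi}$, with $\norm{P}_{L^2/\R} \lesssim \norm{\Lambda}$ via the surjectivity of $\div:\bket{\boldsymbol{\varphi}\in{\bf H}^1(\Om):\boldsymbol{\varphi}\cdot\boldsymbol{\nu}=0}\to L^2(\Om)/\R$ (the inf--sup condition). Unpacking this identity and invoking the Green formula \eqref{eq-2.8-AR} in Lemma \ref{lem-2.4-AR} recovers \eqref{eq-stokes-auxiliary-a} in $\Om$ and the boundary condition $2[\mathbb S({\bf u})\boldsymbol{\nu}]_{\boldsymbol{\tau}} = h\boldsymbol{\tau}$ in $H^{-1/2}(\pd\Om)$, completing the existence and the desired estimate \eqref{eq-3.10-AR}. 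Uniqueness modulo $\boldsymbol{\mathcal N}(\Om)$ is built in: any pair $({\bf u},P)$ with ${\bf f}={\bf 0}$, $h=0$ yields $a({\bf u},{\bf u})=0$ in the weak form tested against ${\bf u}$ itself, forcing ${\bf u}\in\boldsymbol{\mathcal T}(\Om)$ by coercivity, and then $\nb P=0$ in $\Om$ gives $P\in\R$.

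I expect the subtlest step to be the pressure recovery, since de Rham's lemma in this setting requires the inf--sup condition on the solenoidal space adapted to the Navier (not no-slip) tangential boundary condition; this is where the tangential trace is genuinely free, and one must argue that $\Lambda$ identifies correctly with $-\int P\,\div\boldsymbol{\varphi}$ against test fields with only the normal trace vanishing. The coercivity step is essentially a packaged consequence of Lemma \ref{lem-3.3-AR}, so the novelty of the argument lies in treating the Navier boundary term $\bka{h\boldsymbol{\tau},\boldsymbol{\varphi}}_{\pd\Om,2}$ consistently throughout the variational formulation and the de Rham closure.
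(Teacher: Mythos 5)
Your proposal is correct and follows essentially the same route as the paper, which also applies Lax--Milgram to the weak formulation of Proposition \ref{prop-3.1-AR} with coercivity supplied by the Poincar\'e--Morrey inequality of Lemma \ref{lem-3.3-AR}, and defers the pressure recovery to the equivalence already established in that proposition (citing the argument of the 3D analogue in \cite{AR-JDE2014}). Your exposition is more explicit about the de Rham step and the inf--sup condition adapted to the Navier tangential boundary condition; the paper absorbs this into Proposition \ref{prop-3.1-AR} and the cited reference, but the logical content matches.
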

\begin{proof}
The proof is exactly the same as that of \cite[Theorem 3.4]{AR-JDE2014} using the Poincar\'e-Morrey inequality \eqref{eq-3.7-AR} and the weak formulation in Proposition \ref{prop-3.1-AR} via an application of Lax-Milgram theorem on the bilinear form $a$ on ${\bf H}^1(\Om)$ defined by
\[
a({\bf u},\boldsymbol{\varphi}) = \int_\Om \mathbb S({\bf u}):\mathbb S(\boldsymbol{\varphi})\, dx.
\]
\end{proof}

Next, we extend the result in Theorem \ref{thm-3.4-AR} to the case $p\ge2$ in the following theorem.

\begin{theorem}[$\eta=0$, $g=0$, $p\ge2$]\label{thm-3.7-AR}
Suppose $\eta=0$, $g=0$, and $2\le p<\infty$.
Let ${\bf f}\in[{\bf H}^p_0(\div,\Om)]'$ and $h\in W^{-\frac1p,p}(\pd\Om)$ satisfy the compatibility condition \eqref{eq-3.15-AR}.
Then, the generalized Stokes system \eqref{sect0-abstract-stokes-system} has a unique solution $({\bf u},P)\in({\bf W}^{1,p}(\Om)\times L^p(\Om))/\boldsymbol{\mathcal N}(\Om)$.
Moreover, we have the following estimate:
\EQ{\label{eq-est-thm3.7}
\norm{\bf u}_{{\bf W}^{1,p}(\Om)/\boldsymbol{\mathcal T}(\Om)} + \norm{P}_{L^p(\Om)/\R}
\lesssim \bke{ \norm{\bf f}_{[{\bf H}^{p'}_0(\div,\Om)]'} + \norm{h}_{W^{-\frac1p,p}(\pd\Om)} }.
}
\end{theorem}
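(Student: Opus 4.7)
The strategy is to bootstrap from the Hilbert case in Theorem \ref{thm-3.4-AR} by recasting the Navier-slip boundary condition as a vorticity boundary condition on $\pd\Om$ and invoking the $W^{1,p}$ theory for the auxiliary problem in Theorem \ref{thm-2.7-AR}. The crucial bridge is Corollary \ref{cor-2.5-AR}: for any ${\bf u}\in{\bf E}^p(\Om)$ with ${\bf u}\cdot\boldsymbol{\nu}=0$ on $\pd\Om$, the slip condition $2[\mathbb S({\bf u})\boldsymbol{\nu}]_{\boldsymbol{\tau}} = h\boldsymbol{\tau}$ is equivalent to $\curl{\bf u} = h + 2\ka({\bf u}\cdot\boldsymbol{\tau})=:H$ on $\pd\Om$.

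\textbf{Step 1 (descent to $p=2$).} Since $p\ge2$ and hence $p'\le2$, one has ${\bf H}^2_0(\div,\Om)\hookrightarrow{\bf H}_0^{p'}(\div,\Om)$ and so ${\bf f}\in[{\bf H}_0^{p'}(\div,\Om)]'\hookrightarrow[{\bf H}_0^{2}(\div,\Om)]'$; moreover, one-dimensional Sobolev embedding on $\pd\Om$ yields $W^{-1/p,p}(\pd\Om)\hookrightarrow H^{-1/2}(\pd\Om)$, and the compatibility \eqref{eq-3.15-AR} is preserved. Theorem \ref{thm-3.4-AR} therefore produces a unique weak solution $({\bf u},P)\in(H^1(\Om)\times L^2(\Om))/\boldsymbol{\mathcal N}(\Om)$ satisfying \eqref{eq-3.10-AR}.

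\textbf{Step 2 (reduction to the auxiliary problem).} Fix a representative $({\bf u},P)$ from Step 1 and set $H:=h+2\ka({\bf u}\cdot\boldsymbol{\tau})$. The trace $({\bf u}\cdot\boldsymbol{\tau})|_{\pd\Om}$ belongs to $H^{1/2}(\pd\Om)$, which embeds into $L^q(\pd\Om)$ for every $q<\infty$ on the one-dimensional manifold $\pd\Om$; since $L^q(\pd\Om)\hookrightarrow W^{-1/p,p}(\pd\Om)$ once $q\ge p/2$ (by duality applied to the Sobolev embedding $W^{1/p,p'}\hookrightarrow L^{q'}$), we deduce $H\in W^{-1/p,p}(\pd\Om)$ together with the quantitative bound
\[
\|H\|_{W^{-1/p,p}(\pd\Om)}\lesssim \|h\|_{W^{-1/p,p}(\pd\Om)}+\|\ka\|_{L^\infty(\pd\Om)}\|{\bf u}\|_{H^1(\Om)}.
\]
By Corollary \ref{cor-2.5-AR}, the pair $({\bf u},P)$ also solves the auxiliary problem \eqref{eq-stokes-auxiliary} with $\eta=0$, $g=0$, and the above $H$.

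\textbf{Step 3 (compatibility and upgrade).} To apply Theorem \ref{thm-2.7-AR} at exponent $p$, the compatibility \eqref{eq-2.13-AR} must be verified for this $H$. For $\boldsymbol{\varphi}\in{\bf K}_T^{p'}(\Om)$, the identity $\De\boldsymbol{\varphi}=\nb(\div\boldsymbol{\varphi})-\curl(\curl\boldsymbol{\varphi})=0$ shows $\boldsymbol{\varphi}\in{\bf V}^{p'}(\Om)$ is harmonic, and Corollary \ref{cor-2.5-AR} applied to $\boldsymbol{\varphi}$ gives $2[\mathbb S(\boldsymbol{\varphi})\boldsymbol{\nu}]_{\boldsymbol{\tau}}=-2\ka\boldsymbol{\varphi}$ on $\pd\Om$. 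Pairing $-\De\boldsymbol{\varphi}=0$ with ${\bf u}$ through Lemma \ref{lem-2.4-AR} yields $2\int_\Om\mathbb S({\bf u}):\mathbb S(\boldsymbol{\varphi})\,dx=-\langle 2\ka\boldsymbol{\varphi},{\bf u}\rangle_{\pd\Om,p'}$; combining with the weak formulation \eqref{eq-3.2-AR} tested on $\boldsymbol{\varphi}$ produces exactly $\bka{{\bf f},\boldsymbol{\varphi}}_{\Om,p}+\bka{H\boldsymbol{\tau},\boldsymbol{\varphi}}_{\pd\Om,p}=0$. Theorem \ref{thm-2.7-AR} then delivers a $W^{1,p}(\Om)\times(L^p(\Om)/\R)$-solution $(\wt{\bf u},\wt P)$ of the auxiliary problem with the corresponding estimate, and uniqueness of Theorem \ref{thm-2.7-AR} at $p=2$ forces $(\wt{\bf u},\wt P)=({\bf u},P)$ modulo $\boldsymbol{\mathcal N}(\Om)$. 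Chaining Theorem \ref{thm-2.7-AR}'s estimate with \eqref{eq-3.10-AR} to absorb the $\|{\bf u}\|_{H^1}$ term yields \eqref{eq-est-thm3.7}; uniqueness modulo $\boldsymbol{\mathcal N}(\Om)$ descends from Theorem \ref{thm-3.4-AR}.

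\textbf{Main obstacle.} The principal subtlety is Step 3: verifying \eqref{eq-2.13-AR} without explicit knowledge of ${\bf K}_T^{p'}(\Om)$. It requires combining both Green identities \eqref{eq-2.7-AR}--\eqref{eq-2.8-AR} with the curvature identity of Corollary \ref{cor-2.5-AR} applied to the test field (not the solution), trading $\mathbb S(\boldsymbol{\varphi})\boldsymbol{\nu}$ against $\ka\boldsymbol{\varphi}_{\boldsymbol{\tau}}$ so the curvature correction in $H$ is cancelled by the symmetric gradient pairing. A second, lesser obstacle is that the preliminary bound for $H$ already contains $\|{\bf u}\|_{H^1}$, so the final estimate is not a direct corollary of Theorem \ref{thm-2.7-AR} but must be closed by invoking \eqref{eq-3.10-AR}.
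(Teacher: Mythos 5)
Your proof is correct and reaches the same conclusion by essentially the same scaffold (descend to $p=2$ via Theorem \ref{thm-3.4-AR}, recast the slip condition as a Dirichlet condition on $\curl{\bf u}$ using Corollary \ref{cor-2.5-AR}, and bootstrap through Theorem \ref{thm-2.7-AR}), but the verification of the compatibility condition \eqref{eq-2.13-AR} is done along a genuinely different line. The paper applies the curl-based Green formula \eqref{eq-2.7-AR} directly to the solution ${\bf u}$ of the auxiliary problem, testing against $\boldsymbol{\varphi}\in{\bf K}^{p'}_T(\Om)$; since $\curl\boldsymbol{\varphi}=0$, the bulk term $\int_\Om(\curl{\bf u})(\curl\boldsymbol{\varphi})\,dx$ vanishes and the compatibility identity falls out in one step. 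You instead apply the symmetric-gradient Green formula \eqref{eq-2.8-AR} \emph{to the harmonic test field $\boldsymbol{\varphi}$} (noting $\De\boldsymbol{\varphi}=0$), invoke Corollary \ref{cor-2.5-AR} for $\boldsymbol{\varphi}$ to get $2[\mathbb S(\boldsymbol{\varphi})\boldsymbol{\nu}]_{\boldsymbol{\tau}}=-2\ka\boldsymbol{\varphi}_{\boldsymbol{\tau}}$, and combine with the weak formulation \eqref{eq-3.2-AR}; the key cancellation is then the symmetry $\bka{2\ka({\bf u}\cdot\boldsymbol{\tau})\boldsymbol{\tau},\boldsymbol{\varphi}}=\bka{2\ka\boldsymbol{\varphi}_{\boldsymbol{\tau}},{\bf u}}$, which uses that both ${\bf u}$ and $\boldsymbol{\varphi}$ are purely tangential on $\pd\Om$. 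Both arguments are valid; the paper's is more economical (one Green identity, no curvature bookkeeping), whereas yours makes the role of Corollary \ref{cor-2.5-AR} on the kernel side explicit and would generalize more robustly if the kernel fields failed to satisfy $\curl\boldsymbol{\varphi}=0$. Note also that the pairings in your Step~3 should initially be taken at exponent~2 (since at that stage ${\bf u}$ is only known to lie in ${\bf V}^2$, not ${\bf V}^p$) and then extended to the $p$-pairing using that all boundary traces involved are in $L^2(\pd\Om)$ and ${\bf K}^{p'}_T(\Om)$ is independent of $p$; you also write $(\wt{\bf u},\wt P)=({\bf u},P)$ ``modulo $\boldsymbol{\mathcal N}$'' where Theorem \ref{thm-2.7-AR}'s uniqueness is in fact absolute for ${\bf u}$ and modulo constants for $P$. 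These are only presentational slips. Your derivation of $\|H\|_{W^{-1/p,p}(\pd\Om)}$ via the trace and one-dimensional Sobolev embedding, and the closing of \eqref{eq-est-thm3.7} by feeding the $p=2$ bound \eqref{eq-3.10-AR} back into the Theorem \ref{thm-2.7-AR} estimate, match the paper and are handled cleanly.
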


\begin{proof}
The proof is similar to that of \cite[Theorem 3.7]{AR-JDE2014} with a slightly modification to the two-dimensional case.

Note that for $p\ge2$
\EQ{\label{eq-1-pf-thm3.7}
[{\bf H}^{p'}_0(\div,\Om)]' \hookrightarrow [{\bf H}^2_0(\div,\Om)]'\ \text{ and }\ 
{\bf W}^{-\frac1p,p}(\pd\Om) \hookrightarrow {\bf H}^{-\frac12}(\pd\Om).
}
By Theorem \ref{thm-3.4-AR} for the case of $p=2$, 
the generalized Stokes system \eqref{sect0-abstract-stokes-system}, when $\eta=0$ and $g=0$, has a unique solution $({\bf u},P)\in({\bf H}^1(\Om)\times L^2(\Om))/\boldsymbol{\mathcal N}(\Om)$.
Applying Corollary \ref{cor-2.5-AR}, we have
\[
(\curl{\bf u})\boldsymbol{\tau} 
= 2[\mathbb S({\bf u})\boldsymbol{\nu}]_{\boldsymbol{\tau}} + 2\ka{\bf u}_{\boldsymbol{\tau}}
= (h + 2\ka({\bf u}\cdot\boldsymbol{\tau}) ) \boldsymbol{\tau}
\ \text{ on }{\bf H}^{-\frac12}(\pd\Om),\quad \ka\text{ is the curvature on }\pd\Om,
\]
because ${\bf u}\in{\bf E}^2(\Om)$ and ${\bf u}\cdot\boldsymbol{\nu}|_{\pd\Om} = 0$.
Thus, $({\bf u},P)$ is the solution to the auxiliary problem \eqref{eq-stokes-auxiliary} with $H = h + 2\ka({\bf u}\cdot\boldsymbol{\tau})$.
It then follows from the Green formula \eqref{eq-2.7-AR} in Lemma \ref{lem-2.3-AR} that $({\bf u},P)$ solves the following variational problem:
For all $\boldsymbol{\varphi}\in{\bf V}^2(\Om)$,
\[
\int_\Om (\curl{\bf u})(\curl\boldsymbol{\varphi})\, dx 
= \bka{{\bf f},\boldsymbol{\varphi}}_{\Om,2} + \bka{ (h + 2\ka({\bf u}\cdot\boldsymbol{\tau}))\boldsymbol{\tau}, \boldsymbol{\varphi} }_{\pd\Om,2}.
\]
In particular
\[
\bka{{\bf f},\boldsymbol{\varphi}}_{\Om,2} + \bka{ (h + 2\ka({\bf u}\cdot\boldsymbol{\tau}))\boldsymbol{\tau}, \boldsymbol{\varphi} }_{\pd\Om,2} = 0,\ \text{ for all }\boldsymbol{\varphi}\in{\bf K}^2_T(\Om).
\]
More generally, for $p\ge2$ and $\boldsymbol{\varphi}\in{\bf K}^{p'}_T(\Om)$,
\EQN{
\bka{{\bf f},\boldsymbol{\varphi}}_{\Om,p} + \bka{ (h + 2\ka({\bf u}\cdot\boldsymbol{\tau}))\boldsymbol{\tau}, \boldsymbol{\varphi} }_{\pd\Om,p} = 0,
}
which verifies the compatibility condition \eqref{eq-2.13-AR} because ${\bf u}\cdot\boldsymbol{\tau} \in H^{\frac12}(\pd\Om) \hookrightarrow W^{-\frac1p,p}(\pd\Om)$ for $2\le p<\infty$ so that $H = h + 2\ka({\bf u}\cdot\boldsymbol{\tau})\in W^{-\frac1p,p}(\pd\Om)$.
The compatibility condition \eqref{eq-2.14-AR} is also satisfied since $\eta=0$ and $g=0$. 
By Theorem \ref{thm-2.7-AR}, we have $({\bf u},P)\in {\bf W}^{1,p}(\Om)\times (L^p(\Om)/\R)$, $2\le p<\infty$, by uniqueness, and
\EQ{\label{eq-2-pf-thm3.7}
\norm{\bf u}_{{\bf W}^{1,p}(\Om)} + \norm{P}_{L^p(\Om)/\R}
&\lesssim  \bke{ \norm{\bf f}_{[{\bf H}^{p'}_0(\div,\Om)]'} + \norm{h + 2\ka({\bf u}\cdot\boldsymbol{\tau})}_{W^{-\frac1p,p}(\pd\Om)} }\\
&\lesssim \bke{ \norm{\bf f}_{[{\bf H}^{p'}_0(\div,\Om)]'} + \norm{h}_{W^{-\frac1p,p}(\pd\Om)} + 2\ka\norm{\bf u}_{{\bf W}^{-\frac1p,p}(\pd\Om)} }.
}
We now establish the estimate \eqref{eq-est-thm3.7}.
For $p=2$, Theorem \ref{thm-3.7-AR} is proved in Theorem \ref{thm-3.4-AR}.
For $2<p<\infty$, by Morrey's inequality $W^{1,p}(\Om) \hookrightarrow C^{0,\ga}(\Om) \hookrightarrow L^q(\Om)$, $\ga = 1-(2/p)$, for all $q\in[1,\infty]$.
Hence, we have
\[
\norm{\bf u}_{{\bf W}^{-\frac1p,p}(\pd\Om)} \lec \norm{\bf u}_{{\bf W}^{1-\frac1q,q}(\pd\Om)},\ \text{ for all }q\in[1,\infty].
\] 
Choosing $q=2$ and applying trace theorem, we deduce
\EQ{\label{eq-3-pf-thm3.7}
\norm{\bf u}_{{\bf W}^{-\frac1p,p}(\pd\Om)} 
\lec \norm{\bf u}_{{\bf H}^\frac12(\pd\Om)}
&\lec \norm{\bf u}_{{\bf H}^1(\Om)}\\
&\lec \norm{\bf f}_{{\bf H}^2_0(\div,\Om)]'} + \norm{h}_{H^{-\frac12}(\pd\Om)}\\
&\lec \norm{\bf f}_{{\bf H}^{p'}_0(\div,\Om)]'} + \norm{h}_{W^{-\frac1p,p}(\pd\Om)},
}
where we used the estimate \eqref{eq-3.10-AR} (for $p=2$) in the second last inequality and the embedding \eqref{eq-1-pf-thm3.7} in the last inequality.
The desired estimate \eqref{eq-est-thm3.7} follows by using \eqref{eq-3-pf-thm3.7} in \eqref{eq-2-pf-thm3.7}.
This proves Theorem \ref{thm-3.7-AR}.
\end{proof}

\medskip

\noindent{\bf The case of general $\eta$ and $g$.}

\begin{corollary}\label{cor-3.8-AR}
For $2\le p<\infty$,
let ${\bf f}\in[{\bf H}^{p'}_0(\div,\Om)]'$, $\eta\in L^p(\Om)$, $g\in W^{1-\frac1p,p}(\pd\Om)$, and $h\in W^{-\frac1p,p}(\pd\Om)$ satisfy the compatibility conditions \eqref{eq-2.14-AR} and \eqref{eq-3.15-AR}.
Then, the generalized Stokes system \eqref{sect0-abstract-stokes-system} has a unique solution $({\bf u},P)\in({\bf W}^{1,p}(\Om)\times L^p(\Om))/\boldsymbol{\mathcal N}(\Om)$.
In addition, $({\bf u},P)$ satisfies the estimate
\[
\norm{\bf u}_{{\bf W}^{1,p}(\Om)/\boldsymbol{\mathcal T}(\Om)} + \norm{P}_{L^p(\Om)/\R}
\le C\bke{ \norm{\bf f}_{[{\bf H}^{p'}_0(\div,\Om)]'} +  \norm{\eta}_{L^p(\Om)} + \norm{g}_{W^{1-\frac1p,p}(\pd\Om)} + \norm{h}_{W^{-\frac1p,p}(\pd\Om)} }.
\]
\end{corollary}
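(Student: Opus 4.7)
The plan is to reduce Corollary \ref{cor-3.8-AR} to the already established Theorem \ref{thm-3.7-AR} (the case $\eta=0$, $g=0$) via the Helmholtz-type decomposition $\mathbf{u} = \mathbf{z} + \nb\theta$ that was introduced in the proof of Theorem \ref{thm-2.7-AR}. First I would solve the Neumann problem
\begin{equation*}
\De \theta = \eta\ \text{ in }\Om,\qquad \nb\theta\cdot\boldsymbol{\nu} = g\ \text{ on }\pd\Om,
\end{equation*}
which is compatible thanks to \eqref{eq-2.14-AR}. The standard $W^{2,p}$ theory for the Neumann Poisson problem on the $C^{1,1}$ domain $\Om$ produces a unique $\theta\in W^{2,p}(\Om)/\R$ with
$\norm{\theta}_{W^{2,p}(\Om)/\R}\lec \norm{\eta}_{L^p(\Om)} + \norm{g}_{W^{1-1/p,p}(\pd\Om)}$.

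Setting $\mathbf{z} = \mathbf{u} - \nb\theta$, the identity $\De\nb\theta = \nb\De\theta = \nb\eta$ shows that $(\mathbf{z},P)$ solves the homogeneous $(\eta=0,g=0)$ version of \eqref{sect0-abstract-stokes-system} with the modified data
\begin{equation*}
\wt{\mathbf{f}} := \mathbf{f} + \nb\eta,\qquad \wt{h}\boldsymbol{\tau} := h\boldsymbol{\tau} - 2[\mathbb S(\nb\theta)\boldsymbol{\nu}]_{\boldsymbol{\tau}}.
\end{equation*}
Both live in the right spaces: for $\boldsymbol{\varphi}\in {\bf H}^{p'}_0(\div,\Om)$, the pairing $\bka{\nb\eta,\boldsymbol{\varphi}}_{\Om,p}=-\int_\Om \eta\,\div\boldsymbol{\varphi}\,dx$ gives $\wt{\mathbf{f}}\in[{\bf H}^{p'}_0(\div,\Om)]'$, and since $\nb\theta\in{\bf W}^{1,p}(\Om)\subset {\bf E}^p(\Om)$, Lemma \ref{lem-2.4-AR} ensures that $2[\mathbb S(\nb\theta)\boldsymbol{\nu}]_{\boldsymbol{\tau}}\in {\bf W}^{-1/p,p}(\pd\Om)$, whence $\wt{h}\in W^{-1/p,p}(\pd\Om)$.

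Next I would verify the compatibility condition \eqref{eq-3.15-AR} for the reduced data $(\wt{\mathbf{f}},\wt{h})$. For any $\boldsymbol{\be}\in\boldsymbol{\mathcal T}(\Om)$ we have $\div\boldsymbol{\be}=0$, $\boldsymbol{\be}\cdot\boldsymbol{\nu}=0$ on $\pd\Om$, and $\mathbb S(\boldsymbol{\be})={\bf O}$. The first two properties give $\bka{\nb\eta,\boldsymbol{\be}}_{\Om,p} = -\int_\Om \eta\,\div\boldsymbol{\be}\,dx = 0$, while applying the Green formula \eqref{eq-2.8-AR} with $\mathbf{v}=\nb\theta$ and $\boldsymbol{\varphi}=\boldsymbol{\be}$ yields
\begin{equation*}
\bka{2[\mathbb S(\nb\theta)\boldsymbol{\nu}]_{\boldsymbol{\tau}},\boldsymbol{\be}}_{\pd\Om,p}
= \bka{\De\nb\theta,\boldsymbol{\be}}_{\Om,p} + 2\int_\Om \mathbb S(\nb\theta):\mathbb S(\boldsymbol{\be})\,dx
= \bka{\nb\eta,\boldsymbol{\be}}_{\Om,p} = 0.
\end{equation*}
Thus the compatibility condition for $(\wt{\mathbf{f}},\wt{h})$ collapses to the original condition \eqref{eq-3.15-AR} for $(\mathbf{f},h)$, which holds by assumption. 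Theorem \ref{thm-3.7-AR} then yields a unique $(\mathbf{z},P)\in({\bf W}^{1,p}(\Om)\times L^p(\Om))/\boldsymbol{\mathcal N}(\Om)$ with the corresponding estimate in $\wt{\mathbf{f}},\wt{h}$.

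Finally, $\mathbf{u}=\mathbf{z}+\nb\theta\in{\bf W}^{1,p}(\Om)$, and combining the $W^{2,p}$ estimate for $\theta$ with the Theorem \ref{thm-3.7-AR} estimate for $(\mathbf{z},P)$ — using $\norm{\nb\eta}_{[{\bf H}^{p'}_0(\div)]'}\lec\norm{\eta}_{L^p}$ and the trace bound $\norm{2[\mathbb S(\nb\theta)\boldsymbol{\nu}]_{\boldsymbol{\tau}}}_{W^{-1/p,p}(\pd\Om)}\lec\norm{\theta}_{W^{2,p}}$ — produces the asserted estimate. Uniqueness follows from the uniqueness of both sub-problems. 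The only non-routine step is the verification that the modified boundary term $2[\mathbb S(\nb\theta)\boldsymbol{\nu}]_{\boldsymbol{\tau}}$ satisfies the reduced compatibility condition; once the Green identity of Lemma \ref{lem-2.4-AR} is invoked with a kernel element $\boldsymbol{\be}$, this reduces to the vanishing of $\bka{\nb\eta,\boldsymbol{\be}}$, so there is no genuine obstacle beyond careful bookkeeping.
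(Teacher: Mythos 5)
Your proposal is correct and follows essentially the same route as the paper: the paper's proof also uses the decomposition ${\bf u}={\bf z}+\nb\theta$ with $\theta$ solving the Neumann problem \eqref{eq-stokes-auxiliary-theta} and ${\bf z}$ solving the homogeneous Stokes system \eqref{eq-3.18-AR} with $H=h-2(\mathbb S(\nb\theta)\boldsymbol{\nu})\cdot\boldsymbol{\tau}$, invoking the Green formula \eqref{eq-2.8-AR} to reduce the compatibility condition for the modified data to \eqref{eq-3.15-AR} and then applying Theorem \ref{thm-3.7-AR}. Your explicit verification that $\bka{\nb\eta,\boldsymbol{\be}}_{\Om,p}=0$ and $\bka{2[\mathbb S(\nb\theta)\boldsymbol{\nu}]_{\boldsymbol{\tau}},\boldsymbol{\be}}_{\pd\Om,p}=0$ is exactly the bookkeeping the paper defers to \cite{AR-JDE2014}.
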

\begin{proof}
The proof follows from an application of Theorem \ref{thm-3.7-AR} and Green formula \eqref{eq-2.8-AR} via the same argument in the proof of \cite[Corollary 3.8]{AR-JDE2014} using the decomposition ${\bf u} = {\bf z} + \nb\th$,
were ${\bf z}$ solves
\begin{subequations}
\label{eq-3.18-AR}
\begin{align}
-\De{\bf z} + \nb P& = {\bf f} + \nb\eta\ \text{ in } \Om,\label{eq-3.18-AR-a}\\
\div{\bf z}&=0\ \text{ in } \Om,\label{eq-3.18-AR-b}\\
{\bf z}\cdot\boldsymbol{\nu} &= 0\ \text{ on }\pd\Om,\label{eq-3.18-AR-c}\\
2[\mathbb S({\bf z})\boldsymbol{\nu}]_{\boldsymbol{\tau}}&= H\boldsymbol{\tau}\ \text{ on }\pd\Om,\label{eq-3.18-AR-d}
\end{align}
\end{subequations}
where $H = h - 2(\mathbb S(\nb\th) \boldsymbol{\nu})\cdot\boldsymbol{\tau}$,
and $\th$ solves \eqref{eq-stokes-auxiliary-theta}.
\end{proof}

\medskip

\subsection{Strong solutions of the generalized Stokes system \eqref{sect0-abstract-stokes-system}}

\begin{theorem}\label{thm-4.1-AR}
For $2\le p<\infty$,
let ${\bf f}\in{\bf L}^p(\Om)$, $\eta\in W^{1,p}(\Om)$, $g\in W^{2-\frac1p,p}(\pd\Om)$, and $h\in W^{1-\frac1p,p}(\pd\Om)$ satisfy the compatibility conditions \eqref{eq-2.14-AR} and \eqref{eq-3.15-AR}.
Then, the generalized Stokes system \eqref{sect0-abstract-stokes-system} has a unique solution $({\bf u},P)\in({\bf W}^{2,p}(\Om)\times W^{1,p}(\Om))/\boldsymbol{\mathcal N}(\Om)$.
Further, the solution satisfies the following estimate:
\EQ{\label{eq-est-thm4.1}
\norm{\bf u}_{{\bf W}^{2,p}(\Om)/\boldsymbol{\mathcal T}(\Om)} + \norm{P}_{W^{1,p}(\Om)/\R}
\lesssim \bke{ \norm{\bf f}_{{\bf L}^p(\Om)} +  \norm{\eta}_{W^{1,p}(\Om)} + \norm{g}_{W^{2-\frac1p,p}(\pd\Om)} + \norm{h}_{W^{1-\frac1p,p}(\pd\Om)} }.
}
\end{theorem}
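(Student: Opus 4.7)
The plan is to upgrade the weak solution produced by Corollary \ref{cor-3.8-AR} to a strong solution by first killing the divergence and the normal boundary data via a Helmholtz-type decomposition, and then exploiting the two-dimensional vorticity formulation to reduce the remaining problem to a scalar Dirichlet Poisson equation plus a div--curl system. Since ${\bf f}\in{\bf L}^p(\Om)\hookrightarrow[{\bf H}^{p'}_0(\div,\Om)]'$, $\eta\in L^p(\Om)$, $g\in W^{1-\frac1p,p}(\pd\Om)$, $h\in W^{-\frac1p,p}(\pd\Om)$, and both compatibility conditions \eqref{eq-2.14-AR} and \eqref{eq-3.15-AR} hold, Corollary \ref{cor-3.8-AR} yields a unique weak solution $({\bf u},P)\in({\bf W}^{1,p}(\Om)\times L^p(\Om))/\boldsymbol{\mathcal N}(\Om)$ with the weak estimate. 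I then write ${\bf u}={\bf z}+\nb\th$ as in \eqref{eq-3.18-AR}, where $\th$ solves the Neumann Poisson problem $\De\th=\eta$ in $\Om$, $\pd\th/\pd\boldsymbol{\nu}=g$ on $\pd\Om$; classical elliptic regularity gives $\th\in W^{3,p}(\Om)/\R$ with $\|\th\|_{W^{3,p}(\Om)/\R}\lec\|\eta\|_{W^{1,p}(\Om)}+\|g\|_{W^{2-\frac1p,p}(\pd\Om)}$, so $\nb\th\in{\bf W}^{2,p}(\Om)$ and it suffices to establish $W^{2,p}$ regularity of $({\bf z},P)$, whose source ${\bf f}+\nb\eta$ lies in ${\bf L}^p(\Om)$ and whose boundary datum $H:=h-2[\mathbb S(\nb\th)\boldsymbol{\nu}]\cdot\boldsymbol{\tau}$ lies in $W^{1-\frac1p,p}(\pd\Om)$.

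For the reduced system, Corollary \ref{cor-2.5-AR} converts the slip condition $2[\mathbb S({\bf z})\boldsymbol{\nu}]_{\boldsymbol{\tau}}=H\boldsymbol{\tau}$ together with ${\bf z}\cdot\boldsymbol{\nu}=0$ into the scalar Dirichlet condition $\curl{\bf z}=H+2\ka({\bf z}\cdot\boldsymbol{\tau})$ on $\pd\Om$, where $\ka$ is the curvature. Setting $\om:=\curl{\bf z}$ and taking the curl of the momentum equation (which annihilates the $\nb\eta$ contribution), I obtain the scalar Dirichlet problem
\[
-\De\om=\curl{\bf f}\ \text{ in }\Om,\qquad \om=H+2\ka({\bf z}\cdot\boldsymbol{\tau})\ \text{ on }\pd\Om.
\]
Since $\curl{\bf f}\in W^{-1,p}(\Om)$ and the weak solution already furnishes ${\bf z}\cdot\boldsymbol{\tau}|_{\pd\Om}\in W^{1-\frac1p,p}(\pd\Om)$ via the trace theorem (so the Dirichlet datum lies in $W^{1-\frac1p,p}(\pd\Om)$), standard elliptic regularity yields $\om\in W^{1,p}(\Om)$ with a controlled estimate. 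Given $\om\in W^{1,p}$, I recover ${\bf z}$ from the two-dimensional div--curl system $\div{\bf z}=0$, $\curl{\bf z}=\om$, ${\bf z}\cdot\boldsymbol{\nu}=0$ via the stream-function representation ${\bf z}=\nb^\perp\psi$, $-\De\psi=\om$, with $\psi$ locally constant on $\pd\Om$; elliptic regularity for the scalar Poisson problem then gives ${\bf z}\in{\bf W}^{2,p}(\Om)/\boldsymbol{\mathcal T}(\Om)$ with the corresponding estimate.

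Finally, the pressure is recovered from $\nb P=\De{\bf z}+{\bf f}+\nb\eta\in{\bf L}^p(\Om)$, yielding $P\in W^{1,p}(\Om)/\R$, and concatenating the three estimates produces \eqref{eq-est-thm4.1}. The main technical point is the self-referential structure of the Dirichlet datum $H+2\ka({\bf z}\cdot\boldsymbol{\tau})$, in which the curvature correction couples $\om$ back to the trace of ${\bf z}$; however, because Corollary \ref{cor-3.8-AR} already places ${\bf z}$ in ${\bf W}^{1,p}$, this trace sits at the correct regularity $W^{1-\frac1p,p}(\pd\Om)$ without any bootstrap, and the entire argument collapses to scalar elliptic estimates on the vorticity and stream function. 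This is the two-dimensional analogue of the three-dimensional Navier-slip Stokes treatment in \cite{AR-JDE2014} but is notably cleaner, since in 2D the vorticity is a scalar and the div--curl system reduces to a single Poisson equation for $\psi$ rather than the vector-valued Biot--Savart-type inversion needed in higher dimensions.
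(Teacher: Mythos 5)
Your proposal is correct and follows essentially the same route as the paper: the decomposition ${\bf u}={\bf z}+\nb\th$, the $W^{3,p}$ estimate for the Neumann-Poisson $\th$, conversion of the slip condition to a Dirichlet vorticity condition via Corollary \ref{cor-2.5-AR}, the scalar Dirichlet problem $-\De\om=\curl{\bf f}$ for $\om=\curl{\bf z}$ with the self-referential datum $H+2\ka({\bf z}\cdot\boldsymbol{\tau})$, and closing the loop with the $W^{1,p}$ bound on ${\bf z}$ furnished by the weak theory. Where you diverge is the final recovery of ${\bf z}\in{\bf W}^{2,p}$ from $\om\in W^{1,p}$: the paper invokes the abstract embedding ${\bf X}^{2,p}(\Om)\hookrightarrow{\bf W}^{2,p}(\Om)$ for divergence-free fields with $\curl$ in $W^{1,p}$ and vanishing normal trace, whereas you solve the scalar Poisson problem for a stream function $\psi$ with $\nb^\perp\psi={\bf z}$ and $\psi$ locally constant on $\pd\Om$. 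The two are equivalent, and your version is more concrete; but for a multiply-connected $\Om$ the constants on the distinct boundary components must be chosen consistently with the circulation data (the harmonic kernel ${\bf K}^p_T(\Om)$), which the abstract ${\bf X}^{2,p}$ embedding absorbs silently and your stream-function argument should at least acknowledge. Two small omissions: before invoking the $W^{1,p}$ theory on the ${\bf z}$-subsystem you should note that the shifted data $({\bf f}+\nb\eta,H)$ still satisfy the compatibility condition \eqref{eq-3.15-AR} (the paper checks this via the Green formula \eqref{eq-2.8-AR}), and the bound on $\|{\bf z}\|_{{\bf W}^{1,p}}$ needed to absorb the curvature term comes from applying Theorem \ref{thm-3.7-AR} to the ${\bf z}$-subsystem, not merely from subtracting $\nb\th$ from the Corollary \ref{cor-3.8-AR} estimate for ${\bf u}$, since the latter bound is only modulo $\boldsymbol{\mathcal T}(\Om)$.
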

\begin{proof}
To begin with, it follows from Corollary \ref{cor-3.8-AR} that \eqref{sect0-abstract-stokes-system} has a unique solution $({\bf u},P)\in({\bf W}^{1,p}(\Om)\times L^p(\Om))/\boldsymbol{\mathcal N}(\Om)$. 
It remains to improve the regularity of the solution and derive the estimate \eqref{eq-est-thm4.1}.

Adopting the same idea as in the proof of Corollary \ref{cor-3.8-AR}, we decompose
${\bf u} = {\bf z} + \nb\th$,
where ${\bf z}$ solves \eqref{eq-3.18-AR} with $H = h - 2(\mathbb S(\nb\th) \boldsymbol{\nu})\cdot\boldsymbol{\tau}$, and $\th$ solves \eqref{eq-stokes-auxiliary-theta}.
For $\th$ solving \eqref{eq-stokes-auxiliary-theta}, we use the classical elliptic theory to get
\EQ{\label{eq-theta-W3p}
\norm{\th}_{W^{3,p}(\Om)/\R} \lec \norm{\eta}_{W^{1,p}(\Om)} + \norm{g}_{W^{2-\frac1p,p}(\pd\Om)}.
}
For ${\bf z}$ satisfying \eqref{eq-3.18-AR} with $H = h - 2(\mathbb S(\nb\th) \boldsymbol{\nu})\cdot\boldsymbol{\tau}$, we set $\om = \curl{\bf z}$.
Since ${\bf z}\in{\bf E}^p$ and ${\bf z}\cdot\boldsymbol{\nu}=0$ on $\pd\Om$,
we have that $\curl{\bf z} = 2(\mathbb S({\bf z})\boldsymbol{\nu})\cdot\boldsymbol{\tau} + 2\ka({\bf z}\cdot\boldsymbol{\tau})$ on $\pd\Om$ by Corollary \ref{cor-2.5-AR}.
Then $\om$ solves
\begin{subequations}
\label{eq-om}
\begin{align}
-\De\om& = \curl{\bf f}\qquad\qquad\qquad\qquad\qquad\ \ \, \text{ in } \Om,\label{eq-om-a}\\
\om &= h - 2(\mathbb S(\nb\th)\boldsymbol{\nu})\cdot\boldsymbol{\tau} + 2\ka({\bf z}\cdot\boldsymbol{\tau})\ \text{ on }\pd\Om.\label{eq-om-b}
\end{align}
\end{subequations}
The classical elliptic theory then gives
\EQ{\label{eq-om-W1p}
\norm{\om}_{W^{1,p}(\Om)} 
&\lec \norm{\curl{\bf f}}_{[H^{p'}_0(\Om)]'} + \norm{h - 2(\mathbb S(\nb\th)\boldsymbol{\nu})\cdot\boldsymbol{\tau} + 2\ka({\bf z}\cdot\boldsymbol{\tau})}_{W^{1-\frac1p,p}(\pd\Om)}\\
&\lec \norm{\bf f}_{{\bf L}^p(\Om)} + \norm{h}_{W^{1-\frac1p,p}(\pd\Om)} +  \norm{\pd^2\th}_{W^{1-\frac1p,p}(\pd\Om)} + \norm{\bf z}_{W^{1-\frac1p,p}(\pd\Om)}.
}
By trace theorem, $\norm{\pd^2\th}_{W^{1-\frac1p,p}(\pd\Om)}\lec\norm{\pd^2\th}_{W^{1,p}(\Om)} \lec \norm{\th}_{W^{3,p}(\Om)}$ and $\norm{\bf z}_{W^{1-\frac1p,p}(\pd\Om)} \lec \norm{\bf z}_{W^{1,p}(\Om)}$.
Thus, 
\EQN{
\norm{\om}_{W^{1,p}(\Om)} 
\lec \norm{\curl{\bf f}}_{[H^{p'}_0(\Om)]'} + \norm{h}_{W^{1-\frac1p,p}(\pd\Om)} +  \norm{\th}_{W^{3,p}(\Om)} + \norm{\bf z}_{W^{1,p}(\Om)}.
}
Moreover, applying Theorem \ref{thm-3.7-AR} on ${\bf z}$, noting that the compatibility condition \eqref{eq-3.15-AR} can be checked with the aid of Green formula \eqref{eq-2.8-AR} as in the proof of \cite[Corollary 3.8]{AR-JDE2014}, one has
\EQ{\label{eq-z-W1p}
\norm{\bf z}_{W^{1,p}(\Om)} 
&\lec \norm{{\bf f} + \nb\eta}_{[{\bf H}^{p'}_0(\div,\Om)]'} + \norm{h - 2(\mathbb S(\nb\th) \boldsymbol{\nu})\cdot\boldsymbol{\tau} }_{W^{-\frac1p,p}(\pd\Om)}\\
&\lec \norm{\bf f}_{[{\bf H}^{p'}_0(\div,\Om)]'} + \norm{\nb\eta}_{[{\bf H}^{p'}_0(\div,\Om)]'} + \norm{h}_{W^{-\frac1p,p}(\pd\Om)} + \norm{\pd^2\th}_{W^{-\frac1p,p}(\pd\Om)}\\
&\lec \norm{\bf f}_{{\bf L}^p(\Om)} + \norm{\eta}_{W^{1,p}(\Om)} + \norm{h}_{W^{1-\frac1p,p}(\pd\Om)} + \norm{\th}_{W^{2-\frac1p,p}(\pd\Om)}\\
&\lec \norm{\bf f}_{{\bf L}^p(\Om)} + \norm{\eta}_{W^{1,p}(\Om)} + \norm{h}_{W^{1-\frac1p,p}(\pd\Om)} + \norm{\th}_{W^{2,p}(\Om)},
}
where we used the trace theorem again in the last inequality.
Thus, by using \eqref{eq-theta-W3p} and \eqref{eq-z-W1p} in \eqref{eq-om-W1p} we obtain
\[
\norm{\om}_{W^{1,p}(\Om)} \lec \norm{\bf f}_{{\bf L}^p(\Om)} + \norm{h}_{W^{1-\frac1p,p}(\pd\Om)} + \norm{\eta}_{W^{1,p}(\Om)} + \norm{g}_{W^{2-\frac1p,p}(\pd\Om)}.
\]
Therefore, 
\[
{\bf z}\in{\bf X}^{2,p}(\Om):=\bket{{\bf v}\in{\bf L}^p(\Om): \div{\bf v}\in W^{1,p}(\Om),\, \curl{\bf v}\in W^{1,p}(\Om),\ {\bf v}\cdot\boldsymbol{\nu}\in W^{2-\frac1p,\frac1p}(\pd\Om)}
\]
since $\div{\bf z} = 0$, $\curl{\bf z} = \om\in W^{1,p}$, ${\bf z}\cdot\boldsymbol{\nu} = 0$ on $\pd\Om$.
Thus, ${\bf u} = {\bf z} + \nb\th\in {\bf X}^{2,p}(\Om)$ since $\nb\th$ also lies in ${\bf X}^{2,p}(\Om)$.
We can now apply the embedding ${\bf X}^{2,p}(\Om) \hookrightarrow {\bf W}^{2,p}(\Om)$ in iv) of Remark 2 of \cite[Corollary 1, p.p. 212--213]{DL-book1990v3} (see also \cite[(1.15)]{Mitrea-DIE2005} and \cite[Lemma 2.2]{Fan-Li-Li-ARMA2022}) to show that ${\bf u}\in{\bf W}^{2,p}(\Om)$ and derive the estimate \eqref{eq-est-thm4.1} for ${\bf u}$.
Finally, for the pressure $P$, the regularity $P\in W^{1,p}(\Om)$ and its estimate in \eqref{eq-est-thm4.1}
follows from the equation $\nb P = \De{\bf u} + {\bf f} \in {\bf L}^p(\Om)$.
This completes the proof of the theorem.
\end{proof}

\begin{remark}[The case $1<p<2$]\label{sect2-rmk-psmall}
Theorem \ref{thm-4.1-AR} (as well as Theorem \ref{thm-3.7-AR} and Corollary \ref{cor-3.8-AR}) can be proved for $1<p<2$ by the duality argument performed in the proof of \cite[Theorem 3.9]{AR-JDE2014}.  We skip the detailed discussion for small $p$. 
\end{remark}
Combining Theorem \ref{thm-4.1-AR} and Remark \ref{sect2-rmk-psmall}, we have proved Theorem \ref{thm-global-regularity-NS}.  Theorem \ref{thm-global-regularity-NS} establishes the global regularity of the two dimenional stationary Stokes operator with the Navier boundary condition.  It plays the vital role on the construction of the outer solution to the velocity field ${\bf u}$ shown in Section \ref{sect-model-stokes}.  Furthermore, by using Theorem \ref{thm-global-regularity-NS}, the semigroup estimate of the non-stationary Stokes operator is also be developed in Subsection \ref{subsect-stokes-semigroup}. 

\section{Global Existence: Subcritical Mass Case}\label{sect3-free-global-existence}
In this section, we shall discuss the global well-posedness of system (\ref{PKSNS-time-dependent}) and prove Theorem \ref{thm-global-existence-PKS-NS}.  Before this, we have to study the local-in-time existence of the solution.  Whereas, noting that the velocity ${\bf u}$ satisfies the incompressible Navier-Stokes equation subject to the Navier boundary condition rather than the no-slip one, we are driven to develop the corresponding semigroup theory and establish the desired semigroup estimate.  The detailed discussion will be shown in Subsection \ref{subsect-stokes-semigroup}.

\subsection{Analyticity of Stokes semigroup in ${\bf L}^p(\Om)$}\label{subsect-stokes-semigroup}
~

In this subsection, we prove the Stokes operator with Navier slip boundary conditions generates a bounded analytic semigroup on ${\bf L}^p_\si(\Om)$ for all $1<p<\infty$, where
\[
{\bf L}^p_\si(\Om) = \bket{{\bf v}\in{\bf L}^p(\Om): \div{\bf v} = 0\text{ in }\Om\text{ and }{\bf v}\cdot\boldsymbol{\nu} = 0\text{ on }\pd\Om}.
\]

\medskip

\noindent{\bf The case $p=2$.}

\begin{theorem}\label{thm-resolvent-p=2}
Let $\ve\in(0,\pi)$ be fixed, ${\bf f}\in{\bf L}^2(\Om)$ and $\la\in\Si_\ve = \bket{\la\in\CC^*: |\arg\la| < \pi - \ve}$. Then we have the following:

(i) Assume that $\Om$ is of class $C^{1,1}$. Then, the resolvent problem
\begin{subequations}
\label{eq-stokes-resolvent}
\begin{align}
&\la{\bf u} - \De{\bf u} + \nb P = {\bf f},\quad \div{\bf u} = 0,\ \text{ in } \Om,\label{eq-stokes-resolvent-a}\\
&{\bf u}\cdot\boldsymbol{\nu} = 0,\qquad\quad [\mathbb S({\bf u})\boldsymbol{\nu}]_{\boldsymbol{\tau}} = {\bf 0},\ \text{ on }\pd\Om,\label{eq-stokes-resolvent-b}
\end{align}
\end{subequations}
has a unique solution $({\bf u},P)\in{\bf H}^1(\Om)\times(L^2(\Om))/\R$.
Moreover, the solution satisfies the estimates
\EQ{\label{eq-semigroup-3.4}
\norm{\bf u}_{{\bf L}^2(\Om)} \le \frac{C_\ve}{|\la|} \norm{\bf f}_{{\bf L}^2(\Om)},
}
\EQ{\label{eq-semigroup-3.5}
\norm{\mathbb S({\bf u})}_{\mathbb L^2(\Om)} \le \frac{C_\ve}{\sqrt{2|\la|}} \norm{\bf f}_{{\bf L}^2(\Om)},
}
for some constant $C_\ve>0$ independent of ${\bf f}$ and $\la$.

(ii) If $\Om$ is of class $C^{2,1}$, then $({\bf u},P)\in{\bf H}^2(\Om)\times H^1(\Om)$, and ${\bf u}$ satisfies the estimate
\[
\norm{\bf u}_{{\bf H}^2(\Om)} \le \frac{C(\Om,\la,\ve)}{|\la|} \norm{\bf f}_{{\bf L}^2(\Om)},
\]
for some $C(\Om,\la,\ve)>0$.
\end{theorem}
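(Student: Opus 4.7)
The plan is to recast \eqref{eq-stokes-resolvent} as a coercive variational problem on the space ${\bf V}^2(\Om)$ and apply the Lax--Milgram theorem. Testing the PDE against $\boldsymbol{\varphi}\in{\bf V}^2(\Om)$ and using the Green formula \eqref{eq-2.8-AR} together with the Navier slip condition $[\mathbb S({\bf u})\boldsymbol{\nu}]_{\boldsymbol{\tau}}={\bf 0}$ reduces the problem to finding ${\bf u}\in{\bf V}^2(\Om)$ with
\[
a_\la({\bf u},\boldsymbol{\varphi}) := \la\int_\Om {\bf u}\cdot\overline{\boldsymbol{\varphi}}\,dx + 2\int_\Om\mathbb S({\bf u}):\mathbb S(\overline{\boldsymbol{\varphi}})\,dx = \int_\Om {\bf f}\cdot\overline{\boldsymbol{\varphi}}\,dx,\qquad\forall\,\boldsymbol{\varphi}\in{\bf V}^2(\Om).
\]
For coercivity on the full sector $\Si_\ve$, I would multiply by $e^{-i\arg\la/2}$: since $|\arg\la|/2<(\pi-\ve)/2$, the real part of $e^{-i\arg\la/2}a_\la({\bf u},{\bf u})$ equals $\cos(\arg\la/2)(|\la|\|{\bf u}\|_{L^2}^2+2\|\mathbb S({\bf u})\|_{L^2}^2)$, which is bounded below by a positive $\ve$-dependent multiple of $|\la|\|{\bf u}\|_{L^2}^2 + 2\|\mathbb S({\bf u})\|_{L^2}^2$. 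The Poincar\'e--Morrey inequality (Lemma~\ref{lem-3.3-AR}) then upgrades this to coercivity in $\|{\bf u}\|_{{\bf H}^1}$ on ${\bf V}^2(\Om)$ (for $\la\ne0$ the $|\la|\|{\bf u}\|_{L^2}^2$ contribution absorbs any $\boldsymbol{\mathcal T}$--component, so no quotient is needed), and Lax--Milgram produces a unique weak solution.

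For the resolvent bounds, plug $\boldsymbol{\varphi}={\bf u}$ into the weak form to get $\la\|{\bf u}\|_{L^2}^2 + 2\|\mathbb S({\bf u})\|_{L^2}^2 = ({\bf f},{\bf u})_{L^2}$. The key algebraic fact I would use is that for every $z\in\Si_\ve$ and $s\ge 0$,
\[
|z+s|\ge c_\ve\,\max\{|z|,s\},\qquad c_\ve := \tfrac12\sin\ve>0,
\]
which follows by splitting into the cases $\Re z\ge0$ and $\Re z<0$ (in the latter, $|\Im z|\ge\sin\ve\,|z|$). Applied with $z=\la\|{\bf u}\|_{L^2}^2$ and $s=2\|\mathbb S({\bf u})\|_{L^2}^2$, this simultaneously yields $|\la|\|{\bf u}\|_{L^2}^2\le c_\ve^{-1}\|{\bf f}\|_{L^2}\|{\bf u}\|_{L^2}$ and $2\|\mathbb S({\bf u})\|_{L^2}^2\le c_\ve^{-1}\|{\bf f}\|_{L^2}\|{\bf u}\|_{L^2}$, delivering \eqref{eq-semigroup-3.4}--\eqref{eq-semigroup-3.5} after a Cauchy--Schwarz step.

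The pressure is recovered via De Rham's theorem applied to the functional $\boldsymbol{\varphi}\mapsto({\bf f}-\la{\bf u},\boldsymbol{\varphi})_{L^2}-2(\mathbb S({\bf u}),\mathbb S(\boldsymbol{\varphi}))_{L^2}$, which vanishes on divergence-free fields with ${\bf v}\cdot\boldsymbol{\nu}=0$, yielding $P\in L^2(\Om)/\R$ with $\la{\bf u}-\De{\bf u}+\nb P={\bf f}$ distributionally. Reinserting this identity into Green's formula \eqref{eq-2.8-AR} and comparing with the variational equality forces $2[\mathbb S({\bf u})\boldsymbol{\nu}]_{\boldsymbol{\tau}}={\bf 0}$ in $W^{-1/2,2}(\pd\Om)$. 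For part (ii), regard \eqref{eq-stokes-resolvent} as the generalized Stokes system \eqref{sect0-abstract-stokes-system} with data ${\bf f}-\la{\bf u}\in{\bf L}^2(\Om)$, $\eta=g=h=0$: the compatibility \eqref{eq-3.15-AR} is automatic because testing the weak form against $\boldsymbol{\be}\in\boldsymbol{\mathcal T}(\Om)$ with $\mathbb S(\boldsymbol{\be})={\bf O}$ gives exactly $\la({\bf u},\boldsymbol{\be})=({\bf f},\boldsymbol{\be})$. Theorem~\ref{thm-4.1-AR} with $p=2$ then upgrades the solution to $({\bf u},P)\in{\bf H}^2(\Om)\times H^1(\Om)$, and the $H^2$ estimate is obtained by combining $\|{\bf u}\|_{H^2}\lec\|{\bf f}-\la{\bf u}\|_{L^2}$ with the $L^2$ bound \eqref{eq-semigroup-3.4}.

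The principal obstacle I foresee is the uniform coercivity on the full sector $\Si_\ve$, including $\la$ with $\Re\la\le 0$ where the mass term $\la\|{\bf u}\|^2$ ceases to be dissipative; the rotation $e^{-i\arg\la/2}$ (chosen so that \emph{both} the mass and deformation terms rotate into the right half-plane) is the crux. A secondary subtlety is that the constant $C(\Om,\la,\ve)$ in part (ii) may grow in $\la$ since $\|{\bf f}-\la{\bf u}\|_{L^2}\le\|{\bf f}\|_{L^2}+|\la|\|{\bf u}\|_{L^2}\lec\|{\bf f}\|_{L^2}$, but $H^2$ control on second derivatives beyond that furnished by Theorem~\ref{thm-4.1-AR} is not $\la$-uniform; the statement of (ii) is compatible with this.
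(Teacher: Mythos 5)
Your proposal is correct and follows essentially the same route as the paper, which solves the identical variational problem on ${\bf V}^2(\Om)$ with the same sesquilinear form $a_\la({\bf u},\boldsymbol{\varphi}) = \la({\bf u},\boldsymbol{\varphi}) + 2(\mathbb S({\bf u}),\mathbb S(\boldsymbol{\varphi}))$ via Lax--Milgram and derives the equivalence with \eqref{eq-stokes-resolvent} from the Green formula \eqref{eq-2.8-AR}. The rotation $e^{-i\arg\la/2}$ for sector coercivity, the algebraic inequality $|z+s|\ge c_\ve\max\{|z|,s\}$ yielding both resolvent bounds from the identity $\la\|{\bf u}\|_{L^2}^2 + 2\|\mathbb S({\bf u})\|_{L^2}^2 = ({\bf f},{\bf u})$, the De Rham reconstruction of $P$, and the reduction of part (ii) to Theorem~\ref{thm-4.1-AR} with the automatically satisfied compatibility condition are precisely the details the paper delegates to the cited proof of \cite[Theorem 3.2]{AAR-semigroup2016}.
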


\begin{proof}
The theorem follows from the same argument as in the proof of \cite[Theorem 3.2]{AAR-semigroup2016} via the variational problem of finding ${\bf u}\in{\bf V}^2(\Om)$ such that
\[
a({\bf u},\boldsymbol{\varphi}) = \int_{\Om} {\bf f}\cdot\overline{\boldsymbol{\varphi}}\,dx,\ \text{ for all }\boldsymbol{\varphi}\in{\bf V}^2(\Om),\qquad
a({\bf u}, \boldsymbol{\varphi}) := \la\int_\Om{\bf u}\cdot\overline{\boldsymbol{\varphi}}\,dx + 2\int_\Om\mathbb S({\bf u}):\mathbb S(\overline{\boldsymbol{\varphi}})\, dx,
\]
which is equivalent to finding a distributional solution $({\bf u},P)\in{\bf H}^1(\Om)\times L^2(\Om)$ of \eqref{eq-stokes-resolvent} thanks to the Green formula \eqref{eq-2.8-AR} in Lemma \ref{lem-2.4-AR}.
\end{proof}

\medskip

\noindent{\bf The general case $1<p<\infty$.}
We now extend Theorem \ref{thm-resolvent-p=2} to the general case $1<p<\infty$. 
To begin with, we first establish the following existence and uniqueness theorem.
\begin{theorem}\label{thm-semigroup-3.4}
Assume that $\Om$ is of $C^{1,1}$.
Let $\ve\in(0,\pi)$ be fixed, ${\bf f}\in{\bf L}^2(\Om)$ and $\la\in\Si_\ve$. 
Then the resolvent problem \eqref{eq-stokes-resolvent} has a unique solution $({\bf u},P)\in {\bf W}^{1,p}(\Om)\times(L^p(\Om)/\R)$. 
If $\Om$ is of class $C^{2,1}$, then $({\bf u},P)\in {\bf W}^{2,p}(\Om)\times W^{1,p}(\Om)$. 
\end{theorem}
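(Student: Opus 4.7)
The plan is to bootstrap from the Hilbert case of Theorem \ref{thm-resolvent-p=2} using the stationary $W^{1,p}$ and $W^{2,p}$ theories of Corollary \ref{cor-3.8-AR} and Theorem \ref{thm-4.1-AR}, aided by the favorable 2D embeddings ${\bf H}^1(\Om)\hookrightarrow {\bf L}^q(\Om)$ for every $q<\infty$ and ${\bf W}^{1,p}(\Om)\hookrightarrow {\bf L}^\infty(\Om)$ for $p>2$. The ${\bf L}^2$-hypothesis on the datum, combined with Sobolev embedding on the bounded domain, lets us freely pass between ${\bf L}^2$ and ${\bf L}^p$ and avoids any separate variational construction at $p\neq2$.

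For existence when $1<p\le 2$, the embedding ${\bf L}^2(\Om)\hookrightarrow {\bf L}^p(\Om)$ places ${\bf f}$ in ${\bf L}^p$, and the solution from Theorem \ref{thm-resolvent-p=2}(i) already lies in ${\bf H}^1\times L^2\hookrightarrow {\bf W}^{1,p}(\Om)\times(L^p(\Om)/\R)$; if $\Om\in C^{2,1}$, part (ii) upgrades this to ${\bf H}^2\times H^1\hookrightarrow {\bf W}^{2,p}\times W^{1,p}$. For $2<p<\infty$, start instead from the ${\bf H}^1\times L^2$ solution (available via ${\bf L}^p\hookrightarrow {\bf L}^2$) and rewrite the resolvent equation as the stationary Stokes system with $\eta=g=h=0$ and source $\tilde{\bf f}:={\bf f}-\la{\bf u}$. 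Because ${\bf u}\in{\bf H}^1$ yields $\la{\bf u}\in{\bf L}^q$ for every $q<\infty$, we have $\tilde{\bf f}\in{\bf L}^p(\Om)$. The compatibility \eqref{eq-3.15-AR} for $\tilde{\bf f}$ is automatic: testing the resolvent equation against any $\boldsymbol{\be}\in\boldsymbol{\mathcal T}(\Om)$ via the Green identity \eqref{eq-2.8-AR}, and using $\mathbb S(\boldsymbol{\be})=0$, $\boldsymbol{\be}\cdot\boldsymbol{\nu}=0$, and the Navier boundary condition $[\mathbb S({\bf u})\boldsymbol{\nu}]_{\boldsymbol{\tau}}=0$, gives $\la\int_\Om{\bf u}\cdot\boldsymbol{\be}\,dx=\bka{{\bf f},\boldsymbol{\be}}_{\Om,2}$, i.e.\ $\bka{\tilde{\bf f},\boldsymbol{\be}}_{\Om,p}=0$. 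Corollary \ref{cor-3.8-AR} then promotes $({\bf u},P)$ to ${\bf W}^{1,p}\times(L^p/\R)$; for $\Om\in C^{2,1}$, the newly obtained embedding ${\bf W}^{1,p}\hookrightarrow {\bf L}^\infty$ (since $p>2$) keeps $\tilde{\bf f}\in{\bf L}^p$, and Theorem \ref{thm-4.1-AR} delivers $({\bf u},P)\in {\bf W}^{2,p}\times W^{1,p}$.

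For uniqueness when $p\ge 2$, any homogeneous ${\bf W}^{1,p}$-solution sits in ${\bf H}^1$ on the bounded domain and must vanish by Theorem \ref{thm-resolvent-p=2}(i). For $1<p<2$, I proceed by duality: given a homogeneous ${\bf W}^{1,p}$-solution ${\bf u}$ and an arbitrary $\boldsymbol{\phi}\in {\bf L}^{p'}_\si(\Om)$, let ${\bf v}\in {\bf W}^{1,p'}$ solve the adjoint resolvent problem with datum $\boldsymbol{\phi}$ and $\overline\la$ in place of $\la$, which exists by the $p'>2$ case just proved. Pairing the two equations through the Green identity \eqref{eq-2.8-AR} cancels the symmetric bilinear form $2\int_\Om\mathbb S({\bf u}):\mathbb S({\bf v})\,dx$ and the $\la$-terms, yielding $\int_\Om{\bf u}\cdot\overline{\boldsymbol{\phi}}\,dx=0$; arbitrariness of $\boldsymbol{\phi}$ forces ${\bf u}\equiv {\bf 0}$. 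The main obstacle is precisely this duality step: one must ensure $\De{\bf u}\in[{\bf H}^{p'}_0(\div,\Om)]'$ so that Lemma \ref{lem-2.4-AR} applies, which follows from the equation together with $P\in L^p$ pairing correctly against ${\bf H}^{p'}_0(\div,\Om)$ via $\bka{\nb P,\boldsymbol{\varphi}}_{\Om,p}=-\int_\Om P\,\div\boldsymbol{\varphi}\,dx$, bounded by $\norm{P}_{L^p}\norm{\div\boldsymbol{\varphi}}_{L^{p'}}$.
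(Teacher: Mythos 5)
Your overall strategy --- bootstrap the Hilbert-case resolvent solution of Theorem \ref{thm-resolvent-p=2} through the stationary Navier-slip theory, check the compatibility condition \eqref{eq-3.15-AR} by testing against $\boldsymbol{\be}\in\boldsymbol{\mathcal T}(\Om)$, and treat small $p$ by duality --- is essentially the route the paper compresses into one sentence (``duality with Theorem \ref{thm-resolvent-p=2} and embeddings as in Theorem \ref{thm-3.7-AR}''), and several of your details are correct and more explicit than the paper: the verification that $\bka{{\bf f}-\la{\bf u},\boldsymbol{\be}}=0$, the justification that $\De{\bf u}\in[{\bf H}^{p'}_0(\div,\Om)]'$ so that the Green formula \eqref{eq-2.8-AR} is applicable, and the uniqueness arguments (via ${\bf W}^{1,p}\subset{\bf H}^1$ for $p\ge2$, via the adjoint resolvent problem for $p<2$) are all sound.

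There is, however, a genuine gap in the data-space bookkeeping for $2<p<\infty$. You claim the ${\bf L}^2$ hypothesis ``lets us freely pass between ${\bf L}^2$ and ${\bf L}^p$'' and conclude $\tilde{\bf f}={\bf f}-\la{\bf u}\in{\bf L}^p(\Om)$; but ${\bf L}^2(\Om)\not\subset{\bf L}^p(\Om)$ for $p>2$, and an ${\bf L}^2$ source does not in general define an element of $[{\bf H}^{p'}_0(\div,\Om)]'$ either (boundedness against merely divergence-controlled ${\bf L}^{p'}$ test fields fails), so as written neither Corollary \ref{cor-3.8-AR} nor Theorem \ref{thm-4.1-AR} can be applied to $\tilde{\bf f}$: the term $\la{\bf u}$ is harmless by the 2D embedding, but the ${\bf f}$ part is not. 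Two repairs are available. (a) Keep the stated hypothesis ${\bf f}\in{\bf L}^2$: apply Theorem \ref{thm-4.1-AR} with exponent $2$ to the rewritten stationary system (compatibility exactly as you checked), obtaining $({\bf u},P)\in{\bf H}^2(\Om)\times H^1(\Om)$, and then use the two-dimensional embeddings ${\bf H}^2(\Om)\hookrightarrow{\bf W}^{1,p}(\Om)$ and $H^1(\Om)\hookrightarrow L^p(\Om)$ for every $p<\infty$. (b) Read the hypothesis as ${\bf f}\in{\bf L}^p$, which is how the theorem is actually invoked in Proposition \ref{prop-semigroup-3.8}; then your $p>2$ bootstrap is fine, but for $1<p<2$ the datum need not lie in ${\bf L}^2$, so existence can no longer be read off the Hilbert solution and must itself be produced by the duality argument that you deploy only for uniqueness (this is the ``duality'' the paper alludes to). Either way, one half of your argument is missing a step; once it is supplied, the remainder of the proposal goes through.
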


\begin{proof}
By using a duality argument with Theorem \ref{thm-resolvent-p=2} and embedding theorems as in the proof of Theorem \ref{thm-3.7-AR}, the theorem follows.
We omit the details for brevity.
\end{proof}

Now, we proceed to prove resolvent estimate for $1<p<\infty$.
For this purpose, we use the following lemma whose proof is the same as for \cite[Lemma 2.5]{AAE-CM2016} with a slightly modification to the two-dimensional case.

\begin{lemma}
Let ${\bf u}\in{\bf W}^{1,p}(\Om)$ such that $\De{\bf u}\in{\bf L}^p(\Om)$. Then
\EQ{\label{eq-semigroup-2.8}
-\int_\Om |{\bf u}|^{p-2}&\De{\bf u}\cdot\overline{\bf u}\, dx 
= \int_\Om |{\bf u}|^{p-2}|\nb{\bf u}|^2\, dx + 4\,\frac{p-2}{p^2} \int_\Om \abs{\nb|{\bf u}|^{p/2}}^2 dx\\
& + i(p-2)\sum_{k=1}^2 \int_\Om |{\bf u}|^{p-4} \Re\bke{\frac{\pd{\bf u}}{\pd x_k}\cdot\overline{\bf u}} \Im\bke{\frac{\pd{\bf u}}{\pd x_k}\cdot\overline{\bf u}} dx - \left<\frac{\pd{\bf u}}{\pd\boldsymbol{\nu}}, |{\bf u}|^{p-2}{\bf u}\right>_{\pd\Om,p}.
}
\end{lemma}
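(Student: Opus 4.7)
The identity is an integration-by-parts computation in which the $|\mathbf{u}|^{p-2}$ weight is differentiated and carefully decomposed into its real and imaginary contributions. My plan is to work component-wise, regularize to avoid singularities when $\mathbf{u}$ vanishes, and then identify the resulting terms with the right-hand side.

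First I would integrate by parts formally: writing $\Delta\mathbf{u}\cdot\overline{\mathbf{u}} = \sum_j(\Delta u_j)\bar u_j$ and using the duality pairing of Lemma \ref{lem-2.3-AR} / Corollary \ref{cor-2.5-AR} to make sense of the boundary term, one obtains
\begin{equation*}
-\int_\Om |\mathbf{u}|^{p-2}\Delta\mathbf{u}\cdot\overline{\mathbf{u}}\,dx
= \sum_{j,k}\int_\Om \frac{\pd u_j}{\pd x_k}\,\frac{\pd}{\pd x_k}\bke{|\mathbf{u}|^{p-2}\bar u_j}\,dx
- \left<\tfrac{\pd\mathbf{u}}{\pd\boldsymbol{\nu}},|\mathbf{u}|^{p-2}\mathbf{u}\right>_{\pd\Om,p}.
\end{equation*}
Expanding by the Leibniz rule produces two pieces. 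The first, $\sum_{j,k}\int |\mathbf{u}|^{p-2}|\pd_k u_j|^2\,dx$, is exactly $\int_\Om|\mathbf{u}|^{p-2}|\nabla\mathbf{u}|^2\,dx$. For the second, I would use the key pointwise identities
\begin{equation*}
\frac{\pd}{\pd x_k}|\mathbf{u}|^2 = 2\,\Re\bke{\tfrac{\pd\mathbf{u}}{\pd x_k}\cdot\overline{\mathbf{u}}},
\qquad
\frac{\pd}{\pd x_k}|\mathbf{u}|^{p-2} = (p-2)|\mathbf{u}|^{p-4}\Re\bke{\tfrac{\pd\mathbf{u}}{\pd x_k}\cdot\overline{\mathbf{u}}},
\end{equation*}
so that the Leibniz term becomes
\begin{equation*}
(p-2)\sum_k\int_\Om |\mathbf{u}|^{p-4}\bke{\tfrac{\pd\mathbf{u}}{\pd x_k}\cdot\overline{\mathbf{u}}}\,\Re\bke{\tfrac{\pd\mathbf{u}}{\pd x_k}\cdot\overline{\mathbf{u}}}\,dx.
\end{equation*}
Splitting $\frac{\pd\mathbf{u}}{\pd x_k}\cdot\overline{\mathbf{u}} = \Re(\cdot)+i\,\Im(\cdot)$ decomposes this into a real part
\begin{equation*}
(p-2)\sum_k\int_\Om |\mathbf{u}|^{p-4}\bkt{\Re\bke{\tfrac{\pd\mathbf{u}}{\pd x_k}\cdot\overline{\mathbf{u}}}}^2 dx
\end{equation*}
and an imaginary part that is precisely the third term on the right-hand side of the claimed identity. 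Using $\Re(\tfrac{\pd\mathbf{u}}{\pd x_k}\cdot\overline{\mathbf{u}}) = |\mathbf{u}|\,\pd_k|\mathbf{u}|$ and the chain rule identity $|\mathbf{u}|^{p-2}|\nabla|\mathbf{u}||^2 = \frac{4}{p^2}|\nabla|\mathbf{u}|^{p/2}|^2$, the real part rewrites as $\frac{4(p-2)}{p^2}\int_\Om|\nabla|\mathbf{u}|^{p/2}|^2\,dx$, completing the identification.

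The main obstacle is justifying the manipulation when $\mathbf{u}$ vanishes: the factor $|\mathbf{u}|^{p-4}$ is singular for $p<4$, and $\mathbf{u}\in\mathbf W^{1,p}(\Om)$ with $\Delta\mathbf{u}\in\mathbf L^p(\Om)$ is not a priori smooth. I would handle both issues by a single regularization: replace $|\mathbf{u}|$ by $|\mathbf{u}|_\ve := (|\mathbf{u}|^2+\ve)^{1/2}$, and approximate $\mathbf{u}$ by $\mathbf{u}_n\in\boldsymbol{\mathcal D}(\overline\Om)$ in $\mathbf W^{1,p}(\Om)$ with $\Delta\mathbf{u}_n\to\Delta\mathbf{u}$ in $\mathbf L^p(\Om)$. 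For $\mathbf{u}_n$ the identity above is a legitimate integration by parts, the boundary pairing reducing to the usual surface integral; the pointwise identities with $|\mathbf{u}_n|_\ve$ remain valid with the obvious modification. Passing $n\to\infty$ first uses $\mathbf W^{1,p}$-convergence together with the $\mathbf L^p$-control on the Laplacian to handle the boundary pairing (defined via \eqref{eq-Hp-pairing}), and then $\ve\to0$ by dominated convergence on the set $\{\mathbf{u}\neq0\}$, using $|\mathbf{u}|^{p-4}|\pd_k\mathbf{u}\cdot\overline{\mathbf{u}}|^2 \le |\mathbf{u}|^{p-2}|\nabla\mathbf{u}|^2$ to dominate the weighted integrands uniformly in $\ve$. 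Once this limit procedure is in place, the computation above delivers the four terms on the right-hand side of \eqref{eq-semigroup-2.8}, and the proof is complete.
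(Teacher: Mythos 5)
Your proposal is correct and follows essentially the same line of argument as the source the paper cites: the paper gives no independent proof but refers to \cite[Lemma 2.5]{AAE-CM2016}, which establishes the identity by precisely this integration by parts, Leibniz expansion of the weight $|\mathbf{u}|^{p-2}$, and splitting of $\partial_k\mathbf{u}\cdot\overline{\mathbf{u}}$ into real and imaginary parts, together with a density/regularization step to justify the formal computation. Your pointwise identities, the chain-rule rewriting of the real part as $\frac{4(p-2)}{p^2}|\nabla|\mathbf{u}|^{p/2}|^2$, and the domination bound $|\mathbf{u}|^{p-4}|\partial_k\mathbf{u}\cdot\overline{\mathbf{u}}|^2\le|\mathbf{u}|^{p-2}|\nabla\mathbf{u}|^2$ all check out, so the argument is sound.
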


We also need the following lemma whose proof is is the same as for \cite[Lemma 2.1]{AR-JDE2014} with a slightly modification to the two-dimensional case.

\begin{lemma}
For any ${\bf v}\in{\bf W}^{2,p}(\Om)$, we have 
\EQ{\label{eq-semigroup-2.3}
2 \bkt{\mathbb S({\bf v})\boldsymbol{\nu}}_{\boldsymbol{\tau}} = \nb_{\boldsymbol{\tau}}({\bf v}\cdot\boldsymbol{\nu}) + \bke{\frac{\pd{\bf v}}{\pd\boldsymbol{\nu}}}_{\boldsymbol{\tau}} - \bke{{\bf v}_{\boldsymbol{\tau}}\cdot\frac{\pd{\boldsymbol{\nu}}}{\pd s}}{\boldsymbol{\tau}},
}
where $s$ is the arc length parameter of $\pd\Om$.
\end{lemma}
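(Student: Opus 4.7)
The plan is to prove the identity pointwise on $\pd\Om$ by a direct computation in components, exploiting that the tangent space at each boundary point in 2D is one-dimensional so that $[\cdot]_{\boldsymbol{\tau}} = (\cdot\,\cdot\,\boldsymbol{\tau})\boldsymbol{\tau}$, and then reading off the three terms of the right-hand side via the product rule. By the definition of the strain tensor,
\EQN{
2\mathbb S({\bf v})\boldsymbol{\nu} = \bke{\nb{\bf v} + (\nb{\bf v})^\top}\boldsymbol{\nu} = \frac{\pd {\bf v}}{\pd\boldsymbol{\nu}} + (\nb{\bf v})^\top\boldsymbol{\nu},
}
so taking the inner product with $\boldsymbol{\tau}$ yields the scalar identity
\EQN{
2\mathbb S({\bf v})\boldsymbol{\nu}\cdot\boldsymbol{\tau} = \frac{\pd {\bf v}}{\pd\boldsymbol{\nu}}\cdot\boldsymbol{\tau} + \sum_{i,j} \tau_i\,\pd_i v_j\,\nu_j.
}
The second summand is precisely $\sum_j \nu_j\,\pd_{\boldsymbol{\tau}} v_j$, which is a tangential derivative acting only on the components of ${\bf v}$ while $\boldsymbol{\nu}$ is evaluated at the given boundary point.

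\medskip

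First, I would invoke the density of $\boldsymbol{\mathcal D}(\overline\Om)$ in ${\bf W}^{2,p}(\Om)$ (valid since $\pd\Om\in C^{1,1}$) to work with smooth ${\bf v}$; the resulting identity then passes to ${\bf W}^{2,p}$ in the a.e.\ sense on $\pd\Om$ by continuity of the trace. For smooth ${\bf v}$ the unit normal $\boldsymbol{\nu}$ has a smooth extension to a tubular neighborhood of $\pd\Om$, so the product rule on $\pd\Om$ (equivalently, the arc-length derivative $\pd/\pd s$) gives
\EQN{
\sum_j \nu_j\,\pd_{\boldsymbol{\tau}}v_j = \pd_{\boldsymbol{\tau}}({\bf v}\cdot\boldsymbol{\nu}) - {\bf v}\cdot\frac{\pd\boldsymbol{\nu}}{\pd s}.
}
Second, I would use $|\boldsymbol{\nu}|^2\equiv 1$ to differentiate along $s$ and deduce $\pd\boldsymbol{\nu}/\pd s \perp \boldsymbol{\nu}$, which implies ${\bf v}\cdot(\pd\boldsymbol{\nu}/\pd s) = {\bf v}_{\boldsymbol{\tau}}\cdot(\pd\boldsymbol{\nu}/\pd s)$. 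Combining these two facts yields
\EQN{
2\mathbb S({\bf v})\boldsymbol{\nu}\cdot\boldsymbol{\tau} = \frac{\pd {\bf v}}{\pd\boldsymbol{\nu}}\cdot\boldsymbol{\tau} + \pd_{\boldsymbol{\tau}}({\bf v}\cdot\boldsymbol{\nu}) - {\bf v}_{\boldsymbol{\tau}}\cdot\frac{\pd\boldsymbol{\nu}}{\pd s}.
}

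\medskip

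Finally, multiplying by $\boldsymbol{\tau}$ on both sides and identifying $(\pd_{\boldsymbol{\tau}}({\bf v}\cdot\boldsymbol{\nu}))\boldsymbol{\tau}=\nb_{\boldsymbol{\tau}}({\bf v}\cdot\boldsymbol{\nu})$ and $((\pd{\bf v}/\pd\boldsymbol{\nu})\cdot\boldsymbol{\tau})\boldsymbol{\tau}=(\pd{\bf v}/\pd\boldsymbol{\nu})_{\boldsymbol{\tau}}$ produces exactly \eqref{eq-semigroup-2.3}. The only delicate step is the product-rule splitting, since it requires treating tangential differentiation of ${\bf v}\cdot\boldsymbol{\nu}$ intrinsically on $\pd\Om$; the main obstacle—such as it is—is purely notational, namely keeping track of which factor the arc-length derivative hits and confirming that in two dimensions the scalar tangential derivative and the surface gradient agree up to the factor $\boldsymbol{\tau}$. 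This reduction parallels the three-dimensional argument of \cite[Lemma 2.1]{AR-JDE2014}, with the simplification that in $\R^2$ there is a single tangent direction at each boundary point, so no additional tangential index summation is needed.
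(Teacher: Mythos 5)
Your proof is correct and follows essentially the same route as the paper, which simply defers to \cite[Lemma 2.1]{AR-JDE2014} ``with a slight modification to the two-dimensional case''; you have spelled out the underlying pointwise computation (splitting $2\mathbb S({\bf v})\boldsymbol{\nu}\cdot\boldsymbol{\tau}$ via the strain decomposition, applying the product rule along $\pd\Om$, and using $|\boldsymbol{\nu}|\equiv1$ to replace ${\bf v}$ by ${\bf v}_{\boldsymbol{\tau}}$), together with the density argument that the paper leaves implicit.
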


\begin{remark}
If $\Om$ is of class $C^{1,1}$, and if ${\bf u}\in{\bf W}^{1,p}(\Om)$ such that ${\bf u}\cdot\boldsymbol{\nu} = 0$ and $[\mathbb S({\bf u})\boldsymbol{\nu}]_{\boldsymbol{\tau}} = {\bf 0}$ on $\pd\Om$, then, thanks to \eqref{eq-semigroup-2.3},$(\pd{\bf u}/\pd\boldsymbol{\nu})_{\boldsymbol{\tau}} = \bke{{\bf u}_{\boldsymbol{\tau}}\cdot(\pd\boldsymbol{\nu}/\pd s)} \boldsymbol{\tau}$ belongs to ${\bf W}^{1-1/p,p}(\pd\Om)\hookrightarrow{\bf L}^{p'}(\pd\Om)$. 
Consequently, the integral
\EQ{\label{eq-semigroup-2.9}
\int_{\pd\Om} |{\bf u}|^{p-2} \bke{\frac{\pd{\bf u}}{\pd\boldsymbol{\nu}}}_{\boldsymbol{\tau}}\cdot\overline{\bf u}\, dS
}
is well-defined, and the term $\left<\pd{\bf u}/\pd\boldsymbol{\nu}, |{\bf u}|^{p-2}{\bf u}\right>_{\pd\Om,p}$ in \eqref{eq-semigroup-2.8} can be replaced by \eqref{eq-semigroup-2.9}.
That is, we have for ${\bf u}\in{\bf W}^{1,p}(\Om)$ such that $\De{\bf u}\in{\bf L}^p(\Om)$ that 
\EQ{\label{eq-semigroup-2.8-new}
-\int_\Om &|{\bf u}|^{p-2}\De{\bf u}\cdot\overline{\bf u}\, dx 
= \int_\Om |{\bf u}|^{p-2}|\nb{\bf u}|^2\, dx + 4\,\frac{p-2}{p^2} \int_\Om \abs{\nb|{\bf u}|^{p/2}}^2 dx\\
& + i(p-2)\sum_{k=1}^2 \int_\Om |{\bf u}|^{p-4} \Re\bke{\frac{\pd{\bf u}}{\pd x_k}\cdot\overline{\bf u}} \Im\bke{\frac{\pd{\bf u}}{\pd x_k}\cdot\overline{\bf u}} dx - \int_{\pd\Om} |{\bf u}|^{p-2} \bke{\frac{\pd{\bf u}}{\pd\boldsymbol{\nu}}}_{\boldsymbol{\tau}}\cdot\overline{\bf u}\, dS.
}
\end{remark}

We are now ready to estimate of the solution of \eqref{eq-stokes-resolvent} for $p>2$.
To begin with, we consider the case when $\la$ is away from zero.

\begin{proposition}\label{prop-semigroup-3.8}
Let $1<p<\infty$ and $\la\in\CC^*$ with $\Re\la\ge0$.
Let ${\bf f}\in{\bf L}^p_\si(\Om)$, and let ${\bf u}\in{\bf W}^{2,p}(\Om)$ be the unique solution of the resolvent problem \eqref{eq-stokes-resolvent}. 
Then ${\bf u}$ satisfies the estimate
\EQ{\label{eq-semigroup-3.9}
\norm{\bf u}_{{\bf L}^p(\Om)} \le \frac{\ka}{|\la|} \norm{\bf f}_{{\bf L}^p(\Om)}.
}
for some $\ka = \ka(p,\Om) > 0$ independent of $\la$ and ${\bf f}$.
\end{proposition}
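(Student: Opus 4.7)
\medskip

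\noindent\textbf{Proof proposal.}
The plan is to mimic the testing strategy used in the three-dimensional counterpart \cite{AAE-CM2016}, adapted to our two-dimensional setting and the Navier slip boundary conditions.
First I would multiply the resolvent equation \eqref{eq-stokes-resolvent-a} by the pointwise test function $|{\bf u}|^{p-2}\overline{\bf u}$ and integrate over $\Om$.
The $\la{\bf u}$-term produces $\la\norm{\bf u}_{{\bf L}^p(\Om)}^p$, while the Laplacian term is unfolded via the pointwise identity \eqref{eq-semigroup-2.8-new}.
For the boundary contribution on $\pd\Om$, I would exploit the Navier condition $[\mathbb S({\bf u})\boldsymbol{\nu}]_{\boldsymbol{\tau}}={\bf 0}$ together with identity \eqref{eq-semigroup-2.3} and the relation $\pd\boldsymbol{\nu}/\pd s=-\ka\boldsymbol{\tau}$, which, combined with ${\bf u}\cdot\boldsymbol{\nu}=0$, yields the explicit formula
\EQN{
\bke{\frac{\pd{\bf u}}{\pd\boldsymbol{\nu}}}_{\boldsymbol{\tau}}= -\ka\,{\bf u}_{\boldsymbol{\tau}}\quad\text{on }\pd\Om,
}
so that the boundary integral collapses to $\int_{\pd\Om}\ka|{\bf u}|^p\,dS$.

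The pressure term is the first real obstacle. Integrating by parts and using $\div\overline{\bf u}=0$ together with the boundary condition ${\bf u}\cdot\boldsymbol{\nu}=0$, one obtains
\EQN{
\int_\Om|{\bf u}|^{p-2}\nb P\cdot\overline{\bf u}\,dx
= -\int_\Om P\,\nb(|{\bf u}|^{p-2})\cdot\overline{\bf u}\,dx,
}
which does not vanish for $p\ne2$. To control it, I would either pass to the Helmholtz projection of $|{\bf u}|^{p-2}\overline{\bf u}$ (so that only the gradient part pairs with $\nb P$) or bound $P$ in $L^{p}$ via the equation $\nb P=\De{\bf u}-\la{\bf u}+{\bf f}$ and the $L^p$-theory from Theorem \ref{thm-semigroup-3.4}, paying for it with a factor of $|p-2|\,\bigl(\int|{\bf u}|^{p-2}|\nb{\bf u}|^2\,dx\bigr)^{1/2}\norm{{\bf u}}_{{\bf L}^p}^{p/2-1}$, which can then be absorbed by the positive diffusion terms coming from \eqref{eq-semigroup-2.8-new}.

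Once this is achieved, I would split into real and imaginary parts. Because $\Re\la\ge 0$, the real part gives
\EQN{
\Re\la\,\norm{\bf u}_{{\bf L}^p}^p+\int_\Om|{\bf u}|^{p-2}|\nb{\bf u}|^2\,dx+\frac{4(p-2)}{p^2}\int_\Om\bigl|\nb|{\bf u}|^{p/2}\bigr|^2dx
\le \norm{{\bf f}}_{{\bf L}^p}\norm{\bf u}_{{\bf L}^p}^{p-1}+\text{(curvature + pressure remainder)},
}
and the imaginary part supplies
\EQN{
|\Im\la|\,\norm{\bf u}_{{\bf L}^p}^p\le |p-2|\int_\Om|{\bf u}|^{p-2}|\nb{\bf u}|^2\,dx+\norm{{\bf f}}_{{\bf L}^p}\norm{\bf u}_{{\bf L}^p}^{p-1}+\text{(pressure remainder)}.
}
Taking a convex combination and absorbing the gradient terms using the real-part inequality, one expects
\EQN{
|\la|\,\norm{\bf u}_{{\bf L}^p}^p\le C\,\norm{{\bf f}}_{{\bf L}^p}\norm{\bf u}_{{\bf L}^p}^{p-1}+C\norm{\bf u}_{{\bf L}^p}^p,
}
where the last term (from the curvature integral on $\pd\Om$ and the lower-order $\norm{\bf u}_{{\bf L}^p}^p$ contributions) can be handled either by a contradiction/compactness argument for $|\la|$ bounded, or by appealing to the $p=2$ resolvent estimate \eqref{eq-semigroup-3.4} of Theorem \ref{thm-resolvent-p=2} together with Sobolev embedding and a bootstrap in $p$. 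The desired bound \eqref{eq-semigroup-3.9} then follows after dividing by $\norm{\bf u}_{{\bf L}^p}^{p-1}$.

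\medskip

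The main obstacle I anticipate is the non-vanishing pressure remainder $-\int_\Om P\,\nb(|{\bf u}|^{p-2})\cdot\overline{\bf u}\,dx$, which is the essential two-dimensional analogue of the issue treated in \cite{AAE-CM2016}; controlling it requires either a delicate Helmholtz decomposition of $|{\bf u}|^{p-2}\overline{\bf u}$ relative to the Navier slip setting or a careful use of the $W^{1,p}$ theory for $P$ coming from Theorem \ref{thm-global-regularity-NS}. The secondary difficulty, namely the sign-indefiniteness of the curvature term $\int_{\pd\Om}\ka|{\bf u}|^p\,dS$, should be absorbable by combining with the $p=2$ estimate via interpolation and the hypothesis $\Re\la\ge 0$.
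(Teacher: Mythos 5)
Your proposal follows essentially the same route as the paper's proof: both adapt the multiplier argument of \cite{AAR-semigroup2016} using the two-dimensional identity \eqref{eq-semigroup-2.8-new}, reduce the Navier boundary term to a curvature integral via \eqref{eq-semigroup-2.3}, split into real and imaginary parts to obtain the resolvent bound for $|\la|$ large, and for $|\la|$ bounded fall back on the $p=2$ estimates \eqref{eq-semigroup-3.4}--\eqref{eq-semigroup-3.5} combined with Sobolev embeddings. The pressure remainder and the sign-indefinite curvature term you flag are indeed the two technical points the cited 3D proof treats, and your proposed workarounds are consistent with it.
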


\begin{proof}
For the case when $\la$ is away from zero, the same argument as in the proof of \cite[Proposition 3.6]{AAR-semigroup2016}, with the aid of the formula \eqref{eq-semigroup-2.8-new} for two-dimensional case, enable us to find $\la_0 = \la_0(\Om,p)>0$ such that 
\eqref{eq-semigroup-3.9} holds for all $\la\in\CC^*$ with $\Re\la\ge0$ and $|\la|\ge\la_0$.

For the case when $0<|\la|\le\la_0$, \eqref{eq-semigroup-3.9} follows as in \cite[Remark 3.7]{AAR-semigroup2016} using \eqref{eq-semigroup-3.4} and \eqref{eq-semigroup-3.5} together with the Sobolev embeddings.

We skip the details and only note that the embeddings for dimension three used in the proofs of \cite[Proposition 3.6]{AAR-semigroup2016} (e.g. ${\bf H}^2(\Om)\hookrightarrow{\bf W}^{1,p}(\Om)$ for $2\le p\le4$ and ${\bf W}^{2,4}(\Om)\hookrightarrow{\bf W}^{1,\infty}(\Om)$) and in \cite[Remark 3.7]{AAR-semigroup2016} (e.g. ${\bf H}^1(\Om)\hookrightarrow{\bf L}^p(\Om)$ for $2\le p\le6$) also apply to our 2-D case.
Indeed, for dimension two we have that ${\bf H}^2(\Om)\hookrightarrow{\bf H}^s(\Om)\hookrightarrow{\bf W}^{1,p}(\Om)$ for $2\le p\le4$ and $s=2-(2/p)\in[1,3/2]$ and that ${\bf H}^1(\Om)\hookrightarrow{\bf H}^s(\Om)\hookrightarrow{\bf L}^p(\Om)$ for $2\le p\le 6$ and $s=1-(2/p)\in[0,2/3]$.
\end{proof}

Let $A_p$ be the Stokes operator on $L^p_\si(\Om)$ with Navier slip boundary conditions given by 
\[
A_p{\bf u} := -\mathbb P_p\De{\bf u},\ \text{ where }\mathbb P_p:{\bf L}^p(\Om)\to{\bf L}^p_\si(\Om)\text{ is the Helmholtz projection,}
\]
with domain
\begin{align}\label{sect2-semigroup-domain}
{\bf D}(A_p) = \bket{{\bf u}\in{\bf W}^{2,p}(\Om)\cap{\bf L}^p_\si(\Om): \bkt{\mathbb S({\bf u})\boldsymbol{\nu}}_{\boldsymbol{\tau}}=0\ \text{ on }\pd\Om}.
\end{align}

\begin{theorem}
The operator $-A_p$ generates a bounded analytic semigroup on ${\bf L}^p_\si(\Om)$.
\end{theorem}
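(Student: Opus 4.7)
The plan is to invoke the standard characterization of generators of bounded analytic semigroups, with the sectorial resolvent estimate of Proposition \ref{prop-semigroup-3.8} as the decisive input. Concretely, it suffices to verify that $A_p$ is closed and densely defined on ${\bf L}^p_\si(\Om)$, and that $\|(\la+A_p)^{-1}\|_{\mathcal L({\bf L}^p_\si(\Om))}$ is controlled uniformly by $\ka/|\la|$ on the closed right half-plane $\{\la\in\CC^*:\Re\la\ge 0\}$.

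Density of ${\bf D}(A_p)$ in ${\bf L}^p_\si(\Om)$ is standard: smooth, compactly supported, divergence-free vector fields lie in ${\bf D}(A_p)$ (the Navier slip boundary conditions are vacuously satisfied for fields vanishing near $\pd\Om$) and they form a dense subset of ${\bf L}^p_\si(\Om)$. Closedness of $A_p$ is then automatic once a single bounded resolvent is exhibited, and Theorem \ref{thm-semigroup-3.4} does exactly that: for every $\la\in\CC^*$ with $\Re\la\ge 0$ (indeed, on the much larger sector $\Si_\ve$), the resolvent problem \eqref{eq-stokes-resolvent} admits a unique solution $({\bf u},P)$ with ${\bf u}\in{\bf D}(A_p)$, so $\la+A_p$ has a bounded inverse given by ${\bf f}\mapsto{\bf u}$. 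Proposition \ref{prop-semigroup-3.8} then supplies the decisive uniform bound $\|(\la+A_p)^{-1}\|_{\mathcal L({\bf L}^p_\si)}\le \ka/|\la|$ on this half-plane.

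The only remaining bookkeeping is that a bounded analytic semigroup requires the resolvent estimate on an open sector of aperture strictly greater than $\pi$, whereas Proposition \ref{prop-semigroup-3.8} delivers it only on the closed right half-plane. This extension is a routine Neumann series argument: at any point $\la_0=i\tau$ on the imaginary axis, the expansion $R(\la,-A_p)=\sum_{k\ge 0}(\la_0-\la)^k R(\la_0,-A_p)^{k+1}$ converges in the disk $|\la-\la_0|<|\tau|/\ka$, and the union of these disks covers a sector $\Si_{\pi/2+\de}$ with $\de=\arctan(1/\ka)>0$ on which $\|\la R(\la,-A_p)\|\le M_\de$ persists. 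Invoking the classical generation theorem for sectorial operators (e.g., Pazy's monograph, Chapter 2, Theorem 5.2) then yields that $-A_p$ generates a bounded analytic semigroup on ${\bf L}^p_\si(\Om)$. No substantive new obstacle is anticipated, as the entire nontrivial analysis has already been carried out in the preceding resolvent theory; this final theorem is essentially an assembly of Theorem \ref{thm-semigroup-3.4}, Proposition \ref{prop-semigroup-3.8}, and a standard semigroup-theoretic extension.
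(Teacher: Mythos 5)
Your proposal is correct and follows essentially the same path as the paper: both reduce to the resolvent estimate of Proposition \ref{prop-semigroup-3.8} (together with the solvability from Theorem \ref{thm-semigroup-3.4}) and then invoke the standard generation theorem for sectorial operators. You merely spell out the details the paper compresses into two lines — density and closedness of $A_p$, and the Neumann-series extension of the resolvent bound from the closed right half-plane to a genuine sector — which is sound bookkeeping but not a different method.
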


\begin{proof}
It follows from Theorem \ref{thm-semigroup-3.4} and Proposition \ref{prop-semigroup-3.8} that $-A_p$ is a sectorial operator on ${\bf L}^p_\si(\Om)$. Therefore, $-A_p$ is the infinitesimal generator of an analytic semigroup by \cite[Theorem 1.3.4, p.20]{Henry-book1981}.
\end{proof}

We now derive an estimate for the semigroup $\{e^{-A_pt}\}_{t\ge0}$.
To this end, we first obtain a bound on pure imaginary powers of $-A_p$.

\begin{theorem}\label{thm-ghosh-3.5.1}
There exists an angle $\th\in(0,\pi/2)$ and a constant $C>0$ such that 
\EQ{\label{eq-ghosh-3.44}
\norm{A_p^{is}} \le Ce^{|s|\th},\ \text{ for all }s\in\R.
}
\end{theorem}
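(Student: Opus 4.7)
The plan is to establish the BIP bound $\|A_p^{is}\|\le Ce^{|s|\th}$ via the Dunford–Riesz functional calculus, starting from the sectoriality of $-A_p$ proved in Theorem \ref{thm-semigroup-3.4} and Proposition \ref{prop-semigroup-3.8}, and then transferring the sharp Hilbert estimate at $p=2$ to general $p$ via complex interpolation and duality.

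\textbf{Step 1 (the Hilbert case).} First I would verify that $A_2$ is self-adjoint and positive on ${\bf L}^2_\si(\Om)$ modulo the rigid-motion kernel $\boldsymbol{\mathcal T}(\Om)$. The sesquilinear form $a({\bf u},{\bf v}) = 2\int_\Om \mathbb S({\bf u}):\mathbb S({\bf v})\,dx$ on ${\bf V}^2(\Om)$ is symmetric and coercive by the Poincaré–Morrey inequality \eqref{eq-3.7-AR}, so the Friedrichs construction behind Theorem \ref{thm-resolvent-p=2} yields $A_2 = A_2^*\ge 0$. The classical spectral theorem then gives $\|A_2^{is}\|_{{\bf L}^2\to{\bf L}^2}\le 1$ for every $s\in\R$, which furnishes the base estimate at effective angle $\th=0$.

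\textbf{Step 2 (the $L^p$ case via Dunford integrals).} For general $p\in(1,\infty)$, I would represent
\EQN{
A_p^{is} = \frac{1}{2\pi i}\int_\Ga \la^{is}(\la-A_p)^{-1}\,d\la
}
along a Hankel contour $\Ga = \{\la = te^{\pm i(\pi-\ve)}: t>0\}$ in the resolvent set, surrounding the spectrum of $-A_p$ in the narrow sector $\Si_\ve$. On $\Ga$ one has $|\la^{is}|\le e^{|s|(\pi-\ve)}$, and the resolvent bound $\|(\la-A_p)^{-1}\|\le\ka/|\la|$ from Proposition \ref{prop-semigroup-3.8} controls the remaining factor. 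The formal divergence of $\int_0^\infty dr/r$ is handled by renormalizing $A_p^{is} = A_p^{-N}A_p^{N+is}$ for a large integer $N$ (noting that $A_p$ is invertible modulo its kernel on the bounded domain $\Om$, so $A_p^{-N}$ is bounded), or equivalently via the Balakrishnan representation for fractional powers. This produces $A_p^{is}$ as a bounded operator on ${\bf L}^p_\si(\Om)$ with a preliminary exponent $\pi-\ve$.

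\textbf{Step 3 (sharpening the angle — the main obstacle).} The principal obstacle is sharpening the crude exponent $|s|(\pi-\ve)$, which can exceed $\pi/2$, down to $|s|\th$ with $\th<\pi/2$, since this sharpened angle is precisely what is needed later for Dore–Venni-type maximal regularity arguments. I would address this by Stein's complex interpolation theorem, interpolating analytically in the strip $\{0\le\Re z\le 1\}$ the holomorphic family $A_p^{z+is}$ between the $L^2$ bound $\|A_2^{is}\|\le 1$ (angle $0$) on the line $\Re z=z_0$ and the raw $L^q$ bound $\|A_q^{is}\|\le Ce^{|s|(\pi-\ve)}$ for an auxiliary $q$ close to $p$; the interpolated angle becomes $\th = \theta_0(\pi-\ve)$ with $\theta_0<1/2$ provided $p$ is sufficiently close to $2$. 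The full range $1<p<\infty$ is then recovered via the duality $A_p^* = A_{p'}$ inherited from the self-adjointness of $A_2$, together with a bootstrap in $p$. The 2D-specific subtleties (weaker Sobolev embeddings than the 3D setting of \cite{AAR-semigroup2016}) are absorbed because $\Om$ is bounded and the resolvent estimate of Proposition \ref{prop-semigroup-3.8} holds uniformly in $\la\in\Si_\ve$, so the structural argument of the corresponding 3D result transfers with only cosmetic modifications.
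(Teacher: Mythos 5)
There is a genuine gap, and it sits at the heart of your Step 2. From the resolvent estimate of Proposition \ref{prop-semigroup-3.8} you only get sectoriality of $A_p$, and sectoriality does \emph{not} imply boundedness of the purely imaginary powers: this is a classical phenomenon (McIntosh--Yagi type counterexamples; even generators of bounded analytic semigroups on Hilbert spaces can fail to have bounded imaginary powers). Your Dunford integral $\frac{1}{2\pi i}\int_\Gamma \lambda^{is}(\lambda-A_p)^{-1}\,d\lambda$ does not converge, as you note, and the proposed repair does not repair it: the factorization $A_p^{is}=A_p^{-N}A_p^{N+is}$ (or the Balakrishnan formula) only defines $A_p^{is}$ as a closed, in general unbounded, operator --- whether it is bounded, and with which exponential bound in $s$, is exactly the statement to be proved, and it cannot be extracted from the bound $\|(\lambda-A_p)^{-1}\|\le \kappa/|\lambda|$ alone. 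Since Step 3 interpolates the (nonexistent) crude $L^q$ bound against the $L^2$ bound, the whole argument collapses at this point. There is also a secondary defect: even if the crude bound with angle $\pi-\varepsilon$ were available on every $L^q$, interpolating with $p=2$ only yields an angle below $\pi/2$ when the interpolation parameter is less than $1/2$, i.e.\ roughly for $p\in(4/3,4)$; duality merely mirrors $p\leftrightarrow p'$, and the announced ``bootstrap in $p$'' is not explained and does not obviously reach all $p\in(1,\infty)$. (A further small point: when $\Omega$ is a disk, $A_p$ has the nontrivial kernel $\boldsymbol{\mathcal T}(\Omega)$, so $A_p^{is}$ needs to be interpreted on a complement of the kernel before any of these manipulations.)

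The paper does not attempt to manufacture bounded imaginary powers from sectoriality. It obtains \eqref{eq-ghosh-3.44} by adapting the proof of the corresponding three-dimensional result in Ghosh's thesis, which rests on Amann's interpolation--extrapolation theory and on the specific structure of the Stokes operator with Navier(-type) boundary conditions (the curl-based boundary condition allows a reduction to operators whose bounded imaginary powers are already known); the only change needed in 2D is replacing the vector-valued condition $\mathrm{curl}\,{\bf u}\times{\bf n}={\bf 0}$ by the scalar condition $\mathrm{curl}\,{\bf u}=0$. If you want a self-contained route, you must import an ingredient of that strength --- e.g.\ a bounded $H^\infty$-calculus or the transference/structure arguments used there --- rather than the resolvent estimate by itself; your Step 1 (the $L^2$ bound via self-adjointness) is fine and is indeed the natural anchor for such an interpolation, but it needs a valid companion estimate on $L^q$ to interpolate against.
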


\begin{proof}
The proof is similar to that for \cite[(3.44)]{Ghosh-thesis2018} using the interpolation-extrapolation theory (see \cite[V.1.5]{Amann-book1995}) via the Stokes operator for Navier-type boundary conditions ${\bf u}\cdot{\bf n} = 0$, $\boldsymbol{\curl}\,{\bf u}\times{\bf n} = {\bf 0}$ on the boundary in dimension three.
To adapt the proof to our two-dimensional case, we only need to replace the boundary condition $\boldsymbol{\curl}\,{\bf u}\times{\bf n} = {\bf 0}$ by $\boldsymbol{\curl}\,{\bf u} = 0$, and the vector-valued $\boldsymbol{\curl}\,{\bf u}$ for dimension three by scalar-valued $\curl{\bf u}$ for dimension two.
\end{proof}

As a corollary of Theorem \ref{thm-ghosh-3.5.1}, we have the following Sobolev type embedding theorem for domains of fractional powers.

\begin{theorem}\label{thm-ghosh-3.5.4}
For all $1<p<\infty$ and for $\th\in(0,1/p)$,
\[
{\bf D}(A_p^\th)\hookrightarrow{\bf L}^q(\Om),\ \text{ where }\, \frac1q = \frac1p - \th.
\]
\end{theorem}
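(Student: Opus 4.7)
The plan is to combine the bounded imaginary powers estimate \eqref{eq-ghosh-3.44} with classical complex interpolation in order to identify the domain of $A_p^\theta$ with a Bessel potential (fractional Sobolev) space, and then apply a standard Sobolev embedding in dimension two.

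First, since by Theorem \ref{thm-semigroup-3.4} the Stokes operator $A_p$ is invertible (up to a harmless shift, if needed, replace $A_p$ by $A_p+1$; the resulting space differs by an equivalent norm) and has bounded imaginary powers with the exponential bound \eqref{eq-ghosh-3.44} valid for an angle $\theta<\pi/2$, we may invoke the theorem of Seeley/Triebel identifying complex interpolation spaces with domains of fractional powers: for $0<\theta<1$,
\[
{\bf D}(A_p^\theta) = [{\bf L}^p_\sigma(\Om),\,{\bf D}(A_p)]_\theta
\]
with equivalent norms. This is where the estimate of the previous theorem is essential; without BIP one would only obtain real interpolation identifications, which would not yield the sharp Sobolev exponent.

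Second, by the definition of ${\bf D}(A_p)$ and the $W^{2,p}$ regularity established in Theorem \ref{thm-semigroup-3.4} (a consequence of Theorem \ref{thm-global-regularity-NS}), we have the continuous inclusion ${\bf D}(A_p)\hookrightarrow {\bf W}^{2,p}(\Om)\cap{\bf L}^p_\sigma(\Om)$. Therefore, by functoriality of the complex interpolation functor and the classical identification $[{\bf L}^p(\Om),{\bf W}^{2,p}(\Om)]_\theta = {\bf H}^{2\theta,p}(\Om)$ (the Bessel potential space), we conclude
\[
{\bf D}(A_p^\theta)\hookrightarrow {\bf H}^{2\theta,p}(\Om).
\]

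Third, for $\theta\in(0,1/p)$ the classical Sobolev embedding in two space dimensions gives
\[
{\bf H}^{2\theta,p}(\Om)\hookrightarrow {\bf L}^q(\Om),\qquad \frac{1}{q}=\frac{1}{p}-\theta,
\]
since the required relation $2\theta - 2/p \ge -2/q$ is satisfied with equality exactly at this exponent, and the constraint $\theta<1/p$ ensures $q<\infty$. Chaining the three embeddings yields the theorem.

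The delicate point in this outline is the Seeley/Triebel identification $[{\bf L}^p_\sigma,{\bf D}(A_p)]_\theta = {\bf D}(A_p^\theta)$, which requires precisely the uniform bound of the form $\|A_p^{is}\|\le Ce^{|s|\th}$ with $\th<\pi/2$; this is exactly what Theorem \ref{thm-ghosh-3.5.1} provides. Everything else is routine (functoriality of complex interpolation, the elliptic regularity embedding ${\bf D}(A_p)\hookrightarrow {\bf W}^{2,p}$ coming from Theorem \ref{thm-global-regularity-NS}, and the 2D Sobolev embedding for Bessel potential spaces). One minor bookkeeping issue is that $A_p$ must be invertible on ${\bf L}^p_\sigma(\Om)$ in order to define $A_p^\theta$ unambiguously; if $0$ lies in the spectrum (e.g.\ for a disk where $\boldsymbol{\mathcal T}(\Om)\ne\{\mathbf 0\}$), one quotients by the finite-dimensional kernel or works with $A_p+\lambda$ for $\lambda>0$ large and observes that the conclusion is stable under such shifts.
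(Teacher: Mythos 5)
Your proof takes essentially the same route as the paper's: both invoke the bounded-imaginary-powers estimate of Theorem \ref{thm-ghosh-3.5.1} to apply the Seeley/Triebel characterization ${\bf D}(A_p^\th)=[{\bf L}^p_\si(\Om),{\bf D}(A_p)]_\th$, then chain ${\bf D}(A_p)\hookrightarrow {\bf W}^{2,p}(\Om)$ with interpolation and the two-dimensional Sobolev embedding. Your additional remarks --- that complex interpolation produces the Bessel potential scale ${\bf H}^{2\th,p}$ rather than the Sobolev--Slobodeckij space ${\bf W}^{2\th,p}$ written in the paper, and that $A_p$ (or a shift thereof) must be invertible on ${\bf L}^p_\si(\Om)$ for $A_p^\th$ to be unambiguously defined --- are correct refinements that the paper leaves implicit.
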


\begin{proof}
In view of the bound \eqref{eq-ghosh-3.44} on pure imaginary powers of $A_p$, we can apply \cite[Theorem 1.15.3]{Triebel-book1995} to determine the domains of definition of $A_p^\th$ for $\th\in(0,1)$:
\[
{\bf D}(A_p^\th) = \bkt{{\bf L}^p_\si(\Om), {\bf D}(A_p)}_\th,
\]
the complex interpolation space.
By definition of ${\bf D}(A_p)$ and Sobolev embedding, we have
\[
\bkt{{\bf L}^p_\si(\Om), {\bf D}(A_p)}_\th 
\hookrightarrow \bkt{{\bf L}^p(\Om), {\bf W}^{2,p}(\Om)}_\th 
\hookrightarrow {\bf W}^{2\th,p}(\Om)
\hookrightarrow {\bf L}^q(\Om),\ \text{ for }\frac1q = \frac1p - \th,
\]
provided $0<\th<1/p$.
\end{proof}

We are now in the position to estimate the semigroup $\{e^{-A_pt}\}_{t\ge0}$.

\begin{theorem}\label{sect3-semigroup-estimate-pre}
For all $p\le q<\infty$, there exists $\de>0$ such that for all $t>0$
\EQ{\label{eq-ghosh-3.65}
\norm{e^{-A_pt}{\bf u}_0}_{{\bf L}^q(\Om)} \le C(\Om,p)\, e^{-\de t} t^{-(1/p - 1/q)} \norm{{\bf u}_0}_{{\bf L}^p(\Om)},
}
\EQ{\label{eq-ghosh-3.66}
\norm{\mathbb S(e^{-A_pt}{\bf u}_0)}_{\mathbb L^q(\Om)} \le C(\Om,p)\, e^{-\de t} t^{-\bke{1/p-1/q}-1/2} \norm{{\bf u}_0}_{{\bf L}^p(\Om)},
}
and for all $m,n\in\N$
\EQ{\label{eq-ghosh-3.67}
\norm{\frac{\pd^m}{\pd t^m}\, A_p^ne^{-A_pt}{\bf u}_0}_{{\bf L}^q(\Om)} \le C(\Om,p)\, e^{-\de t} t^{-(m+n)-\bke{1/p-1/q}} \norm{{\bf u}_0}_{{\bf L}^p(\Om)}.
}
\end{theorem}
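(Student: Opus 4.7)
The plan is to derive Theorem~\ref{sect3-semigroup-estimate-pre} by combining three ingredients already at our disposal: \emph{(i)} the analyticity of $\{e^{-A_pt}\}_{t\ge0}$ on ${\bf L}^p_\si(\Om)$, which by standard sectorial-operator theory yields the smoothing bound $\|A_p^\al e^{-A_pt}\|_{\mathcal L({\bf L}^p_\si)}\lec t^{-\al}$ for every $\al\ge0$ and $t>0$; \emph{(ii)} the Sobolev-type embedding ${\bf D}(A_p^\theta)\hookrightarrow{\bf L}^q(\Om)$ of Theorem~\ref{thm-ghosh-3.5.4} for $\theta=1/p-1/q\in(0,1/p)$, i.e.\ $\|{\bf w}\|_{{\bf L}^q(\Om)}\lec\|A_p^\theta{\bf w}\|_{{\bf L}^p(\Om)}$; and \emph{(iii)} an exponential decay bound $\|e^{-A_pt}\|_{\mathcal L({\bf L}^p_\si)}\lec e^{-\de t}$ coming from a shifted sectoriality of $-A_p$.

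To prove \eqref{eq-ghosh-3.65} I would split $e^{-A_pt}=e^{-A_pt/2}\circ e^{-A_pt/2}$. Applying \emph{(i)} with exponent $\theta=1/p-1/q$ to the left factor gives $\|A_p^\theta e^{-A_pt/2}{\bf v}\|_{{\bf L}^p}\lec t^{-\theta}\|{\bf v}\|_{{\bf L}^p}$; composing with the embedding in \emph{(ii)} then delivers ${\bf L}^p\to{\bf L}^q$ smoothing with the exact growth $t^{-(1/p-1/q)}$. The right factor contributes the exponential decay $e^{-\de t/2}$ via \emph{(iii)}, and multiplication of the two bounds (renaming $\de/2$ as $\de$) yields \eqref{eq-ghosh-3.65}. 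The estimate \eqref{eq-ghosh-3.67} follows from the identity $\pd_t^m A_p^n e^{-A_pt}=(-1)^m A_p^{m+n}e^{-A_pt}$: running the same splitting while distributing an extra power $A_p^{m+n}$ into the left factor costs the additional singularity $t^{-(m+n)}$. Finally, \eqref{eq-ghosh-3.66} follows by observing that $\|\mathbb S({\bf u})\|_{{\bf L}^q}\lec\|\nb{\bf u}\|_{{\bf L}^q}\lec\|A_q^{1/2}{\bf u}\|_{{\bf L}^q}$, which is a consequence of the bound on imaginary powers \eqref{eq-ghosh-3.44} (hence ${\bf D}(A_q^{1/2})={\bf V}^q(\Om)$ with equivalent norms) together with the $W^{2,p}$ regularity of Theorem~\ref{thm-global-regularity-NS}; plugging this into the splitting argument costs one extra factor $t^{-1/2}$.

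The main obstacle is the exponential decay \emph{(iii)}. When $\Om$ is not a disk, Lemma~\ref{lem-3.3-AR} tells us $\boldsymbol{\mathcal T}(\Om)=\{\mathbf 0\}$, so $\|\mathbb S(\cdot)\|_{{\bf L}^2}$ and $\|\cdot\|_{{\bf H}^1}$ are equivalent on the solenoidal subspace with zero normal trace; feeding this into the variational identity $\langle A_2{\bf u},{\bf u}\rangle=2\|\mathbb S({\bf u})\|_{{\bf L}^2}^2$ produces a spectral gap $\sigma(A_2)\subset[\de_0,\infty)$ with $\de_0>0$, which settles the $L^2$ case. To lift this to general $1<p<\infty$, I would re-run the resolvent analysis of Theorem~\ref{thm-resolvent-p=2} and Proposition~\ref{prop-semigroup-3.8} on the shifted sector $\la\in\Si_\ve-\de$ for $\de\in(0,\de_0)$, obtaining $\|(\la+A_p)^{-1}\|_{\mathcal L({\bf L}^p_\si)}\lec|\la+\de|^{-1}$ for $\Re\la\ge-\de$. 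This shifted sectoriality of $-A_p$ directly delivers the exponential rate $\de$, uniformly on any fixed compact range of $p$, and hence the decay claimed for all $p\le q<\infty$; the disk case, in which $A_p$ has the one-dimensional kernel $\mspan\{{\bf x}^\perp\}\cap{\bf L}^p_\si$, would be handled by restricting to its orthogonal complement.
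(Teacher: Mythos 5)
Your proposal is correct in substance and rests on the same two pillars the paper uses---the fractional-power embedding of Theorem~\ref{thm-ghosh-3.5.4} and the analytic-semigroup smoothing bound $\|A_p^\al e^{-A_pt}\|\lec t^{-\al}e^{-\de t}$---but it assembles them a little differently. You go straight to the endpoint $\th=1/p-1/q$ of the embedding ${\bf D}(A_p^\th)\hookrightarrow{\bf L}^q$ and absorb the singularity through one factor of a $e^{-A_pt/2}\circ e^{-A_pt/2}$ splitting (the splitting is actually unnecessary: $\|e^{-A_pt}u_0\|_{{\bf L}^q}\lec\|A_p^\th e^{-A_pt}u_0\|_{{\bf L}^p}\lec t^{-\th}e^{-\de t}\|u_0\|_{{\bf L}^p}$ is a one-liner once the smoothing bound is in hand). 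The paper, following Ghosh, instead chooses $s\in(1/p-1/q,1/p)$ strictly, embeds into an auxiliary $L^{p_0}$ with $1/p_0=1/p-s$, and interpolates between $L^{p_0}$ and $L^p$; the two routes give exactly the same power $t^{-(1/p-1/q)}$. Your argument for \eqref{eq-ghosh-3.66} via $\|\mathbb S(u)\|_{L^q}\lec\|\nb u\|_{L^q}\lec\|A_q^{1/2}u\|_{L^q}$ is the right idea, but the identification ${\bf D}(A_q^{1/2})={\bf V}^q(\Om)$ deserves a sentence of justification: it does follow from the bounded imaginary powers \eqref{eq-ghosh-3.44} via complex interpolation ${\bf D}(A_q^{1/2})=[{\bf L}^q_\si,{\bf D}(A_q)]_{1/2}$, where the first-order boundary condition $[\mathbb S({\bf u})\boldsymbol\nu]_{\boldsymbol\tau}=0$ drops out at the half-way interpolation level, but this is not trivial and should be stated rather than taken for granted. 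Where you are genuinely more careful than the paper is the treatment of the exponential decay factor: the paper cites Pazy's theorem for analytic semigroups, which requires $0\in\rho(A_p)$, yet if $\Om$ is a disk then $\boldsymbol{\mathcal T}(\Om)$ is one-dimensional, $0\in\si(A_p)$, and \eqref{eq-ghosh-3.65} as stated fails for $u_0={\bf x}^\perp$. You correctly observe that when $\Om$ is not a disk the Poincar\'e--Morrey inequality of Lemma~\ref{lem-3.3-AR} gives a spectral gap in $L^2$ which can be transferred to $L^p$ by re-running the resolvent analysis on a shifted sector, and that the disk case requires passing to the quotient by $\boldsymbol{\mathcal T}(\Om)$. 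This qualification is implicit but unstated in the paper.
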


\begin{proof}
The proof is similar to that of \cite[Theorem 3.6.3]{Ghosh-thesis2018} with a slightly modification to the two-dimensional case.

For the case $p=q$, the estimates \eqref{eq-ghosh-3.65}, \eqref{eq-ghosh-3.66} and \eqref{eq-ghosh-3.67} follow from \cite[Theorem 6.13, Chapter 2]{Pazy-book1983}.

Suppose that $p<q$. Let $s\in(1/p-1/q,1/p)$ and set $1/p_0 = 1/p - s$.
Obviously, $p<q<p_0$ so that $1/q = \th/p_0 + (1-\th)/p$ for $\th\in(0,1)$ and $\th = \frac{1/p-1/q}{1/p-1/p_0} = \frac{1/p-1/q}s$.
For all $t>0$, we have $e^{-A_pt}{\bf u}_0\in{\bf D}(A_p^s)\hookrightarrow{\bf L}^{p_0}(\Om)$ by Theorem \ref{thm-ghosh-3.5.4}.
Thus, $e^{-A_pt}{\bf u}_0\in{\bf L}^q(\Om)$ and
\EQN{
\norm{e^{-A_pt}{\bf u}_0}_{{\bf L}^q(\Om)} 
&\le C\norm{e^{-A_pt}{\bf u}_0}_{{\bf L}^{p_0}(\Om)}^\th \norm{e^{-A_pt}{\bf u}_0}_{{\bf L}^p(\Om)}^{1-\th}
\le C\norm{A_p^s \bke{e^{-A_pt}{\bf u}_0} }_{{\bf L}^p(\Om)}^\th \norm{e^{-A_pt}{\bf u}_0}_{{\bf L}^p(\Om)}^{1-\th}\\
&\le C (e^{-\de t} t^{-s})^\th (e^{-\de t})^{1-\th} \norm{{\bf u}_0}_{{\bf L}^p(\Om)}\\
&= C\, e^{-\de t} t^{-\th s} \norm{{\bf u}_0}_{{\bf L}^p(\Om)} = C\, e^{-\de t} t^{-\bke{1/p-1/q}} \norm{{\bf u}_0}_{{\bf L}^p(\Om)},
}
proving \eqref{eq-ghosh-3.65}.

The estimates \eqref{eq-ghosh-3.66} and \eqref{eq-ghosh-3.67} follows from the same proof for \cite[(3.66), (3.67)]{Ghosh-thesis2018}. We leave out the details for the sake of brevity.
\end{proof}

\medskip

\subsection{Local and global existence}
~

Invoking Lemma \ref{sect3-semigroup-estimate-pre}, we study the local-in-time existence and summarize the results as follows:
\begin{lemma}\label{sect3-lemma-local-in-time-1}
Suppose $(n_0,c_0,{\bf u}_0)\in C^{0}(\bar\Omega)\times W^{1,\infty}(\Omega)\times {\bf{D}}(A_2)$ with ${\bf{D}}(A_2)$ defined by \eqref{sect2-semigroup-domain}; $n_0$ and $c_0$ are nonnegative but not identically equal to zero in $\Omega.$  Then there exists $T\leq \infty$ such that the unique pair $(n,c,u,P)$ with positive $n$ and $c$ solve (\ref{PKSNS-time-dependent}) in a classical sense.  Moreover, if $T<\infty,$ we have
 \begin{align}\label{eq-in-lemma3.1}
    \lim_{t\rightarrow T^-}\big(\Vert n\Vert_{L^{\infty}(\Omega)}+\Vert c\Vert_{W^{1,\infty}(\Omega)}+\Vert A^{\alpha}_2{\bf u}\Vert_{W^{1,q}(\Omega)}\big)=\infty,
 \end{align}
for some $q>2.$
\end{lemma}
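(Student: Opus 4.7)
The plan is to set up the system in its mild (Duhamel) formulation and apply a contraction mapping argument on a short time interval. Using the Neumann heat semigroup $\{e^{t\Delta_N}\}$ for $n$, the semigroup of $(\Delta-1)/\iota$ for $c$ (or for $\iota=0$ the elliptic solution operator $(1-\Delta)^{-1}$ with Neumann boundary conditions applied to $n$), and the Stokes semigroup $\{e^{-tA_2}\}$ from Subsection \ref{subsect-stokes-semigroup}, I would write
\EQN{
n(t) &= e^{t\Delta_N}n_0 - \int_0^t e^{(t-s)\Delta_N}\nabla\cdot\bke{n\nabla c + n{\bf u}}(s)\,ds,\\
c(t) &= e^{t(\Delta-1)/\iota}c_0 + \tfrac{1}{\iota}\int_0^t e^{(t-s)(\Delta-1)/\iota}n(s)\,ds \quad (\iota>0),\\
{\bf u}(t) &= e^{-tA_2}{\bf u}_0 + \int_0^t e^{-(t-s)A_2}\mathbb P_2\bkt{-({\bf u}\cdot\nabla){\bf u} + n\nabla c}(s)\,ds.
}
Fix a $q>2$ and $\alpha\in(\tfrac12+\tfrac1q,1)$ so that the embedding ${\bf D}(A_2^\alpha)\hookrightarrow {\bf W}^{1,q}(\Om)\hookrightarrow {\bf L}^\infty(\Om)$ (combined with Theorem \ref{thm-ghosh-3.5.4}) is available; this controls ${\bf u}$ and $\nabla{\bf u}$ uniformly.

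Next, I would introduce the complete metric space
\EQN{
X_T=\bket{(n,c,{\bf u})\in C([0,T];C^0(\bar\Omega)\times W^{1,\infty}(\Om))\times C([0,T];{\bf D}(A_2^\alpha))}
}
with the natural norm, and consider the closed ball $B_R\subset X_T$ of radius $R=2(\|n_0\|_{C^0}+\|c_0\|_{W^{1,\infty}}+\|A_2^\alpha{\bf u}_0\|_{{\bf L}^2})$. The map $\Phi:(n,c,{\bf u})\mapsto(\tilde n,\tilde c,\tilde{\bf u})$ defined by the right-hand sides above must be shown to send $B_R$ into itself and be a contraction for $T$ small. The required ingredients are: (i) the standard Neumann heat semigroup smoothing estimates $\|e^{t\Delta_N}\nabla\cdot \boldsymbol{\phi}\|_{L^\infty}\lesssim t^{-1/2-1/q}\|\boldsymbol{\phi}\|_{L^q}$ and $W^{1,\infty}$-bounds for $c$; (ii) the Stokes semigroup estimates \eqref{eq-ghosh-3.65}--\eqref{eq-ghosh-3.67} applied to the fractional powers $A_2^\alpha e^{-tA_2}\mathbb P_2$ together with the identity $(\mathbf u\cdot\nabla)\mathbf u=\nabla\cdot(\mathbf u\otimes\mathbf u)$ to absorb one derivative into the semigroup. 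A routine estimation of differences yields a factor $T^\beta$ with $\beta>0$, producing the contraction for small $T$. The Banach fixed point theorem then gives a unique mild solution on $[0,T]$, which by standard parabolic bootstrapping (Schauder theory for the $n$- and $c$-equations, together with $L^p$-regularity for the Stokes system from Theorem \ref{thm-global-regularity-NS}) is actually classical.

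For positivity, I would note that $c$ satisfies a linear elliptic/parabolic equation with nonnegative source $n$ and nonnegative initial datum, so the maximum principle and Hopf lemma give $c>0$. Similarly the $n$-equation can be rewritten as $n_t=\nabla\cdot(n\nabla(\log n-c))-\mathbf u\cdot\nabla n$, a linear advection-diffusion equation in $n$ with divergence-free drift $\mathbf u$ and smooth coefficients, and the strong maximum principle yields $n>0$ in $\Omega\times(0,T]$.

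Finally, for the blow-up alternative \eqref{eq-in-lemma3.1}, I would argue by contradiction in the standard way: let $T$ be the maximal existence time; if $T<\infty$ but the quantity in \eqref{eq-in-lemma3.1} stays bounded along a sequence $t_k\to T^-$, then $(n(t_k),c(t_k),{\bf u}(t_k))$ provides admissible initial data with uniformly bounded norm in $C^0\times W^{1,\infty}\times {\bf D}(A_2^\alpha)$. Applying the local existence result just proved, one obtains a uniform existence time $\tau>0$ independent of $k$, allowing continuation of the solution beyond $T$ and contradicting maximality.

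The main obstacle is calibrating the choice of function spaces so that each nonlinear term can be absorbed: the chemotactic term $\nabla\cdot(n\nabla c)$ forces a $W^{1,\infty}$-control on $c$; the transport term ${\bf u}\cdot\nabla n$ forces ${\bf u}$ to lie in an $L^\infty$-class (hence the fractional-power domain of $A_2$); and the Stokes forcing $n\nabla c$ together with the convective term $(\mathbf u\cdot\nabla)\mathbf u$ must be handled via the divergence form and the smoothing estimate \eqref{eq-ghosh-3.67}, which is exactly what the analytic semigroup theory developed in Subsection \ref{subsect-stokes-semigroup} provides. Once this functional-analytic scaffolding is in place, the contraction estimate is a somewhat standard, though tedious, computation.
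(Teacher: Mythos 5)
Your overall strategy coincides with the paper's own proof: a Duhamel/mild formulation, a contraction mapping argument in a ball of $L^\infty_t\big(C^0(\bar\Omega)\times W^{1,\infty}(\Omega)\times(\text{Stokes domain})\big)$, the heat and Stokes semigroup smoothing estimates of Subsection \ref{subsect-stokes-semigroup}, parabolic bootstrapping to classical regularity, the maximum principle for positivity, and a continuation argument for the blow-up alternative \eqref{eq-in-lemma3.1}; the paper also treats $\iota=0$ by replacing the $c$-semigroup with the Neumann elliptic solution operator, as you indicate. The main cosmetic difference is that the paper works directly with $A_q$, $q>2$, on ${\bf D}_1(A_q)\subset{\bf W}^{1,q}(\Omega)$, whereas you use fractional powers $A_2^\alpha$; this is an acceptable alternative, but note that the embedding ${\bf D}(A_2^\alpha)\hookrightarrow{\bf W}^{1,q}(\Omega)$ needs $\alpha\ge 1-\tfrac1q$ (harmless if you take $q$ close to $2$), not merely $\alpha>\tfrac12+\tfrac1q$, and Theorem \ref{thm-ghosh-3.5.4} as stated only provides an ${\bf L}^q$ embedding, so you should justify the $W^{1,q}$ control by interpolation with ${\bf D}(A_2)\subset{\bf W}^{2,2}$.

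The one genuine gap is uniqueness. The lemma asserts uniqueness of the classical solution, but your argument only yields uniqueness of the fixed point within the ball $B_R$ in the class of mild solutions; a second classical solution need not a priori lie in $B_R$ on the whole interval, nor is it immediate that every classical solution is a mild solution of your specific formulation with your specific radius. The paper closes this with a separate energy argument: for two classical solutions on $(0,T_0)$, $T_0<T$, it tests the difference equations against the differences, uses the a priori $L^\infty$/$W^{1,q}$ bounds on that interval together with H\"older and Young inequalities, and derives $y'\le \hat C_3\, y$ for $y=\|n_1-n_2\|_{L^2}^2+\iota\|c_1-c_2\|_{L^2}^2+\|{\bf u}_1-{\bf u}_2\|_{L^2}^2$, whence $y\equiv0$ by Gr\"onwall; when $\iota=0$ the vanishing of $c_1-c_2$ is then recovered from the elliptic $c$-equation. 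You should either add such a Gr\"onwall argument or carefully show that any classical solution with the stated regularity is, on a short time interval, a mild solution belonging to your ball, and then propagate uniqueness by continuation.
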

\begin{proof}
The proof is shown in Appendix \ref{appen-local-in-time}.   
\end{proof}
With the local existence result, we focus on the global well-posedness of (\ref{PKSNS-time-dependent}).  A useful observation is the mass conservation of cellular density $n$.  Indeed, we have from the integration by parts that
\begin{align}\label{freen1bound}
\frac{d}{dt}\int_{\Omega} n\, dx=\int_{\Omega} n_t\, dx=\int_{\Omega} {\bf u}\cdot \nabla n\, dx=-\int_{\Omega} (\nabla\cdot {\bf u}) n\, dx+\int_{\partial\Omega} {\bf u}\cdot \boldsymbol{\nu}\,  dS=0.
\end{align}
{An immediate consequence of (\ref{freen1bound}) is the boundedness of $c$ in $L^1$.  In fact, we integrate the $c$-equation by parts and obtain that
\begin{align*}
\frac{d}{dt}\int_{\Omega }c\, dx+\int_{\Omega} c\, dx=\int_{\Omega} n\, dx \lesssim 1.
\end{align*}
By solving the Gr\"{o}nwall's inequality, one has $\Vert c\Vert_{L^1(\Omega)}\lesssim 1.$}  Summarizing the results above, we have
\begin{lemma}
Let $(n,c,u)$ be the solution of time-dependent system (\ref{PKSNS-time-dependent}) and the conditions in Theorem \ref{thm-global-existence-PKS-NS} hold.  Then we have
\begin{align*}
\int_{\Omega} n\, dx+\int_{\Omega} c\,dx \lesssim1.
\end{align*}
\end{lemma}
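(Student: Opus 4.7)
The plan is to use the already-established conservation of mass for $n$ together with a linear ODE obtained by spatially integrating the $c$-equation.

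For the first piece, the computation \eqref{freen1bound} displayed just above the lemma already gives $\frac{d}{dt}\int_\Omega n\, dx = 0$, using the Neumann boundary conditions on $n$ and $c$ (so that the diffusive--chemotactic flux $\nabla n - n\nabla c$ has zero normal trace), the incompressibility $\nabla\cdot{\bf u}=0$ in $\Omega$, and the no-penetration condition ${\bf u}\cdot\boldsymbol{\nu}=0$ on $\partial\Omega$. Hence $\int_\Omega n(\cdot,t)\, dx \equiv \int_\Omega n_0\, dx = M$ for all $t\ge 0$, and by the subcritical mass hypothesis \eqref{sect0-M-M0-less}, this is bounded by a constant depending only on the initial data.

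For the second piece, I would integrate the $c$-equation $\iota\, \partial_t c = \Delta c - c + n$ over $\Omega$. The Neumann condition $\partial c/\partial\boldsymbol{\nu}=0$ on $\partial\Omega$ eliminates the Laplacian contribution after applying the divergence theorem, yielding
\begin{equation*}
\iota\,\frac{d}{dt}\int_\Omega c\, dx + \int_\Omega c\, dx = \int_\Omega n\, dx = M.
\end{equation*}
If $\iota>0$, this is a scalar linear ODE with constant forcing; multiplying by the integrating factor $e^{t/\iota}$ and integrating gives
\begin{equation*}
\int_\Omega c(\cdot,t)\, dx = M + e^{-t/\iota}\Bigl(\int_\Omega c_0\, dx - M\Bigr),
\end{equation*}
which is uniformly bounded in $t$ by $\max\{M,\int_\Omega c_0\, dx\}$. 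If $\iota=0$, the identity reduces to the algebraic relation $\int_\Omega c\, dx = \int_\Omega n\, dx = M$, so the bound is immediate. Either way, $\int_\Omega c\, dx \lesssim 1$ uniformly in $t$.

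Adding the two bounds gives $\int_\Omega n\, dx + \int_\Omega c\, dx \lesssim 1$, finishing the proof. There is essentially no obstacle here: everything follows by integrating the equations against $1$ and invoking the boundary conditions. The only point worth flagging is that the estimate is unified across the parabolic-parabolic case ($\iota>0$) and the parabolic-elliptic case ($\iota=0$), requiring a brief case distinction when solving the ODE.
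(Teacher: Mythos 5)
Your proof is correct and follows essentially the same route as the paper: mass conservation of $n$ from \eqref{freen1bound}, then integration of the $c$-equation over $\Omega$ with the Neumann condition to obtain a linear ODE for $\int_\Omega c\,dx$, solved by Gr\"onwall/integrating factor. Your explicit case distinction for $\iota=0$ versus $\iota>0$ is a minor tidy-up of the paper's argument (whose displayed identity omits the factor $\iota$), but it is not a different method.
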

Now, we state several useful preliminary lemmas for the arguments below on the proof of global well-posedness.
\begin{lemma}[ Cf. Lemma 2.4 in \cite{liu2015global} ]\label{sect3-preliminary-lemma33}
Assume that $\Omega\subset\R^2$ is a bounded domain with the smooth boundary.  Let $p\in(1,\infty)$ and $r\in(0,p).$  Then there exists $C>0$ such that for any $\delta>0$,
\begin{align*}
\Vert f\Vert_{L^p}^p\leq \delta\Vert \nabla f\Vert_{L^2}^{p-r}\Vert f\log |f|\Vert_{L^r}^r+C\Vert f\Vert_{L^r}^p+C_{\delta},
\end{align*}
where $f\in W^{1,2}(\Omega)$ and constant $C_{\delta}>0$ depending on $\delta.$
\end{lemma}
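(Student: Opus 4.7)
The plan is to combine the two-dimensional Gagliardo--Nirenberg interpolation with a level-set decomposition trick that exchanges a power of $|f|$ for a power of $|f|\log|f|$.

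First, I would invoke the Gagliardo--Nirenberg inequality on the bounded planar domain $\Omega$ applied to $f\in W^{1,2}(\Omega)$. With interpolation exponent $\theta=1-r/p$ dictated by the scaling $1/p = (1-\theta)/r$ in dimension two, this yields
\[
\|f\|_{L^p(\Omega)}^p \le C_1 \|\nabla f\|_{L^2(\Omega)}^{p-r}\|f\|_{L^r(\Omega)}^r + C_1\|f\|_{L^r(\Omega)}^p,
\]
for some $C_1 = C_1(p,r,\Omega)>0$. Next, for any parameter $K>e$, I would partition $\Omega$ into the sub-level set $\{|f|\le K\}$ and its complement $\{|f|>K\}$. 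On the former $|f|^r \le K^r$ pointwise, whereas on the latter $\log|f|>\log K>1$ gives $|f|^r \le (|f|\log|f|)^r/(\log K)^r$. Combining,
\[
\|f\|_{L^r(\Omega)}^r \le (\log K)^{-r}\|f\log|f|\|_{L^r(\Omega)}^r + K^r|\Omega|.
\]

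Substituting this into the Gagliardo--Nirenberg estimate and selecting $K=K(\delta)$ sufficiently large that $C_1(\log K)^{-r}\le\delta$, the leading term becomes the desired $\delta\|\nabla f\|_{L^2}^{p-r}\|f\log|f|\|_{L^r}^r$. I expect the main technical obstacle to be controlling the residual piece $C_1 K^r|\Omega|\|\nabla f\|_{L^2}^{p-r}$ arising from the small-$|f|$ part of the splitting, because a priori $\|\nabla f\|_{L^2}$ is not dominated by the right-hand side. The remedy is to apply Young's inequality with exponents $p/(p-r)$ and $p/r$ in order to redistribute this factor between the main $\delta$-term (using that in the regime $\|f\log|f|\|_{L^r}^r \gtrsim_\delta 1$ the residual can be absorbed, after slightly shrinking $\delta$) and the allowed slots $C\|f\|_{L^r}^p$ and $C_\delta$, while the complementary regime $\|f\log|f|\|_{L^r}^r \ll_\delta 1$ is handled by noting that the splitting inequality then forces $\|f\|_{L^r}^r$ itself to be bounded by a $\delta$-dependent constant. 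The detailed balancing of constants is standard and follows the treatment of \cite[Lemma 2.4]{liu2015global}.
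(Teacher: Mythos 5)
Your first two steps (the Gagliardo--Nirenberg interpolation with $\theta=1-r/p$ and the level-set trade $\Vert f\Vert_{L^r}^r\le(\log K)^{-r}\Vert f\log|f|\Vert_{L^r}^r+K^r|\Omega|$) are fine, but the way you dispose of the residual term $C_1K^r|\Omega|\,\Vert\nabla f\Vert_{L^2}^{p-r}$ has a genuine gap. The absorption in the regime $\Vert f\log|f|\Vert_{L^r}^r\ge 2C_1K^r|\Omega|/\delta$ is legitimate, but in the complementary regime your argument gives no bound on $\Vert f\Vert_{L^p}^p$ at all: from the splitting you only learn that $\Vert f\Vert_{L^r}^r$ is bounded by a $\delta$-dependent constant, and for $p>r$ this does not control $\Vert f\Vert_{L^p}^p$. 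Concretely, let $f_\lambda$ be a smooth bump of height $\lambda$ supported on a set of measure $\lambda^{-r}(\log\lambda)^{-r-1}$: then $\Vert f_\lambda\Vert_{L^r}^r\to0$ and $\Vert f_\lambda\log|f_\lambda|\Vert_{L^r}^r\to0$, so for large $\lambda$ you are in the ``small'' regime, yet $\Vert f_\lambda\Vert_{L^p}^p\sim\lambda^{p-r}(\log\lambda)^{-r-1}\to\infty$. The lemma is still true for such $f_\lambda$, but only through the gradient term, whose coefficient after your substitution is $C_1K^r|\Omega|$ (which blows up as $\delta\to0$), not $\delta$. The Young-inequality redistribution you mention cannot repair this either, since it produces a standalone power of $\Vert\nabla f\Vert_{L^2}$, which is not among the admissible terms on the right-hand side.

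The standard fix --- and this is how the cited Lemma 2.4 of \cite{liu2015global} is proved (the paper itself only quotes the lemma and gives no proof) --- is to truncate \emph{before} interpolating: apply Gagliardo--Nirenberg to $g:=(|f|-K)_+\in W^{1,2}(\Omega)$ rather than to $f$. Then $\Vert\nabla g\Vert_{L^2}\le\Vert\nabla f\Vert_{L^2}$, $\Vert g\Vert_{L^r}^r\le(\log K)^{-r}\Vert f\log|f|\Vert_{L^r}^r$ (since $g\le|f|\le|f|\log|f|/\log K$ on $\{|f|>K\}$ and $g=0$ elsewhere, for $K>e$), and $\Vert g\Vert_{L^r}\le\Vert f\Vert_{L^r}$, while $\Vert f\Vert_{L^p}^p\le 2^{p-1}\bigl(\Vert g\Vert_{L^p}^p+K^p|\Omega|\bigr)$. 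Feeding these into $\Vert g\Vert_{L^p}^p\le C_1\Vert\nabla g\Vert_{L^2}^{p-r}\Vert g\Vert_{L^r}^r+C_1\Vert g\Vert_{L^r}^p$ and choosing $K=K(\delta)$ with $2^{p-1}C_1(\log K)^{-r}\le\delta$ yields the lemma with $C=2^{p-1}C_1$ independent of $\delta$ and $C_\delta=2^{p-1}K^p|\Omega|$, uniformly and with no case distinction. (A minor additional point: for $r<1$ you should check that the version of Gagliardo--Nirenberg you invoke allows the quasi-norm exponent $r\in(0,1)$; the paper's Lemma~\ref{sect3-pre-GN-inequality} is stated only for exponents larger than one, though the needed extension is standard.)
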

\begin{lemma}[ Cf. Lemma 2.2 in \cite{chenkongwang2021global} ]\label{sect3-pre-chenkongwang2021global}
Let $(n,c,u,P)$ be the solution of (\ref{PKSNS-time-dependent}) and assume constant $\iota>0.$  Then there exists constant $C$ depending on $\Vert\nabla c_0\Vert_{L^q}$ and $|\Omega|$ such that
\begin{align*}
\Vert c(\cdot,t)\Vert_{W^{1,q}}\leq C\Big(1+\sup_{s\in(0,t)}\Vert n(\cdot,s)\Vert_{L^p}\Big),
\end{align*}
where $q\in\Big[1,\frac{Np}{N-p}\Big)$ if $p\in[1,N)$; $q\in[1,\infty)$ if $p=N$ and $q=\infty$ if $p>N.$
\end{lemma}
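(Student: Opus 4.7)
The plan is to exploit the semigroup representation of $c$ in the parabolic--parabolic case $\iota>0$. The $c$-equation
$$\iota c_t=\Delta c-c+n,\qquad \partial_{\boldsymbol{\nu}}c=0 \text{ on }\pd\Om,$$
is a heat-type equation whose Duhamel formula reads
$$c(\cdot,t)=e^{\frac{t}{\iota}(\Delta-I)}c_0+\frac{1}{\iota}\int_0^t e^{\frac{t-s}{\iota}(\Delta-I)}n(\cdot,s)\,ds,$$
where $\{e^{t(\Delta-I)}\}_{t\geq0}$ is the Neumann heat semigroup on $\Om$ shifted by $-I$, which is analytic and satisfies exponential decay on $L^q(\Om)$ for every $1\leq q\leq\infty$.

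First, for the homogeneous piece, the Neumann semigroup commutes with the contraction property on $W^{1,q}$ up to boundary corrections, giving
$$\|e^{\frac{t}{\iota}(\Delta-I)}c_0\|_{W^{1,q}(\Om)}\lesssim e^{-t/\iota}\|c_0\|_{W^{1,q}(\Om)},$$
which is bounded by a constant depending on $\|\nb c_0\|_{L^q}$ and $|\Om|$. Second, for the nonhomogeneous piece, I would invoke the standard $L^p\to W^{1,q}$ smoothing estimate for the Neumann heat semigroup (see e.g.\ Winkler's arguments used in \cite{chenkongwang2021global}):
$$\|\nb e^{t(\Delta-I)}f\|_{L^q(\Om)}\lesssim \bigl(1+t^{-\frac12-\frac{N}{2}\left(\frac1p-\frac1q\right)}\bigr)e^{-\lambda t}\|f\|_{L^p(\Om)},$$
valid precisely in the range claimed in the lemma (with the endpoint $q=\infty$ requiring $p>N$). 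Plugging this into the Duhamel formula and pulling the supremum of $\|n(\cdot,s)\|_{L^p}$ outside the integral yields
$$\|\nb c(\cdot,t)\|_{L^q}\lesssim e^{-t/\iota}\|\nb c_0\|_{L^q}+C\sup_{s\in(0,t)}\|n(\cdot,s)\|_{L^p}\int_0^t \bigl(1+(t-s)^{-\frac12-\frac{N}{2}\left(\frac1p-\frac1q\right)}\bigr)e^{-\lambda(t-s)/\iota}\,ds.$$

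The key observation is that the hypothesis $\frac{1}{p}-\frac{1}{q}<\frac{1}{N}$ makes the singular exponent of $(t-s)$ strictly greater than $-1$, so the time integral is finite and uniformly bounded in $t>0$ thanks to the exponential factor. Combined with the mass conservation $\|n(\cdot,t)\|_{L^1}=M$, which together with the $L^p\to L^q$ smoothing controls $\|c(\cdot,t)\|_{L^q}$ by $1+\sup_{s}\|n(\cdot,s)\|_{L^p}$, this delivers the desired $W^{1,q}$ bound. The main (minor) obstacles are: verifying the Neumann semigroup gradient-smoothing estimate in the precise $W^{1,q}$ form stated, handling the endpoint $q=\infty$ case via the embedding $W^{1,p}\hookrightarrow L^\infty$ when $p>N$, and absorbing the shift $-I$ (which only improves decay). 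Since these are all standard for the Neumann Laplacian on smooth bounded domains, I would cite \cite{chenkongwang2021global} for the identical statement and omit the routine computation.
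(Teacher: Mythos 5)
Your proposal is correct and follows essentially the same route as the paper: the paper's (one-line) proof states that it rewrites the $c$-equation in abstract form (Duhamel/variation-of-constants) and applies Neumann heat semigroup smoothing estimates, citing \cite{chenkongwang2021global}, which is precisely what you carry out. You have also correctly identified that the admissible range of $q$ in the lemma is exactly the condition $\frac1p-\frac1q<\frac1N$ that keeps the singular exponent $\frac12+\frac{N}{2}\bigl(\frac1p-\frac1q\bigr)$ strictly below $1$ so that the Duhamel integral converges; the only inessential detour is invoking mass conservation to control $\|c\|_{L^q}$, which already follows directly from the zeroth-order $L^p\to L^q$ smoothing estimate.
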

\begin{proof}
We follow the argument in \cite{chenkongwang2021global} to rewrite the $c$-equation as the abstract form then perform the heat Neumann semigroup estimate to prove the lemma.  Noting the steps are the same as in \cite{chenkongwang2021global}, we omit the details.
\end{proof}
\begin{lemma}[ Cf. \cite{nirenberg1959elliptic} ]\label{sect3-pre-GN-inequality}
Let $\Omega\subset R^N$, $N\geq 1$, be a bounded smooth domain.  Let $j \geq 0$, $k \geq 0$ be integers and $p$, $q$, $r$, $s>1$.  Then there is a constant $C > 0$ such that for any function $w \in L^q(\Omega)\cap L^s(\Omega)$ with $D^{k}w\in L^r(\Omega)$ such that
\begin{align*}
\Vert D^jw\Vert_{L^p}\leq C\Vert D^k w\Vert_{L^r}^{\alpha}\Vert w\Vert_{L^q}^{1-\alpha}+C\Vert w\Vert_{L^s},
\end{align*}
where $\frac{1}{p}=\frac{j}{N}+\big(\frac{1}{r}-\frac{k}{N})\alpha+\frac{1-\alpha}{q}$ with $\frac{j}{k}\leq \alpha <1.$
\end{lemma}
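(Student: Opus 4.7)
The plan is to derive the inequality by reducing it to the classical whole-space Gagliardo--Nirenberg interpolation inequality through an extension argument. First I would verify the admissibility of the exponent relation $\frac{1}{p} = \frac{j}{N} + \bke{\frac{1}{r} - \frac{k}{N}}\alpha + \frac{1-\alpha}{q}$ via a scaling check: on $\R^N$ the dilation $w_\lambda(x) = w(\lambda x)$ forces both sides to scale by the same power of $\lambda$, uniquely pinning down $\alpha$ given $j$, $k$, $p$, $q$, $r$, $N$. This both motivates the hypothesis and confirms that no other value of $\alpha$ is possible.

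Next I would establish the whole-space version without any lower-order correction term, namely
\begin{equation*}
\|D^j u\|_{L^p(\R^N)} \leq C \|D^k u\|_{L^r(\R^N)}^\alpha \|u\|_{L^q(\R^N)}^{1-\alpha}, \quad u \in C_c^\infty(\R^N).
\end{equation*}
Following Nirenberg's original induction on $k - j$, the base case $k - j = 1$ rests on the integration-by-parts identity $\int |D^j u|^p\,dx = -\int D^{j-1}u \cdot D\bke{|D^j u|^{p-2} D^j u}\,dx$ combined with H\"older's inequality at the scaling-dictated exponents. Iterating this one-derivative interpolation together with Young's inequality to absorb intermediate norms produces the general statement for arbitrary $k > j$.

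To descend to a bounded smooth domain $\Omega$, I would apply a Stein-type extension operator $E\colon W^{k,r}(\Omega) \to W^{k,r}(\R^N)$ with uniformly bounded support and simultaneous continuity into $L^q(\R^N)$, $L^r(\R^N)$, and $W^{k,r}(\R^N)$. Combining $\|D^j w\|_{L^p(\Omega)} \leq \|D^j Ew\|_{L^p(\R^N)}$ with the whole-space inequality applied to $Ew$, and using the continuity estimates $\|D^k Ew\|_{L^r(\R^N)} \lesssim \|D^k w\|_{L^r(\Omega)} + \|w\|_{L^r(\Omega)}$ and $\|Ew\|_{L^q(\R^N)} \lesssim \|w\|_{L^q(\Omega)}$, yields the desired inequality modulo a residual lower-order term of the form $\|w\|_{L^r(\Omega)}^\alpha \|w\|_{L^q(\Omega)}^{1-\alpha}$.

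I expect the main obstacle to be absorbing that residual term into the single $C\|w\|_{L^s(\Omega)}$ on the right-hand side. My strategy will be H\"older interpolation between $L^r(\Omega)$, $L^q(\Omega)$, and $L^s(\Omega)$, supplemented by Young's inequality; since $|\Omega|<\infty$, one can freely pass between these norms once the interpolation exponents are chosen consistently with the scaling-determined $\alpha$. The delicate bookkeeping is to track these exponents uniformly across the admissible range $\tfrac{j}{k} \leq \alpha < 1$ so that the final bound takes precisely the claimed shape $\|D^k w\|_{L^r}^\alpha \|w\|_{L^q}^{1-\alpha} + C\|w\|_{L^s}$ without constraints on $s$ beyond $s > 1$.
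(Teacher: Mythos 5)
The paper itself offers no proof of this lemma: it is stated with a ``Cf.\ \cite{nirenberg1959elliptic}'' citation as a standing classical fact. So there is no argument in the text to compare yours against; what I can do is assess your blind reconstruction on its own terms.

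Your overall plan---whole-space Gagliardo--Nirenberg via Nirenberg's induction, then descent to a bounded domain by a Stein extension---is the standard route, and the first two stages are essentially right. The gap is in the final absorption step, and it is a genuine one. After applying the Stein extension, the natural estimate is $\|D^k Ew\|_{L^r(\R^N)} \lesssim \|w\|_{W^{k,r}(\Omega)}$, which brings in \emph{all} intermediate derivatives $\|D^m w\|_{L^r(\Omega)}$ for $0 \le m < k$, not merely $\|D^k w\|_{L^r(\Omega)} + \|w\|_{L^r(\Omega)}$ as you wrote. To reduce these to the claimed form you need an Ehrling-type compactness lemma,
\begin{equation*}
\|D^m w\|_{L^r(\Omega)} \le \varepsilon \|D^k w\|_{L^r(\Omega)} + C_\varepsilon \|w\|_{L^s(\Omega)}\qquad (0 \le m < k),
\end{equation*}
which relies on the compact embedding $W^{k,r}(\Omega)\hookrightarrow\hookrightarrow W^{m,r}(\Omega)$ for bounded smooth $\Omega$ and does not follow from H\"older plus Young alone. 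Without it, the phrase ``freely pass between these norms'' does not close the argument: on a finite-measure domain $L^a \hookrightarrow L^b$ only for $a \ge b$, so H\"older interpolation cannot push an $L^r$-norm of $w$ down to $\|w\|_{L^s}$ for \emph{arbitrary} $s>1$. You correctly flagged this absorption as the crux, but your proposed tool set is too weak to carry it; inserting Ehrling's lemma (or, equivalently, a recursive application of the interpolation inequality to the intermediate derivatives) is the missing idea. Once that is supplied the proof goes through, and it matches the established proof of this result in the literature.
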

We shall follow the ideas in \cite{nagai1997application1} to show the global existence of the solution.  Firstly, we cite the following Moser-Trudinger type's inequalities given in \cite{nagai1997application1}:
\begin{lemma}\label{lemma2-free}
Assume $\Omega\subset \mathbb R^2$ with the smooth boundary $\partial \Omega$, then for any small $\epsilon>0$ and $w\in W^{1,2}(\Omega),$
\begin{align*}
\int_{\Omega} e^{|w|}\, dx\leq C_{\Omega}\exp\Big\{\Big(\frac{1}{8\theta_{\Omega}}+\epsilon\Big)\Vert \nabla w\Vert_{L^2(\Omega)}^2+\frac{2}{|\Omega|}\Vert w\Vert_{L^{1}(\Omega)}\Big\},
\end{align*}
where $C_{\Omega}$ is some positive constant depending on $|\Omega|$ and $\theta_{\Omega}$ denotes the minimum interior
angle at the vertices of $\Omega$.  In particular $\theta_{\Omega}=\pi$ if there is no corner on $\Omega.$
\end{lemma}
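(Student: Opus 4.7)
The inequality is a Moser--Trudinger/Chang--Yang type bound on bounded planar domains. My plan is to reduce it to the sharp Moser--Trudinger inequality for mean-zero functions and then linearize the exponent by a Young-type inequality. Setting $\bar w := |\Omega|^{-1}\int_\Omega w\,dx$, the triangle inequality gives
\[
|w| \leq |w - \bar w| + |\bar w|, \qquad |\bar w| \leq \frac{\|w\|_{L^1(\Omega)}}{|\Omega|},
\]
so it suffices to control $\int_\Omega e^{|w-\bar w|}\,dx$ in terms of $\|\nabla w\|_{L^2}$, because the $|\bar w|$ contribution already matches the $\|w\|_{L^1}$ term in the target estimate (up to a universal factor that will be absorbed into $C_\Omega$).

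The central ingredient I would invoke is the sharp Moser--Trudinger/Chang--Yang inequality for mean-zero functions: for every $\alpha < 2\theta_\Omega$ there is a constant $C(\alpha,\Omega)$ such that
\[
\int_\Omega \exp\!\left( \frac{\alpha v^2}{\|\nabla v\|_{L^2(\Omega)}^2} \right) dx \leq C(\alpha,\Omega), \qquad v \in W^{1,2}(\Omega),\ \int_\Omega v\,dx = 0.
\]
On a smooth boundary this is the classical Cherrier/Moser statement with $\theta_\Omega = \pi$ (the extra factor $1/2$ relative to Moser's Dirichlet constant $4\pi$ reflects that reflection across $\partial\Omega$ doubles the domain). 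On domains with corners, Moser's symmetric rearrangement inside a sector of opening angle $\theta_\Omega$ accounts for the degradation of the sharp constant to $2\theta_\Omega$.

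Next I would apply Young's inequality $ab \leq \delta a^2 + (4\delta)^{-1} b^2$ with $a = |w-\bar w|/\|\nabla w\|_{L^2}$ and $b = \|\nabla w\|_{L^2}$, obtaining
\[
|w-\bar w| \leq \frac{\delta (w-\bar w)^2}{\|\nabla w\|_{L^2}^2} + \frac{\|\nabla w\|_{L^2}^2}{4\delta}.
\]
Exponentiating and applying Chang--Yang to $v = w-\bar w$ with $\alpha = \delta$, valid for any $\delta < 2\theta_\Omega$, one gets
\[
\int_\Omega e^{|w-\bar w|}\, dx \leq C(\delta,\Omega)\, \exp\!\left( \frac{\|\nabla w\|_{L^2}^2}{4\delta}\right).
\]
Given $\epsilon > 0$, one chooses $\delta$ just below $2\theta_\Omega$ so that $1/(4\delta) \leq 1/(8\theta_\Omega) + \epsilon$, which is exactly the desired exponent. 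Combining with the bound on $|\bar w|$ yields the inequality; the overall factor of $2$ in front of $\|w\|_{L^1}$ can be generated by applying the above reasoning separately to $\pm w$ using $e^{|w|} \leq e^{w} + e^{-w}$, and then absorbing the multiplicative prefactor into $C_\Omega$.

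The main obstacle is the sharp Moser--Trudinger/Chang--Yang inequality with the geometric constant $2\theta_\Omega$; on smooth boundaries this is a standard fact, but for a domain with vertices one has to revisit Moser's capacity argument in a sector of opening $\theta_\Omega$, which is where the geometry enters. Once this is granted, the remaining steps (triangle inequality, Young's inequality, and the choice of $\delta$) are elementary and quantitative.
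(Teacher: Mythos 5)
Your argument is correct, and it is the standard proof of this lemma. The paper itself does not prove Lemma~\ref{lemma2-free}; it is cited from \cite{nagai1997application1}, and the route you take — centering $w$ at its mean, invoking the sharp mean-zero Moser--Trudinger/Chang--Yang inequality with constant $2\theta_\Omega$, then linearizing the exponent via Young's inequality $|w-\bar w|\le \delta (w-\bar w)^2/\|\nabla w\|_{L^2}^2 + \|\nabla w\|_{L^2}^2/(4\delta)$ and choosing $\delta$ just below $2\theta_\Omega$ — is exactly how Nagai, Senba and Yoshida establish it. The constant-counting works: for any $\epsilon>0$ you can take $\delta = 2\theta_\Omega/(1+8\theta_\Omega\epsilon) < 2\theta_\Omega$, which makes $1/(4\delta)=1/(8\theta_\Omega)+\epsilon$. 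One small remark: the final paragraph about generating the factor $2$ in front of $\|w\|_{L^1}/|\Omega|$ by splitting $e^{|w|}\le e^w+e^{-w}$ is an unnecessary detour. Your centering step already gives $\int_\Omega e^{|w|}\,dx \le e^{|\bar w|}\int_\Omega e^{|w-\bar w|}\,dx$ with $|\bar w|\le \|w\|_{L^1}/|\Omega|$, i.e.\ coefficient $1$, which trivially implies the stated coefficient $2$; no further manipulation is needed. Also worth noting (though not a flaw in your proof) is that, as written, the constant $C(\delta,\Omega)$ depends on $\epsilon$ through the choice of $\delta$; the lemma's notation $C_\Omega$ suppresses this dependence, which is a sloppiness inherited from the source.
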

\begin{lemma}\label{lemma2-free1}
Let $\Omega=\{x\in \mathbb R^N:\vert x\vert\leq R\}$ $(N\geq 2)$ and $w\in W^{1,N}(\Omega)$ with $w=w(|x|).$  Then for any $\epsilon>0$, there exists $C=C(|\Omega|,\epsilon)$ such that 
\begin{align*}
\int_{\Omega} e^{|w|}dx\leq C\exp\Big\{\Big(\frac{1}{\beta_N}+\epsilon\Big) \Vert\nabla w\Vert_{L^N(\Omega)}^N+\frac{2^N}{N|\Omega|}\Vert w\Vert_{L^1(\Omega)}\Big\},
\end{align*}
where $\beta_N$ is given by
\begin{align*}
\beta_N=N\Big(\frac{N\alpha_N}{N-1}\Big)^{N-1},~~\alpha_N=N\omega_{N-1}^{1/(N-1)},
\end{align*}
and $\omega_{N-1}$ denotes the surface area of the unit sphere in $\R^N.$
\end{lemma}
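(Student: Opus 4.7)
\bigskip

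\noindent\textbf{Proof plan for Lemma \ref{lemma2-free1}.}

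The plan is to reduce the inequality to a one-dimensional Hölder estimate via radial symmetry, extract the mean value, and then linearize a Moser-Trudinger-type inequality through Young's inequality with an adjustable parameter tuned to produce the sharp constant $1/\beta_N$. First, I would replace $w$ by $|w|$ (observing that $|w|\in W^{1,N}(\Omega)$ with $|\nabla |w||\le|\nabla w|$ a.e.\ and $\bigl\||w|\bigr\|_{L^1}=\|w\|_{L^1}$), so we may assume $w\ge 0$. Writing $w=w(r)$ with $r=|x|$, the key one-dimensional bound comes from Hölder's inequality with exponents $N$ and $N/(N-1)$:
\begin{equation*}
|w(r)-w(r_0)|\le \Bigl(\int_0^R |w'(s)|^N s^{N-1}\,ds\Bigr)^{1/N}\Bigl|\int_{r_0}^r s^{-1}\,ds\Bigr|^{(N-1)/N}=\omega_{N-1}^{-1/N}\|\nabla w\|_{L^N(\Omega)}\,\bigl|\log(r/r_0)\bigr|^{(N-1)/N}.
\end{equation*}

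Next, I would choose $r_0\in(0,R)$ by the integral mean value theorem so that $w(r_0)=\bar{w}:=|\Omega|^{-1}\int_\Omega w\,dx$, giving $0\le \bar{w}\le |\Omega|^{-1}\|w\|_{L^1(\Omega)}$. Combining with the previous display yields $w(r)\le \bar{w}+\omega_{N-1}^{-1/N}\|\nabla w\|_{L^N}\,|\log(r/r_0)|^{(N-1)/N}$. Applying Young's inequality $ab\le \delta^N a^N/N+(N-1)\delta^{-N/(N-1)}b^{N/(N-1)}/N$ with $a=\omega_{N-1}^{-1/N}\|\nabla w\|_{L^N}$, $b=|\log(r/r_0)|^{(N-1)/N}$, and a free parameter $\delta>0$, one gets
\begin{equation*}
e^{w(r)}\le e^{\bar{w}}\exp\Bigl(\tfrac{\delta^N}{N\omega_{N-1}}\|\nabla w\|_{L^N}^N\Bigr)\,(r/r_0)^{\pm(N-1)\delta^{-N/(N-1)}/N},
\end{equation*}
with the sign depending on whether $r\gtrless r_0$.

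Multiplying by $\omega_{N-1}r^{N-1}$ and integrating over $r\in(0,R)$, the radial integral is finite provided $c:=(N-1)\delta^{-N/(N-1)}/N<N$, i.e.\ $\delta^N>(N-1)^{N-1}/N^{2(N-1)}$. A direct computation confirms that at this critical value
\begin{equation*}
\frac{\delta^N}{N\omega_{N-1}}=\frac{(N-1)^{N-1}}{N^{2N-1}\omega_{N-1}}=\frac{1}{\beta_N},
\end{equation*}
so for any prescribed $\epsilon>0$ we may pick $\delta$ slightly above the critical value so that $\delta^N/(N\omega_{N-1})\le 1/\beta_N+\epsilon$, while the $r$-integral remains bounded by a constant $C=C(|\Omega|,\epsilon)$. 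Using $\bar{w}\le\|w\|_{L^1}/|\Omega|\le (2^N/N)\|w\|_{L^1}/|\Omega|$ (since $2^N\ge N$ for $N\ge 2$) to absorb $e^{\bar{w}}$ into the stated form completes the proof.

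The main obstacle is the sharp calibration step: the Young-exponent parameter $\delta$ must simultaneously keep the radial integral $\int_0^R r^{N-1}(r/r_0)^{\pm c}\,dr$ finite and deliver the constant $1/\beta_N+\epsilon$ in the exponent. Verifying that the critical $\delta$ identified by the convergence threshold $c<N$ is exactly the one producing $1/\beta_N$ is the delicate point, and this is precisely where the algebraic form of $\beta_N=N\bigl(N\alpha_N/(N-1)\bigr)^{N-1}$ originates.
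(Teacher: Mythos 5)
The paper does not prove this lemma itself; it is cited from Nagai--Senba--Yoshida, so I am simply assessing your argument on its own merits. Your one-dimensional H\"older reduction is correct, and so is the calibration of the Young parameter: the critical $\delta$ with $c=\frac{N-1}{N}\delta^{-N/(N-1)}=N$ indeed gives $\delta^N/(N\omega_{N-1}) = (N-1)^{N-1}/(N^{2N-1}\omega_{N-1}) = 1/\beta_N$, so taking $\delta$ slightly larger yields $1/\beta_N+\epsilon$ with $c<N$. The gap is in the choice of $r_0$. Selecting $r_0$ by the integral mean value theorem so that $w(r_0)=\bar w$ gives no lower bound on $r_0$, and your final integral is
\begin{equation*}
\omega_{N-1}\int_0^R \Big(\frac{r}{r_0}\Big)^{\pm c} r^{N-1}\,dr
= \omega_{N-1}\left[\frac{r_0^{N}}{N-c}+\frac{r_0^{-c}R^{N+c}-r_0^{N}}{N+c}\right],
\end{equation*}
which blows up as $r_0\to0$ through the factor $r_0^{-c}$. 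A concrete family showing this is fatal: on $B_1\subset\R^2$, let $w(r)=1+(1-r/\delta_0)_+$; then $\|\nabla w\|_{L^2}^N=\omega_{1}/N$ is fixed, $\bar w = 1+O(\delta_0^2)$, and the equation $w(r_0)=\bar w$ forces $r_0\approx\delta_0\to0$, so your intermediate upper bound diverges while $\int_\Omega e^w$ stays bounded. Hence the constant you obtain is not a function of $|\Omega|$ and $\epsilon$ alone, and the proof as written does not establish the lemma.

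The fix is to pick $r_0$ bounded away from zero, trading the identity $w(r_0)=\bar w$ for an inequality. For instance, the outer annulus $\{2^{-1/N}R\le |x|\le R\}$ has volume $|\Omega|/2$, and the average of $w$ over it is at most $\frac{2}{|\Omega|}\|w\|_{L^1(\Omega)}$; since $w$ cannot exceed this average at every point of the annulus, there is $r_0\in[2^{-1/N}R,R]$ with $w(r_0)\le\frac{2}{|\Omega|}\|w\|_{L^1}$. Using this $r_0$, one has $R/r_0\le 2^{1/N}$, so both pieces of the radial integral are bounded by $C(R,c)=C(|\Omega|,\epsilon)$ uniformly in $w$, and $\frac{2}{|\Omega|}\le\frac{2^N}{N|\Omega|}$ for $N\ge2$, which recovers exactly the coefficient $\frac{2^N}{N|\Omega|}$ in the statement. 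With this substitution the rest of your argument closes; the remaining issues you flagged (the role of $\delta$ and the integrability threshold $c<N$) you had already handled correctly.
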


By using Lemma \ref{lemma2-free} and Lemma \ref{lemma2-free1}, we have from the energy dissipation given in Lemma \ref{lemma1-free} that
\begin{lemma}\label{free-lemma3}
Assume condition \eqref{sect0-M-M0-less} holds.  Then $(n,c,{\bf u},P)$ satisfies
\begin{align*}
\int_{\Omega} n c\, dx\lesssim 1,~~|\mathcal J(t)|\lesssim 1.
\end{align*}
\end{lemma}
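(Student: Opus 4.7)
The plan is to combine the free-energy dissipation of Lemma \ref{lemma1-free} with the sharp Moser--Trudinger inequalities of Lemmas \ref{lemma2-free} and \ref{lemma2-free1} to extract coercivity from $\mathcal{J}$ under the subcritical mass assumption \eqref{sect0-M-M0-less}. To begin with, Lemma \ref{lemma1-free} immediately gives $\mathcal{J}(t)\le \mathcal{J}(0)\lesssim 1$ for every $t>0$. Next, Jensen's inequality applied with the probability measure $e^{c}\,dx/\int_{\Omega} e^{c}\,dx$ and the convex function $s\mapsto s\log s$ yields
\[ \int_{\Omega} nc\,dx\;\le\; \int_{\Omega} n\log n\,dx+M\log\int_{\Omega} e^{c}\,dx-M\log M. \]
Applying the appropriate Moser--Trudinger inequality (Lemma \ref{lemma2-free} for a generic smooth $\Omega$, or Lemma \ref{lemma2-free1} in the radial setting) with $w=c$, together with the previously established bound $\|c\|_{L^{1}}\lesssim 1$, produces $\log\int_{\Omega} e^{c}\,dx\le \bigl(\tfrac{1}{8\theta_{\Omega}}+\epsilon\bigr)\|\nabla c\|_{L^{2}}^{2}+C_{\epsilon}$, hence
\[ \int_{\Omega} nc\,dx\;\le\; \int_{\Omega} n\log n\,dx+\Bigl(\tfrac{M}{8\theta_{\Omega}}+M\epsilon\Bigr)\|\nabla c\|_{L^{2}}^{2}+C. \]

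Inserting this inequality into the definition \eqref{sect0-free-energy-calculate} of $\mathcal{J}$ produces
\[ \mathcal{J}(t)\;\ge\; \Bigl(\tfrac{1}{2}-\tfrac{M}{8\theta_{\Omega}}-M\epsilon\Bigr)\|\nabla c\|_{L^{2}}^{2}+\tfrac{1}{2}\|c\|_{L^{2}}^{2}+\tfrac{1}{2}\|\mathbf{u}\|_{L^{2}}^{2}-C. \]
Under $M<M_{0}=4\theta_{\Omega}$ in the generic case (respectively $M<8\pi$ in the radial case, matching $\beta_{2}=16\pi$ in Lemma \ref{lemma2-free1}), choosing $\epsilon>0$ sufficiently small renders the coefficient of $\|\nabla c\|_{L^{2}}^{2}$ strictly positive. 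Combined with $\mathcal{J}(t)\le \mathcal{J}(0)$, this yields uniform $L^{2}$ controls on $\nabla c$, $c$, $\mathbf{u}$ as well as a lower bound $\mathcal{J}(t)\ge -C$; together with the dissipation upper bound this proves $|\mathcal{J}(t)|\lesssim 1$.

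To finally upgrade to $\int_{\Omega} nc\,dx\lesssim 1$, I would bootstrap: since $\|\nabla c\|_{L^{2}}$ is now uniformly bounded, the Moser--Trudinger inequality is no longer used at its critical constant and yields $\int_{\Omega} e^{\alpha c}\,dx\lesssim 1$ for every fixed $\alpha>1$. Young's inequality $xy\le x\log x-x+e^{y}$ with $x=n$ and $y=\alpha c$, divided by $\alpha$, gives
\[ \int_{\Omega} nc\,dx\;\le\; \tfrac{1}{\alpha}\int_{\Omega} n\log n\,dx+C. \]
On the other hand, the already established bound on $\mathcal{J}$ together with the positivity of the remaining terms in \eqref{sect0-free-energy-calculate} provides $\int n\log n-\int nc\le \mathcal{J}(t)\lesssim 1$. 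Combining both inequalities and exploiting $1-\tfrac{1}{\alpha}>0$ yields $\int_{\Omega} n\log n\,dx\lesssim 1$; together with the pointwise lower bound $n\log n\ge -1/e$ this controls $\int n\log n\,dx$ in absolute value, whence $\int_{\Omega} nc\,dx\lesssim 1$ follows. The chief obstacle is the sharp use of Moser--Trudinger to capture the subcritical gap $M<M_{0}$, together with the bootstrap step with $\alpha>1$, which is necessary because the first-round coercivity argument only controls $\|\nabla c\|_{L^{2}}^{2}$ rather than $\int nc\,dx$ itself.
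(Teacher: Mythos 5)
Your argument is correct, but the second half genuinely differs from the paper's. The first half (Jensen's inequality in the form $\int_\Omega nc\,dx \le \int_\Omega n\log n\,dx + M\log\int_\Omega e^c\,dx - M\log M$ followed by Moser--Trudinger, so that $\int n\log n$ cancels in $\mathcal J$ and subcriticality of $M$ gives coercivity in $\|\nabla c\|_{L^2}^2$, $\|c\|_{L^2}^2$, $\|\mathbf u\|_{L^2}^2$) is in content the same as the paper's, modulo cosmetic differences in how Jensen is phrased. The difference is in extracting $\int nc$. The paper works with the perturbed exponent $(1+\delta)c$ in the Moser--Trudinger step rather than $c$, which produces the inequality
\[
\Big\{\tfrac12-M\big(\tfrac{1}{2M_0}+\epsilon\big)(1+\delta)^2\Big\}\|\nabla c\|_{L^2}^2+\delta\int_\Omega nc\,dx\lesssim 1,
\]
so the bound on $\int nc$ drops out in the same stroke as the gradient bound, with no further work. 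You instead apply Moser--Trudinger at exponent $1$, observe that this controls only $\|\nabla c\|_{L^2}$, and then bootstrap: once $\|\nabla c\|_{L^2}$ is bounded, Moser--Trudinger with any fixed $\alpha>1$ gives $\int e^{\alpha c}\lesssim 1$, and the Fenchel--Young inequality $xy\le x\log x-x+e^y$ with $(x,y)=(n,\alpha c)$ yields $\int nc\le \tfrac1\alpha\int n\log n+C$; combining with $\int n\log n-\int nc\le\mathcal J\lesssim 1$ and $\alpha>1$ closes the loop. Both routes are valid. The paper's $\delta$-trick is tighter and avoids the second pass; your version is more modular in that it separates the sharp use of Moser--Trudinger (which controls only the gradient) from the duality step that upgrades to $\int nc$. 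It also makes clearer that the bound on $\int nc$ does not actually require the sharp Moser--Trudinger constant once the gradient is already controlled. One small remark: your formula presents the lower bound for $\mathcal J(t)$ as if $\int n\log n$ were still present, but it has in fact cancelled -- you correctly note afterwards that this is precisely why the bootstrap is needed, so no harm is done, but a reader could stumble on the display as written.
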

\begin{proof}
Firstly, we have the free-energy $\mathcal J(t)$ given by (\ref{sect0-free-energy-calculate}) can be rewritten as
{{\begin{align}\label{lemma23-1}
\mathcal J(t)=\int_{\Omega} n\log n\, dx-(1+\delta)\int_{\Omega} nc\, dx+\int_{\Omega}[ \delta nc+\frac{1}{2}(|\nabla c|^2+c^2)]\, dx+\frac{1}{2}\int_{\Omega}\vert {\bf u}\vert^2\, dx,
\end{align}}}
where $\delta>0$ is any small number.  From (\ref{lemma23-1}), we find
{{\begin{align*}
\mathcal J(t)\geq-\int_{\Omega} n\log \frac{e^{(1+\delta)c}}{n}\, dx+\int_{\Omega} \delta nc\, dx +\frac{1}{2}\int_{\Omega}(|\nabla c|^2+c^2)\, dx.
\end{align*}}}
On the other hand, noting $n$ satisfies (\ref{freen1bound}), one has
$$\int_{\Omega} \frac{n}{M}\, dx=1.$$
Then by using the fact that $-\log x$ is convex, we apply Jensen's inequality to obtain
\begin{align}\label{sect2-jesen-after}
-\log \Big\{\frac{1}{M}\int_{\Omega} e^{(1+\delta)c}\, dx\Big\}=-\log \int_{\Omega} \frac{e^{(1+\delta)c}}{n}\frac{n}{M}\, dx\leq &\int_{\Omega} \bigg(-\log\frac{e^{(1+\delta)c}}{n}\bigg)\frac{n}{M}\,dx\nonumber\\
=&-\frac{1}{M}\int_{\Omega} n\log\frac{e^{(1+\delta )c}}{n}\, dx.
\end{align}
In light of Lemma \ref{lemma2-free}, one gets
\begin{align}\label{Sect2-global-tmineq}
\int_{\Omega} e^{(1+\delta) c}\, dx\lesssim \exp\Big\{\Big(\frac{1}{2M_0}+\epsilon\Big)(1+\delta)^2\Vert \nabla c\Vert_{L^2(\Omega)}^2+\frac{2(1+\delta)}{|\Omega|}\Vert c\Vert_{L^1(\Omega)}\Big\},
\end{align}
where $\epsilon>0$ is small.  Upon substituting (\ref{Sect2-global-tmineq}) into (\ref{sect2-jesen-after}), we arrive at
\begin{align*}
\log\Big(\frac{1}{M}\int_{\Omega} e^{(1+\delta) c}\, dx\Big)\lesssim \log \frac{1}{M} +\Big(\frac{1}{2M_0}+\epsilon\Big)(1+\delta)^2\Vert \nabla c\Vert_{L^2}^2+\frac{2(1+\delta)}{|\Omega|}\Vert c\Vert_{L^1},
\end{align*}
which yields 
\begin{align}\label{freeenergy-eq111}
    \mathcal J(t)\geq& -M\log \Big\{\frac{1}{M}\int_{\Omega} e^{(1+\delta)c}\, dx\Big\}+\int_{\Omega} \Big[\delta nc +\frac{1}{2}\Big(|\nabla c|^2+c^2\Big)\Big]\, dx\nonumber\\
    \gec&  -M\Big[\log \frac{1}{M} +\Big(\frac{1}{2M_0}+\epsilon\Big)(1+\delta)^2\Vert \nabla c\Vert_{L^2}^2+\frac{2(1+\delta)}{|\Omega|}\Vert c\Vert_{L^1}\Big]+\int_{\Omega} \Big[\delta nc +\frac{1}{2}(|\nabla c|^2+c^2)\Big].
\end{align}
We rearrange (\ref{freeenergy-eq111}) to find
\begin{align}\label{Sect2-readily-obtain-before}
&\Big\{\frac{1}{2}-M\Big(\frac{1}{2M_0}+\epsilon\Big)(1+\delta)^2\Big\}\Vert \nabla c\Vert_{L^2}^2+\delta \int_{\Omega} nc\, dx\nonumber\\
\leq & M\Big\{\log \frac{1}{M}+\frac{2(1+\delta)}{|\Omega|}\Vert c\Vert_{L^1}\Big\}+\mathcal J(t)\lec 1+\mathcal J(0)\lec 1.
\end{align}
To obtain the boundedness of $\Vert \nabla c\Vert_{L^2(\Omega)}$ and $\Vert nc\Vert_{L^1(\Omega)}$, we require for sufficiently small $\epsilon,\delta>0$ that
\begin{align}\label{sect2-before-take-limit}
\frac{1}{2}-M\Big(\frac{1}{2M_0}+\epsilon\Big)(1+\delta)^2>0.
\end{align}
Letting $\epsilon\rightarrow 0^+$ and $\delta\rightarrow 0^+,$ one finds (\ref{sect2-before-take-limit}) becomes
$$M<M_0.$$
With this condition, we readily obtain from (\ref{Sect2-readily-obtain-before}) that
\begin{align*}
 \int_{\Omega} n\log n\, dx\lesssim 1,~~~\int_{\Omega} \vert \nabla c\vert^2\,  dx\lesssim 1.
\end{align*}
This completes the proof of our lemma.
\end{proof}
It is worthy mentioning that $\int_{\Omega}n \log n\, dx$ is bounded from below.  Indeed, according to the lower bound of function $x\log x$, one has
$$\int_{\Omega} n\log n \, dx \geq -\frac{1}{e}|\Omega|.$$
With the help of key Lemma \ref{free-lemma3}, we can show the boundedness of $\Vert n\Vert_{L^2}$, which is
\begin{lemma}\label{sect3-lemma-n2-bounded}
Assume the condition (\ref{sect0-M-M0-less}) holds.  Then we have
\begin{align*}
{\int_{\Omega} n^2dx\lesssim 1.}
\end{align*}
\end{lemma}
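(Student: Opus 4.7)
The strategy is to test the $n$-equation against $n$ to derive an $L^2$-energy identity, eliminate the fluid advection via $\nb\cdot{\bf u}=0$ together with the slip condition ${\bf u}\cdot\boldsymbol{\nu}=0$, and convert the chemotactic flux into a cubic nonlinearity in $n$ by invoking the $c$-equation. The resulting cubic term is then absorbed into the dissipation via the entropy-weighted interpolation Lemma \ref{sect3-preliminary-lemma33}, using the uniform bound $\int_\Om n\log n\,dx\lesssim 1$ supplied by Lemma \ref{free-lemma3}, which is precisely where the subcriticality $M<M_0$ enters.

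Concretely, multiplying $(\ref{PKSNS-time-dependent})_1$ by $n$ and integrating, the transport contribution satisfies
$\int_\Om n\,{\bf u}\cdot\nb n\,dx = \tfrac12\int_\Om {\bf u}\cdot\nb(n^2)\,dx = 0$
thanks to $\nb\cdot{\bf u}=0$ and ${\bf u}\cdot\boldsymbol{\nu}=0$. Rewriting $\int_\Om n\nb n\cdot\nb c\,dx = \tfrac12\int_\Om \nb(n^2)\cdot\nb c\,dx$, integrating by parts using $\pd_{\boldsymbol{\nu}} c=0$ on $\pd\Om$, and substituting $\De c = \iota c_t + c - n$, one obtains
\begin{align*}
\tfrac12\tfrac{d}{dt}\int_\Om n^2\,dx + \int_\Om |\nb n|^2\,dx = \tfrac12\int_\Om n^3\,dx - \tfrac12\int_\Om n^2 c\,dx - \tfrac{\iota}{2}\int_\Om n^2 c_t\,dx.
\end{align*}
The term $-\tfrac12\int_\Om n^2 c\,dx$ is non-positive since $c\geq 0$ and can be discarded.

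Next, apply Lemma \ref{sect3-preliminary-lemma33} with $p=3$, $r=1$: for any $\de>0$,
\begin{align*}
\int_\Om n^3\,dx \leq \de\,\|\nb n\|_{L^2}^2\,\|n\log n\|_{L^1} + C\|n\|_{L^1}^3 + C_\de \leq C\de\,\|\nb n\|_{L^2}^2 + C,
\end{align*}
where the second bound uses Lemma \ref{free-lemma3} together with the mass conservation $\|n\|_{L^1}=M$. In the parabolic-elliptic case $\iota=0$, choosing $\de$ sufficiently small absorbs the cubic term into the dissipation and yields $\tfrac{d}{dt}\int_\Om n^2\,dx + \|\nb n\|_{L^2}^2 \leq C$. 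Combined with the Poincar\'e-type inequality $\|n\|_{L^2}^2 \leq C_P\|\nb n\|_{L^2}^2 + M^2/|\Om|$, a standard Gr\"onwall argument produces $\|n(t)\|_{L^2}^2\lesssim 1$ uniformly in $t$.

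For the parabolic-parabolic case $\iota>0$, the extra term $|\tfrac{\iota}{2}\int_\Om n^2 c_t\,dx|$ is controlled via the two-dimensional Gagliardo-Nirenberg inequality (Lemma \ref{sect3-pre-GN-inequality}) in the form $\|n\|_{L^4}^2 \lesssim \|\nb n\|_{L^2}\|n\|_{L^2} + \|n\|_{L^1}^2$, followed by Young's inequality to split it as $\epsilon\|\nb n\|_{L^2}^2 + C\|n\|_{L^2}^2\|c_t\|_{L^2}^2 + C$. Since the dissipation identity \eqref{dissipation} implies $\iota\int_0^\infty\|c_t(\cdot,s)\|_{L^2}^2\,ds \leq \mathcal J(0) - \inf_t \mathcal J(t) \lesssim 1$, the time-dependent factor $\|c_t\|_{L^2}^2$ is $L^1$-integrable in $t$, and Gr\"onwall closes the estimate uniformly. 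The main obstacle is precisely this last step: the parabolic-parabolic coupling rules out a pointwise-in-time bound on $\|c_t\|_{L^2}^2$, forcing us to exploit its $L^1_t$-integrability coming from the free-energy dissipation of Lemma \ref{lemma1-free}.
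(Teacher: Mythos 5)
Your proposal is correct and takes essentially the same route as the paper: test the $n$-equation against $n$, kill the transport term via incompressibility and the slip condition, substitute $\Delta c = \iota c_t + c - n$, control $\int_\Om n^3$ via the entropy-interpolation Lemma \ref{sect3-preliminary-lemma33} combined with the uniform bound $\int_\Om n\log n\,dx\lesssim 1$ from Lemma \ref{free-lemma3}, control $\int_\Om n^2 c_t$ via Gagliardo--Nirenberg and Young, and close with a Gr\"onwall argument that exploits $\iota\int_0^\infty\|c_t\|_{L^2}^2\,dt\lesssim 1$ from the free-energy dissipation. The only cosmetic difference is that the paper derives the absorbing linear term $(\iota+1)\|n\|_{L^2}^2$ from a Gagliardo--Nirenberg lower bound on $\|\nabla n\|_{L^2}^2$ rather than a Poincar\'e--Wirtinger inequality; both serve the identical purpose in the Gr\"onwall step.
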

\begin{proof}
Test the $n$-equation in (\ref{PKSNS-time-dependent}) against $n$, then we integrate it by parts and apply the $c$-equation to get
\begin{align*}
\frac{1}{2}\frac{d}{dt} \int_{\Omega}n^2\, dx=&-\int_{\Omega}|\nabla n|^2\, dx-\int_{\Omega}n\nabla \cdot (n\nabla c)\, dx-\int_{\Omega} {\bf u}\cdot\nabla n n\, dx \\
=&-\int_{\Omega}|\nabla n|^2\, dx-\int_{\Omega} {\bf u}\cdot \nabla n n\, dx+\int_{\Omega} n\nabla n\cdot\nabla c\, dx\\
=&-\int_{\Omega}|\nabla n\vert^2\, dx-\int_{\Omega}{\bf u}\cdot\nabla n n\, dx-\frac{1}{2}\int_{\Omega} n^2\Delta c\, dx\\
=&-\int_{\Omega}|\nabla n|^2\, dx-\int_{\Omega}{\bf u}\cdot \nabla n n\, dx-\frac{1}{2}\int_{\Omega}n^2\Big(\iota\frac{\pd c}{\pd t}+c-n\Big)\, dx\\
\leq &-\int_{\Omega}|\nabla n|^2\, dx-\frac{\iota}{2}\int_{\Omega} n^2 \frac{\pd c}{\pd t}\, dx+\frac{1}{2}\int n^3\, dx,
\end{align*}
where we have used the boundary conditions of ${\bf u}.$  Thanks to Lemma \ref{sect3-preliminary-lemma33}, one has $n$ satisfies
\begin{align}\label{free-eqGn}
\Vert n\Vert_{L^3}\leq \epsilon \Vert\nabla n\Vert_{L^2}^{2/3}\Vert n\log n\Vert_{L^1}^{1/3}+C_{\epsilon}\Big(\Vert  n\log n\Vert_{L^1}+\Vert n\Vert_{L^1}^{1/3}\Big),
\end{align}
where $\epsilon>0$ is any small constant.  Invoking the H\"{o}lder's inequality and Lemma \ref{sect3-pre-GN-inequality}, we obtain
\begin{align}\label{free-eq2}
\int_{\Omega} \Big|n^2 \frac{\pd c}{\pd t}\Big|\, dx\leq &\Big\Vert \frac{\pd c}{\pd t}\Big\Vert_{L^2}\Vert n\Vert_{L^4}^2\nonumber\\
\lesssim & \Big\Vert\frac{\pd c}{\pd t}\Big\Vert_{L^2}\big[(\Vert \nabla n\Vert_{L^2}^{1/2}+\Vert n\Vert_{L^2}^{1/2})\Vert n\Vert_{L^2}^{1/2}\big]^2\nonumber\\
\lesssim & \Big\Vert \frac{\pd c}{\pd t}\Big\Vert_{L^2}(\Vert \nabla n\Vert_{L^2}\Vert n\Vert_{L^2}+\Vert n\Vert_{L^2}^2)\nonumber\\
\leq &\epsilon \Vert \nabla n\Vert_{L^2}^2+C_{\epsilon}\Big(\Big\Vert \frac{\pd c}{\pd t}\Big\Vert_{L^2}^2+\Big\Vert \frac{\pd c}{\pd t}\Big\Vert_{L^2}\Big)\Vert n\Vert_{L^2}^2.
\end{align}
Combining (\ref{free-eqGn}) and (\ref{free-eq2}), one gets
\begin{align}\label{free-eqq}
\frac{d}{dt}\Vert n\Vert_{L^2}^2+2\Vert \nabla n\Vert_{L^2}^2\leq &\epsilon \iota \Vert \nabla n\Vert_{L^2}^2+\iota\Big(\Big\Vert \frac{\pd c}{\pd t }\Big\Vert_{L^2}^2+\Big\Vert \frac{\pd c}{\pd t}\Big\Vert_{L^2}\Big)\Vert n\Vert_{L^2}^2\nonumber\\
+&\epsilon^3\Vert \nabla n\Vert_{L^2}^2\Vert n\log n\Vert_{L^1}+C_{\epsilon}(\Vert n\log n\Vert_{L^1}^3+\Vert n\Vert_{L^1}).
\end{align}
We rearrange (\ref{free-eqq}) to arrive at
\begin{align*}
&\frac{d}{dt}\Vert n\Vert_{L^2}^2+(2-\epsilon\iota-\epsilon^3\Vert n\log n\Vert_{L^1})\Vert\nabla n\Vert_{L^2}^2\\
\leq &C_{\epsilon}\Big[\iota\Big(\Big\Vert \frac{\pd c}{\pd t}\Big\Vert_{L^2}^2+\Big\Vert \frac{\pd c}{\pd t}\Big\Vert_{L^2}\Big)\Vert n\Vert_{L^2}^2+\Vert n\log n\Vert_{L^1}^3+\Vert n\Vert_{L^1}\Big].
\end{align*}
In light of Lemma \ref{free-lemma3}, we have $\Vert n\log n\Vert_{L^1}$ is bounded.  As a consequence, we choose $\epsilon$ small such that
\begin{align}\label{boundbefore}
  &\frac{d}{dt}\Vert n\Vert_{L^2}^2+\Vert\nabla n\Vert_{L^2}^2\nonumber\\
\leq &C_{\epsilon}\Big[\iota\Big(\Big\Vert \frac{\pd c}{\pd t}\Big\Vert_{L^2}^2+\Big\Vert \frac{\pd c}{\pd t}\Big\Vert_{L^2}\Big)\Vert n\Vert_{L^2}^2+\Vert n\log n\Vert_{L^1}^3+\Vert n\Vert_{L^1}\Big].  
\end{align}
On the other hand, one can conclude from Lemma \ref{sect3-pre-GN-inequality} that
\begin{align*}
(\iota+1)\Vert n\Vert_{L^2}\leq \bar C_1(\iota+1)\Big(\Vert \nabla n\Vert_{L^2}^{1/2}\Vert n\Vert_{L^1}^{1/2}+\Vert n\Vert_{L^1}\Big),
\end{align*}
where ${\bar C}_1>0$ is some constant only depending on $|\Omega|$.  It follows that
\begin{align}\label{sect3-lemma-GN-use1}
\Vert\nabla n\Vert_{L^2}^2\geq (\iota+1)\Vert n\Vert_{L^2}^2-\bar C_2 \Vert n\Vert_{L^1}^2,
\end{align}
where positive constant ${\bar C}_2$ only depends on $|\Omega|$ and $\iota$.  Upon substituting (\ref{sect3-lemma-GN-use1}) into (\ref{boundbefore}), one finds
\begin{align}\label{sect3-differential-inequality-1}
\frac{d}{dt}\Vert n\Vert_{L^2}^2+(\iota+1)\Vert n\Vert_{L^2}^2\leq \bar C_3\Big[\iota\Big(\Big\Vert \frac{\pd c}{\pd t}\Big\Vert_{L^2}^2+\Big\Vert \frac{\pd c}{\pd t}\Big\Vert_{L^2}\Big)\Vert n\Vert_{L^2}^2+B\Big],
\end{align}
where positive constant $\bar C_3$ only depends on $|\Omega|$, $\iota$ and $\epsilon$; moreover, $B$ is defined by
\begin{align*}
B:=\sup_{t>0}(\Vert n\log n\Vert_{L^1}^3+\Vert n\Vert_{L^1}+\Vert n\Vert_{L^1}^2).
\end{align*}
Noting that energy dissipation (\ref{dissipation}) gives us
\begin{align}\label{boundedl2n2}
\iota\int_0^t\Big\Vert \frac{\pd c}{\pd t}(\cdot,s)\Big\Vert_{2}^2ds\lesssim 1,
\end{align}
then we define $y_1(t):=\Vert n(\cdot, t)\Vert_{L^2}^2$ and $y_2(t)=\big\Vert \frac{\pd c}{\pd t}(\cdot,t)\big\Vert_{L^2}^2$, then rewrite (\ref{sect3-differential-inequality-1}) as the following differential inequality:
\begin{align}\label{diffinequal}
    y_1'(t)+(\iota+1) y_1(t)\lesssim \iota(y_2+\sqrt{y_2})y_1+B.
\end{align}
By Young's inequality, we have $y_2$ satisfies
\begin{align*}
\sqrt{y_2}\leq \frac{1}{2}+\frac{y_2}{2}.
\end{align*}
Substituting it into (\ref{diffinequal}), one finds
\begin{align}\label{sect-combine-1-ineq}
y_1'+\Big(\frac{\iota}{2}+1-\frac{3\iota}{2}y_2\Big)y_1\leq B.
\end{align}
Combining (\ref{sect-combine-1-ineq}) with (\ref{boundedl2n2}), we obtain that
$$y_1(t)\lesssim 1,$$
i.e. $\Vert n(\cdot,t)\Vert_{L^2}$ is uniformly bounded in time.
\end{proof}
Lemma \ref{sect3-lemma-n2-bounded} implies $\Vert n(\cdot,t)\Vert_{L^2}$ is bounded.  Then if $\iota=0,$ one has for any $q\in[1,+\infty),$ $c\in W^{1,q}$ by the standard elliptic estimate.  Moreover, if $\iota>0,$ we find from Lemma \ref{sect3-pre-chenkongwang2021global} that
\begin{align*}
\Vert c(\cdot,t)\Vert_{W^{1,q}}\lesssim 1, \ \ \forall q\in[1,+\infty).
\end{align*}
With the boundedness of $\Vert n\Vert_{L^2}$ and $\Vert c\Vert_{W^{1,q}}$, we next prove the boundedness of $\Vert n\Vert_{L^\infty}$ and $\Vert c\Vert_{W^{1,\infty}}$, which are
\begin{lemma}\label{lemma310-sect3}
Assume that all conditions in Theorem \ref{thm-global-existence-PKS-NS} hold. 
 Then we have
\begin{align}\label{sect3-ncomponent-estimate}
\Vert n(\cdot,t)\Vert_{L^\infty}+\Vert c(\cdot,t)\Vert_{W^{1,\infty}}\lesssim 1.
\end{align}
\end{lemma}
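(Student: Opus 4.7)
The plan is to bootstrap the $L^2$-bound on $n$ from Lemma \ref{sect3-lemma-n2-bounded} and the $W^{1,q}$-bound on $c$ for every finite $q$ (established in the paragraph preceding the lemma) to the $L^\infty$-bound on $n$ and the $W^{1,\infty}$-bound on $c$, through a Moser-Alikakos iteration on the $n$-equation followed by standard elliptic/parabolic regularity for the $c$-equation. The key structural observation is that the fluid advection $\mathbf{u}\cdot\nabla n$ is invisible to every $L^p$ energy estimate for $n$: testing against $n^{p-1}$ produces $p^{-1}\int_\Om\mathbf{u}\cdot\nabla(n^p)\,dx=0$ by virtue of $\nabla\cdot\mathbf{u}=0$ in $\Om$ and $\mathbf{u}\cdot\boldsymbol{\nu}=0$ on $\pd\Om$. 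Consequently the fluid drops out of these estimates and the problem becomes structurally parallel to the fluid-free 2D Keller-Segel setting treated by Nagai \cite{nagai1997application1}.

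Concretely, I would test the $n$-equation against $n^{p-1}$ for $p\ge 2$ and use the Neumann conditions for $n$ and $c$ to derive
\[
\tfrac{1}{p}\tfrac{d}{dt}\|n\|_{L^p}^p + \tfrac{4(p-1)}{p^2}\|\nabla n^{p/2}\|_{L^2}^2 = (p-1)\int_\Om n^{p-1}\nabla n\cdot \nabla c\,dx.
\]
Young's inequality absorbs half of the gradient term into the left-hand side and leaves a contribution of the form $C_p\int n^p|\nabla c|^2\,dx$. Applying Hölder with the uniform bound $\|\nabla c\|_{L^q}\lesssim 1$ for $q$ chosen sufficiently large, and then invoking the two-dimensional Gagliardo-Nirenberg inequality (Lemma \ref{sect3-pre-GN-inequality}) on $n^{p/2}$ to interpolate the resulting $L^{pr}$-norm between $\|n\|_{L^p}$ and $\|\nabla n^{p/2}\|_{L^2}$, a second Young absorption produces a differential inequality roughly of the form $\frac{d}{dt}\|n\|_{L^p}^p + \|n\|_{L^p}^p\lesssim_p 1 + \|n\|_{L^{p/2}}^{\al(p)}$. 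Induction on $p\in\{2,4,8,\ldots\}$ with the base case supplied by Lemma \ref{sect3-lemma-n2-bounded} then yields a uniform-in-$t$ $L^p$-bound on $n$ for every finite $p$.

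To pass from $L^p$ for every finite $p$ to $L^\infty$, I would then carry out the Moser-Alikakos scheme: with $p_k=2^k$ and $M_k = \sup_{t>0}\|n(\cdot,t)\|_{L^{p_k}}$, carefully tracking the $k$-dependence of the constants yields a recurrence of the form $M_k\le K^{1/p_k}M_{k-1}^{\ga_k}$ with $\ga_k\to 1$ and $K$ independent of $k$, whose iteration gives $\|n\|_{L^\infty}\lesssim 1$. Once $n$ is uniformly bounded, the $c$-equation reads $\iota c_t - \De c + c = n\in L^\infty$ with homogeneous Neumann condition; for $\iota=0$ the classical $W^{2,p}$ elliptic regularity, and for $\iota>0$ the Neumann heat semigroup estimates (via Lemma \ref{sect3-pre-chenkongwang2021global} upgraded to $p=\infty$), together with the Sobolev embedding $W^{2,p}\hookrightarrow W^{1,\infty}$ for $p>2$, deliver $\|c(\cdot,t)\|_{W^{1,\infty}}\lesssim 1$.

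The principal obstacle is the uniform-in-$k$ control of the constants in the Moser-Alikakos recurrence against the chemotactic nonlinearity. The high-integrability bound $\nabla c\in L^q$ for every finite $q$ is decisive here: it provides enough Hölder latitude to separate the factor $\|\nabla c\|_{L^q}$ from the $n$-factor with exponents that can be freely tuned in $p$, forcing the recurrence constants to stay geometric rather than super-exponential, and it is precisely this feature that allows the argument to mirror Nagai's treatment of the fluid-free 2D Keller-Segel system.
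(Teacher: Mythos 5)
Your proposal is correct and follows essentially the same route as the paper: the paper likewise tests the $n$-equation against $n^{p-1}$, notes that the transport term vanishes by $\nabla\cdot\mathbf{u}=0$ and $\mathbf{u}\cdot\boldsymbol{\nu}=0$, absorbs the chemotactic term via Young's inequality together with the bound $\|\nabla c\|_{L^q}\lesssim 1$ for all finite $q$ and the Gagliardo--Nirenberg inequality, arrives at $\frac{d}{dt}\|n\|_{L^p}^p+\|n\|_{L^p}^p\le C_p$, and concludes by the standard Moser--Alikakos iteration, with the $W^{1,\infty}$ bound on $c$ then following from $n\in L^\infty$ exactly as you describe. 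Your write-up merely spells out the dyadic induction and the $\iota=0$ versus $\iota>0$ regularity step in more detail than the paper does.
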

\begin{proof}
For $p>1$, we multiply the $n$-equation in (\ref{PKSNS-time-dependent}) by $n^{p-1}$, then apply the integration by parts and the divergence-free property of $\bf u$ to get
\begin{align}\label{sect3-after-young-substitute-before-0}
\frac{1}{p}\cdot\frac{d}{dt}\int_{\Omega} n^{p}\, dx=&-(p-1)\int_{\Omega}n^{p-2}|\nabla n|^2\, dx+(p-1)\int_{\Omega} n^{p-1}\nabla n\cdot \nabla c\, dx-\int_{\Omega} n^{p-1}{\bf u}\cdot\nabla n\, dx\nonumber\\
=&-\frac{4(p-1)}{p^2}\int_{\Omega}|\nabla n^{\frac{p}{2}}|^2\, dx+(p-1)\int_{\Omega} n^{p-1}\nabla n\cdot \nabla c\, dx, 
\end{align}
where we have used the boundary conditions of ${\bf u}.$
On the other hand, by Young's inequality, one has
\begin{align}\label{sect3-after-young-substitute-before}
\int_{\Omega} n^{p-1}\vert \nabla n\vert \vert \nabla c\vert \, dx\leq &\frac{p-1}{2}\int_{\Omega} n^{p-2}\vert \nabla n\vert^2\, dx+C_{p}\int_{\Omega}  n^{p}\vert \nabla c\vert^2\, dx\nonumber\\
\leq &\frac{2(p-1)}{p^2}\int_{\Omega}|\nabla n^{\frac{p}{2}}|^2\, dx+\frac{1}{p}\int_{\Omega} n^{p+1}\, dx+C_p\int_{\Omega} \vert\nabla c\vert^{2(p+1)}\, dx.
\end{align}
In light of $c\in W^{1,q}$ for any $q\in[1,\infty)$, we find
\begin{align}\label{combine1}
\int_{\Omega} \vert\nabla c\vert^{2(p+1)}\, dx\leq C_p.
\end{align}
On the other hand, by using Gagliardo-Nirenberg's inequality given in Lemma \ref{sect3-pre-GN-inequality}, one has
\begin{align}\label{combine2}
2\int_{\Omega} n^{p+1}\, dx\leq \frac{2(p-1)}{p}\int_{\Omega}|\nabla n^{\frac{p}{2}}|^2\, dx+C_p.
\end{align}

We collect (\ref{sect3-after-young-substitute-before-0}), (\ref{sect3-after-young-substitute-before}), (\ref{combine1}) and (\ref{combine2}) to arrive at
\begin{align*}
\frac{d}{dt}\int_{\Omega} n^p\, dx+\int_{\Omega} n^p\, dx\leq C_p,
\end{align*}
where $p>1$ is arbitrary.  By the standard Moser--Alikakos iteration, we obtain that $n(\cdot,t)$, $\nabla c(\cdot,t)\in L^{\infty}(\Omega).$
\end{proof}

\medskip

{\bf{Proof of Theorem \ref{thm-global-existence-PKS-NS}:}}\\
With the help of Lemma \ref{lemma310-sect3}, it is left to study the estimate of velocity ${\bf u}$.  Firstly, we define $\omega:=\text{curl}~{\bf u}$ and obtain from (\ref{PKSNS-time-dependent})$_3$ that the vorticity equation is
\begin{align}\label{omegaeq}
\partial_{t}\omega+({\bf u}\cdot \nabla)\omega=\Delta \omega+\nabla\times (n\nabla c),
\end{align}
where $\nabla \cdot {\bf u}=0$ in $\Omega$.  Next, we estimate the vorticity $\omega$.  To this end, multiplying the (\ref{omegaeq}) by $\omega$, we integrate it by parts to obtain
\begin{align}\label{sect3-final-rearrange-before}
\frac{1}{2}\frac{d}{dt}\Vert \omega\Vert_{L^2}^2+\Vert \nabla \omega\Vert_{L^2}^2=\int_{\Omega} \nabla\times (n\nabla c)\omega\, dx =-\int_{\Omega} n\nabla c\cdot \nabla^{\perp}\omega\, dx.
\end{align}
We integrate (\ref{sect3-final-rearrange-before}) over $(0,t)$ and apply the Young's inequality to find 
\begin{align*}
\frac{1}{2}\Vert\omega\Vert_{L^2}^2+\int_0^t\Vert\nabla \omega\Vert_{L^2}^2\, d\tau=&\frac{1}{2}\Vert\omega_0\Vert_{L^2}^2-\int_0^t\int_{\Omega}n\nabla c\cdot (\nabla^{\perp} \omega)dxd\tau\\
\leq &\frac{1}{2}\Vert\omega_0\Vert_{L^2}^2+\frac{1}{2}\int_0^t ({\Vert n\nabla c\Vert_{L^2}^2}+\Vert\nabla^{\perp}\omega\Vert_{L^2}^2)\, d\tau,
\end{align*}
which implies
\begin{align*}
\Vert \omega\Vert_2^2+\int_0^t\Vert\nabla \omega \Vert_2^2\, d\tau\leq \Vert \omega_0\Vert_2^2+C(T),
\end{align*}
where we have used (\ref{sect3-ncomponent-estimate}) and positive constant $C$ satisfies $C(T)\rightarrow \infty$ as $T\rightarrow \infty.$  Moreover, noting that $\Vert\nabla {\bf u}\Vert_{{\bf L}^2}=\Vert \nabla\times {\bf u}\Vert_{{\bf L}^2}=\Vert\omega\Vert_{{\bf L}^2},$ we have
\begin{align}\label{sect3-combine-final-before-1-global-exsitence}
\Vert\nabla {\bf u}\Vert_{{\bf L}^2}\leq C(T).
\end{align}
By using the Gagliardo-Nirenberg-Sobolev's inequality, i.e. Lemma \ref{sect3-pre-GN-inequality}, we obtain
\begin{align}\label{sect3-combine-final-before-1-global-exsitence-2}
\Vert {\bf u}\Vert_{{\bf L}^\infty}\lesssim (\Vert {\bf u}\Vert_{{\bf L}^2}^{1-\alpha}\Vert\nabla {\bf u}\Vert_{{\bf L}^2}^{\alpha}+\Vert {\bf u}\Vert_{{\bf L}^2}),
\end{align}
where $\alpha\in(0,1)$.  Due to energy dissipation (\ref{dissipation}), one has ${\bf u}\in  {\bf L}^2$.  Hence, combining (\ref{sect3-combine-final-before-1-global-exsitence}) and (\ref{sect3-combine-final-before-1-global-exsitence-2}), we finish the proof of Theorem \ref{thm-global-existence-PKS-NS}.
\qed

\medskip

\begin{remark}
~

Lemma \ref{lemma310-sect3} establishes the uniformly-in-time boundedness of $\Vert n\Vert_{L^\infty}$ and $\Vert c\Vert_{W^{1,\infty}}$.  However, due to the growth bound of $\Vert {\bf u}\Vert_{{\bf L}^\infty}$ as $t\rightarrow T^{-}$, we can not rule out the infinite time blow-up of (\ref{PKSNS-time-dependent}).  We conjecture that velocity ${\bf u}$ possesses the uniformly-in-time bound in the subcritical mass case. 
\end{remark}
Theorem \ref{thm-global-existence-PKS-NS} demonstrates that (\ref{PKSNS-time-dependent}) admits the global-in-time solution with the subcritical mass.  Next, in Section \ref{sect2}--\ref{inn-out-gluing-sect}, we shall focus on the critical mass case and construct the stationary solution with the striking structure.

\section{The Choice of Ansatz and Error Estimates}\label{sect2}
First of all, we borrow the idea from \cite{del2006collapsing} to determine the approximate solution of (\ref{PKSNS-ss-equiv}).  Next, we reduce the transported Keller-Segel system as the single equation with a nonlocal term.  To begin with, we find with the absence of transport term $u\cdot \nabla n,$ the $n$-equation of (\ref{PKSNS-ss-equiv}) implies
\begin{align}\label{sect2-nc-relation}
n=\varepsilon^2 e^{c},
\end{align}
 where $\varepsilon>0$ is any small constant.  Upon substituting (\ref{sect2-nc-relation}) into the $c$-equation, one has the standard minimal Keller-Segel model is reduced as 
\begin{align}\label{sect2-standard-reduced}
\left\{\begin{array}{ll}
    \Delta c-c+\varepsilon^2 e^c=0,&x\in\Omega,\\
    \frac{\partial c}{\partial\boldsymbol{\nu}}=0,&x\in\partial\Omega.
    \end{array}
    \right.
\end{align}

In this paper, we plan to construct the boundary spot, i.e. the location is assumed to be at the boundary $\partial\Omega$.  In fact, similarly as shown in \cite{del2006collapsing} and \cite{KWX2022}, the construction of boundary spot is the modification of interior counterpart.  Thus, we first give the ansatz of single interior spot and compute its error.  Assume that the solution is concentrated at $\xi\in \Omega,$ then we introduce the stretched variable $y=\frac{x-\xi}{\varepsilon}$, define $c(x)=\bar c(\xi+\varepsilon y)-4\log \varepsilon$ and obtain from (\ref{sect2-standard-reduced}) that the limiting problem of $\bar c$ is 
$$\Delta_y \bar c+e^{\bar c}=0,~~y\in\mathbb R^2.$$
After imposing the following integral constraint
$$\int_{\mathbb R^2}e^{\bar c}dy<+\infty,$$
we arrive at
\begin{align}\label{sect2-limiting-problem-barc}
\left\{\begin{array}{ll}
\Delta_y \bar c+e^{\bar c}=0,&y\in\mathbb R^2,\\
\int_{\mathbb R^2}e^{\bar c}dy<+\infty.
\end{array}
\right.
\end{align}
It is well-known that (\ref{sect2-limiting-problem-barc}) has a family of solutions as follows:
\begin{align}\label{sect2-barcmu}
\bar c_{\mu}=\log\frac{8\mu^2}{(\mu^2+|y |^2)^2},~~\mu>0.
\end{align}
By using \eqref{sect2-barcmu}, we find the ``rough" ansatz of $c$ is given by
\begin{align}\label{sect2-c0-leading}
c_0=\Gamma\Big( \frac{ x-\xi}{\varepsilon}\Big)-4\log\varepsilon, \ \ \Gamma=\log\frac{8\mu^2}{(\mu^2+ \vert y\vert^2)^2}, \ \ y=\frac{x-\xi}{\e},
\end{align}
where $\xi$ represents the location of the local spot.  Moreover, noting that $n=\e^2e^{c}$, one finds from (\ref{sect2-c0-leading}) that the basic ansatz $n_0$ of $n$ is 
\begin{align}\label{sect2-n0-W0-leading}
n_0=\frac{1}{\varepsilon^2}W\Big(\frac{x-\xi}{\varepsilon}\Big),~~W=\frac{8\mu^2}{(\mu^2+|y|^2)^2}.
\end{align}
Since $c$ must satisfy the Neumann boundary condition, we set $H^{\varepsilon}$ as the correction term of $c_0$, which is determined by
\begin{align}\label{sect2-Hvarepsilon-eq}
\left\{\begin{array}{ll}
-\Delta H^{\varepsilon}+H^{\varepsilon}=-\Gamma,&x\in\Omega,\\
\frac{\partial H^{\varepsilon}}{\partial\boldsymbol{\nu}}=-\frac{\partial\Gamma}{\partial\boldsymbol{ \nu}},&x\in\partial\Omega.
\end{array}
\right.
\end{align}
It is easy to find that there exists the solution to (\ref{sect2-Hvarepsilon-eq}) satisfying $H^{\varepsilon}\in C^{1,\alpha}$ in 2D.  We summarize and choose the approximate solution of $(n,c)$ as 
\begin{align}\label{sect2-n-and-c}
n=\frac{1}{\varepsilon^2}W+\phi,~~c=-4\log \varepsilon+\Gamma+H^{\varepsilon}+\psi,
\end{align}
where $(\phi,\psi)$ is the remainder term. 

We have obtained the desired approximate solution defined in \eqref{mainn} and \eqref{mainc}.  Next, we shall compute and analyze the error generated by (\ref{sect2-n-and-c}).  Similarly as in \cite{KWX2022}, the coupled $n$-equation and $c$-equation can be reduced as the single form $S(n)=0$, where $S(n)$ is given by
$$S(n):=\Delta_x n+\nabla_x\cdot( n\nabla (\Delta_x-1)^{-1}n)-{\bf u}\cdot \nabla_x n.$$
Upon substituting (\ref{sect2-n-and-c}) into $S(n)=0$, we have
\begin{align}\label{S-of-n1}
S(n)=&\frac{1}{\varepsilon^4}\Delta_y W-\frac{1}{\varepsilon^4}\nabla_y\cdot(W\nabla_y\Gamma)\nonumber\\
&-\frac{1}{\varepsilon^3}\nabla_y\cdot\big(W\nabla_x H^{\varepsilon}\big)-\frac{1}{\varepsilon^3}{\bf u}\cdot\nabla_y W\nonumber\\
&+\Delta_x\phi-\frac{1}{\varepsilon^2}\nabla_x\cdot (W\nabla_x\psi)-\nabla_x\cdot(\phi\nabla_x\Gamma)\nonumber\\
&-\nabla_x\cdot(\phi\nabla_x\psi)-\nabla_x\cdot(\phi\nabla_x H^{\varepsilon})\nonumber\\
&-{\bf u}\cdot\nabla_x\phi=0.
\end{align}
Noting that $\Gamma$ and $W$ satisfy (\ref{sect2-c0-leading}) and (\ref{sect2-n0-W0-leading}), one further obtains from (\ref{S-of-n1}) that 
\begin{align*}
S(n)=&-\frac{1}{\varepsilon^3}\nabla_y\cdot\big(W\nabla_x H^{\varepsilon}\big)-\frac{1}{\varepsilon^3}{\bf u}\cdot\nabla_y W\\
&+\Delta_x\phi-\frac{1}{\varepsilon^2}\nabla_x\cdot (W\nabla_x\psi)-\nabla_x\cdot(\phi\nabla_x\Gamma)\\
&-\nabla_x\cdot(\phi\nabla_x\psi)-\nabla_x\cdot(\phi\nabla_x H^{\varepsilon})\\
&-{\bf u}\cdot\nabla_x\phi=0.
\end{align*}
Our goal is to show the existence of $(\phi,\psi)$ via the inner-outer gluing method and fixed point theorem.  To this end, we decompose $\phi(x)$ as
\begin{align}\label{phi-decomposition1}
\phi(x)=\frac{1}{\e^2}\Phi^{\text{i}}(y)\chi(y)+\varphi^{\text{o}},~~y=\frac{x-\xi}{\varepsilon},
\end{align}
where radial function $\chi$ is defined by
\begin{align*}
\chi(r):=\left\{\begin{array}{ll}
1,&r\leq \frac{\delta}{\e},\\
0,&r\geq \frac{2\delta}{\e},
\end{array}
\right.
\end{align*}
with $\delta>0$ is a small constant.  Upon substituting (\ref{phi-decomposition1}) into (\ref{S-of-n1}), we have
\begin{align}\label{sect2-S-of-n-2}
S(n)=&-\frac{1}{\varepsilon^3}\nabla_y\cdot\big(W\nabla_x H^{\varepsilon}\big)-\frac{1}{\varepsilon^3}{\bf u}\cdot\nabla_y W\nonumber\\
&+\frac{1}{\varepsilon^4}\Delta_y\Phi^{\text{i}}\chi-\frac{1}{\varepsilon^4}\nabla_y\cdot(\Phi^{\text{i}}\nabla_y\Gamma)\chi-\frac{1}{\varepsilon^4}\nabla_y\cdot\Big(W\nabla_y\bar\Psi^{\text{i}}\Big)\chi\nonumber\\
&+\Delta_x\varphi^{\text{o}}-\nabla_x\cdot(\varphi^{\text{o}}\nabla_x(\Gamma+ H^{\varepsilon}))-\frac{1}{\varepsilon^2}\nabla_x\cdot(W\nabla_x\psi^{\text{o}})\nonumber\\
&+\frac{2}{\varepsilon^3}\nabla_y\Phi^{\text{i}}\cdot\nabla_x\chi+\frac{1}{\varepsilon^2}\Phi^{\text{i}}\Delta_x\chi-\frac{1}{\varepsilon^4}\Phi^{\text{i}}\nabla_y\Gamma\cdot\nabla_y\chi\nonumber\\
&+\frac{1}{\varepsilon^4}\nabla_y\cdot\big(W\nabla_y\bar\Psi^{\text{i}}\big)\chi-\frac{1}{\varepsilon^4}\nabla_y\cdot(W\nabla_y\Psi^{\text{i}})\chi\nonumber\\
&+\frac{1}{\varepsilon^4}\nabla_y\cdot(W\nabla_y\Psi^{\text{i}})\chi-\frac{1}{\varepsilon^4}\nabla_y\cdot(W\nabla_y\hat\Psi^{\text{i}})\nonumber\\
&-\frac{1}{\varepsilon^2}\nabla_y\cdot\Big(\Big(\frac{1}{\varepsilon^2}\Phi^{\text{i}}\chi+\varphi^{\text{o}}\Big)\nabla_y\psi\Big)-\frac{1}{\varepsilon^3}\nabla_y\cdot\Big(\Phi^{\text{i}}\nabla_x H^{\varepsilon}\chi \Big)\nonumber\\
&-\frac{1}{\varepsilon}{\bf u}\cdot\nabla_y\Big(\frac{1}{\varepsilon^2}\Phi^{\text{i}}\chi+\varphi^{\text{o}}\Big),
\end{align}
where $\psi=-(\Delta_x-1)^{-1}\phi$,
\begin{align}\label{sect3-hatpsi-inn}
\hat\Psi^{\text{i}}:=-\Big(\Delta_y-\varepsilon^2\Big)^{-1}\big(\Phi^{\text{i}}\chi\big),~~ \bar\Psi^{\text{i}}:=-\Delta_y^{-1}\Phi^{\text{i}}
\end{align}
and
\begin{align}\label{sect3-psi-inn-varphi-o}
 \Psi^{\text{i}}:=-\Big(\Delta_y-\varepsilon^2\Big)^{-1}\Phi^{\text{i}},~~\psi^{\text{o}}:=-\Big(\frac{1}{\varepsilon^2}\Delta_y-1\Big)^{-1}\varphi^{\text{o}}.
 \end{align}
Before formulating the inner-outer gluing scheme, we define the inner operator $L_{W}$ and outer operator $L^{\text{o}}$ as
\begin{align}\label{sect2-linearized-inner-operator}
L_{W}[\Phi]=\Delta_y\Phi-\nabla_y\cdot(W\nabla_y \Psi)-\nabla_y\cdot(\Phi\nabla_y\Gamma),~~-\Delta_y^{-1} \Phi=\Psi,
\end{align}
and
\begin{align}\label{sect2-linearized-outer-operator}
L^{\text{o}}[\varphi]=\Delta_x\varphi-\nabla_x\varphi\cdot\nabla_x(\Gamma+ H^{\varepsilon}))-\varphi(\Gamma+H^{\varepsilon}).
\end{align}
By using (\ref{sect2-linearized-inner-operator}) and (\ref{sect2-linearized-outer-operator}), we simplify (\ref{sect2-S-of-n-2}) to get
\begin{align}
&\e\nabla_y\cdot\big(W\nabla_x H^{\varepsilon}\big)+\e {\bf u}\cdot\nabla_y W+{\varepsilon^2}\nabla_x\cdot(W\nabla_x\psi^{\text{o}})\nonumber\\
&-2{\varepsilon}\nabla_y\Phi^{\text{i}}\cdot\nabla_x\chi-{\varepsilon^2}\Phi^{\text{i}}\Delta_x\chi+\Phi^{\text{i}}\nabla_y\Gamma\cdot\nabla_y\chi\nonumber\\
&+{\varepsilon^2}\nabla_y\cdot\Big(\Big(\frac{1}{\varepsilon^2}\Phi^{\text{i}}\chi+\varphi^{\text{o}}\Big)\nabla_y\psi\Big)+{\varepsilon}\nabla_y\cdot\Big(\Phi^{\text{i}}\nabla_x H^{\varepsilon}\chi \Big)\nonumber\\
&+{\varepsilon^3}{\bf u}\cdot\nabla_y\Big(\frac{1}{\varepsilon^2}\Phi^{\text{i}}\chi+\varphi^{\text{o}}\Big)\nonumber\\
&-\nabla_y\cdot\big(W\nabla_y\bar\Psi^{\text{i}}\big)\chi+\nabla_y\cdot(W\nabla_y\Psi^{\text{i}})\chi\nonumber\\
&-\nabla_y\cdot(W\nabla_y\Psi^{\text{i}})\chi+\nabla_y\cdot(W\nabla_y\hat\Psi^{\text{i}})\nonumber\\
=&L_{W}[\Phi^{\text{i}}]\chi+\e^4L^{\text{o}}[\varphi^{\text{o}}]\nonumber.
\end{align}
Now, the construction of interior spot is transformed to show the existence of remainder term $(\phi,\psi)$.  However, focusing on the construction of boundary pattern, we need to study the effect of boundary on the ansatz and error analysis.  Our strategy is to strengthen the boundary and analyze the corresponding new error.   

It is worthy mentioning that we mainly compute the error arising from the transported Keller-Segel system here and the study of Stokes operator and $u$-equation will be shown in Section \ref{sect-model-stokes}.   
\section{The Effect of Boundary on Spots}\label{sect3}
In this section, we flatten the boundary and study the influence of boundary curvature on the error estimate arising from $S(n)=0$.  Since $\pd \Omega$ is assumed to be smooth, we can find near the location $\xi=(\xi_1,\xi_2)$ the boundary is the graph of a smooth function $\rho: (-R_{\xi},R_{\xi})\rightarrow\mathbb R$ with $\rho(0)=\rho'(0)=0.$  In other words, $\pd \Omega$ can be represented by $(x_1,\rho(x_1))$ locally.  Denote $X_1:=x_1-\xi_1$ and $X_2=x_2-\xi_2-\rho(x_1-\xi_1)$, then one has the gradient, Laplace and Neumann boundary operators become
\begin{equation*}
    \nabla_x = \Big(\frac{\partial}{\partial X_1} - \frac{\partial}{\partial X_2}\cdot \rho'(X_1), \frac{\partial}{\partial X_2} \Big), 
\end{equation*}
\begin{equation*}
\Delta_x=\Delta_{X}+(\rho'(X_1))^2\partial_{X_2X_2}-2\rho'(X_1)\partial_{X_2X_1}-\rho''(X_1)\partial_{X_2}, 
\end{equation*}
and
 \begin{equation*}
 (1+(\rho'(X_1))^2)\frac{\partial }{\partial\boldsymbol{\nu}}=(\rho'(X_1))\partial_{X_1}-\partial_{X_2}-(\rho'(X_1))^2\partial_{X_2}.
 \end{equation*}
Moreover, in the inner coordinate $Y_i=\frac{X_i}{\e}$, $i=1,2,$ we utilize $\rho(0)=\rho'(0)=0$ to get $\rho(X_1):=\rho(\e Y_1)$ can be expanded as 
\begin{align}\label{sect3-rho-X1}
\rho(X_1)=\frac{1}{2}\rho''(0)\varepsilon^2 Y_1^2+O(\varepsilon^3).
\end{align}
 By using (\ref{sect3-rho-X1}), one finds  
 \begin{equation}\label{sect3-Delta-w}
 \begin{split}
     \Delta_x w&= \frac{1}{\varepsilon^2}\Delta_Y w + (\rho'(\varepsilon Y_1))^2 \frac{1}{\varepsilon^2}\partial_{Y_2Y_2} w- 
      \frac{2}{\varepsilon^2} (\rho'(\varepsilon Y_1)) \partial_{Y_1Y_2}w - \frac{1}{\varepsilon}\rho''(\varepsilon Y_1) \partial_{Y_2}w\\
      & = \frac{1}{\varepsilon^2}\Delta_Y w + (\rho''(0))^2 Y^2_1\partial_{Y_2 Y_2} w - \frac{2}{\varepsilon}\rho''(0)Y_1 \partial_{Y_1Y_2} w - \frac{1}{\varepsilon}\rho''(0) \partial_{Y_2}w+O(1),
      \end{split}  
 \end{equation}
 and
  \begin{equation}\label{sect3-nabla-w}
  \begin{split}
      \nabla_x w_1\cdot \nabla_x w_2 &=  \frac{1}{\varepsilon^2} \nabla_Y w_1 \cdot \nabla_Y w_2   + \frac{1}{\varepsilon^2}\frac{\partial w_1}{\partial Y_2} \cdot \frac{\partial w_2}{\partial Y_2} (\rho'(\varepsilon Y_1))^2   \\
      &  -  \frac{1}{\varepsilon^2}\Big(\frac{\partial w_1}{\partial Y_1} \cdot \frac{\partial w_2}{\partial Y_2} + \frac{\partial w_1}{\partial Y_2} \cdot \frac{\partial w_2}{\partial Y_1} \Big)\rho'(\varepsilon Y_1)  \\
        &  = \frac{1}{\varepsilon^2} \nabla_Y w_1 \cdot \nabla_Y w_2  + \frac{\partial w_1}{\partial Y_2} \cdot \frac{\partial w_2}{\partial Y_2} (\rho''(0))^2Y_1^2  \\
         &  -  \frac{1}{\varepsilon}\Big(\frac{\partial w_1}{\partial Y_1} \cdot \frac{\partial w_2}{\partial Y_2} + \frac{\partial w_1}{\partial Y_2} \cdot \frac{\partial w_2}{\partial Y_1} \Big)\rho''(0)Y_1+O(1).
      \end{split}
      \end{equation}

With the help of (\ref{sect3-Delta-w}) and (\ref{sect3-nabla-w}), we are able to compute the error generated by the approximate solution \eqref{sect2-n-and-c} involving the boundary curvature term.  Upon substituting (\ref{sect2-n-and-c}) into $\e^4 S(n)=0$, we obtain
\begin{align*}
\varepsilon^4S(u)=&\varepsilon^4\Big[\nabla_x\cdot\Big(\nabla_x\Big(\frac{1}{\varepsilon^2}W+\phi\Big)-\Big(\frac{1}{\varepsilon^2} W+\phi\Big)\nabla_x(\Gamma+H+\psi)\Big)\Big]-\varepsilon^4{\bf u}\cdot \nabla_x\Big(\frac{1}{\varepsilon^2}W+\phi\Big)\\
=&\varepsilon^2 \Delta_x  W+\varepsilon^4 \Delta_x\phi-\nabla_x(\varepsilon^2W+\varepsilon^4\phi)\cdot \nabla_x (\Gamma+H+\psi)-(\varepsilon^2W+\varepsilon^4\phi)\Delta_x (\Gamma+H+\psi)-{\bf u}\cdot \nabla_x\big({\varepsilon^2}W+\varepsilon^4\phi\big)\\
=&\Delta_Y W-2\frac{\partial^2 W}{\partial Y_1 \partial Y_2}\rho'(\varepsilon Y_1)+\frac{\partial^2 W}{\partial Y_2^2}[\rho'(\varepsilon Y_1)]^2-\varepsilon\frac{\partial W}{\partial Y_2}\rho''(\varepsilon Y_1)\\
&+\varepsilon^2\Delta_Y \phi-2\varepsilon^2\frac{\partial^2\phi}{\partial Y_1\partial Y_2}\rho'(\varepsilon Y_1)+\varepsilon^2\frac{\partial^2\phi}{\partial Y_2^2}[\rho'(\varepsilon Y_1)]^2-\varepsilon^3 \frac{\partial \phi}{\partial Y_2}\rho''(\varepsilon Y_1)\\
&-(\varepsilon^2\nabla_x W\cdot\nabla_x\Gamma+\varepsilon^4\nabla_x\phi\cdot\nabla_x\Gamma+\varepsilon^4\nabla_x \phi\cdot\nabla_x\psi-\varepsilon^2\nabla_xW\cdot \nabla_x\psi):=I_{1}\\
&-{(\varepsilon^2 W\Delta_x \Gamma+\varepsilon^2 W\Delta_x \psi+\varepsilon^4\phi\Delta_x \Gamma+\varepsilon^4\phi\Delta_x\psi)}:=I_2\\
&-\varepsilon^2W\Delta_x H-\varepsilon^4\phi\Delta_x H-\varepsilon^2\nabla_xW\cdot \nabla_xH-\varepsilon^4\nabla_x\phi\cdot\nabla_x H \\
&-{\bf u}\cdot \nabla_y\big({\varepsilon} W+{\varepsilon^3}\phi\big)+{\bf u}_1\frac{\partial ({\varepsilon} W+{\varepsilon^3}\phi)}{\partial Y_2} \rho'(\varepsilon Y_1).
\end{align*}
Noting that $\phi$ can be decomposed as \eqref{phi-decomposition1} and we strengthen the boundary locally near the location $\xi,$ one finds $(\phi,\psi)(x)$ satisfies
\begin{align}\label{phi-decomposition1-bdry}
\phi(x)=\frac{1}{\e^2}\Phi^{\text{i}}(Y)\chi(Y)+\varphi^{\text{o}},~~~~-(\Delta_x-1)^{-1}\phi=\psi.
\end{align}
Next, we substitute (\ref{phi-decomposition1-bdry}) into $I_1$ to get 
\begin{align*}
I_{1}=&\varepsilon^2\nabla_x W\cdot\nabla_x\Gamma+\varepsilon^4\nabla_x\phi\cdot\nabla_x\Gamma+\varepsilon^4\nabla_x \phi\cdot\nabla_x\psi\\
=&\Bigg(\frac{\partial W}{\partial Y_1}-\frac{\partial W}{\partial Y_2}\rho'(\varepsilon Y_1),\frac{\partial W}{\partial Y_2}\Bigg)\cdot \Bigg(\frac{\partial \Gamma}{\partial Y_1}-\frac{\partial \Gamma}{\partial Y_2}\rho'(\varepsilon Y_1),\frac{\partial \Gamma}{\partial Y_2}\Bigg)\\
&+\Bigg(\frac{\partial (\Phi^{\text{i}}\chi)}{\partial Y_1}-\frac{\partial (\Phi^{\text{i}}\chi)}{\partial Y_2}\rho'(\varepsilon Y_1),\frac{\partial (\Phi^{\text{i}}\chi)}{\partial Y_2}\Bigg)\cdot \Bigg(\frac{\partial \Gamma}{\partial Y_1}-\frac{\partial \Gamma}{\partial Y_2}\rho'(\varepsilon Y_1),\frac{\partial \Gamma}{\partial Y_2}\Bigg)\\
&+\Bigg(\frac{\partial W}{\partial Y_1}-\frac{\partial W}{\partial Y_2}\rho'(\varepsilon Y_1),\frac{\partial W}{\partial Y_2}\Bigg)\cdot \Bigg(\frac{\partial \hat\Psi^{\text{i}}}{\partial Y_1}-\frac{\partial \hat\Psi^{\text{i}}}{\partial Y_2}\rho'(\varepsilon Y_1),\frac{\partial\hat\Psi^{\text{i}}}{\partial Y_2}\Bigg)\\
&+\Bigg(\frac{\partial (\Phi^{\text{i}}\chi)}{\partial Y_1}-\frac{\partial (\Phi^{\text{i}}\chi)}{\partial Y_2}\rho'(\varepsilon Y_1),\frac{\partial (\Phi^{\text{i}}\chi)}{\partial Y_2}\Bigg)\cdot \Bigg(\frac{\partial \hat\Psi^{\text{i}}}{\partial Y_1}-\frac{\partial \hat\Psi^{\text{i}}}{\partial Y_2}\rho'(\varepsilon Y_1),\frac{\partial \hat\Psi^{\text{i}}}{\partial Y_2}\Bigg)\\
&+\varepsilon^3\nabla_x\varphi^{\text{o}}\cdot\Bigg(\frac{\partial \Gamma}{\partial Y_1}-\frac{\partial \Gamma}{\partial Y_2}\rho'(\varepsilon Y_1),\frac{\partial \Gamma}{\partial Y_2}\Bigg)+\varepsilon^4\nabla_x \varphi^{\text{o}}\cdot\nabla_x\psi\nonumber\\
&+\e^2
\Bigg(\frac{\partial (\Phi^{\text{i}}\chi)}{\partial Y_1}-\frac{\partial (\Phi^{\text{i}}\chi)}{\partial Y_2}\rho'(\varepsilon Y_1),\frac{\partial (\Phi^{\text{i}}\chi)}{\partial Y_2}\Bigg)\cdot \nabla_x(\psi-\hat\Psi^{\text{i}}).
\end{align*}
We further rearrange $I_1$ and obtain
\begin{align*}
I_1=&\nabla_Y W\cdot\nabla_Y\Gamma-\Bigg(\frac{\partial W}{\partial Y_2}\frac{\partial \Gamma}{\partial Y_1}+\frac{\partial \Gamma}{\partial Y_2}\frac{\partial W}{\partial Y_1}\Bigg)\rho'(\varepsilon Y_1)+\frac{\partial W}{\partial Y_2}\frac{\partial \Gamma}{\partial Y_2}[\rho'(\varepsilon Y_1)]^2\\
&+\nabla_Y (\Phi^{\text{i}}\chi)\cdot\nabla_Y\Gamma-\Bigg(\frac{\partial (\Phi^{\text{i}}\chi)}{\partial Y_2}\frac{\partial \Gamma}{\partial Y_1}+\frac{\partial \Gamma}{\partial Y_2}\frac{\partial(\Phi^{\text{i}}\chi)}{\partial Y_1}\Bigg)\rho'(\varepsilon Y_1)+\frac{\partial (\Phi^{\text{i}}\chi)}{\partial Y_2}\frac{\partial \Gamma}{\partial Y_2}[\rho'(\varepsilon Y_1)]^2\\
&+\nabla_Y W\cdot\nabla_Y\hat\Psi^{\text{i}}-\Bigg(\frac{\partial \hat\Psi^{\text{i}}}{\partial Y_2}\frac{\partial W}{\partial Y_1}+\frac{\partial W}{\partial Y_2}\frac{\partial \hat\Psi^{\text{i}}}{\partial Y_1}\Bigg)\rho'(\varepsilon Y_1)+\frac{\partial W}{\partial Y_2}\frac{\partial \hat\Psi^{\text{i}}}{\partial Y_2}[\rho'(\varepsilon Y_1)]^2\\
&+\nabla_Y(\Phi^{\text{i}}\chi)\cdot\nabla_Y\hat\Psi^{\text{i}}-\Bigg(\frac{\partial (\Phi^{\text{i}}\chi)}{\partial Y_2}\frac{\partial  \hat\Psi^{\text{i}}}{\partial Y_1}+\frac{\partial  \hat\Psi^{\text{i}}}{\partial Y_2}\frac{\partial (\Phi^{\text{i}}\chi)}{\partial Y_1}\Bigg)\rho'(\varepsilon Y_1)+\frac{\partial(\Phi^{\text{i}}\chi)}{\partial Y_2}\frac{\partial  \hat\Psi^{\text{i}}}{\partial Y_2}[\rho'(\varepsilon Y_1)]^2\\
&+\varepsilon^3\nabla_x\varphi^{\text{o}}\cdot\Bigg(\frac{\partial \Gamma}{\partial Y_1}-\frac{\partial \Gamma}{\partial Y_2}\rho'(\varepsilon Y_1),\frac{\partial \Gamma}{\partial Y_2}\Bigg)+\varepsilon^4\nabla_x \varphi^{\text{o}}\cdot\nabla_x\psi\nonumber\\
&+\e^2
\Bigg(\frac{\partial (\Phi^{\text{i}}\chi)}{\partial Y_1}-\frac{\partial (\Phi^{\text{i}}\chi)}{\partial Y_2}\rho'(\varepsilon Y_1),\frac{\partial (\Phi^{\text{i}}\chi)}{\partial Y_2}\Bigg)\cdot \nabla_x(\psi-\hat\Psi^{\text{i}}),
\end{align*}
where $-(\Delta_x-1)^{-1}(\Phi^{\text{i}}\chi)=\hat\Psi^{\text{i}}$.  Proceeding $I_2$ with the same argument, we have
\begin{align*}
I_{2}=&\varepsilon^2 W\Delta_x \Gamma+\varepsilon^2 W\Delta_x \psi+\varepsilon^4\phi\Delta_x \Gamma+\varepsilon^4\phi\Delta_x\psi\\
=&W\Bigg[\Delta_Y \Gamma-2\frac{\partial^2\Gamma}{\partial Y_1\partial Y_2}\rho'(\varepsilon Y_1)+\frac{\partial^2\Gamma}{\partial Y_2^2}(\rho'(\varepsilon Y_1))^2-\varepsilon\frac{\partial \Gamma}{\partial Y_2}\rho''(\varepsilon Y_1)\Bigg]\\
&+\Phi^{\text{i}}\chi\Bigg[\Delta_Y \Gamma-2\frac{\partial^2\Gamma}{\partial Y_1\partial Y_2}\rho'(\varepsilon Y_1)+\frac{\partial^2\Gamma}{\partial Y_2^2}(\rho'(\varepsilon Y_1))^2-\varepsilon\frac{\partial \Gamma}{\partial Y_2}\rho''(\varepsilon Y_1)\Bigg]\\
&+W\Bigg[\Delta_Y \hat\Psi^{\text{i}}-2\frac{\partial^2\hat\Psi^{\text{i}}}{\partial Y_1\partial Y_2}\rho'(\varepsilon Y_1)+\frac{\partial^2\hat\Psi^{\text{i}}}{\partial Y_2^2}(\rho'(\varepsilon Y_1))^2-\varepsilon\frac{\partial \hat\Psi^{\text{i}}}{\partial Y_2}\rho''(\varepsilon Y_1)\Bigg]\\
&+\Phi^{\text{i}}\chi\Bigg[\Delta_Y \hat\Psi^{\text{i}}-2\frac{\partial^2\hat\Psi^{\text{i}}}{\partial Y_1\partial Y_2}\rho'(\varepsilon Y_1)+\frac{\partial^2\hat\Psi^{\text{i}}}{\partial Y_2^2}(\rho'(\varepsilon Y_1))^2-\varepsilon\frac{\partial \hat\Psi^{\text{i}}}{\partial Y_2}\rho''(\varepsilon Y_1)\Bigg]\\
&+\varepsilon^2\varphi^{o}\Bigg[\Delta_Y \Gamma-2\frac{\partial^2\Gamma}{\partial Y_1\partial Y_2}\rho'(\varepsilon Y_1)+\frac{\partial^2\Gamma}{\partial Y_2^2}(\rho'(\varepsilon Y_1))^2-\varepsilon\frac{\partial \Gamma}{\partial Y_2}\rho''(\varepsilon Y_1)\Bigg]\\
&+\varepsilon^4\varphi^{\text{o}}\cdot\Delta_x\psi+\e^2
\Phi^{\text{i}}\chi\Delta_x(\psi-\hat\Psi^{\text{i}}).
\end{align*}
Focusing on the inner region ${|x-\xi|}\lesssim {\e}$, we substitute $I_{1}$ and $I_{2}$ into $\varepsilon^4 S(u)$, then obtain
\begin{align}\label{sect3-eSu-before}
\varepsilon^4S(u)=&\Delta_Y W-\nabla_Y W\cdot\nabla_Y\Gamma-W\Delta_Y\Gamma\nonumber\\
&+\Bigg[\Big(\frac{\partial W}{\partial Y_2}\frac{\partial \Gamma}{\partial Y_1}+\frac{\partial \Gamma}{\partial Y_2}\frac{\partial W}{\partial Y_1}\Big)+2W\frac{\partial^2 \Gamma}{\partial Y_1\partial Y_2}-2\frac{\partial^2 W}{\partial Y_1\partial Y_2}\Bigg]\rho'(\varepsilon Y_1)\nonumber\\
&+\Bigg(\frac{\partial^2 W}{\partial Y_2^2}+\frac{\partial W}{\partial Y_2}\frac{\partial \Gamma}{\partial Y_2}-W\frac{\partial^2\Gamma}{\partial Y_2^2}\Bigg)[\rho'(\varepsilon Y_1)]^2-\varepsilon\Big(\frac{\partial W}{\partial Y_2}-W\frac{\partial\Gamma}{\partial Y_2}\Big)\rho''(\varepsilon Y_1)\nonumber\\
&+\Delta_Y \Phi^{\text{i}}\chi-\nabla_Y \Phi^{\text{i}}\cdot\nabla_Y\Gamma\chi-\nabla_Y W\cdot\nabla_Y\bar \Psi^{\text{i}}\chi-\Phi^{\text{i}}\Delta_Y\Gamma\chi-W\Delta_Y\bar\Psi^{\text{i}}\chi\nonumber\\
&-\rho'(\varepsilon Y_1)\chi\bigg[2\frac{\partial^2\Phi^{\text{i}}}{\partial Y_1\partial Y_2}-\Big(\frac{\partial\Phi^{\text{i}}}{\partial Y_1}\frac{\partial\Gamma}{\partial Y_2}+\frac{\partial\Phi^{\text{i}}}{\partial Y_2}\frac{\partial\Gamma }{\partial Y_1}\Big)-\Big(\frac{\partial W}{\partial Y_1}\frac{\partial\bar \Psi^{\text{i}}}{\partial Y_2}+\frac{\partial W}{\partial Y_2}\frac{\partial\bar \Psi^{\text{i}} }{\partial Y_1}\Big)\nonumber\\
&\ \ \ \ \ \ \ \ \ \ -2\Phi^{\text{i}}\frac{\partial^2\Gamma^{\text{i}}}{\partial Y_1\partial Y_2}-2\frac{\partial^2\bar\Psi^{\text{i}}}{\partial Y_1\partial Y_2}W-2\Phi^{\text{i}}\frac{\partial^2\bar\Psi^{\text{i}}}{\partial Y_1\partial Y_2}-\Bigg(\frac{\partial \Phi^{\text{i}}}{\partial Y_2}\frac{\partial \bar\Psi^{\text{i}}}{\partial Y_1}+\frac{\partial \bar\Psi^{\text{i}}}{\partial Y_2}\frac{\partial \Phi^{\text{i}}}{\partial Y_1}\Bigg)\bigg]\nonumber\\
&+[\rho'(\varepsilon Y_1)]^2\chi\bigg[\frac{\partial^2 \Phi^{\text{i}}}{\partial Y_2^2}-\frac{\partial \Phi^{\text{i}}}{\partial Y_2}\frac{\partial\Gamma}{\partial Y_2}-\frac{\partial W}{\partial Y_2}\frac{\partial\bar\Psi^{\text{i}}}{\partial Y_2}-\frac{\partial^2\Gamma}{\partial Y_2^2}\Phi^{\text{i}}-\frac{\partial^2\bar\Psi^{\text{i}}}{\partial Y_2^2}W-\frac{\partial \Phi^{\text{i}}}{\partial Y_2}\frac{\partial \bar\Psi^{\text{i}}}{\partial Y_2}-\Phi^{\text{i}}\frac{\partial^2\bar\Psi^{\text{i}}}{\partial Y_2^2}\bigg]\nonumber\\
&{-\varepsilon\nabla_YW\cdot \nabla_XH-\varepsilon {\bf u}\cdot \nabla_Y W}\nonumber\\
&-\varepsilon^2W\Delta_X H-\varepsilon  {\bf u}\cdot \nabla_Y\Phi^{\text{i}}+{u_1}\frac{\partial ({\varepsilon} W+{\varepsilon^3}\phi)}{\partial Y_2}\cdot \rho'(\varepsilon Y_1)+\text{H. O. T.}
\end{align}
Noting that $X_1=\e Y_1$, $X_2=\e Y_2$ and $\rho(0)=\rho'(0)=0$, we find $\rho(X_1)$, $\rho'(X_1)$ and $\rho''(X_1)$ can be expanded as
\begin{align}\label{sect3-rhoY1-expand}
\rho(\varepsilon Y_1)=\frac{1}{2}\rho''(0)\varepsilon^2 Y_1^2+O(\varepsilon^3),
\end{align}
\begin{align}\label{sect3-rhoprime-Y1}
\rho'(\varepsilon Y_1)=\rho''(0)\varepsilon Y_1+\frac{1}{2}\rho'''(0)\varepsilon^2 Y_1^2+O(\varepsilon^3),
\end{align}
and
\begin{align}\label{sect3-rhodouble-Y1}
\rho''(\varepsilon Y_1)=\rho''(0)+\rho'''(0)\varepsilon Y_1+\frac{1}{2}\rho^{(4)}(0)\varepsilon^2 Y_1^2+O(\varepsilon^3).
\end{align}
In addition, it follows from $W=e^{\Gamma}$ that  
\begin{align*}
\frac{\partial^2 W}{\partial Y_2^2}+\frac{\partial W}{\partial Y_2}\frac{\partial \Gamma}{\partial Y_2}-W\frac{\partial^2\Gamma}{\partial Y_2^2}=0,
\end{align*}
\begin{align*}
\Big(\frac{\partial W}{\partial Y_2}\frac{\partial \Gamma}{\partial Y_1}+\frac{\partial \Gamma}{\partial Y_2}\frac{\partial W}{\partial Y_1}\Big)+2W\frac{\partial^2 \Gamma}{\partial Y_1\partial Y_2}-2\frac{\partial^2 W}{\partial Y_1\partial Y_2}=0,
\end{align*}
and
\begin{align}\label{sect3-wgamma-3}
\frac{\partial W}{\partial Y_2}-W\frac{\partial\Gamma}{\partial Y_2}=0.
\end{align}
Upon collecting (\ref{sect3-rhoY1-expand})--(\ref{sect3-wgamma-3}), we simplify (\ref{sect3-eSu-before}) as
\begin{align}\label{sect3-inner-before}
\varepsilon^4 S(u)=&(\Delta_Y \Phi^{\text{i}}-\nabla_Y \Phi^{\text{i}}\cdot\nabla_Y\Gamma-\nabla_Y W\cdot\nabla_Y\bar\Psi^{\text{i}}-\Phi^{\text{i}}\Delta_Y\Gamma-W\Delta_Y\bar\Psi^{\text{i}})\chi\nonumber\\
&-\rho'(\varepsilon Y_1)\chi\bigg[2\frac{\partial^2\Phi^{\text{i}}}{\partial Y_1\partial Y_2}-\Big(\frac{\partial\Phi^{\text{i}}}{\partial Y_1}\frac{\partial\Gamma}{\partial Y_2}+\frac{\partial\Phi^{\text{i}}}{\partial Y_2}\frac{\partial\Gamma }{\partial Y_1}\Big)-\Big(\frac{\partial W}{\partial Y_1}\frac{\partial\bar\Psi^{\text{i}}}{\partial Y_2}+\frac{\partial W}{\partial Y_2}\frac{\partial\bar\Psi^{\text{i}} }{\partial Y_1}\Big)\nonumber\\
&\ \ \ \ \ \ \ \ \ \ -2\Phi^{\text{i}}\frac{\partial^2\Gamma}{\partial Y_1\partial Y_2}-2\frac{\partial^2\bar\Psi^{\text{i}}}{\partial Y_1\partial Y_2}W-2\Phi^{\text{i}}\frac{\partial^2\bar\Psi^{\text{i}}}{\partial Y_1\partial Y_2}-\Bigg(\frac{\partial \Phi^{\text{i}}}{\partial Y_2}\frac{\partial \bar\Psi^{\text{i}}}{\partial Y_1}+\frac{\partial \bar\Psi^{\text{i}}}{\partial Y_2}\frac{\partial \Phi^{\text{i}}}{\partial Y_1}\Bigg)\bigg]\nonumber\\
&+[\rho'(\varepsilon Y_1)]^2\chi\bigg[\frac{\partial^2 \Phi^{\text{i}}}{\partial Y_2^2}-\frac{\partial \Phi^{\text{i}}}{\partial Y_2}\frac{\partial\Gamma}{\partial Y_2}-\frac{\partial W}{\partial Y_2}\frac{\partial\bar\Psi^{\text{i}}}{\partial Y_2}-\frac{\partial^2\Gamma}{\partial Y_2^2}\Phi^{\text{i}}-\frac{\partial^2\bar\Psi^{\text{i}}}{\partial Y_2^2}W-\frac{\partial \Phi^{\text{i}}}{\partial Y_2}\frac{\partial \bar\Psi^{\text{i}}}{\partial Y_2}-\Phi^{\text{i}}\frac{\partial^2\bar\Psi^{\text{i}}}{\partial Y_2^2}\bigg]\nonumber\\
&-\varepsilon\nabla_YW\cdot \nabla_XH-\varepsilon {\bf u}\cdot \nabla_Y W+\text{H. O. T.}
\end{align}
Now, we have from (\ref{sect3-inner-before}) that in the inner region $|x-\xi|\lesssim \e$, the leading order error is $O(\e)$ and given by
\begin{align}\label{sect3-I3}
I_3:=-\varepsilon\nabla_YW\cdot \nabla_XH-\varepsilon {\bf u}\cdot \nabla_Y W.
\end{align}
It follows from (\ref{sect3-I3}) that the leading order error does not include any boundary curvature term $\rho''(0)$.  It is natural to further formulate the inner and outer problem satisfied by $\Phi^{\text{i}}$ and $\varphi^{\text{o}}.$  In fact, since we flatten the boundary locally, the form of outer operator is the same as (\ref{sect2-linearized-outer-operator}).  However, compared to (\ref{sect2-linearized-inner-operator}), the inner operator should be written in $Y$-variable and the inner problem will be solved in $\mathbb R^{2}_{+}$ rather than $\mathbb R^2$.  After finishing the error estimate to the equation, we focus on the study of boundary conditions.     

Since $(n,c)$ satisfies the no-flux boundary condition, we similarly use (\ref{phi-decomposition1}) and calculate to get
\begin{align}\label{sect3-error-analysis-boundary-before}
\varepsilon^3\bigg(\frac{\partial n}{\partial\boldsymbol{\nu}}-n\frac{\partial c}{\partial \boldsymbol{\nu}}\bigg)=&\varepsilon\frac{\partial (W+\varepsilon^2\phi)}{\partial\boldsymbol{\nu}}-\varepsilon(W+\varepsilon^2\phi)\frac{\partial(\Gamma+H+\psi)}{\partial \boldsymbol{\nu}}\nonumber\\
=&\varepsilon\frac{\partial W}{\partial \boldsymbol{\nu}}-\varepsilon W\frac{\partial\Gamma }{\partial\boldsymbol{\nu}}+\varepsilon^3 \frac{\partial\phi}{\partial\boldsymbol{\nu}}-\varepsilon W\frac{\partial \psi}{\partial\boldsymbol{\nu}}-\varepsilon^3\phi\frac{\partial\Gamma}{\partial\boldsymbol{\nu}}-\varepsilon^3\phi\frac{\partial \psi}{\partial \boldsymbol{\nu}}-\varepsilon(W+\varepsilon^2\phi)\frac{\partial H}{\partial\boldsymbol{\nu}}\nonumber\\
=&\frac{1}{\sqrt{1+[\rho'(\varepsilon Y_1)]^2}}\Bigg[-\frac{\partial W}{\partial Y_2}+W\frac{\partial \Gamma}{\partial Y_2}+\rho'(\varepsilon Y_1)\bigg(\frac{\partial W}{\partial Y_1}-W\frac{\partial \Gamma}{\partial Y_1}\bigg)-[\rho'(\varepsilon Y_1)]^2\bigg(\frac{\partial W}{\partial Y_2}-W\frac{\partial \Gamma}{\partial Y_2}\bigg)\nonumber\\
&-\frac{\partial (\Phi^{\text{i}}\chi+\e^2\varphi^{\text{o}})}{\partial Y_2}+W\frac{\partial (\hat \Psi^{\text{i}}+\e^2\psi^{\text{o}})}{\partial Y_2}+\rho'(\varepsilon Y_1)\bigg(\frac{\partial (\Phi^{\text{i}}\chi+\e^2\varphi^{\text{o}})}{\partial Y_1}-W\frac{\partial 
 (\hat \Psi^{\text{i}}+\e^2\psi^{\text{o}})}{\partial Y_1}\bigg)\nonumber\\
&-[\rho'(\varepsilon Y_1)]^2\bigg(\frac{\partial (\Phi^{\text{i}}\chi+\e^2\varphi^{\text{o}})}{\partial Y_2}-W\frac{\partial (\hat \Psi^{\text{i}}+\e^2\psi^{\text{o}})}{\partial Y_2}\bigg)\nonumber\\
&-(\Phi^{\text{i}}\chi+\e^2\varphi^{\text{o}})\Big(\frac{\partial \Gamma}{\partial Y_1}\rho'(\varepsilon Y_1)-\frac{\partial \Gamma}{\partial Y_2}-\frac{\partial\Gamma}{\partial Y_2}(\rho'(\varepsilon Y_1))^2\Big)-(\Phi^{\text{i}}\chi+\e^2\varphi^{\text{o}})\Big(\frac{\partial(\hat \Psi^{\text{i}}+\e^2\psi^{\text{o}})}{\partial Y_1}\rho'(\varepsilon Y_1)\nonumber\\
&-\frac{\partial (\hat \Psi^{\text{i}}+\e^2\psi^{\text{o}})}{\partial Y_2}-\frac{\partial(\hat \Psi^{\text{i}}+\e^2\psi^{\text{o}})}{\partial Y_2}(\rho'(\varepsilon Y_1))^2\Big)\nonumber\\
&+\varepsilon(W+\Phi^{\text{i}}\chi+\e^2\varphi^{\text{o}})\Big(\frac{\partial H}{\partial X_2}-\rho'(\varepsilon Y_1)\frac{\partial H}{\partial X_1}+[\rho'(\varepsilon Y_1)]^2\frac{\partial H}{\partial X_2}\Big)\Bigg].
\end{align}
Next, we perform the order analysis by regarding $\e$ as the variable.  On one hand, we expand $\frac{1}{\sqrt{1+[\rho'(\varepsilon Y_1)]^2}}$ as
\begin{align}\label{sect3-error-boundary-1}
\frac{1}{\sqrt{1+[\rho'(\varepsilon Y_1)]^2}}=1-\frac{1}{2}[\rho'(\varepsilon Y_1)]^2+O(\varepsilon^4).
\end{align}
On the other hand, since $W=e^{\Gamma}$, one finds
\begin{align}\label{sect3-error-boundary-2}
\frac{\partial W}{\partial Y_i}-W\frac{\partial \Gamma}{\partial Y_i}=0,~~i=1,2.
\end{align}
Upon substituting (\ref{sect3-error-boundary-1}) and (\ref{sect3-error-boundary-2}) into (\ref{sect3-error-analysis-boundary-before}), we obtain that in the inner region $|x-\xi|\lesssim \e$,
\begin{align}\label{sect3-no-flux-bdry-influence-after}
\varepsilon^3\bigg(\frac{\partial n}{\partial\boldsymbol{\nu}}-n\frac{\partial c}{\partial \boldsymbol{\nu}}\bigg)=&\varepsilon\frac{\partial (W+\varepsilon^2\phi)}{\partial\boldsymbol{\nu}}-\varepsilon(W+\varepsilon^2\phi)\frac{\partial(\Gamma+H+\psi)}{\partial \boldsymbol{\nu}}\nonumber\\
=&(1+O(\varepsilon^2))\Bigg[-\frac{\partial \Phi^{\text{i}}}{\partial Y_2}+W\frac{\partial \bar\Psi^{\text{i}}}{\partial Y_2}+\rho'(\varepsilon Y_1)\bigg(\frac{\partial \Phi^{\text{i}}}{\partial Y_1}-W\frac{\partial \bar\Psi^{\text{i}}}{\partial Y_1}\bigg)-[\rho'(\varepsilon Y_1)]^2\bigg(\frac{\partial \Phi^{\text{i}}}{\partial Y_2}-W\frac{\partial \bar\Psi^{\text{i}}}{\partial Y_2}\bigg)\nonumber\\
&-\Phi^{\text{i}}\Big(\frac{\partial \Gamma}{\partial Y_1}\rho'(\varepsilon Y_1)-\frac{\partial \Gamma}{\partial Y_2}-\frac{\partial\Gamma}{\partial Y_2}(\rho'(\varepsilon Y_1))^2\Big)-\Phi^{\text{i}}\Big(\frac{\partial \bar\Psi^{\text{i}}}{\partial Y_1}\rho'(\varepsilon Y_1)-\frac{\partial \bar\Psi^{\text{i}}}{\partial Y_2}-\frac{\partial\bar\Psi^{\text{i}}}{\partial Y_2}(\rho'(\varepsilon Y_1))^2\Big)\nonumber\\
&+\varepsilon(W+\Phi^{\text{i}})\Big(\frac{\partial H}{\partial X_2}-\rho'(\varepsilon Y_1)\frac{\partial H}{\partial X_1}+[\rho'(\varepsilon Y_1)]^2\frac{\partial H}{\partial X_2}\Big)\Bigg]\nonumber\\
=&(1+O(\varepsilon^2))\Bigg[-\frac{\partial \Phi^{\text{i}}}{\partial Y_2}+W\frac{\partial \bar\Psi^{\text{i}}}{\partial Y_2}+\rho''(0)\varepsilon Y_1\bigg(\frac{\partial \Phi^{\text{i}}}{\partial Y_1}-W\frac{\partial \bar\Psi^{\text{i}}}{\partial Y_1}\bigg)-[\rho''(0)\varepsilon Y_1]^2\bigg(\frac{\partial \Phi^{\text{i}}}{\partial Y_2}-W\frac{\partial \bar\Psi^{\text{i}}}{\partial Y_2}\bigg)\nonumber\\
&-\Phi^{\text{i}}\Big(\frac{\partial \Gamma}{\partial Y_1}\rho''(0)\varepsilon Y_1-\frac{\partial \Gamma}{\partial Y_2}-\frac{\partial\Gamma}{\partial Y_2}(\rho''(0)\varepsilon Y_1)^2\Big)\nonumber\\
&-\Phi^{\text{i}}\Big(\frac{\partial \bar\Psi^{\text{i}}}{\partial Y_1}\rho''(0)\varepsilon Y_1-\frac{\partial \bar\Psi^{\text{i}}}{\partial Y_2}-\frac{\partial\bar\Psi^{\text{i}}}{\partial Y_2}(\rho''(0)\varepsilon Y_1)^2\Big)+O(\varepsilon^2)\Bigg]\nonumber\\
&+\varepsilon(W+\Phi^{\text{i}})\Big(\frac{\partial H}{\partial X_2}-\rho'(\varepsilon Y_1)\frac{\partial H}{\partial X_1}+[\rho'(\varepsilon Y_1)]^2\frac{\partial H}{\partial X_2}\Big).
\end{align}
By checking (\ref{sect3-no-flux-bdry-influence-after}), one finds from $\Phi^{\text{i}}$ and $\bar \Psi^{\text{i}}$ are both $o(1)$ that the leading order error of boundary estimate is $O(\e)$ and given by 
\begin{align}\label{sect3-II1-boundary}
II_1:=\e W\cdot \frac{\partial H}{\partial X_2}.
\end{align}
It follows from (\ref{sect3-II1-boundary}) that the boundary curvature term $\rho''(0)$ also does not influence the leading term of boundary error.

We summarize the arguments shown in Section \ref{sect2} and Section \ref{sect3}, then obtain that the approximate solution of boundary spot is still given by (\ref{sect2-n-and-c}).  To show the existence of remainder term $(\phi,\psi)$, it is necessary to establish the linear theory of corresponding linearized operators, which will be exhibited in Section \ref{sect4}.  
\section{Linear Theory: Transported Keller-Segel System}\label{sect4}
In this section, we shall discuss the properties of linearized Keller-Segel operators.  Recall that $\phi$ and $\psi$ satisfy (\ref{phi-decomposition1}), (\ref{sect3-hatpsi-inn}) and (\ref{sect3-psi-inn-varphi-o}); moreover, the inner problem satisfied by $\big(\Phi^{\text{i}},\bar\Psi^{\text{i}}\big)$ is formulated as  
\begin{align}\label{sect4-inner-operator-half}
\left\{\begin{array}{ll}
L_{W}[\Phi]:=\Delta_Y\Phi-\nabla_Y\cdot(W\nabla_Y \Psi)-\nabla_Y\cdot(\Phi\nabla_Y\Gamma)=h,&Y\in\mathbb R^2_{+},\\
(-\Delta_Y)^{-1} \Phi=\Psi,&Y\in\mathbb R^2_{+},\\
\end{array}
\right.
\end{align}
where we replace $(\Phi^{\text{i}},\bar\Psi^{\text{i}})$ by $(\Phi,\Psi)$ without confusing the reader.  Before studying (\ref{sect4-inner-operator-half}), it is necessary to establish the linear theory of inner problem (\ref{sect4-inner-operator-half}) in the whole space $\mathbb R^2$ at first.  Indeed, assume that the location $\xi\in \Omega$ and consider the inner problem formulated by
\begin{align}\label{sect4-inner-operator-whole}
\left\{\begin{array}{ll}
L_{W}[\Phi]:=\Delta_y\Phi-\nabla_y\cdot(W\nabla_y \Psi)-\nabla_y\cdot(\Phi\nabla_y\Gamma)=h,&y\in\mathbb R^2,\\
(-\Delta_y)^{-1} \Phi=\Psi,&y\in\mathbb R^2,\\
\end{array}
\right.
\end{align}
then we define the inner norm in $\mathbb R^{2}$ as 
\begin{align*}
      \|h\|_{\delta_1,\nu_1}: = \sup_{y \in \R^2}\varepsilon^{-\delta_1}\vert h\vert{(1 + |y|)^{\nu_1}},~~~~\delta_1,\nu_1>0
      \end{align*}
and have the following lemma:
\begin{lemma}\label{sect4-lemmainner-interior}
  Suppose that $h$ satisfies
  \begin{equation}\label{innercon}
      \int_{\R^2}h(y)d y = 0,  \ \ \int_{\R^2}h(y)y_jd y = 0\ \ \text{for} \ \ j =1,2,
  \end{equation}
  then for any $\Vert h\Vert_{\delta_1,4+\sigma}<\infty$ with $\delta_1>0,$ $\sigma\in(0,1)$, there exists the solution $\Phi= \mathcal{T}_{\text{i}}[h]$ to \eqref{sect4-inner-operator-whole} such that
    \begin{equation*}
      \|\Phi\|_{\delta_1, 2 + \sigma} \lesssim \|h\|_{\delta_1, 4 + \sigma}, 
    \end{equation*}
    where $\mathcal{T}_{\text{i}}[h]$ is a continuous linear operator.
\end{lemma}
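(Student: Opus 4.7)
The plan is to follow the mode-by-mode Fourier decomposition developed for the minimal Keller--Segel linearization in \cite{del2006collapsing,davila2020existence,KWX2022}. Since $W$ and $\Gamma$ are radial, the operator $L_W$ preserves each angular Fourier mode. Passing to polar coordinates $y = r(\cos\theta, \sin\theta)$ and expanding $\Phi$ and $h$ in Fourier series $\Phi = \sum_k \Phi_k(r) e^{ik\theta}$, $h = \sum_k h_k(r) e^{ik\theta}$, together with the corresponding expansion of $\Psi$ obtained from $-\Delta_y\Psi = \Phi$, reduces \eqref{sect4-inner-operator-whole} to a family of coupled linear ODE systems in $r$ for each pair $(\Phi_k, \Psi_k)$, indexed by $k\in\Z$.

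For each mode I would first explicitly identify the bounded kernel elements of the homogeneous radial operator; these correspond to the symmetries of the Liouville bubble. The scaling invariance produces the kernel element $Z_0 = y\cdot\nabla W + 2W$ at mode $k=0$, while the translations $Z_j = \partial_{y_j} W$, $j=1,2$, generate kernel elements at modes $k = \pm 1$. For $|k|\ge 2$ the homogeneous problem admits no bounded solution, so a unique solution $(\Phi_k, \Psi_k)$ with the required algebraic decay can be constructed directly by explicit variation-of-parameters formulas. For $k = 0, \pm 1$ the orthogonality assumptions \eqref{innercon} precisely eliminate the obstructions to solvability: $\int h\, dy = 0$ cancels the dilation obstruction and $\int h y_j\, dy = 0$ cancels the two translation obstructions. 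After quotienting out these kernels, the same variation-of-parameters scheme delivers radial profiles $\Phi_k$ whose pointwise size is controlled by $h_k$ with the desired gain of two powers in the decay exponent.

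Summing the modes and recasting the resulting radial $L^\infty$ bounds in the weighted norm $\|\cdot\|_{\delta_1, 2+\sigma}$ will yield the estimate $\|\Phi\|_{\delta_1, 2+\sigma} \lesssim \|h\|_{\delta_1, 4+\sigma}$, while the linearity of the construction at each mode provides the continuity of the solution operator $\mathcal T_{\text{i}}: h \mapsto \Phi$.

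The main obstacle I anticipate is the nonlocal coupling between $\Phi$ and $\Psi$ through the relation $\Psi = (-\Delta_y)^{-1}\Phi$: because the Newtonian potential in $\R^2$ has logarithmic growth, one has to verify at each mode that $\Psi_k$ grows or decays at the correct rate so that $\nabla_y\cdot(W\nabla_y\Psi_k)$ is controlled in the weighted norm used for $h$, and the vanishing of $\int h\, dy$ at mode $k=0$ will be indispensable to avoid a log-growing potential. A further delicate point is that the two-unit gain in the decay exponent (from $4+\sigma$ on $h$ to $2+\sigma$ on $\Phi$) must be obtained uniformly in $k$, which requires a mode-independent estimate of the radial Green's function together with a careful bookkeeping of constants as $|k|\to\infty$.
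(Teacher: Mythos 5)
Your plan coincides with the paper's own proof: both reduce \eqref{sect4-inner-operator-whole} to angular Fourier modes, use the conditions \eqref{innercon} together with variation of parameters to produce decaying solutions at modes $0$ and $\pm1$ where the bounded kernel elements (dilation and translations of the Liouville bubble) live, and then treat the remaining modes separately before summing. The only organizational differences are that the paper first sets $g=\Phi/W-\Psi$ so the equation becomes the divergence form $\nabla\cdot(W\nabla g)=h$, it tolerates (rather than avoids) the logarithmic growth of the mode-zero potential $\Psi_0$ because the factor $W$ absorbs it in $\Phi_0=W(g_0+\Psi_0)$, and it handles all modes $|k|\ge 2$ at once by a maximum-principle/barrier argument, which sidesteps the uniform-in-$k$ bookkeeping you flagged.
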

The proof of Lemma \ref{sect4-lemmainner-interior} shown in \cite{KWX2022} (Cf. Lemma 3.1), crucially relies on the Fourier mode analysis since we solve (\ref{sect4-inner-operator-whole}) in 2D.  For the sake of completeness, we give the sketch of the proof here.
\begin{proof}
First of all, we define $g$ as 
\begin{align*}
g=\frac{\Phi}{W}-\Psi,
\end{align*}
then rewrite (\ref{sect4-inner-operator-whole}) as the following divergence form:
\begin{align}\label{sect4-g-problem}
\left\{\begin{array}{ll}
\nabla\cdot (W\nabla g)=h,&y\in\mathbb R^2,\\
-\Delta \Psi=\Phi.
\end{array}
\right.
\end{align}
Next, we perform the Fourier expansions of $h,$ $\Phi$, $\Psi$, $g$ and $\bar g:=Wg$.  We first write the error term $h(y)$ as
\begin{align}\label{sect4-h-decomposition-1}
h(y)=h(\rho,\theta)=\sum_{k=-\infty}^\infty {\tilde h}_k(\rho) e^{ik\theta}:=h_0(y)+h_1(y)+h_{\perp}(y),
\end{align}
where $h_k(y)={\tilde h}_k(\rho)e^{ik\theta}.$  Then, we decompose $(\Phi,\Psi)$ and $(g,\bar g)$ as 
\begin{align}
&\Phi(y)~~(\text{resp.~}\Psi(y))=\Phi(\rho,\theta)~~(\text{resp.~}\Psi(\rho,\theta))\nonumber\\
=&\sum_{k=-\infty}^{\infty}\tilde \Phi_{k}(\rho)e^{ik\theta}~~(\text{resp.~}\sum_{k=-\infty}^{\infty}\tilde \Psi_{k}(\rho)e^{ik\theta})\nonumber\\
:=&\Phi_0(y)+\Phi_1(y)+\Phi_{\perp}(y)~~(\text{resp.~}\Psi_0(y)+\Psi_1(y)+\Psi_{\perp}(y))\nonumber,
\end{align}
and
\begin{align}\label{sect4-g-and-tildeg}
&g(y)~~(\text{resp.~}\bar g(y))=g(\rho,\theta)~~(\text{resp.~}\bar g(\rho,\theta))\nonumber\\
=&\sum_{k=-\infty}^{\infty}\tilde g_{k}(\rho)e^{ik\theta}~~(\text{resp.~}\sum_{k=-\infty}^{\infty}\hat g_{k}(\rho)e^{ik\theta})\nonumber\\
:=&g_0(y)+g_1(y)+g_{\perp}(y)~~(\text{resp.~}\bar g_0(y)+\bar g_1(y)+\bar g_{\perp}(y)),
\end{align}
respectively.  Now, we construct the solution $(\Phi,\Psi)$ to (\ref{sect4-g-problem}) mode by mode.  In each mode $k$, one has $( \Phi_k,h_k)$ satisfies
\begin{align*}
\left\{\begin{array}{ll}
\nabla\cdot (W\nabla g_k)=h_k,&y\in\mathbb R^2,\\
-\Delta \Psi_k=Wg_k+W\Psi_k,
\end{array}
\right.
\end{align*}
which is equivalent to the following mode $k$ problem:
\begin{align}\label{sect4-equiv-mode-k}
\left\{\begin{array}{ll}
\mathcal L_k[\hat g_k]=h_k,&y\in\mathbb R^2,\\
-{\tilde \Psi}_{k\rho\rho}-\frac{1}{\rho}{\tilde \Psi}_{k\rho}+\frac{k^2}{\rho^2}{\tilde \Psi}_k=\bar g_k+W{\tilde \Psi}_k,
\end{array}
\right.
\end{align}
where $\mathcal L_k[\hat g_k]:=\hat g_{k\rho\rho}+\frac{1}{\rho}\hat g_{k\rho}-\frac{k^2}{\rho^2}\hat g_{k}-{(\ln W)_{\rho}}\hat g_{k\rho}+W\hat g_k.$  It is vital to study the bounded kernel functions such that 
\begin{align*}
{\tilde \Psi}_{k\rho\rho}+\frac{1}{\rho}{\tilde \Psi}_{k\rho}-\frac{k^2}{\rho^2}{\tilde \Psi}_k+W{\tilde \Psi}_k=0.
\end{align*}
Indeed, we have the fact that at mode $k=0,1$, the bounded kernel functions are given by
\begin{align}\label{sect4-boundedkernel-KS-operator}
Z_0:=\frac{\rho^2-1}{\rho^2+1},~~~~Z_j:=\partial_{y_j}\Gamma,~j=1,2,
\end{align}
where $\Gamma$ is defined in \eqref{sect2-c0-leading}.

Owing to the existence of (\ref{sect4-boundedkernel-KS-operator}), we must impose the orthogonality conditions on mode $0$ and $1$.  At mode $0$, we impose the mass condition given by
$$\int_{\mathbb R^2} hdy=0.$$
It immediately follows from (\ref{sect4-h-decomposition-1}) that 
\begin{align}\label{sect4-mass-orthogonal-condition}
\int_{\mathbb R^2}\tilde h_0(\rho)\rho d\rho=0. 
\end{align}
Then, by choosing the solution $\tilde g_0$ to \eqref{sect4-equiv-mode-k} with $k=0$ as 
\begin{align}\label{sect4-g0-sol}
\tilde g_0(\rho)=\int_{\rho}^{\infty}\frac{1}{rW(r)}\int_0^r \tilde h_0(s)sds dr,
\end{align}
we have from (\ref{sect4-mass-orthogonal-condition}), (\ref{sect4-g0-sol}) and $\Vert h\Vert_{\delta_1,4+\sigma}\lesssim 1$ that $\tilde g_0$ satisfies
\begin{align}\label{sect4-hatg0-next-before}
|\tilde g_0(\rho)|\lesssim \e^{\delta_1}\rho^{2-\sigma}.
\end{align}
In light of $\bar g=Wg$ and (\ref{sect4-g-and-tildeg}), we use (\ref{sect4-hatg0-next-before}) to get $\hat g_0$ has the fast decay property, which is
\begin{align}\label{sect4-hatg0-next}
|\hat g_0(\rho)|\lesssim \frac{\e^{\delta_1}\Vert h\Vert_{4+\sigma}}{\rho^{2+\sigma}}.
\end{align}
We further apply the variation-of-parameters formula on ${\tilde \Psi}_0$-equation to choose $\tilde \Psi_0$ satisfying 
\begin{align}\label{sect4-tilde-psi0-log}
|\tilde \Psi_0(\rho)|\lesssim \Vert h\Vert_{4+\sigma}\e^{\delta_1}\ln \rho.
\end{align}
We summarize (\ref{sect4-hatg0-next}) and (\ref{sect4-tilde-psi0-log}) to obtain that
\begin{align*}
|\tilde \Phi_0(\rho)|\lesssim \frac{\e^{\delta_1}\Vert h\Vert_{4+\sigma}}{\rho^{2+\sigma}}, 
\end{align*}
which gives us the desired estimate of mode $0$.  

Similarly, to guarantee the desired decay estimate at mode $1$, we impose the first moment orthogonality condition given in (\ref{innercon}), which is 
$$\int_{\mathbb R^2} hy_jdy=0,~~~j=1,2.$$
By direct computation, we further obtain that
\begin{align}\label{sect4-first-moment-condition-equiv}
\int_0^\infty W(\rho)\tilde g_1(\rho){\bar Z}_1(\rho)\rho d\rho=0,
\end{align}
where ${\bar Z}_1:=\frac{d}{d\rho}\Gamma.$  Recall that $\tilde \Psi_1$ satisfies
$${\tilde \Psi}_{1\rho\rho}+\frac{1}{\rho}{\tilde \Psi}_{1\rho}-\frac{1}{\rho^2}{\tilde \Psi}_1+W{\tilde \Psi}_1=0.$$
Then we apply the variation-of-parameters formula on $\tilde \Psi_1$-equation and use (\ref{sect4-first-moment-condition-equiv}) to choose the solution satisfying
\begin{align}\label{sect4-tilde-psi1-mode1}
|\tilde \Psi_1|\lesssim \frac{\e^{\delta_1}\Vert h\Vert_{4+\sigma}}{\rho^{\sigma}}.
\end{align}
On the other hand, we consider the operator $\mathcal L_k$ and employ the maximum principle to show that $\hat g_1$ satisfies
\begin{align}\label{sect4-hat-g1-mode1}
|\hat g_1|\lesssim \frac{\e^{\delta_1}\Vert h\Vert_{4+\sigma}}{\rho^{2+\sigma}}.
\end{align}
Upon collecting (\ref{sect4-tilde-psi1-mode1}) and (\ref{sect4-hat-g1-mode1}), we have the desired estimate of ${\tilde \Phi}_1$ at mode $1$.

Focusing on other modes arising from $h_{\perp},$ it is straightforward to derive the desired estimates by using the maximum principle, which was shown in \cite{KWX2022} and we omit the argument here.
\end{proof}
However, Lemma \ref{sect4-lemmainner-interior} is valid only when the location of spot is assumed to be in the interior of domain $\Omega.$  If $\xi\in\partial\Omega,$ we must modify Lemma \ref{sect4-lemmainner-interior} and impose new orthogonality conditions.  To construct the boundary spot, we first define the inner norm in $\mathbb R_{+}^2$ as
  \begin{equation*}
     \| h\|_{\delta_2,\nu_2,H} = \varepsilon^{-\delta_2}\sup_{y \in \R^2_{+}} |h|(1 + |y|)^{\nu_2},
  \end{equation*}
  where $\delta_2>0$, $\nu_2>0.$
Then, with the help of even reflection and Lemma \ref{sect4-lemmainner-interior}, we establish the following linear theory in $\mathbb R_{+}^2:$
  \begin{lemma}\label{sect4-linear-theory-bdry}
Given any function $h(y)$ and $\beta(y)$ satisfying
    \begin{equation}\label{331}
       \int_{\R^2_{+}}h dy  - \int_{\partial \R^2_{+}} \beta dS_y= 0,  \ \   \int_{\R^2_+}hy_1dy  - \int_{\partial\R^2_+}\beta y_1dS_y  = 0,
    \end{equation}
and assume $\|h\|_{\delta_2,4 + \sigma, H} < \infty$ with $\delta_2>0$ and $\sigma\in(0,1)$.  Then, we have the problem
    \begin{equation}\label{sect4-problem-bdry-1}
  \begin{cases}
    L_W[\Phi] = h, &y\in\R^2_{+},\\
     W\frac{\partial g}{\partial \boldsymbol{\nu}} = \beta,~~g=\frac{\Phi}{W}-\Psi,&y\in\partial \R^2_{+}
  \end{cases}
\end{equation}
    admits a solution $\Phi ={\mathcal T^H_{\text{i}}}[h]$ satisfying the following estimate: 
      \begin{equation*}
        \|\Phi\|_{\delta_2,2 + \sigma, H} \lesssim  \|h\|_{\delta_2,4 + \sigma, H},
      \end{equation*}  
      where ${ \mathcal T_{\text{i}}^H}[h]$ is a continuous linear operator.   
\end{lemma}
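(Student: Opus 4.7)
The approach is to reduce problem \eqref{sect4-problem-bdry-1} to the whole-space problem \eqref{sect4-inner-operator-whole} via even reflection across $\{y_2 = 0\}$ and then invoke Lemma \ref{sect4-lemmainner-interior}. The key structural observation is that since $W$ and $\Gamma$ depend only on $|y|$, they are invariant under $(y_1,y_2) \mapsto (y_1,-y_2)$, and hence the operator $L_W$ commutes with this reflection. Of the bounded kernels $Z_0$, $Z_1$, $Z_2$ in \eqref{sect4-boundedkernel-KS-operator}, the first two are even in $y_2$ while $Z_2 = \partial_{y_2}\Gamma$ is odd in $y_2$. This explains structurally why \eqref{331} contains only two solvability conditions instead of three: the $y_2$-moment condition is automatic once the data is extended by even reflection.

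First, I would peel off the boundary flux via an explicit correction. Let $\eta \in C^\infty_c(\mathbb{R})$ be a cutoff with $\eta(0)=1$ concentrated near the origin, and set $\Phi^*(y_1,y_2) := -W(y)\,y_2\,\eta(y_2)\,\beta(y_1)$. A direct computation shows that $\Phi^*$ generates the prescribed boundary flux $W\partial_\nu g^* = \beta$ on $\partial\mathbb{R}^2_+$ up to a lower-order contribution from $\Psi^* = (-\Delta)^{-1}\Phi^*$, which can be reabsorbed by iteration or by a further explicit correction. The residual $\tilde{\Phi} := \Phi - \Phi^*$ then solves $L_W[\tilde\Phi] = \tilde h := h - L_W[\Phi^*]$ with \emph{zero} flux on $\partial\mathbb{R}^2_+$, and the weighted norm of $\tilde h$ is controlled by $\|h\|_{\delta_2,4+\sigma,H}$ together with an appropriate norm of $\beta$.

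Second, I extend $\tilde h$ to all of $\mathbb{R}^2$ by even reflection, $\tilde h_e(y_1,y_2) := \tilde h(y_1,|y_2|)$, and verify the three orthogonality conditions needed for Lemma \ref{sect4-lemmainner-interior}. The $y_2$-moment $\int_{\mathbb{R}^2}\tilde h_e\, y_2\,dy$ vanishes by evenness. Writing $L_W[\Phi^*] = \nabla\cdot(W\nabla g^*)$ in divergence form and integrating against the constants $1$ and $y_1$ via the divergence theorem converts bulk integrals of $L_W[\Phi^*]$ into boundary integrals of $\beta$; combining with \eqref{331} then yields $\int_{\mathbb{R}^2}\tilde h_e\,dy = 0$ and $\int_{\mathbb{R}^2}\tilde h_e y_1\,dy = 0$. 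Lemma \ref{sect4-lemmainner-interior} now produces $\tilde\Phi_e := \mathcal{T}_{\mathrm i}[\tilde h_e]$ with $\|\tilde\Phi_e\|_{\delta_2,2+\sigma} \lesssim \|\tilde h_e\|_{\delta_2,4+\sigma}$.

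Finally, since $\tilde h_e$ is even in $y_2$, symmetrizing $\tilde\Phi_e$ (i.e., averaging it with its even reflection) yields an even-in-$y_2$ solution whose odd part lies along the kernel direction $Z_2$ and is discarded harmlessly. The resulting even solution restricted to $\mathbb{R}^2_+$ automatically satisfies zero flux on $\partial\mathbb{R}^2_+$, and setting $\Phi := \tilde\Phi_e\big|_{\mathbb{R}^2_+} + \Phi^*$ delivers the desired solution of \eqref{sect4-problem-bdry-1}, with the claimed estimate following by combining the bounds on $\Phi^*$ and $\tilde\Phi_e$. The main obstacle is Step 1: the correction $\Phi^*$ must simultaneously reproduce the prescribed flux \emph{and} keep $L_W[\Phi^*]$ within the same weighted decay class as $h$. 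Because $L_W$ contains the nonlocal coupling $\nabla\cdot(W\nabla\Psi^*)$ with $\Psi^* = (-\Delta)^{-1}\Phi^*$, the long-range tail generated by $\Psi^*$ must be controlled carefully, and one must track how the boundary integrals of $\beta$ in \eqref{331} convert—through integration by parts against the non-constant kernel directions $1$ and $y_1$—into the precise vanishing-moment conditions required by Lemma \ref{sect4-lemmainner-interior}.
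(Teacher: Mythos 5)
Your overall architecture does match the paper's: absorb the boundary flux by a correction, reduce to a zero-flux half-space problem, extend by even reflection, verify the mass and $y_1$-moment conditions from \eqref{331} via the divergence theorem, and invoke Lemma \ref{sect4-lemmainner-interior}. The genuine gap is in your correction step. You correct at the level of $\Phi$ with $\Phi^*=-W\,y_2\,\eta(y_2)\,\beta(y_1)$, but the flux you must match is $W\pd_{\boldsymbol{\nu}} g$ with $g=\Phi/W-\Psi$, and the nonlocal piece $\Psi^*=(-\Delta)^{-1}\Phi^*$ contributes both to the boundary flux and to $L_W[\Phi^*]$ through the long-range term $\nabla\cdot(W\nabla\Psi^*)$; you defer exactly this to ``iteration or a further explicit correction,'' which is the nontrivial part (the iterates must stay in the weighted class $\|\cdot\|_{\delta_2,4+\sigma,H}$ and must preserve the moment identities), and it is not carried out. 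As a minor point, your ansatz as written produces flux $W\beta$ rather than $\beta$. The paper avoids this feedback entirely by correcting the variable $g$ itself: since $L_W[\Phi]=h$ is equivalent to the local divergence-form equation $\nabla\cdot(W\nabla g)=h$ and the boundary condition is exactly $W\pd_{\boldsymbol{\nu}} g=\beta$, one chooses an auxiliary $\eta$ with $W\pd_{\boldsymbol{\nu}}\eta=\beta$, sets $g_N=g-\eta$, and reflects the resulting zero-flux problem for $g_N$; no nonlocal term enters the correction at all.

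A second concrete omission is in the $y_1$-moment verification. Integrating the corrected right-hand side against $y_1$ by parts produces, besides the boundary term $\int_{\pd\R^2_+}\beta\, y_1\,dS_y$ cancelled by \eqref{331}, an extra bulk term of the form $\int_{\R^2_+}W\nabla(\text{correction})\cdot{\bf e}_1\,dy$, which does not vanish automatically; in the paper this is precisely why $\eta$ is chosen with the normalization $\int_{\R^2_+}W\nabla\eta\cdot{\bf e}_1\,dy=0$. Your proposal neither imposes nor notices the analogous constraint on $\Phi^*$, so the hypothesis $\int_{\R^2}\tilde h_e\,y_1\,dy=0$ of Lemma \ref{sect4-lemmainner-interior} is not actually verified. (Your final symmetrization step is also unnecessary once the reflection is applied to the zero-flux problem, since the reflected datum is even and the solution can be taken even directly.) If you replace your $\Phi$-level correction by the $g$-level correction with this extra normalization, the rest of your outline goes through and coincides with the paper's proof.
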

\begin{proof}
We refer the reader to Lemma 3.2 in \cite{KWX2022}.  For the sake of completeness, we exhibit the sketch of proof.  Our strategy is to define the intermediate variable $\eta$ such that 
\begin{align*}
\int_{\mathbb R_{+}^2}W\nabla \eta\cdot {\bf e}_1dy=0,
\end{align*}
where ${\bf e}_1=(1,0).$  Define $g_{N}:=g-\eta$, then we transform system (\ref{sect4-problem-bdry-1}) as the following form:
  \begin{align}\label{sect4-problem-bdry-2}
\left\{\begin{array}{ll}
\nabla\cdot (W \nabla {g_N}) = h - \nabla \cdot (W\nabla \eta),& y\in \R^2_{+}\\
    W \frac{\partial {g_N}}{\partial \boldsymbol{\nu}} = 0, &y\in\partial \R^2_{+},
    \end{array}
    \right.
\end{align}
Next, we perform the even reflection and define ${\bar g}_N$ as 
\begin{equation}\label{sect4-barg-new}
   {\bar {g}}_N :=
   \begin{cases}
      {g_N}(y_1, y_2),  &  y_2 \ge 0; \\
      {g_N}(y_1, -y_2), & y_2 < 0.
   \end{cases}
\end{equation}
Thanks to (\ref{sect4-barg-new}), one has (\ref{sect4-problem-bdry-2}) can be evenly extended into $\mathbb R^2$ and the form is shown as follows:  
 \begin{equation}\label{sect4-WbargN-whole-space}
    \nabla\cdot(W \nabla {\bar{g}}_N) = \bar {h},~~~y\in \R^2,
 \end{equation}
where $\bar h$ is given by
 \begin{equation*}
      \bar {h}=
     \begin{cases}
         h(y_1, y_2) - \nabla\cdot (W\nabla \eta)(y_1, y_2), & y_2 \ge 0, \\
         h(y_1, -y  _2) - \nabla\cdot (W\nabla \eta)(y_1, -y_2), & y_2 < 0.
     \end{cases}
 \end{equation*}
We wish to apply the results of Lemma \ref{sect4-lemmainner-interior} on \eqref{sect4-WbargN-whole-space}.  To this end, it is necessary to verify the orthogonality conditions given by (\ref{innercon}).  For the mass condition, noting that $\bar h$ is the even function, one finds
 \begin{equation*}
    \int_{\R^2_-} h(y_1, -y_2) - \nabla\cdot (W\nabla \eta)(y_1, -y_2)dy = \int_{\R^2_+} h(y_1, y_2) - \nabla\cdot (W\nabla \eta)(y_1, y_2)dy.
 \end{equation*}
 Then we use the divergence theorem to obtain from (\ref{331}) that
 \begin{equation}\label{sect4-mass-condition-equiv}
 \begin{split}
   \int_{\R^2}\bar{h} dy &=  2\int_{\R^2_+}[h - \nabla\cdot(W\nabla \eta) ]dy  = 2\int_{\R^2_+} hdy- 2\int_{\partial\R^2_+} (W \nabla \eta)\cdot \boldsymbol{\nu}dS_y  \\
        & = 2\int_{\R^2_+} h dy - 2\int_{\partial\R^2_+} W \frac{\partial \eta}{\partial \boldsymbol{\nu}}dS_y =  2\int_{\R^2_+} h dy-   2\int_{\partial\R^2_+} \beta dS_y = 0,
   \end{split}
 \end{equation}
 which verifies the mass condition.  On the other hand, we deduce from the even property of $\bar h$ that 
 \begin{equation*}
   \int_{\R^2_-} \big[h(y_1, -y_2) - \nabla\cdot (W\nabla \eta)(y_1, -y_2)\big]y_1 dy= \int_{\R^2_+} \big[h(y_1, y_2) - \nabla\cdot  (W\nabla \eta)(y_1, y_2)\big]y_1dy.
 \end{equation*}
 It similarly follows that
\begin{align}\label{sect4-center-of-mass-condition-equiv}
   \int_{\R^2_+}\bar{h}y_1dy &= 2 \int_{\R^2_+}[h - \nabla\cdot (W \nabla \eta)]y_1dy  = 2\int_{\R^2_+}h y_1 dy- 2 \int_{\partial \R^2_+}\beta y_1dS_y = 0,
\end{align}
which verifies the center of mass condition.  By using (\ref{sect4-mass-condition-equiv}) and (\ref{sect4-center-of-mass-condition-equiv}), we utilize Lemma \ref{sect4-lemmainner-interior} to find there exists the solution $(\Phi,\Psi)$ to \eqref{sect4-problem-bdry-1} such that
    \begin{equation*} 
       \|\Phi\|_{\delta_2,2 + \sigma, H} \lesssim  \|h\|_{\delta_2,4 + \sigma, H},~~~\delta_2>0,\sigma\in(0,1).
    \end{equation*}
 It completes the proof of Lemma.
\end{proof}
Lemma \ref{sect4-lemmainner-interior} and Lemma \ref{sect4-linear-theory-bdry} help us establish the inner linear theory of linearized Keller-Segel operator.  It is natural to discuss the corresponding property of outer operator $L^{\text{o}}$ defined by \eqref{sect2-linearized-outer-operator} next.  We assume $\varphi(x)=\varphi(y)$ and rewrite $L^{\text{o}}$ in $y$-variable to obtain the operator becomes
\begin{align}\label{sect4-outer-problem-new-1}
 \bar L^o[\varphi]:=\varepsilon^2 L^o[\varphi]=&\Delta_y \varphi-\nabla_y \varphi\cdot \nabla_y \bar V-\varepsilon^2\bar V \varphi,
\end{align}
where $\bar V:=\Gamma+ H^{\varepsilon}.$  By studying (\ref{sect4-outer-problem-new-1}), we define the outer norm as
 \begin{equation*}
      \|h\|_{\delta_3,\nu_3, o}: = \varepsilon^{-\delta_3}\sup_{y \in \Omega_{\varepsilon}}\frac{ |h|}{ (1 + |y - \xi'|)^{-\nu_2}},~~ \delta_3,\nu_3>0,
    \end{equation*}
then formulate the following outer linear theory:
\begin{lemma}\label{sect4-outer-problem-linear-theory}
Assume that $\|h\|_{b+2, o} < \infty$, then the problem
     \begin{equation*}
     \begin{cases}
         \bar L^{o}[\varphi]=h, &y\in \Omega_{\varepsilon}, \\
        \frac{\partial \varphi}{\partial \boldsymbol{\nu}} = 0, &y\in \partial \Omega_{\varepsilon}
        \end{cases}
     \end{equation*}
    admits the solution $\varphi=\mathcal{T}_o(h)$ satisfying 
      \begin{equation}\label{sect4-outer-problem-estimate-1}
            \|\varphi\|_{\delta_3, b, o} \lesssim \|h\|_{\delta_3, b +2, o}, 
      \end{equation}
      where $\delta_3$ and $b$ are positive constants; moreover, $\mathcal{T}_o(h)$ is a continuous linear mapping.
\end{lemma}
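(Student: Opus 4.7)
My plan is to reduce the problem to a fixed-domain elliptic equation by the change of variables $\tilde\varphi(x)=\varphi((x-\xi)/\varepsilon)$, establish existence in $W^{2,p}(\Omega)$ through the Fredholm alternative, and then extract the weighted pointwise decay via a barrier argument, paralleling the outer linear theory developed in \cite{del2006collapsing,KWX2022}. The scaling relations $\nabla_y=\varepsilon\nabla_x$ and $\nabla_y\bar V=\varepsilon\nabla_x(\Gamma_x+H^\varepsilon)$ with $\Gamma_x(x):=\Gamma((x-\xi)/\varepsilon)$ yield $\bar L^o[\varphi]=\varepsilon^2 L^o[\tilde\varphi]$, so the equation becomes $L^o[\tilde\varphi]=\varepsilon^{-2}h((\cdot-\xi)/\varepsilon)$ on the fixed smooth domain $\Omega$ with Neumann boundary condition, where $L^o$ is the operator in \eqref{sect2-linearized-outer-operator}.

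On $\Omega$, $L^o$ is a uniformly elliptic operator with smooth (but $\varepsilon$-dependent) coefficients, so I would invoke Fredholm theory in $W^{2,p}(\Omega)$. Writing $L^o$ in the form $L^o[\tilde\varphi]=e^{\bar V_x}\nabla_x\cdot(e^{-\bar V_x}\nabla_x\tilde\varphi)-\bar V_x\tilde\varphi$ with $\bar V_x:=\Gamma_x+H^\varepsilon$, the homogeneous equation can be tested against $e^{-\bar V_x}\tilde\varphi$ and integrated by parts (the Neumann boundary term vanishes) to produce a weighted energy identity. Combined with the mass concentration of $W_x=e^{\Gamma_x}$ near $\xi$ and the compatibility constraint $\log 8\mu^2=4\pi H(\xi,\xi)$ from Theorem~\ref{thm11}, this rules out a nontrivial kernel, and standard elliptic estimates deliver $\|\tilde\varphi\|_{L^\infty(\Omega)}\lesssim\|\tilde h\|_{L^p(\Omega)}$ for any $p>2$. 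The pointwise weighted estimate would then follow by comparison with the barrier $\mathcal B(y)=A\varepsilon^{\delta_3}(1+|y-\xi'|)^{-b}$: since in the outer region $\nabla_y\Gamma\sim-4(y-\xi')/|y-\xi'|^2$ and $\varepsilon^2|\bar V|=O(\varepsilon^2|\log\varepsilon|)$ is uniformly small, a direct 2D computation gives $\bar L^o[\mathcal B]\sim b(b-4)A\varepsilon^{\delta_3}(1+|y-\xi'|)^{-b-2}$, which has the favorable (negative) sign for $b\in(0,4)$. Choosing $A$ proportional to $\|h\|_{\delta_3,b+2,o}$ and combining with the Fredholm $L^\infty$ bound on the inner core and the Neumann boundary treatment from Section~\ref{sect3} then yields $|\varphi|\le\mathcal B$, and the continuity of $\mathcal T_o$ is immediate from the linearity of the construction.

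The main obstacle is that the zeroth-order coefficient $-\varepsilon^2\bar V$ of $\bar L^o$ is sign-indefinite across $\Omega_\varepsilon$: $\bar V>0$ near $\xi'$, but $\Gamma\to-\infty$ drives $\bar V<0$ in the outer region, so the classical maximum principle does not apply globally to $\bar L^o$. The barrier argument must therefore be implemented piecewise---an inner core handled by Fredholm $L^\infty$ estimates, a transition annulus where the $\varepsilon$-smallness of $\varepsilon^2\bar V$ absorbs the sign indefiniteness, and a far field where the barrier $(1+r)^{-b}$ with $b\in(0,4)$ acts as a supersolution. Matching constants across these regions uniformly in $\varepsilon$, while also respecting the Neumann condition on the flattened boundary $\partial\Omega_\varepsilon$, requires the same delicate bookkeeping as in \cite[Lemma~3.3]{KWX2022}; this constitutes the analytic heart of the argument.
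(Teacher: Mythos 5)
Your plan follows the same route as the paper — a barrier function combined with the maximum principle, modeled on Lemma 3.3 of \cite{KWX2022} — and your computation $\bar L^o[\mathcal B]\sim b(b-4)A\varepsilon^{\delta_3}(1+|y-\xi'|)^{-b-2}$, with the restriction $b\in(0,4)$, is consistent with the paper's inequality. The missing piece, and it is the whole point of this lemma in the boundary-spot setting, is that the radial power barrier $\mathcal B=A\varepsilon^{\delta_3}(1+|y-\xi'|)^{-b}$ does \emph{not} satisfy $\frac{\partial\mathcal B}{\partial\boldsymbol\nu}=0$ on $\partial\Omega_\varepsilon$. Near $\xi'\in\partial\Omega_\varepsilon$ the curvature of the flattened boundary produces $\frac{\partial\mathcal B}{\partial\boldsymbol\nu}=O(\varepsilon^{b+1})$ (this is exactly the computation in \eqref{sect4-outer-problem-barrier-function-1}), and away from $\xi'$ the normal direction is not aligned with the radial direction at all. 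Since $\varphi$ itself has vanishing normal derivative, the comparison function $\mathcal B-\varphi$ has a nonzero, sign-indefinite normal derivative on $\partial\Omega_\varepsilon$, so the Hopf/Neumann maximum principle does not close at the boundary. The paper fixes this by building a composite barrier $w=w_b+w_0+\bar w_1$: $w_b$ is the radial power, $w_0$ solves the auxiliary Neumann problem \eqref{sect4-outer-problem-NBC-1} with $\frac{\partial w_0}{\partial\boldsymbol\nu}=-\frac{\partial w_b}{\partial\boldsymbol\nu}$ and enjoys $w_0\lesssim\varepsilon^b$, and $\bar w_1=\varepsilon^b$ is a constant that helps absorb the zeroth-order contribution; then $\frac{\partial w}{\partial\boldsymbol\nu}=0$ on $\partial\Omega_\varepsilon$ and the supersolution inequality survives. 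Your proposal acknowledges the boundary issue but defers it to ``delicate bookkeeping,'' which is precisely the step that needs to be carried out.

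Two lesser points. The Fredholm preamble is unnecessary — the barrier estimate gives uniqueness directly and existence then follows by the method of continuity — and the invocation of the relation $\log8\mu^2=4\pi H(\xi,\xi)$ to rule out a kernel of $\bar L^o$ is not a valid argument: that relation fixes $\mu$ in the reduced problem and has no bearing on $\ker\bar L^o$. Your concern about the sign-indefiniteness of $-\varepsilon^2\bar V$ is correctly identified, but the composite barrier $w$ together with $b\in(0,4)$ handles it in a single stroke, without the three-region matching you sketch.
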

\begin{proof}
The proof of this Lemma is the slight modification of Lemma 3.3, \cite{KWX2022}.  We shall construct the barrier function then apply the maximum principle to show the estimate (\ref{sect4-outer-problem-estimate-1}).

Define the barrier function $w$ as
  \begin{equation*}
      w =  w_{b} + w_0+\bar w_1 =   \frac{C_1}{(\mu^2+|y - \xi'|)^{b}} +  C_2 w_0+C_3\bar w_1.
  \end{equation*}
Here $C_1$, $C_2$, $C_3$ are positive constants and $w_0$, $\bar w_1$ are functions.  We will explain and determine them later on.  Since the location satisfies $\xi'_j \in \partial \Omega_\e$, we rewrite $\partial \Omega_\e$ near $\xi'_j$ as the graph $(y_1, y_2) = (y_1, \rho(y_1))$ with $\rho(0) = 0$ and $\rho'(0) = 0$, then find 
 \begin{align}\label{sect4-outer-problem-barrier-function-1}
       \frac{\partial w_b}{\partial \boldsymbol{\nu}} =&- \frac{b}{\vert y-\xi'_j\vert(\mu^2+|y-\xi'|)^{1 + b}}\cdot\frac{(y_2-\xi_{2} ')- (y_1-\xi_{1}') \rho'(y_1)}{\sqrt{1 + |\rho'(y)|^2}}\nonumber\\
       =&O(\e^{b+1}).
     \end{align}
To guarantee the boundary condition, we choose $w_0$ as the unique solution to the following problem
  \begin{align}\label{sect4-outer-problem-NBC-1}
     \left\{\begin{array}{ll}
      -\Delta w_0 + \nabla\bar V \cdot  \nabla w_0 + \e^2G(y, \xi_j') w_0 =0, &y\in \Omega_{\e},\\
     \frac{\partial w_0}{\partial \boldsymbol{\nu}}=-\frac{\partial w_b}{\partial \boldsymbol{\nu}},&y\in \partial\Omega_{\e}.
     \end{array}
     \right.
  \end{align}
  Moreover, one has $w_0$ satisfies $w_0\lesssim \e^b.$  Letting $\bar w_1:=\e^b,$ we calculate to get for $y\in\Omega_{\e},$
  \begin{equation*}
   \begin{split}
      L^{o}[w] =&\Bigg(-\frac{C_1b(b+1)}{(\mu^2+\vert y-\xi'\vert)^{b+2}}+\frac{C_1b}{(\vert y-\xi'\vert)(\mu^2+\vert y-\xi'\vert)^{b+1}}\\
     & + \frac{4C_1 b\vert y - \xi'\vert^2}{(\mu^2+|y - \xi'|)^{b+2}(\mu^2+\vert y-\xi'\vert^2)}\Bigg)- \frac{C_1 b(y - \xi')}{(1+|y - \xi'|)^{b+2}}4\e\pi \nabla H^{\e}\\
     &+ \frac{C_1 \e^2 G(y, \xi')}{(1+|y - \xi'|)^{b}}+C_3 \e^2 G(y, \xi_j') {\bar w}_1\\
     \geq &\frac{C_1b(4-b)}{(\mu^2+\vert y-\xi'\vert)^{b+2}}- \frac{C_1 b(y - \xi')}{(1+|y - \xi'|)^{b+2}}4\e\pi  \nabla H^{\e}\\
     &+ \frac{C_1 \e^2 G(y, \xi_j')}{(1+|y - \xi'|)^{b}}+C_3 \e^2 G(y, \xi_j')  {\bar w}_1\geq \frac{C_4}{(1 + |y - \xi'|)^{b + 2}},
        \end{split}
   \end{equation*}
   where $C_4>0$ is a positive constant.  On the other hand, upon combining (\ref{sect4-outer-problem-barrier-function-1}) and (\ref{sect4-outer-problem-NBC-1}), we find $\frac{\partial w}{\partial \boldsymbol{\nu}}=0$ on $\partial\Omega.$  Therefore, we use the maximum principle to show that
   \begin{equation*}
       \vert \varphi\vert   \lesssim \e^{\delta_3}\|h\|_{b+2, o} w, 
    \end{equation*}
    which gives us the desired estimate (\ref{sect4-outer-problem-estimate-1}). 
\end{proof}
With the preparation of error estimates shown in Section \ref{sect3} and linear theory shown in Section \ref{sect4} arising from the transported Keller-Segel system, we next investigate the existence of solution to the ${\bf u}$-equation of \eqref{PKSNS-ss}.  In particular, we shall establish the linear theory of Stokes operator. 
\section{Error Estimate and Linear Theory: Stokes Operator}\label{sect-model-stokes}
In this section, given the ansatz of $(n,c)$, we compute the error of ${\bf u}$-equation at first.  Recall that ${\bf u}$-equation of (\ref{PKSNS-ss-equiv}) is  
\begin{align}\label{sect5-u-equation-1}
{\bf u}\cdot\nb {\bf u} + \nb P = \De {\bf u} - \e_0\nabla \cdot(\nabla c\otimes\nabla c)+\e_0 \nabla\Big(\frac{|\nabla c|^2}{2}\Big)+\e_0\nabla\Big(\frac{c^2}{2}\Big).
\end{align}
In light of (\ref{sect2-n-and-c}), we have the forcing term in (\ref{sect5-u-equation-1}) becomes
\begin{align}\label{sect5-forcing-term-1}
\nabla_x\cdot(\nabla c\otimes \nabla c)=&\nabla\cdot[\nabla (\Gamma+H^{\e}+\psi)\otimes\nabla (\Gamma+H^{\e}+\psi)]\nonumber\\
=&\nabla_x\cdot (\nabla_x\Gamma\otimes \nabla_x\Gamma)+2\nabla_x\cdot(\nabla_x\Gamma\otimes \nabla H)+2\nabla_x\cdot(\nabla_x\Gamma\otimes \nabla_x \psi)\nonumber\\
&+\nabla_x\cdot(\nabla H\otimes \nabla H)+2\nabla_x\cdot(\nabla H\otimes \nabla_x\psi)+\nabla_x\cdot(\nabla_x\psi\otimes \nabla_x \psi).
\end{align}
It follows from (\ref{sect5-forcing-term-1}) that (\ref{sect5-u-equation-1}) can be written as
\begin{align}\label{sect5-bfu-decompose-before}
&{\bf u}\cdot\nb {\bf u} + \nb \Big(P-\e_0\frac{|\nabla c|^2}{2}-\e_0\frac{c^2}{2}\Big)\nonumber\\
=&\De_x {\bf u} - \e_0\Big[\nabla_x\cdot (\nabla_x\Gamma\otimes \nabla_x\Gamma)+2\nabla_x\cdot(\nabla_x\Gamma\otimes \nabla H)+2\nabla_x\cdot(\nabla_x\Gamma\otimes \nabla_x \psi)\nonumber\\
&+\nabla_x\cdot(\nabla H\otimes \nabla H)+2\nabla_x\cdot(\nabla H\otimes \nabla_x\psi)+\nabla_x\cdot(\nabla_x\psi\otimes \nabla_x \psi)\Big].
\end{align}
Upon letting $\tilde P:=P-\e_0\frac{|\nabla c|^2}{2}-\e_0\frac{c^2}{2}$ and
  \begin{align*}
  \mathbb F:=& - \e_0\Big[ \nabla_x\Gamma\otimes \nabla_x\Gamma+2(\nabla_x\Gamma\otimes \nabla H)+2(\nabla_x\Gamma\otimes \nabla_x \psi)\nonumber\\
&+\nabla H\otimes \nabla H+2(\nabla H\otimes \nabla_x\psi)+\nabla_x\psi\otimes \nabla_x \psi\Big],
  \end{align*}
  we obtain (\ref{sect5-bfu-decompose-before}) becomes
\begin{align*}
&{\bf u}\cdot\nb_x {\bf u} + \nb_x \tilde P=\De_x {\bf u}+\nabla_x\cdot  \mathbb F.
\end{align*}
Considering the divergence-free and boundary conditions, we rewrite the ${\bf u}$-equation in \eqref{PKSNS-ss-equiv} as 
\begin{align}\label{sect5-bfu-decompose-before-formulate}
\left\{\begin{array}{ll}
{\bf u}\cdot\nb_x {\bf u} + \nb_x \tilde P=\De_x {\bf u}+\nabla_x\cdot  \mathbb F, &x\in\Omega,\\
\nabla_x\cdot {\bf u} =0,&x\in\Omega,\\
{\bf u}\cdot \boldsymbol{\nu}=0,&x\in\partial\Omega,\\
({\mathbb S} {\bf u}\cdot \boldsymbol{\nu})_{\boldsymbol{\tau}}=0,&x\in\partial\Omega.
\end{array}
\right.
\end{align}
It is straightforward to see that in $\mathbb R^2,$ $\mathbb F$ is a $2\times 2$ matrix, i.e. $\mathbb F=(F_{ij})_{i,j=1,2}$.  Of concern the existence of the solution to system (\ref{sect5-bfu-decompose-before-formulate}), we find it is necessary to study the corresponding homogeneous problem and impose the orthogonality conditions on (\ref{sect5-bfu-decompose-before-formulate}) if the non-trivial kernel exists.  Indeed, as shown in Section \ref{sect6-W2p-estimate-Stokes}, more precisely Theorem \ref{thm-4.1-AR}, if the following compatibility conditions hold:
\begin{align}\label{sect5-solvability-conditions-eq}
\int_{\Omega} ({\bf u}\cdot \nabla_x {\bf u}) \cdot\boldsymbol{\beta} \, dx=\int_{\partial\Omega}({\mathbb F}\cdot\boldsymbol{\nu})_{\boldsymbol{\tau}}\cdot\boldsymbol{\beta}\, dS-\int_{\Omega} \mathbb F:\nabla \boldsymbol{\beta}\, dx,
\end{align}
and
\begin{align}\label{sect5-solvability-condition-divergence}
\int_{\Omega} \nabla_x \cdot {\bf u}\, dx=\int_{\partial\Omega}{\bf u}\cdot \boldsymbol{\nu}\, dS,
\end{align}
where $\boldsymbol{\be} = c{\bf x}^\perp + {\bf b}$ and ${\bf x}^\perp=(-x_2,x_1)^T$ with $c\not=0$ and ${\bf b}$ being a constant vector, we have there exists a unique solution $\bf u$ to system \eqref{sect5-bfu-decompose-before-formulate}.  Thus, for the proof of the existence, it suffices to verify (\ref{sect5-solvability-conditions-eq}) and (\ref{sect5-solvability-condition-divergence}).  On one hand, since ${\bf u}\cdot \boldsymbol{\nu}=0$ on $\pd \Omega$, one obviously has (\ref{sect5-solvability-condition-divergence}) holds.  On the other hand, we compute
\begin{align}\label{sect5-original-ueq-int1}
\int_{\Omega}(\nabla_x\cdot {\mathbb F})\cdot\boldsymbol{\beta}\, dx=\int_{\partial\Omega}({\mathbb F}\cdot\boldsymbol{\nu})_{\boldsymbol{\tau}}\cdot\boldsymbol{\beta}\, dS-\int_{\Omega} \mathbb F:\nabla \boldsymbol{\beta}\, dx,
\end{align}
and use $\nabla\cdot {\bf u}=0$ in $\Omega$ to get
\begin{align}\label{sect5-original-ueq-int2}
\int_{\Omega}({\bf u}\cdot \nabla_x {\bf u})\cdot\boldsymbol{\beta}\, dx=&\int_{\Omega}[\nabla_x\cdot ({\bf u}\otimes {\bf u})]\cdot\boldsymbol{\beta}\, dx\nonumber\\
=&\int_{\partial\Omega}[({\bf u}\otimes {\bf u})\cdot\boldsymbol{\nu}]_{\boldsymbol{\tau}}\cdot\boldsymbol{\beta}\, dS-\int_{\Omega} ({\bf u}\otimes {\bf u}):\nabla \boldsymbol{\beta}\, dx.
\end{align}
Next, we calculate the integrals term by term.  Recall that $\mathbb F=\nabla c\otimes \nabla c,$ which is 
\begin{align*}
\mathbb F=\left( \begin{array}{cc}
    (\partial_{x_1} c)^2 & \partial_{x_1}c\partial_{x_2}c \\
 \partial_{x_1}c\partial_{x_2}c &  (\partial_{x_2} c)^2\\
    \end{array}
  \right).
\end{align*}
Then we have 
\begin{align*}
\mathbb F\cdot \boldsymbol{\nu}=\left( \begin{array}{c}
    \partial_{x_1} c(\partial_{x_1}c\nu_1+\partial_{x_2}c\nu_2)\\
   \partial_{x_2} c(\partial_{x_1}c\nu_1+\partial_{x_2}c\nu_2)\\
    \end{array}
  \right)=\left( \begin{array}{c}
    \partial_{x_1} c\frac{\partial c}{\partial \boldsymbol{\nu}}\\
 \partial_{x_2} c\frac{\partial c}{\partial \boldsymbol{\nu}}\\
    \end{array}
  \right),
\end{align*}
where $\boldsymbol{\nu}=(\nu_1,\nu_2)^T.$  Noting that $c$ satisfies the Neumann boundary condition, one further obtains
\begin{align}\label{sect5-original-ueq-int3}
\int_{\partial\Omega}({\mathbb F}\cdot\boldsymbol{\nu})_{\boldsymbol{\tau}}\cdot\boldsymbol{\beta}\, dS=0. 
\end{align}
Similarly, by using ${\bf u}\cdot \boldsymbol{\nu}=0$ on $\partial\Omega,$ we get
\begin{align}\label{sect5-original-ueq-int4}
\int_{\partial\Omega} [({\bf u}\otimes {\bf u})\cdot\boldsymbol{\nu}]_{\boldsymbol{\tau}}\cdot\boldsymbol{\beta}\, dS=0. 
\end{align}
In addition, since 
\begin{align*}
\nabla \boldsymbol{\beta}=\left(\begin{array}{cc}
   0 & -1 \\
1 &  0\\
    \end{array}
  \right),
\end{align*}
we find
\begin{align}\label{sect5-original-ueq-int5}
\int_{\Omega} \mathbb F:\nabla \boldsymbol{\beta}\, dx=\int_{\Omega} (\partial_{x_1}c\partial_{x_2}c- \partial_{x_1}c\partial_{x_2}c) \, dx=0.
\end{align}
Furthermore, the symmetry of $\bf u\otimes \bf u$ implies
\begin{align}\label{sect5-original-ueq-int6}
\int_{\Omega} ({\bf u}\otimes {\bf u}):\nabla \boldsymbol{\beta}\, dx=0.
\end{align}
 Upon collecting (\ref{sect5-original-ueq-int1}), (\ref{sect5-original-ueq-int2}), (\ref{sect5-original-ueq-int3}), (\ref{sect5-original-ueq-int4}), (\ref{sect5-original-ueq-int5}) and (\ref{sect5-original-ueq-int6}), one obtains \eqref{sect5-solvability-conditions-eq} is true.  Therefore, we have (\ref{sect5-bfu-decompose-before-formulate}) admits the unique solution $\bf u$ under the solvability conditions (\ref{sect5-solvability-conditions-eq}) and (\ref{sect5-solvability-condition-divergence}).
 
Next, we perform the error analysis.  Noting that the location $\xi$ of the single boundary spot is on $\partial\Omega,$ we have to flatten the boundary locally.  Indeed, one has for $|x-\xi|\lesssim \e,$
\begin{align*}
\Delta_x{\bf u}=&\Delta_{X}{\bf u}+(\rho'(X_1))^2\partial_{X_2X_2}{\bf u} -2\rho'(X_1)\partial_{X_2X_1}{\bf u}-\rho''(X_1)\partial_{X_2}{\bf u}\nonumber\\
=&\frac{1}{\e^2}\Delta_Y {\bf u}+\frac{1}{\e^2}(\rho'(\e Y_1))^2\partial_{Y_2Y_2}{\bf u}-\frac{2}{\e^2}\rho'(\e Y_1)\partial_{Y_2Y_1}{\bf u}-\frac{1}{\e}\rho''(\e Y_1)\partial_{Y_2}{\bf u},
\end{align*}
\begin{align*}
    \nabla_x\tilde P = &\Big(\partial_{X_1}\tilde P - \rho'(X_1)\partial_{X_2}\tilde P, \partial_{X_2}\tilde P \Big)\nonumber\\
    =&\nabla_{X}\tilde P- (\rho'(X_1)\partial_{X_2}\tilde P,0)\nonumber\\
    =&\frac{1}{\e}\nabla_{Y}\tilde P- \frac{1}{\e}(\rho'(\e Y_1)\partial_{Y_2}\tilde P,0).
    \end{align*}
and
\begin{align*}
  \nabla_x\cdot {\mathbb F} =& \Big(\frac{\partial}{\partial X_1}, \frac{\partial}{\partial X_2} \Big)\cdot \mathbb F- \Big( \frac{\partial}{\partial X_2}\cdot \rho'(X_1),0\Big)\cdot\mathbb F\nonumber\\
  =&\frac{1}{\e}\nabla_Y \cdot {\mathbb F}- \frac{1}{\e}\Big( \frac{\partial}{\partial Y_2}\cdot \rho'(\e Y_1),0\Big)\cdot\mathbb F.
  \end{align*}
Since $\rho'$ and $\rho''$ satisfy \eqref{sect3-rhoprime-Y1} and \eqref{sect3-rhodouble-Y1}, we find in the inner region,  
\begin{align}\label{sect5-inner-region-error}
\e^2\Delta_x {\bf u}=\Delta_Y {\bf u}+o(1),\quad \e \nabla_x \tilde P=\nabla_Y\tilde P+o(1),\quad \e \nabla_x\cdot \mathbb F=\nabla_Y\cdot \mathbb F+o(1).
\end{align}
In addition, considering the divergence-free condition $\nabla_x\cdot {\bf u}=0$ in $\Omega$ and Naiver boundary condition $(\mathbb S{\bf u}\cdot \boldsymbol{\nu})_{\boldsymbol{\tau}}=0$ on $\partial\Omega,$ we have in the inner region,
\begin{align}\label{sect5-inner-region-error-boundary}
\e\nabla_x \cdot {\bf u}=\nabla_Y\cdot {\bf u}+o(1),\quad \e (\mathbb S_x{\bf u}\cdot \boldsymbol{\nu})_{\boldsymbol{\tau}}=(\mathbb S_Y{\bf u}\cdot \boldsymbol{\nu})_{\boldsymbol{\tau}}+o(1).
\end{align}
Moreover, we decompose ${\bf u}$ as 
\begin{align}\label{sect5-bfu-decompose-1}
{\bf u}=\e{\bf u}^{\text{i}}\chi(y)+{\bf u}^{\text{o}}.
\end{align}
By using (\ref{sect5-inner-region-error}), (\ref{sect5-inner-region-error-boundary}) and (\ref{sect5-bfu-decompose-1}), we formulate the inner problem satisfied by ${\bf u}^{\text{i}}$ as
\begin{align}\label{sect5-inner-problem-velocity}
\left\{\begin{array}{ll}
\nabla P_1=\Delta {\bf u}^{\text{i}}+\nabla\cdot \mathbb F,&Y \in \mathbb R_{+}^2,\\
\nabla\cdot {\bf u}^{\text{i}}=0,&Y \in \mathbb R_{+}^2,\\
\partial_{Y_2} { u}^{\text{i}}_1= { u}^{\text{i}}_2=0, &Y \in\partial\mathbb R_{+}^2,
\end{array}
\right.
\end{align}
where ${{\bf u}^{\text{i}}}=(u^{\text{i}}_1,u^{\text{i}}_2)$. 
 Correspondingly, noting that
\begin{align*}
\Delta( {\bf u}^{\text{i}}\chi)=\Delta {\bf u}^{\text{i}}\chi+2\nabla {\bf u}^{\text{i}}\cdot\nabla \chi+{\bf u}^{\text{i}}\Delta\chi,
\end{align*}
one establishes the following outer problem of ${\bf u}^{\text{o}}$:
\begin{align}\label{sect5-outer-problem-velocity}
\left\{\begin{array}{ll}
\nabla_x (\tilde P-\chi P_1)=\Delta_x{\bf u}^{\text{o}}+(1-\chi)\nabla_x\cdot \mathbb F+2\e\nabla_x\chi\cdot\nabla_x {\bf u}^{\text{i}}+\e(\Delta_x \chi){\bf u}^{\text{i}}- P_1\nabla_x \chi,&x\in\Omega,\\
\nabla_x\cdot {\bf u}^{\text{o}}=-\e\nabla_x \chi\cdot {\bf u}^{\text{i}},&x\in\Omega,\\
{\bf u}^{\text{o}}\cdot \boldsymbol{\nu}=0,&x\in\partial\Omega,\\
(\mathbb S_x({\bf u}^{\text{o}})\cdot\boldsymbol{\nu})_{\boldsymbol{\tau}}=-(\mathbb S_x(\e{\bf u}^{\text{i}}\chi)\cdot\boldsymbol{\nu})_{\boldsymbol{\tau}},&x\in\partial\Omega.
\end{array}
\right.
\end{align}
After finishing the error estimate, we shall first focus on (\ref{sect5-inner-problem-velocity}) and establish the inner linear theory, then find the solution to (\ref{sect5-outer-problem-velocity}) by using the $W^{2,p}$ estimate of Stokes equation.   

To study the inner problem (\ref{sect5-inner-problem-velocity}), our idea is to perform the extension of velocity field ${\bf u}^{\text{i}}$ and solve the whole space problem, then derive the desired pointwise decay estimate by the representation formula.  To begin with, we state the following useful lemma.
\begin{lemma}[\text{\cite[Lemma 2.2]{KLLT-CMP2023}}]
\label{lem-KLLT-integral-est-1}
Let $a>0$, $b>0$, $k>0$, $m>0$ and $k+m>N$. Define
\[I:=\int_{\R^N}\frac{dz}{(|z|+a)^k(|z-x|+b)^m},\quad x\not=0.\]
Then, with $R=\max\{|x|,\,a,\,b\}\sim|x|+a+b$, we have
$$I\lesssim R^{N-k-m} + \de_{kN} R^{-m} \log \frac Ra
+ \de_{mN} R^{-k} \log \frac Rb
+ { \mathbbm 1}_{k>N} R^{-m}a^{N-k}
+ {\mathbbm 1}_{m>N} R^{-k}b^{N-m},
$$
where $\delta_{\alpha\beta}$ and $\mathbbm 1_{\alpha>\beta}$ are defined by
\begin{align*}
\delta_{\alpha\beta}=\left\{\begin{array}{ll}
1,&\alpha=\beta,\\
0,&\alpha\not=\beta;
\end{array}
\right.\quad 
\mathbbm 1_{\alpha>\beta}=\left\{\begin{array}{ll}
1,&\alpha>\beta,\\
0,&\text{otherwise.}
\end{array}
\right.
\end{align*}
\end{lemma}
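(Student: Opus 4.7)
The plan is to prove this integral estimate by partitioning $\R^N$ into three regions around the two singularities of the integrand (at $z=0$ and $z=x$) and then bounding each piece by elementary radial estimates. Specifically, I would write $\R^N = A \cup B \cup C$ with
\[
A = \{|z|\le R/2\},\quad B=\{|z-x|\le R/2\},\quad C=\R^N\setminus (A\cup B),
\]
noting that $A$ and $B$ are disjoint because $|x|\le R$ forces $|z-x|\ge |x|-R/2$, which is still $\ge R/2$ only when $|x|=R$; a small technical adjustment (take $R/4$ instead of $R/2$, or rescale the covering constants) ensures $A$ and $B$ are disjoint. On $A$ the ``far'' factor $(|z-x|+b)^{-m}$ is $\lesssim R^{-m}$ because $|z-x|\gtrsim R$; on $B$ the same argument interchanges the roles of the two factors and swaps $(k,a)\leftrightarrow(m,b)$; on $C$ both factors are of size $|z|\sim |z-x|\gtrsim R$ (using $|z|\ge R/2\ge a/2$ and similarly for the other), giving $(|z|+a)^{-k}(|z-x|+b)^{-m}\lesssim |z|^{-(k+m)}$.

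The next step is to evaluate the three one-dimensional radial integrals that arise. For $A$,
\[
I_A \lesssim R^{-m}\int_0^{R/2}\frac{r^{N-1}\,dr}{(r+a)^k},
\]
and splitting the inner integral at $r=a$ (or rescaling $r=as$) yields the three cases
\[
\int_0^{R/2}\frac{r^{N-1}\,dr}{(r+a)^k}\lesssim R^{N-k}\mathbbm{1}_{k<N}+\delta_{kN}\log(R/a)+a^{N-k}\mathbbm{1}_{k>N},
\]
after absorbing the subdominant term in the ``$<N$'' case into $R^{N-k-m}$. A symmetric computation on $B$ produces the terms with $(m,b)$ in place of $(k,a)$. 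For $C$, the estimate
\[
I_C\lesssim \int_{|z|\ge R/2}\frac{dz}{|z|^{k+m}}\sim R^{N-k-m}
\]
uses precisely the hypothesis $k+m>N$ to guarantee integrability at infinity.

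Summing the three contributions $I_A+I_B+I_C$ reproduces the stated bound term-by-term, with the $R^{N-k-m}$ summand arising from $C$ (and also absorbing the subcritical $k<N$ or $m<N$ contributions from $A,B$), the $\delta_{kN}$ and $\delta_{mN}$ logarithmic terms coming from the critical radial integrals on $A$ and $B$, and the $\mathbbm{1}_{k>N}a^{N-k}$ and $\mathbbm{1}_{m>N}b^{N-m}$ pieces coming from the supercritical cases where the singularity at $z=0$ (respectively $z=x$) is non-integrable without the regularization by $a$ (respectively $b$). The main obstacle is keeping the case-by-case bookkeeping clean so that each term on the right-hand side is matched exactly by one contribution: one needs to be careful that when, e.g., $k<N$, the resulting $R^{-m}\cdot R^{N-k}=R^{N-k-m}$ is indeed already present (and not redundant with a $\delta_{kN}$ or $\mathbbm{1}_{k>N}$ term), so the three regimes $k<N$, $k=N$, $k>N$ must be tracked in parallel. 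Since the statement is cited from \cite{KLLT-CMP2023}, in the final paper I would simply reference that proof rather than reproduce it.
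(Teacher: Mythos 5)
The paper does not prove this lemma; it is cited directly from \cite{KLLT-CMP2023}. Assessing your proposed argument on its own: the ball decomposition $A=\{|z|\le R/2\}$, $B=\{|z-x|\le R/2\}$ does not support the bounds you claim on $A$ and $B$. Your assertion that $|z-x|\gtrsim R$ on $A$ holds only when $R\sim|x|$, but $R=\max\{|x|,a,b\}$ may be realized by $a$ or by $b$, in which case $|x|$ can be arbitrarily small relative to $R$. For instance take $N=2$, $k=m=3$, $a=1$, $b=10^{-3}$, $x=(10^{-1},0)$, so $R=1$: then $x\in A$, and at $z=x$ the factor $(|z-x|+b)^{-m}=b^{-m}\gg R^{-m}$. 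Your claimed bound gives $I_A\lesssim R^{-m}\int_0^{R/2}r^{N-1}(r+a)^{-k}\,dr\sim 1$, whereas in fact $I_A\sim b^{-1}=10^3$. The symmetric assertion on $B$ fails in the same way when $R=b$. Rescaling the ball radii does not repair this: when $|x|\ll R$ the two balls overlap for any fixed multiple of $R$, and in any case overlap is not the actual obstruction — the obstruction is the absence of a uniform lower bound on $|z-x|+b$ inside $A$.

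The clean fix is to decompose not by distance to the two base points but by comparing the two regularized distances directly: set $E_1:=\{z:\,|z|+a\le|z-x|+b\}$ and $E_2:=\R^N\setminus E_1$. The triangle inequality gives $(|z|+a)+(|z-x|+b)\ge|x|+a+b\sim R$, which forces $|z-x|+b\gtrsim R$ on $E_1$ (and $|z|+a\gtrsim R$ on $E_2$) uniformly, with no case distinctions on which of $|x|$, $a$, $b$ is largest. Intersecting $E_1$, $E_2$ with the far-field region $\{|z|\ge 2R\}$ and its complement, your far-field estimate (correct as written, and where $k+m>N$ enters) and your one-dimensional radial evaluations (also correct as written) then assemble into the stated bound. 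Alternatively one can retain the ball decomposition but, on each of $A$ and $B$, bound whichever of the two factors happens to be $\gtrsim R$; this is always exactly one of them, but deciding which requires precisely the case analysis on $\max\{|x|,a,b\}$ that your write-up tried to sidestep.
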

By using Lemma \ref{lem-KLLT-integral-est-1}, we are able to derive the decay estimate satisfied by ${\bf u}^{\text{i}}$ in \eqref{sect5-inner-problem-velocity}. 
Before stating our result, we define the norm $\Vert \cdot \Vert_{S,\gamma-2,a+2}$ satisfied by the forcing term $\mathbb F$ as
 \begin{align}\label{sect5-stokes-norm-def}
 \Vert \mathbb F\Vert_{S,\gamma-2,a+2}:={\varepsilon^{-(\gamma-2)}}\sup_{Y\in\mathbb R_{+}^2}\Vert \mathbb F\Vert_{\infty}(1+|Y|^{a+2}),
 \end{align}
 where $\gamma,$ $a>0$ and $\Vert\mathbb F\Vert_{\infty}$ is given by
 \begin{align*}
\Vert \mathbb F\Vert_{\infty}(Y):=\max_{i,j=1,2}\vert F_{ij}(Y)\vert\text{~for~}Y\in\mathbb R_{+}^2.
\end{align*}
 Thanks to (\ref{sect5-stokes-norm-def}), we summarize our result as the following lemma:  
\begin{lemma}\label{sect5-lemmainner-interior}
Let $\gamma\in(0,1)$ and $a\in(0,1)$ be constants.  Assume that $\Vert{\mathbb F}\Vert_{S,\nu-2,a+2}<\infty,$ then the solution $({\bf u^{\text{i}}},P_1)$ of system \eqref{sect5-inner-problem-velocity} satisfies
\begin{align}\label{sect5-linear-theory-decay-estimate-u}
\max_{j=1,2}|{{\bf u}^{\text{i}}}_j(Y)|\lesssim \Vert \mathbb F\Vert_{S,\gamma-2,a+2}\frac{\varepsilon^{\gamma-2}}{1+\vert Y\vert},
\end{align}
and
\begin{align}\label{sect5-linear-theory-decay-estimate-P1}
    \vert P_1(Y)\vert \lesssim \Vert \mathbb F\Vert_{S,\gamma-2,a+1} \frac{\varepsilon^{\gamma-2}}{1+|Y|^2}.
\end{align}
\end{lemma}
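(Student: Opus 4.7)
Following the strategy sketched in \cite{lin2023nematic}, I would reduce the half-space problem \eqref{sect5-inner-problem-velocity} to a whole-space problem via a symmetric reflection that respects the Navier slip boundary conditions, represent the solution through convolution with the two-dimensional Oseen tensor and pressure kernel, and then extract the claimed pointwise decay using the integral estimate in Lemma \ref{lem-KLLT-integral-est-1}.

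\textbf{Step 1: Reflection.} The boundary conditions $\partial_{Y_2}u_1^{\text{i}}=0$ and $u_2^{\text{i}}=0$ on $\{Y_2=0\}$ are preserved by extending $u_1^{\text{i}}$ evenly and $u_2^{\text{i}}$ oddly across $\{Y_2=0\}$, together with an even extension of $P_1$. Consistency with the two scalar momentum equations in \eqref{sect5-inner-problem-velocity} then forces $F_{11}, F_{22}$ to be extended evenly and $F_{12}, F_{21}$ to be extended oddly, producing a tensor $\tilde{\mathbb F}$ on all of $\R^2$. The reflected data inherits the pointwise bound $\|\tilde{\mathbb F}\|_\infty(Y)\le \|\mathbb F\|_{S,\gamma-2,a+2}\,\varepsilon^{\gamma-2}(1+|Y|)^{-(a+2)}$, and the extended system $-\Delta u+\nabla P=-\nabla\cdot\tilde{\mathbb F}$ with $\nabla\cdot u=0$ in $\R^2$ is equivalent to \eqref{sect5-inner-problem-velocity} on $\R^2_+$.

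\textbf{Step 2: Representation and estimate of $u^{\text{i}}$.} Using the 2D Oseen tensor $E_{ij}(x)=\frac{1}{4\pi}(-\delta_{ij}\log|x|+x_ix_j|x|^{-2})$, the decaying whole-space solution satisfies, after integration by parts, $u_i(Y)=\int_{\R^2}\partial_k E_{ij}(Y-Z)\tilde F_{jk}(Z)\,dZ$. Since $|\nabla E(x)|\lesssim |x|^{-1}$, applying Lemma \ref{lem-KLLT-integral-est-1} with $N=2$, $k=1$, $m=a+2$ (so $m>N$ since $a>0$, and $k+m>N$) gives $|u_i(Y)|\lesssim \|\mathbb F\|_{S,\gamma-2,a+2}\,\varepsilon^{\gamma-2}(1+|Y|)^{-1}$, which is \eqref{sect5-linear-theory-decay-estimate-u}. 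For the pressure, taking the divergence of the momentum equation produces $-\Delta P_1=\partial_i\partial_j\tilde F_{ij}$ on $\R^2$, so that $P_1(Y)=\int_{\R^2}\partial_i\partial_j G(Y-Z)\tilde F_{ij}(Z)\,dZ$ with $G$ the 2D Newtonian kernel. Splitting the domain into $\{|Z|\le|Y|/2\}$, $\{|Z|>|Y|/2,\,|Z-Y|\ge|Y|/2\}$ and the local region $\{|Z-Y|<|Y|/2\}$, the first two pieces are controlled directly by Lemma \ref{lem-KLLT-integral-est-1}, while the local piece requires subtracting $\tilde F_{ij}(Y)$ and invoking the zero-mean property $\int_{|x|=r}\partial_i\partial_j G\,dS_x=0$ together with the H\"older regularity of $\mathbb F$ inherited from the smooth ansatz. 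Summing the three contributions yields $|P_1(Y)|\lesssim\|\mathbb F\|_{S,\gamma-2,a+2}\,\varepsilon^{\gamma-2}(1+|Y|)^{-2}$, which is \eqref{sect5-linear-theory-decay-estimate-P1}.

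\textbf{Main obstacle.} The genuinely delicate point is the pressure estimate in Step 3: the kernel $\partial_i\partial_jG$ is borderline of Calder\'on-Zygmund type of size $|x|^{-2}$, which is not locally integrable in 2D, so the naive pointwise bound $\int|\partial_i\partial_j G(Y-Z)||\tilde F_{ij}(Z)|\,dZ$ diverges near $Z=Y$. Extracting the sharp decay $(1+|Y|)^{-2}$ without any spurious logarithmic loss requires careful use of the spherical mean cancellation together with a quantitative modulus-of-continuity control on $\mathbb F$ on balls of radius $|Y|/2$; verifying that the local contribution is genuinely bounded by a multiple of $\|\mathbb F\|_{S,\gamma-2,a+2}\,\varepsilon^{\gamma-2}|Y|^{-2}$, rather than merely $|Y|^{-2}\log|Y|$, is the core technical step of the proof.
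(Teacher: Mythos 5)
Your Step~1 (the reflection ${\bf E}\mathbb F$ with $F_{11},F_{22}$ even and $F_{12},F_{21}$ odd) and your Step~2 velocity estimate (representation via $\partial_k U_{ij} * \tilde F_{jk}$ followed by Lemma~\ref{lem-KLLT-integral-est-1} with $k=1$, $m=a+2$) are exactly the paper's argument, including the verification that the restriction of the whole-space solution to $\R^2_+$ satisfies $\partial_{Y_2}u_1^{\text{i}}=u_2^{\text{i}}=0$.

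For the pressure, however, you take a genuinely different route. You apply the divergence to the momentum equation and represent $P_1$ through the scalar Calder\'on--Zygmund kernel $\partial_i\partial_j G \sim |x|^{-2}$, then split the domain into three regions and invoke the zero spherical mean of $\partial_i\partial_jG$ together with a H\"older modulus of $\mathbb F$ to tame the non--locally--integrable singularity. The paper instead keeps the vector Stokes pressure kernel $Q_j(x)=x_j/(2\omega_2|x|^2)$ (of size $|x|^{-1}$, hence locally integrable) applied to $\nabla\cdot{\bf E}\mathbb F$, integrates by parts only outside a fixed small ball $B_0(\delta)$ to move a derivative onto $Q_j$ there, collects the resulting boundary term, and then applies Lemma~\ref{lem-KLLT-integral-est-1} with $k=2$, $m=2+a$ (which produces a $\log(1+|Y|)$ factor from the diagonal $\delta_{kN}$ term that is absorbed via $\log(1+|Y|)\lesssim(1+|Y|)^{a}$). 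Both routes are correct and both require pointwise information on $\mathbb F$ beyond the norm $\Vert\mathbb F\Vert_{S,\gamma-2,a+2}$ (which only controls the sup): your version needs a quantitative H\"older seminorm bound on $\mathbb F$ on balls $B_{|Y|/2}(Y)$ to make the local PV integral finite and to avoid a log loss, while the paper's version needs the pointwise decay $|\nabla\cdot\mathbb F|\lesssim \e^{\gamma-2}(1+|z|)^{-3-a}$ both for the near-ball term and for the boundary term on $\partial B_0(\delta)$. Your proposal would be complete once you make the H\"older seminorm control on dyadic annuli explicit (say $[\mathbb F]_{C^\alpha(B_{|Y|/2}(Y))}\lesssim\e^{\gamma-2}|Y|^{-2-a-\alpha}$), which does hold for the actual $\mathbb F$ built from the smooth ansatz; without that quantitative input, the ``spherical mean cancellation'' by itself only saves you from divergence, not from the potential $\log|Y|$ you correctly flag. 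The paper's version buys you an argument that never encounters a non-integrable kernel, at the price of keeping track of an extra boundary term and a log that must be absorbed afterward; yours is closer to a textbook CZ argument but requires the sharper regularity hypothesis.
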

\begin{proof}
First of all, we define ${\bf E}\mathbb F$ as an extension of $\mathbb F$ to $\R^2$, which is
\[
{\bf E}{\mathbb F}(Y_1,Y_2) = 
\begin{bmatrix}
F_{11}&-F_{12}\\
-F_{21}&F_{22}
\end{bmatrix}
(Y_1,-Y_2)\ \text{ for }Y_2<0.
\]
Then, we let $ {\bf {\td u^{\text{i}}}}$ be the solution of the following Stokes equation in the whole space $\mathbb R^2$:
\begin{align}\label{sect5-eq-stokes-divF-extension}
\left\{\begin{array}{ll}
\nb \td Q_1 = \De  {\bf {\td u}^{\text{i}}} + \nb\cdot({\bf E}{\mathbb F}), &Y\in\R^2, \\
\nb\cdot {\bf {\td u}^{\text{i}}}= 0,& Y\in\R^2.
\end{array}
\right.
\end{align}
We have from the representation formula that the solution to (\ref{sect5-eq-stokes-divF-extension}) is
\EQ{\label{sect5-eq-solution-formula-stokes-divF}
{\bf {\td u}^{\text{i}}}_i(Y) 
&=\sum_{j=1}^2 \int_{\R^2} U_{ij}(Y-z) (\nb\cdot({\bf E}\mathbb F))_j(z)\, dz
=\sum_{j,k=1}^2 \int_{\R^2} U_{ij}(Y-z) \pd_{z_k}({\bf E}{\mathbb F})_{jk}(z)\, dz,\\
{\td Q}_1(Y) &= \sum_{j=1}^2 \int_{\R^2} Q_j(Y-z) (\nb\cdot({\bf E}{\mathbb F}))_j(z)\, dz
=\sum_{j,k=1}^2 \int_{\R^2} Q_j(Y-z) \pd_{z_k}({\bf E}{\mathbb F})_{jk}(z)\, dz,
}
where $(U_{ij}, Q_j)$, $i,j=1,2$ is the Lorentz tensor given by
\EQN{
U_{ij}(x) &= - \frac1{4\pi} \de_{ij} \log|x| + \frac1{4\om_2}\, \frac{x_ix_j}{|x|^2},\ \
Q_j(x)  = \frac{x_j}{2\om_2|x|^2},
}
and $\omega_2$ denotes the measure of the unit ball in $\mathbb R^2.$

Now, we verify that the restriction ${\bf {\td u}^{\text{i}}}\vert_{\R_{+}^2}$ is the solution to \eqref{sect5-inner-problem-velocity}. It is straightforward to show that the restriction ${\bf {\td u}^{\text{i}}}\vert_{\R_{+}^2}$ satisfy Stokes equation \eqref{sect5-inner-problem-velocity}$_1$.  Next, we check the boundary condition \eqref{sect5-inner-problem-velocity}$_2$.
Indeed, we rewrite \eqref{sect5-eq-solution-formula-stokes-divF}$_1$ as the following form: 
\EQN{
{\bf {\td u}^{\text{i}}}_1(Y) &= \int_{\R^2} \bke{-\frac1{4\pi} \log|Y-z| + \frac1{4\om_2}\, \frac{(Y_1-z_1)^2}{|Y-z|^2}} (\nb\cdot({\bf E}{\mathbb F}))_1(z)\, dz\\
&\quad + \int_{\R^2} \frac1{4\om_2}\, \frac{(Y_1-z_1)(Y_2-z_2)}{|Y-z|^2}\, (\nb\cdot({\bf E}{\mathbb F}))_2(z)\, dz,\\
{\bf {\td u}^{\text{i}}}_2(Y) &= \int_{\R^2} \frac1{4\om_2}\, \frac{(Y_2-z_2)(Y_1-z_1)}{|Y-z|^2}\, (\nb\cdot({\bf E}{\mathbb F}))_1(z)\, dz\\
&\quad + \int_{\R^2} \bke{-\frac1{4\pi} \log|Y-z| + \frac1{4\om_2}\, \frac{(Y_2-z_2)^2}{|Y-z|^2}} (\nb\cdot({\bf E}{\mathbb F}))_2(z)\, dz.
}
Noting that 
\EQN{(\nb\cdot({\bf E}{\mathbb F}))_1(z) =& \pd_{z_1}({\bf E}{\mathbb F})_{11}(z) + \pd_{z_2}({\bf E}{\mathbb F})_{12}(z),\\
(\nb\cdot({\bf E}F))_2(z) = &\pd_{z_1}({\bf E}F)_{21}(z) + \pd_{z_2}({\bf E}F)_{22}(z),}
we find $(\nb\cdot({\bf E}{\mathbb F}))_1$ and $(\nb\cdot({\bf E}{\mathbb F}))_2$ are even and odd in $z_2$, respectively.  Then, we obtain
\EQN{
\pd_{Y_2} {\bf{\td  u}^{\text{i}}}_1(Y_1,0) &= \int_{\R^2} \bke{-\frac1{4\pi}\, \frac{-z_2}{(Y_1-z_1)^2 + z_2^2} - \frac2{4\om_2}\, \frac{(Y_1-z_1)^2(-z_2)}{\bke{ (Y_1-z_1)^2 + z_2^2 }^2 }} (\nb\cdot({\bf E}{\mathbb F}))_1(z)\, dz\\
&\quad + \int_{\R^2} \frac1{4\om_2} \bkt{\frac{(Y_1-z_1)\bke{(Y_1-z_1)^2 + z_2^2} - 2(Y_1-z_1)z_2^2}{\bke{(Y_1-z_1)^2 + z_2^2}^2}} (\nb\cdot({\bf E}{\mathbb F}))_2(z)\, dz
= 0,
}
and
\EQN{
{\bf{\td  u}^{\text{i}}}_2(Y_1,0)  &= \int_{\R^2} \frac1{4\om_2}\, \frac{-z_2(Y_1-z_1)}{(Y_1-z_1)^2 + z_2^2}\, (\nb\cdot({\bf E}{\mathbb F}))_1(z)\, dz\\
&\quad+ \int_{\R^2} \bke{-\frac1{4\pi} \log\sqrt{(Y_1-z_1)^2 + z_2^2} + \frac1{4\om_2}\, \frac{z_2^2}{(Y_1-z_1)^2 + z_2^2}} (\nb\cdot({\bf E}{\mathbb F}))_2(z)\, dz
= 0.
}
It follows that the boundary condition \eqref{sect5-inner-problem-velocity}$_2$ is satisfied, and hence ${\bf u^{\text{i}}} \equiv  {\bf {\td u^{\text{i}}}}|_{\R^2_+}$ solves \eqref{sect5-inner-problem-velocity}.

Our next aim is to show \eqref{sect5-linear-theory-decay-estimate-u} by using \eqref{sect5-eq-solution-formula-stokes-divF}.  In fact, we have from integration by parts formula that for ${\bf{\td u}^{\text{i}}}_i$,
\EQ{\label{sect5-eq-represent-formula-v}
{\bf{\td u}^{\text{i}}}_i(Y) 
= - \sum_{j,k=1}^2 \int_{\R^2} \pd_{z_k}U_{ij}(Y-z) ({\bf E}{\mathbb F})_{jk}(z)\, dz.
}
Noting that $\mathbb F$ satisfies $\Vert{\mathbb F}\Vert_{S,\nu-2,a+2}<\infty,$ we take 
\EQ{\label{sect5-eq-F-decay}
\Vert \mathbb F\Vert_{H,\infty}(z) \lec \frac{\e^{\gamma-2}}{1+|z|^{2+a}}, \ \ a\in(0,1).
}
Upon substituting (\ref{sect5-eq-F-decay}) into \eqref{sect5-eq-represent-formula-v}, one has
\begin{align}\label{sect5-integral-u-useful-1}
\max_{j=1,2}|{\bf{\td u}^{\text{i}}}_j(Y)| 
\lec &\int_{\R^2} \frac1{|Y-z|}\, \frac{\e^{\gamma-2}}{1+|z|^{2+a}}\, dz.
\end{align}
It is necessary to show that the RHS in (\ref{sect5-integral-u-useful-1}) is well-defined.  In fact, we find
\begin{align*}
\int_{\R^2} \frac1{|Y-z|}\, \frac{1}{1+|z|^{2+a}}\, dz=&\int_{\R^2} \frac1{|z|}\, \frac{1}{1+|z-Y|^{2+a}}\, dz\\
=&\int_{B_{0}(\delta)} \frac1{|z|}\, \frac{1}{1+|z-Y|^{2+a}}\, dz +\int_{\R^2\backslash B_{0}(\delta)} \frac1{|z|}\, \frac{1}{1+|z-Y|^{2+a}}\, dz\\
=&\frac{2\pi\delta}{1+|Y|^{2+a}}+\int_{\R^2\backslash B_{0}(\delta)} \frac1{|z|}\, \frac{1}{1+|z-Y|^{2+a}}\, dz.
\end{align*}
Hence,
\begin{align*}
\max_{j=1,2}|{\bf{\td u}^{\text{i}}}_j(Y)| \lec\e^{\gamma-2}\int_{\R^2} \frac1{|Y-z|}\, \frac{1}{1+|z|^{2+a}}\, dz\sim \e^{\gamma-2}\int_{\R^2} \frac{1}{\big(\frac{1}{2}+|z|\big)\big(|z-Y| +\frac{1}{2}\big)^{2+a}}\, dz.
\end{align*}
Then, we invoke Lemma \ref{lem-KLLT-integral-est-1} to find
\[
\max_{j=1,2}|{{\bf{\td u}}^{\text{i}}}_j(Y)| \lec \e^{\gamma-2} \bke{\frac1{(|Y|+1)^{1+a}} + \frac1{|Y|+1}}
\lec \frac{\e^{\gamma-2}}{1+\vert Y\vert}, 
\]
which gives the proof of (\ref{sect5-linear-theory-decay-estimate-u}).

Next, we derive the decay estimate for pressure $\tilde Q_1$.  {Recall that the representation formula for $\td Q_1$ in \eqref{sect5-eq-solution-formula-stokes-divF} is
\begin{align}\label{sect5-td-Q1-Y-formula}
\td Q_1(Y) = \sum_{j=1}^2 \int_{\R^2} Q_j(Y-z) (\nb_Y\cdot({\bf E}F))_j(z)\, dz,
\end{align}
where $|Q_j(x)| \lec |x|^{-1}$.  According to the assumption that $\Vert \mathbb F\Vert_{S,\nu-2,a+2}<\infty,$ we have
\begin{align}\label{sect5-grad-F-Q}
\max_{j=1,2}| (\nb_Y \cdot {\mathbb F})_j(z)| \lec \frac{\e^{\gamma-2}}{1+\vert z\vert^{3+a}}.
\end{align}
Upon substituting (\ref{sect5-grad-F-Q}) into (\ref{sect5-td-Q1-Y-formula}), one obtains 
\EQN{
|\td Q_1(Y)| \lec \int_{\R^2} \frac1{|Y-z|}\, \frac{\e^{\gamma-2}}{1+|z|^{3+a}}\, dz.}
Similarly, we have 
\EQN{\int_{\R^2} \frac1{|Y-z|}\, \frac{1}{1+|z|^{3+a}}\, dz=&\int_{\R^2} \frac1{|z|}\, \frac{1}{1+|z-Y|^{3+a}}\, dz\\
=&\int_{B_0(\delta)} \frac1{|z|}\, \frac{1}{1+|z-Y|^{3+a}}\, dz+\int_{\R^2\backslash B_0(\delta)} \frac1{|z|}\, \frac{1}{1+|z-Y|^{3+a}}\, dz\\
=&\frac{2\pi\delta}{1+|Y|^{3+a}}+\int_{\R^2\backslash B_0(\delta)} \frac1{|z|}\, \frac{1}{1+|z-Y|^{3+a}}\, dz,}
where $\delta>0$ is a small but fixed number.  Thus,
\EQN{
|\td Q_1(Y)| \lec \int_{\R^2} \frac1{|Y-z|}\, \frac{\e^{\gamma-2}}{1+|z|^{3+a}}\, dz
\sim \e^{\gamma-2}\int_{\R^2} \frac{1}{\big(\frac{1}{2}+|z|\big)\big(|z-Y| + \frac{1}{2}\big)^{3+a}}\, dz.
}
By using Lemma \ref{lem-KLLT-integral-est-1}, we further get
\[
|\td Q_1(Y)| \lec \e^{\gamma-2} \bke{\frac1{(|Y|+1)^{2+a}} + \frac1{|Y|+1}}
\lec \frac{\e^{\gamma-2}}{1+|Y|}. 
\]
}

Now, we derive the decay estimate of $\td Q_1$; however, by noting (\ref{sect5-linear-theory-decay-estimate-u}),  the algebraic decay rate satisfied by $\td Q_1$ is expected to be $2$.  To show this, we integrate the solution formula \eqref{sect5-td-Q1-Y-formula} of $\td Q_1$ by parts to get
\EQ{\label{sect5-eq-represent-formula-q}
\td Q_1(Y) 
=& \sum_{j,k=1}^2 \int_{\R^2} Q_j(Y-z) \pd_{z_k}({\bf E}{\mathbb F})_{jk}(z)\, dz\\
=&-\sum_{j,k=1}^2 \int_{\R^2} Q_j(z) \pd_{z_k}[({\bf E}{\mathbb F})_{jk}(Y-z)]\, dz\\
=&-\sum_{j,k=1}^2 \int_{B_0(\delta)} Q_j(z) \pd_{z_k}[({\bf E}{\mathbb F})_{jk}(Y-z)]\, dz-\sum_{j,k=1}^2 \int_{\R^2\backslash{B_0(\delta)}} Q_j(z) \pd_{z_k}[({\bf E}{\mathbb F})_{jk}(Y-z)]\, dz\\
=&-\sum_{j,k=1}^2 \int_{B_0(\delta)} Q_j(z) \pd_{z_k}[({\bf E}{\mathbb F})_{jk}(Y-z)]\, dz\\
&+\sum_{j,k=1}^2 \int_{\partial B_{0}(\delta)} Q_j(z) ({\bf E}{\mathbb F})_{jk}(Y-z)\cdot {\boldsymbol{\nu}}_{z}\, dS_z+\sum_{j,k=1}^2 \int_{\R^2\backslash B_0(\delta)} \pd_{z_k} Q_j(z) ({\bf E}{\mathbb F})_{jk}(Y-z)\, dz,
}
where ${\boldsymbol{\nu}}_z$ denotes the unit outer normal of $B_{0}(\delta).$  Noting that $| Q_j(x)|\lec|x|^{-1}$, we find
\begin{align}\label{sect5-term1-q-formula}
\Big\vert -\sum_{j,k=1}^2 \int_{B_0(\delta)} Q_j(z) \pd_{z_k}[({\bf E}{\mathbb F})_{jk}(Y-z)]\, dz\Big\vert \lec& \int_{B_{0}(\delta)} \frac1{|z|}\, \frac{\e^{\gamma-2}}{1+|z-Y|^{3+a}}\, dz\nonumber\\
=&\e^{\gamma-2}\frac{2\pi\delta}{1+|Y|^{3+a}},
\end{align}
and
\begin{align}\label{sect5-term2-q-formula}
\Big\vert\sum_{j,k=1}^2 \int_{\partial B_{0}(\delta)} Q_j(z) ({\bf E}{\mathbb F})_{jk}(Y-z)\cdot {\boldsymbol{\nu}}_{z}\, dS_z\Big\vert\lec &\int_{\partial B_0(\delta)}\frac{1}{|z|}\frac{\e^{\gamma-2}}{1+|z|^{2+a}}\, dz\nonumber\\
=&\frac{2\pi \e^{\gamma-2}}{1+\delta^{2+a}}.
\end{align}
Moreover, in light of $|\nb Q_j(x)|\lec|x|^{-2}$, combining (\ref{sect5-term1-q-formula}) and (\ref{sect5-term2-q-formula}), we apply \eqref{sect5-eq-F-decay} on \eqref{sect5-eq-represent-formula-q} and deduce that
\EQN{
|\td Q_1(Y)| \lec &\e^{\gamma-2}\frac{2\pi\delta}{1+|Y|^{3+a}}+\frac{2\pi \e^{\gamma-2}}{1+\delta^{2+a}}\\
&+ \int_{\R^2\backslash B_{0}(\delta)} \frac1{|z|^2}\, \frac{\e^{\gamma-2}}{1+|z-Y|^{2+a}}\, dz\\
\sim &\e^{\gamma-2}\int_{\R^2} \frac{1}{\big(\frac{1}{2}+|z|\big)^2\big(|z-Y|+\frac{1}{2}\big)^{2+a}}\, dz.
}
Thanks to Lemma \ref{lem-KLLT-integral-est-1}, one finds
\EQ{\label{sect5-tdQ1-est}
|\td Q_1(Y)| &\lec\e^{\gamma-2}\bkt{\frac1{(|Y|+1)^{2+a}}\bke{1+\log\bke{1+|Y|}} + \frac1{(|Y|+1)^2}}\\
&\lec \e^{\gamma-2} \bke{\frac{|Y|^{\delta}}{(|Y|+1)^{2+a}}\, + \frac1{(|Y|+1)^2}},\quad \de>0,\\
&\lec \frac{\e^{\gamma-2}}{\bke{1+|Y|}^{2+a-\de}} + \frac{\e^{\gamma-2}}{\bke{1+|Y|}^2}.
}
Then, we simply take $\de=a$ in \eqref{sect5-tdQ1-est} to obtain
\[
|\td Q_1(Y)| \lec \frac{\e^{\gamma-2}}{\bke{1+|Y|}^2},
\]
which completes the proof of \eqref{sect5-linear-theory-decay-estimate-P1}.
\end{proof}
Lemma \ref{sect5-lemmainner-interior} establishes the desired inner estimate of the velocity field $\bf u$.  Next, we consider the outer problem (\ref{sect5-outer-problem-velocity}) and hope to find the solution via the $W^{2,p}$ estimate of Stokes equation.  
To this end, we must estimate $\nabla_x {\bf{u}}^{\text{i}}$ and readiy have the following lemma.
\begin{lemma}\label{sect5-lemma53-inner-estimate-for-outer}
Under the assumptions of Lemma \ref{sect5-lemmainner-interior}, the following estimates hold:
\begin{align}\label{sect5-linear-theory-decay-estimate-gradu}
|\nabla_x{{\bf u}^{\text{i}}}(Y)|\lesssim \Vert \mathbb F\Vert_{S,\gamma-2,a+2}\frac{\varepsilon^{\gamma-3}}{(1+\vert Y\vert)^2},
\end{align}
and
\begin{align}\label{sect5-linear-theory-decay-estimate-P1-W2p-estimate}
\Vert\nabla_x P_1\Vert_{L^p(B_{2\delta}(\xi)\backslash B_{\delta}(\xi))}\lesssim \Vert \nabla_x\cdot\mathbb F\Vert_{L^p(B_{2\delta}(\xi)\backslash B_{\delta}(\xi))}.
\end{align}
\end{lemma}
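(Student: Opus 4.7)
The plan is to prove the two estimates separately, both by differentiating the Oseen-tensor representation formulas established in the proof of Lemma \ref{sect5-lemmainner-interior}.

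For the velocity gradient estimate \eqref{sect5-linear-theory-decay-estimate-gradu}, I begin with
\[
{\bf{\td u}^{\text{i}}}_i(Y) = -\sum_{j,k=1}^2 \int_{\R^2} \pd_{z_k}U_{ij}(Y-z)\, ({\bf E}\mathbb F)_{jk}(z)\, dz
\]
and differentiate once in $Y$. Moving the new derivative onto $\mathbb F$ by integration by parts produces the kernel $|\nb U(Y-z)| \lesssim |Y-z|^{-1}$ paired with $|\nb({\bf E}\mathbb F)| \lesssim \e^{\gamma-2}(1+|z|)^{-3-a}$, but Lemma \ref{lem-KLLT-integral-est-1} then only yields $(1+|Y|)^{-1}$ decay, which is insufficient. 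To obtain the sharp rate, I will leave both derivatives on the kernel and interpret the resulting singular integral via the decomposition $\R^2 = \{|z-Y|\le 1\} \cup \{|z| \le |Y|/2\} \cup \{\text{rest}\}$. On the near-diagonal piece, subtracting and adding $({\bf E}\mathbb F)_{jk}(Y)$ uses the H\"older continuity of $\mathbb F$ together with the radial cancellation $\int_{|z-Y|\le 1} \nb_Y\pd_{z_k} U_{ij}(Y-z)\, dz = 0$; on the other two pieces the integrand is non-singular and Lemma \ref{lem-KLLT-integral-est-1} applies with $k = 2 = N$, producing a logarithmic correction which is absorbed by the small-$\delta$ trick used in the proof of \eqref{sect5-linear-theory-decay-estimate-P1}. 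The resulting pointwise bound $|\nb_Y {\bf{\td u}^{\text{i}}}(Y)| \lesssim \norm{\mathbb F}_{S,\gamma-2,a+2}\, \e^{\gamma-2}(1+|Y|)^{-2}$, combined with the rescaling $\nb_x = \e^{-1}\nb_Y$, yields the claimed factor $\e^{\gamma-3}$.

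For the pressure bound \eqref{sect5-linear-theory-decay-estimate-P1-W2p-estimate}, I invoke the momentum equation $\nb_x P_1 = \De_x {\bf u}^{\text{i}} + \nb_x\cdot\mathbb F$ from \eqref{sect5-inner-problem-velocity} to reduce to estimating $\norm{\De_x {\bf u}^{\text{i}}}_{L^p(B_{2\delta}(\xi)\setminus B_\delta(\xi))}$. Since the annulus corresponds to $|Y|\in[\delta/\e, 2\delta/\e]$, the integrand is smooth and far from the singularity; applying the same differentiation-and-decomposition procedure to the representation formula for $\De {\bf{\td u}^{\text{i}}}$ produces the pointwise bound $|\De_Y {\bf{\td u}^{\text{i}}}(Y)| \lesssim \norm{\mathbb F}_{S,\gamma-2,a+2}\,\e^{\gamma-2}(1+|Y|)^{-3-a}$. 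Integrating the $p$-th power over the annulus and converting back to $x$-coordinates via $\De_x = \e^{-2}\De_Y$ shows this contribution is absorbed by the $\nb_x\cdot\mathbb F$ term, since both terms scale identically in $\e$ and $\delta$ on the annulus.

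The main obstacle is Part 1: recovering the sharp $(1+|Y|)^{-2}$ decay in two dimensions. The second derivative of the Oseen tensor $\nb^2 U$ sits at the critical Calder\'on--Zygmund scaling, and Lemma \ref{lem-KLLT-integral-est-1} is precisely at the borderline case $k=N=2$ where only logarithmic control is immediate. The combination of the radial cancellation on the near-diagonal piece with the small-$\delta$ trick, analogous to the argument preceding \eqref{sect5-linear-theory-decay-estimate-P1}, is the technical heart of converting the borderline bound into polynomial decay.
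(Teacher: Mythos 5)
Your proof takes a genuinely different route from the paper on both halves of the lemma, and the divergence is larger for the pressure estimate than for the velocity one.

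For \eqref{sect5-linear-theory-decay-estimate-gradu}, your Calder\'on--Zygmund decomposition (split near $z=Y$, exploit the mean-zero property of $\nb^2 U$ on spheres, pay with the H\"older modulus of $\mathbb F$) is a standard and correct way to handle the borderline $k=N=2$ kernel. The paper does something conceptually similar but operationally simpler: starting from \eqref{sect5-pd-tdu-estimate-step1} (one derivative on $U$, one on $\mathbb F$), it changes variables to place the kernel singularity at $z=0$, splits $\R^2=B_0(\delta_1)\cup B_0(\delta_1)^c$, keeps $\pd_{z_k}({\bf E}\mathbb F)$ on the small ball where $|\nb U|\lesssim|z|^{-1}$ is integrable, and integrates by parts only on the complement, where $\nb^2 U$ is smooth, so no cancellation argument is needed. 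Both routes implicitly use more about $\mathbb F$ than the weighted $L^\infty$ norm $\norm{\mathbb F}_{S,\gamma-2,a+2}$ controls (you need H\"older continuity; the paper needs a pointwise bound on $\nb\mathbb F$ with one extra power of decay), so neither is more self-contained, but the paper's version avoids having to verify the spherical mean-zero property of $\nb^2 U$ and does not require a three-region decomposition.

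For \eqref{sect5-linear-theory-decay-estimate-P1-W2p-estimate}, your approach diverges substantially and is more fragile than the paper's. The paper simply invokes the $W^{2,p}$ theory of Theorem \ref{thm-4.1-AR}: the annular $L^p$ estimate for $\nb P_1$ is a direct consequence of $\norm{P_1}_{W^{1,p}}\lesssim\norm{\nb\cdot\mathbb F}_{L^p}$, with no pointwise information required. Your plan of extracting a pointwise bound on $\De{\bf u}^{\text i}$ by differentiating the Oseen representation a second time runs into trouble: putting all three derivatives on the kernel gives $\nb^3 U\sim|x|^{-3}$, which is hypersingular in $2$D, while splitting the derivatives forces you to control $\nb^2\mathbb F$ (or a H\"older modulus for $\nb\mathbb F$), neither of which is supplied by the hypotheses. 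Moreover the scaling claim that the two terms in the momentum balance match on the annulus needs care: the inner system \eqref{sect5-inner-problem-velocity} is posed in $Y$-variables, so converting to $x$-derivatives introduces a relative power of $\e$ between $\De{\bf u}^{\text i}$ and $\nb\cdot\mathbb F$ that your sketch glosses over. The Sobolev-space argument sidesteps all of this and is the intended proof.
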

\begin{proof}
Noting the representation formula of ${\bf u}^{\text{i}}$ is \eqref{sect5-eq-solution-formula-stokes-divF}$_1$, we have
\begin{align}\label{sect5-pd-tdu-estimate-step1}
\partial_l{\bf{\td u}^{\text{i}}}_m(Y) 
=&\sum_{j,k=1}^2 \int_{\R^2} \pd_{z_l}U_{mj}(Y-z) \pd_{z_k}({\bf E}{\mathbb F})_{jk}(z)\, dz.
\end{align}
Since $\max\limits_{i,j=1,2}|\nabla^2 U_{ij}(x)|\lec  |x|^{-2}$ and $\mathbb F$ satisfies 
$$\Vert \mathbb F\Vert_{H,\infty}(z) \lec \frac{\e^{\gamma-2}}{1+|z|^{2+a}},$$
one applies the integration by parts on \eqref{sect5-pd-tdu-estimate-step1} to get
\begin{align*}
|\pd_{l} {\bf {\tilde u}^{\text{i}}}_m(Y)|\lec &\int_{B_0(\delta_1)}\frac{1}{|z|}\,  \frac{\e^{\gamma-2}}{1+|z-Y|^{3+a}}\, dz+\int_{\R^2\backslash B_0(\delta_1)}\frac{1}{|z|^2}\,  \frac{\e^{\gamma-2}}{1+|z-Y|^{2+a}}\, dz\\
\sim & \e^{\gamma-2}\int_{\mathbb R^2}\frac{1}{\big(|z|+\frac{1}{2}\big)^2\big(|z-Y|+\frac{1}{2}\big)^{2+a}}\, dz.
\end{align*}
By using Lemma \ref{lem-KLLT-integral-est-1}, we obtain 
$$\int_{\mathbb R^2}\frac{1}{\big(|z|+\frac{1}{2}\big)^2\big(|z-Y|+\frac{1}{2}\big)^{2+a}}\, dz\lec \frac{1}{(|Y|+1)^{2+a}}+\frac{1}{(|Y|+1)^2}.$$
Moreover, recall that $y=\frac{x-\xi}{\e}$, one completes the proof of \eqref{sect5-linear-theory-decay-estimate-gradu}.  

For pressure $\tilde P_1$, invoking the $W^{2,p}$ theory shown in Theorem \ref{thm-4.1-AR}, we readily derive the estimate \eqref{sect5-linear-theory-decay-estimate-P1-W2p-estimate}.  
\end{proof}
With the help of Lemma \ref{sect5-lemma53-inner-estimate-for-outer}, we now focus on the outer part and formulate the outer linear theory satisfied by ${\bf u}^\text{o}$.  To begin with, noting the existence of non-trivial kernel discussed in (\ref{sect5-solvability-conditions-eq}) and (\ref{sect5-solvability-condition-divergence}), we must impose the orthogonality conditions on system \eqref{sect5-outer-problem-velocity} and modify the problem as
\begin{align}\label{sect5-outer-problem-velocity-plus-beta-1}
\left\{\begin{array}{ll}
\nabla_x (\tilde P-\chi P_1)=\Delta_x{\bf u}^{\text{o}}+\boldsymbol{{\tilde {f}}}+d_1 \boldsymbol{\be},&x\in\Omega,\\
\nabla_x\cdot {\bf u}^{\text{o}}=-\e\nabla_x \chi\cdot {\bf u}^{\text{i}},&x\in\Omega,\\
{\bf u}^{\text{o}}\cdot \boldsymbol{\nu}=0,&x\in\partial\Omega,\\
(\mathbb S_x({\bf u}^{\text{o}})\cdot\boldsymbol{\nu})_{\boldsymbol{\tau}}=\tilde h\boldsymbol{\tau},&x\in\partial\Omega,\\
\int_{\Omega}{\bf u}^\text{o}\cdot{\boldsymbol{\beta}}\, dx=0,~~{\bf u}^\text{o}\in {\boldsymbol{H}}^2(\Omega),
\end{array}
\right.
\end{align}
where $\boldsymbol{\be} = c{\bf x}^\perp + {\bf b}$ with constant $c\neq0$ and constant vector ${\bf b}$, and
\EQ{\label{sect5-tildef-tildeh-def}
\boldsymbol{{\tilde {f}}}:=&(1-\chi)\nabla_x\cdot \mathbb F+2\e\nabla_x\chi\cdot\nabla_x {\bf u}^{\text{i}}+\e(\Delta_x \chi){\bf u}^{\text{i}}- P_1\nabla_x \chi,\\
\tilde h\boldsymbol{\tau}:=&-(\mathbb S_x(\e{\bf u}^{\text{i}}\chi)\cdot\boldsymbol{\nu})_{\boldsymbol{\tau}}.
}
Here $d_1$ is determined to satisfy
$$\int_{\Omega} \boldsymbol{\tilde f }\cdot\boldsymbol{\beta} \, dx+d_1\int_{\Omega} \boldsymbol{\beta}\cdot \boldsymbol{\beta}\, dx+2\int_{\partial\Omega} \tilde h \boldsymbol{\tau}\cdot\boldsymbol{ \beta} \,dS=0.
$$
Then, considering system (\ref{sect5-outer-problem-velocity-plus-beta-1}), we have the following results:
\begin{lemma}\label{sect5-lemma-outer-velocity-prop}
Assume that $\Vert\mathbb F\Vert_{S,\nu-2,a+1}<\infty,$ then we have system (\ref{sect5-outer-problem-velocity-plus-beta-1}) admits the solution $({\bf u}^{\text{o}},\tilde P)$ satisfying
\begin{align}\label{sect5-outer-sol-eq-conlusion-1}
\Vert {\bf u}^{\text{o}}\Vert_{W^{2,p}(\Omega)}+\Vert \td P-\chi P_1\Vert_{W^{1,p}(\Omega)}\lesssim \Vert \mathbb F\Vert_{S,\gamma-2,2+a},\ \ p>2.
\end{align}
Moreover, we have the following H{\"o}lder estimate holds:
\begin{align}\label{sect5-outer-sol-eq-conlusion-2}
\Vert {\bf u}^{\text{o}}\Vert_{C^{\alpha}(\Omega)}\lesssim \Vert \mathbb F\Vert_{S,\gamma-2,2+a},
\end{align}
where $\alpha\in(0,1).$
\end{lemma}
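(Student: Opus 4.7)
The strategy is to recognize that \eqref{sect5-outer-problem-velocity-plus-beta-1} is precisely the generalized inhomogeneous Stokes system with Navier slip boundary conditions treated by Theorem \ref{thm-global-regularity-NS}, so the plan is to verify the hypotheses of that theorem, estimate the source data, and then invoke Sobolev embedding for the H\"older estimate. Concretely, I would identify, in the language of \eqref{sect0-abstract-stokes-system}, ${\bf f}=\boldsymbol{\tilde f}+d_1\boldsymbol{\beta}$, $\eta=-\e\nabla_x\chi\cdot{\bf u}^{\text{i}}$, $g=0$, and $h=\tilde h$, and then verify the compatibility conditions \eqref{eq-2.14-AR} and \eqref{eq-3.15-AR}. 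Since $g=0$, condition \eqref{eq-2.14-AR} reduces to $\int_\Omega\eta\,dx=0$, which follows by integration by parts using $\nabla_x\cdot{\bf u}^{\text{i}}=0$ together with the fact that the boundary trace $\chi\,{\bf u}^{\text{i}}\cdot\boldsymbol{\nu}$ vanishes on $\partial\Omega$ (up to a curvature-induced defect that I would quantify using the expansion $\rho(X_1)=\tfrac12\rho''(0)X_1^2+O(X_1^3)$ of Section~\ref{sect3}). Condition \eqref{eq-3.15-AR} is enforced by construction through the Lagrange multiplier $d_1$ whose defining relation is exactly \eqref{eq-3.15-AR} specialised to the present source.

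Next, I would estimate each of the source terms. The piece $(1-\chi)\nabla_x\cdot\mathbb F$ in $\boldsymbol{\tilde f}$ is supported away from $\xi$, so the weighted $L^\infty$ bound encoded in the norm \eqref{sect5-stokes-norm-def} gives an $L^p(\Omega)$ bound $\lesssim\|\mathbb F\|_{S,\gamma-2,a+2}$. The remaining pieces $2\e\nabla_x\chi\cdot\nabla_x{\bf u}^{\text{i}}$, $\e(\Delta_x\chi){\bf u}^{\text{i}}$ and $P_1\nabla_x\chi$ of $\boldsymbol{\tilde f}$, as well as $\eta$ itself, are all supported in the annular region $\{\delta\le|x-\xi|\le 2\delta\}$ where $|Y|\sim\delta/\e$; on this region the pointwise decays from Lemma \ref{sect5-lemmainner-interior} and Lemma \ref{sect5-lemma53-inner-estimate-for-outer}, namely $|{\bf u}^{\text{i}}|\lesssim\e^{\gamma-1}/\delta$, $|\nabla_x{\bf u}^{\text{i}}|\lesssim\e^{\gamma-1}/\delta^2$ and $|P_1|\lesssim\e^{\gamma}/\delta^2$, combined with $|\nabla_x\chi|\lesssim\delta^{-1}$, $|\Delta_x\chi|\lesssim\delta^{-2}$ and area $\lesssim\delta^2$, yield $L^p$ control. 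The boundary datum $\tilde h$ is handled analogously on $\partial\Omega\cap\{|x-\xi|\le 2\delta\}$ via the trace theorem and \eqref{sect5-linear-theory-decay-estimate-gradu}. The Lagrange multiplier $d_1$ is bounded by these same quantities through its explicit defining formula.

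With all data estimated, Theorem \ref{thm-global-regularity-NS} delivers a solution $({\bf u}^{\text{o}},\tilde P-\chi P_1)\in({\bf W}^{2,p}(\Omega)\times W^{1,p}(\Omega))/\boldsymbol{\mathcal N}(\Omega)$ satisfying \eqref{sect5-outer-sol-eq-conlusion-1}, and the condition $\int_\Omega{\bf u}^{\text{o}}\cdot\boldsymbol{\beta}\,dx=0$ together with a normalisation of $\tilde P-\chi P_1$ selects a unique representative. Finally, the H\"older estimate \eqref{sect5-outer-sol-eq-conlusion-2} follows from Morrey's embedding ${\bf W}^{2,p}(\Omega)\hookrightarrow{\bf C}^{\alpha}(\Omega)$ with $\alpha=1-2/p\in(0,1)$ for $p>2$.

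The main obstacle I anticipate is the verification of the divergence-compatibility condition in the presence of boundary curvature: because $\xi\in\partial\Omega$, the cutoff $\chi$ is non-trivial on a piece of $\partial\Omega$ and the inner solution ${\bf u}^{\text{i}}$ satisfies the Navier condition only on the flattened model boundary $\partial\R^2_+$, so $\int_{\partial\Omega}\chi\,{\bf u}^{\text{i}}\cdot\boldsymbol{\nu}\,dS$ is generally only $o(1)$ rather than identically zero. The plan is to treat this curvature error quantitatively, using the expansions \eqref{sect3-rhoprime-Y1}--\eqref{sect3-rhodouble-Y1}, and absorb it into the source $\boldsymbol{\tilde f}$ (or, equivalently, into a further correction of the ansatz); a parallel care is required for $\tilde h$, where the decay of $\nabla_x{\bf u}^{\text{i}}$ on the trace is the crucial ingredient preventing any loss of $\e$-powers.
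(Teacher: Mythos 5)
Your proposal follows the same overall structure as the paper's proof: identify \eqref{sect5-outer-problem-velocity-plus-beta-1} as an instance of the abstract Stokes system \eqref{sect0-abstract-stokes-system}, verify the compatibility conditions \eqref{eq-2.14-AR} and \eqref{eq-3.15-AR} (the latter enforced by the definition of $d_1$), control the source data using the decay bounds from Lemma \ref{sect5-lemmainner-interior} and Lemma \ref{sect5-lemma53-inner-estimate-for-outer} together with the support properties of $\chi$, apply the global $W^{2,p}$ estimate of Theorem \ref{thm-global-regularity-NS}, and conclude the H\"older bound by Morrey's embedding. Your pointwise estimates $|{\bf u}^{\text{i}}|\lesssim\e^{\gamma-1}/\delta$, $|\nabla_x{\bf u}^{\text{i}}|\lesssim\e^{\gamma-1}/\delta^2$, $|P_1|\lesssim\e^{\gamma}/\delta^2$ in the transition annulus agree with the bounds the paper uses to establish \eqref{sect5-outer-sol-eq-1}--\eqref{sect5-outer-sol-eq-6}, and your treatment of $d_1$ via its explicit formula and H\"older and trace matches \eqref{sect5-d1estimate-before}. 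So this is in substance the same argument.

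Where you diverge from the paper is in the level of care you devote to the divergence-compatibility condition. The paper simply asserts
\begin{align*}
\int_{\Omega} \nabla_x\chi\cdot {\bf u}^{\text{i}}\, dx=0
\end{align*}
``by the divergence-free property and boundary conditions of ${\bf u}^{\text{i}}$,'' but as you correctly observe, ${\bf u}^{\text{i}}$ is divergence-free and satisfies $u^{\text{i}}_2=0$ only in the flattened $Y$-variables on $\R^2_+$; in the physical $x$-variables one picks up $O(\rho')$ errors in $\nabla_x\cdot{\bf u}^{\text{i}}$ and in ${\bf u}^{\text{i}}\cdot\boldsymbol{\nu}$ along the curved portion of $\partial\Omega$ inside $\supp\chi$, so the divergence theorem gives $\int_\Omega\nabla_x\chi\cdot{\bf u}^{\text{i}}\,dx=o(1)$ rather than exactly $0$. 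Your plan to quantify this defect via \eqref{sect3-rhoprime-Y1}--\eqref{sect3-rhodouble-Y1} and absorb it into the source, or to introduce a further correction of the inner ansatz so that the normal trace vanishes exactly, is the right repair; this is a place where your proposal is more careful than the paper's stated argument. In all other respects the two proofs coincide.
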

\begin{proof}
We shall apply the $W^{2,p}$ theory on system (\ref{sect5-outer-problem-velocity-plus-beta-1}).  Before this, we must verify the compatibility conditions.  It is straightforward to see the first condition given by (\ref{eq-3.15-AR}) is satisfied since we choose $d_1$ such that
\begin{align}\label{sect5-d1-chose-reason}
\int_{\Omega} \boldsymbol{\tilde f }\cdot\boldsymbol{\beta} \, dx+d_1\int_{\Omega} \boldsymbol{\beta}\cdot \boldsymbol{\beta}\, dx+2\int_{\partial\Omega} \tilde h \boldsymbol{\tau}\cdot\boldsymbol{ \beta} \,dS=0.
\end{align}
Next, we claim 
$$\int_{\Omega} \nabla_x\chi\cdot {\bf u}^{\text{i}}\, dx=0, $$
which establishes the second orthogonality condition (\ref{eq-2.14-AR}).  Indeed, we have the fact that ${\bf u}^{\text{i}}$ is divergence-free.  Then in light of ${\bf u}={\bf u}^{\text{i}}\chi+{\bf u}^{\text{o}}$ and the boundary conditions of ${\bf u}^{\text{i}}$, one completes the proof of our claim.

Now, we can use the $W^{2,p}$ estimate (\ref{eq-est-thm4.1}) to find
\EQ{\label{sect5-outer-sol-eqq-before}
\norm{{\bf u}^{\text{o}}}_{{\bf W}^{2,p}(\Om)} + \norm{\tilde P_1-\chi P}_{W^{1,p}(\Om)}
\lec&  \norm{(1-\chi)\nabla_x\cdot \mathbb F+2\e\nabla_x\chi\cdot\nabla_x {\bf u}^{\text{i}}+\e(\Delta_x \chi){\bf u}^{\text{i}}- P_1\nabla_x \chi}_{{\bf L}^p(\Om)} \\
&+  \norm{\e\nabla_x \chi\cdot {\bf u}^{\text{i}}}_{W^{1,p}(\Om)} + \norm{(\mathbb S_x(\e{\bf u}^{\text{i}}\chi)\cdot\boldsymbol{\nu})_{\boldsymbol{\tau}}}_{W^{1-\frac1p,p}(\pd\Om)} \\
&+\vert d_1\vert\Vert \boldsymbol{\beta}\Vert_{L^p(\Om)}.
}
For the formulation of the outer estimate, it is necessary to study $d_1$.  Thanks to the cut-off function $\chi$, we have from (\ref{sect5-d1-chose-reason}) that
\begin{align*}
d_1=&-\Big({\int_{\Omega}\boldsymbol{\tilde f}\cdot \boldsymbol{\beta}\, dx+2\int_{\partial\Omega}\tilde h\boldsymbol{\tau}\cdot\boldsymbol{\beta}\, dS}\Big)\Big/\int_{\Omega}\boldsymbol{\beta}\cdot \boldsymbol{\beta}\, dx\\
=&-\Big({\int_{\Omega\backslash B_{\delta}(\xi)}\boldsymbol{\tilde f}\cdot \boldsymbol{\beta} dx+2\int_{\partial\Omega\backslash B_{\delta}(\xi)}\tilde h\boldsymbol{\tau}\cdot\boldsymbol{\beta}dS}\Big)\Big/\int_{\Omega}\boldsymbol{\beta}\cdot \boldsymbol{\beta}\, dx.
\end{align*}
Then by H\"{o}lder's inequality and Trace theorem, we estimate $d_1$ to get
\begin{align}\label{sect5-d1estimate-before}
|d_1|\lec \Vert \boldsymbol{\tilde f}\Vert_{L^p(\Omega)}+\Vert \tilde h\Vert_{W^{1-\frac{1}{p},p}(\partial\Omega)},
\end{align}
where $\boldsymbol{{\tilde {f}}}$ and $\tilde h$ are given by \eqref{sect5-tildef-tildeh-def}. 
 With the help of (\ref{sect5-d1estimate-before}), we utilize the boundedness of $\Vert\boldsymbol{\beta}\Vert_{L^p(\Omega)}$ and check (\ref{sect5-outer-sol-eqq-before}) to obtain
\EQ{\label{sect5-outer-sol-eqq-then}
\norm{{\bf u}^{\text{o}}}_{{\bf W}^{2,p}(\Om)} + \norm{\tilde P_1-\chi P}_{W^{1,p}(\Om)}
\lec&  \norm{(1-\chi)\nabla_x\cdot \mathbb F+2\e\nabla_x\chi\cdot\nabla_x {\bf u}^{\text{i}}+\e(\Delta_x \chi){\bf u}^{\text{i}}- P_1\nabla_x \chi}_{{\bf L}^p(\Om)} \\
&+  \norm{\e\nabla_x \chi\cdot {\bf u}^{\text{i}}}_{W^{1,p}(\Om)} + \norm{(\mathbb S_x(\e{\bf u}^{\text{i}}\chi)\cdot\boldsymbol{\nu})_{\boldsymbol{\tau}}}_{W^{1-\frac1p,p}(\pd\Om)}.
}
Noting the definition of the cut-off function $\chi$, we further have
\begin{align}\label{sect5-outer-sol-eq-1}
|(1-\chi)\nabla_x\cdot \mathbb F|\lec \Vert \mathbb F\Vert_{S,\gamma-2,2+a}\e^{a+\gamma},
\end{align}
\begin{align}\label{sect5-outer-sol-eq-2}
|2\e\nabla_x\chi\cdot\nabla_x {\bf u}^{\text{i}}+\e(\Delta_x \chi){\bf u}^{\text{i}}- P_1\nabla_x \chi| \lec \Vert \mathbb F\Vert_{S,\gamma-2,2+a}\e^{\gamma},
\end{align}
\begin{align}\label{sect5-outer-sol-eq-3}
|\e\nabla_x \chi\cdot {\bf u}^{\text{i}}| \lec \Vert \mathbb F\Vert_{S,\gamma-2,2+a}\e^{\gamma}
\end{align}
and
\begin{align}\label{sect5-outer-sol-eq-4}
|\e\nabla_x \chi\cdot \nabla_x{\bf u}^{\text{i}}| \lec \Vert \mathbb F\Vert_{S,\gamma-2,2+a}\e^{\gamma}.
\end{align}
In particular, since $\partial_{Y_2} { u}^{\text{i}}_1= { u}^{\text{i}}_2=0$ on $\pd \R_{+}^2,$ we get
\begin{align}\label{sect5-outer-sol-eq-5}
\vert(\mathbb S_x(\e{\bf u}^{\text{i}}\chi)\cdot\boldsymbol{\nu})_{\boldsymbol{\tau}}\vert \lec |\e \nabla_x \chi\cdot{\bf u}^{\text{i}}|+|\e^2 {\bf u}^{\text{i}}|\lec \e^{\gamma} \Vert \mathbb F\Vert_{S,\gamma-2,2+a}
\end{align}
and
\begin{align}\label{sect5-outer-sol-eq-6}
\big\vert\nabla_x[ (\mathbb S_x(\e{\bf u}^{\text{i}}\chi)\cdot\boldsymbol{\nu})_{\boldsymbol{\tau}}]\big\vert \lec& |\e \nabla_x \chi\cdot\nabla_x{\bf u}^{\text{i}}|+|\e \Delta_x \chi{\bf u}^{\text{i}}|+|\e^2 \nabla_x{\bf u}^{\text{i}}|\nonumber\\
\lec &\e^{\gamma} \Vert \mathbb F\Vert_{S,\gamma-2,2+a}.
\end{align}

Upon collecting (\ref{sect5-outer-sol-eq-1}), (\ref{sect5-outer-sol-eq-2}), (\ref{sect5-outer-sol-eq-3}), (\ref{sect5-outer-sol-eq-4}), (\ref{sect5-outer-sol-eq-5}) and (\ref{sect5-outer-sol-eq-6}), one finds from (\ref{sect5-outer-sol-eqq-then}) that \eqref{sect5-outer-sol-eq-conlusion-1} holds. 
 Moreover, noting that $p$ is assumed to satisfy $p>2$, we use Sobolev embedding theorem to readily obtain \eqref{sect5-outer-sol-eq-conlusion-2}. 
\end{proof}
For the velocity field ${\bf u}$, by using Lemma \ref{sect5-lemmainner-interior} and Lemma \ref{sect5-lemma-outer-velocity-prop}, we have the following proposition:
\begin{proposition}\label{sect5-inner-outer-combine-prop}
Assume that $\Vert \mathbb F\Vert_{S,\gamma-1,2+a}<\infty$ with $0<\gamma, a<1.$  Then there exists a solution $({\bf u},\tilde P)$ to system \eqref{sect5-bfu-decompose-before-formulate} satisfying
\begin{itemize}
    \item for $ x\in B_{\delta}(\xi):=\{x||x-\xi|<\delta\}$ with $\delta>0$ being some sufficiently small number,
    \begin{align}\label{sect5-prop55-conclu1}
    |{\bf u}(x)|\lesssim\Vert \mathbb F\Vert_{S,\gamma-2,a+2}\frac{\e^{\gamma-1}}{1+\big\vert\frac{x-\xi}{\e}\big\vert}
    \end{align}
    and
    \begin{align*}
    |{\tilde P}(x)|\lec\Vert \mathbb F\Vert_{S,\gamma-2,a+2}\frac{\e^{\gamma-2}}{\bke{1+|Y|}^2};
    \end{align*}
    \item for $x\in\Omega \backslash B_{\delta}(\xi)$,
    \begin{align*}
\Vert {\bf u}\Vert_{W^{2,p}(\Omega\backslash B_{\delta}(\xi))}+\Vert \nabla\td P\Vert_{L^{p}(\Omega\backslash B_{\delta}(\xi))}\lesssim \Vert \mathbb F\Vert_{S,\gamma-2,2+a},\ \ p>2;
\end{align*} 
moreover,
\begin{align*}
\Vert {\bf u}\Vert_{C^{\alpha}(\Omega\backslash B_{\delta}(\xi))}\lesssim \Vert \mathbb F\Vert_{S,\gamma-2,2+a}.
\end{align*}
\end{itemize}
\end{proposition}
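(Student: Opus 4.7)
The plan is to combine the inner linear theory (Lemma \ref{sect5-lemmainner-interior}) with the outer linear theory (Lemma \ref{sect5-lemma-outer-velocity-prop}) through the gluing ansatz ${\bf u} = \varepsilon{\bf u}^{\text{i}}\chi + {\bf u}^{\text{o}}$ and $\tilde P = \chi P_1 + (\tilde P - \chi P_1)$ introduced in \eqref{sect5-bfu-decompose-1}. First, I would invoke Lemma \ref{sect5-lemmainner-interior} to obtain $({\bf u}^{\text{i}},P_1)$ solving the half-space Stokes system \eqref{sect5-inner-problem-velocity} with the pointwise decay \eqref{sect5-linear-theory-decay-estimate-u} and \eqref{sect5-linear-theory-decay-estimate-P1}. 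Next, feeding the gradient bounds from Lemma \ref{sect5-lemma53-inner-estimate-for-outer} into the right-hand side of the outer system \eqref{sect5-outer-problem-velocity-plus-beta-1} and verifying the compatibility condition \eqref{eq-2.14-AR} (which follows from $\div{\bf u}^{\text{i}} = 0$ and the boundary behaviour of ${\bf u}^{\text{i}}$ on $\partial\mathbb{R}^2_+$) and \eqref{eq-3.15-AR} (enforced by the Lagrange multiplier $d_1$ via \eqref{sect5-d1estimate-before}), I would apply Lemma \ref{sect5-lemma-outer-velocity-prop} to produce $({\bf u}^{\text{o}},\tilde P-\chi P_1)$ satisfying \eqref{sect5-outer-sol-eq-conlusion-1}--\eqref{sect5-outer-sol-eq-conlusion-2}.

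For the pointwise claim on $B_\delta(\xi)$, since $\chi\equiv 1$ there, the ansatz reduces to ${\bf u} = \varepsilon{\bf u}^{\text{i}} + {\bf u}^{\text{o}}$, and multiplying \eqref{sect5-linear-theory-decay-estimate-u} by $\varepsilon$ yields
\[
|\varepsilon{\bf u}^{\text{i}}(x)| \lesssim \|\mathbb F\|_{S,\gamma-2,a+2}\,\frac{\varepsilon^{\gamma-1}}{1+|(x-\xi)/\varepsilon|},
\]
while the uniform H\"older bound \eqref{sect5-outer-sol-eq-conlusion-2} on ${\bf u}^{\text{o}}$ is absorbed into the leading inner term. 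The $\tilde P$ estimate follows analogously from \eqref{sect5-linear-theory-decay-estimate-P1} combined with the $W^{1,p}$ bound on $\tilde P-\chi P_1$ and a Morrey-type embedding. Outside the inner ball, when $|x-\xi|\ge 2\delta$ the cutoff vanishes and ${\bf u} = {\bf u}^{\text{o}}$, so \eqref{sect5-outer-sol-eq-conlusion-1} delivers the desired $W^{2,p}$ and $C^\alpha$ bounds directly; on the transition annulus $\delta \le |x-\xi|\le 2\delta$, the truncated inner contribution $\varepsilon{\bf u}^{\text{i}}\chi$ is of order $\varepsilon^\gamma$ thanks to the decay \eqref{sect5-linear-theory-decay-estimate-u} and \eqref{sect5-linear-theory-decay-estimate-gradu} evaluated at $|y|\sim\delta/\varepsilon$, hence is easily absorbed into $\|\mathbb F\|_{S,\gamma-2,2+a}$.

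The main technical obstacle lies in the transition annulus, where the commutator of the cutoff $\chi$ with the Stokes operator produces the extra source terms $\nabla_x\chi\cdot\nabla_x{\bf u}^{\text{i}}$, $(\Delta_x\chi){\bf u}^{\text{i}}$, and $P_1\nabla_x\chi$ appearing in \eqref{sect5-tildef-tildeh-def}, together with a Navier boundary defect from $\mathbb S_x(\varepsilon{\bf u}^{\text{i}}\chi)\cdot\boldsymbol{\nu}$ on $\partial\Omega\cap(B_{2\delta}(\xi)\setminus B_\delta(\xi))$. These must be controlled in $L^p(\Omega)$ and $W^{1-1/p,p}(\partial\Omega)$ by using the pointwise decay from Lemmas \ref{sect5-lemmainner-interior} and \ref{sect5-lemma53-inner-estimate-for-outer}, which is exactly what drives the small bounds \eqref{sect5-outer-sol-eq-1}--\eqref{sect5-outer-sol-eq-6} and ultimately permits the invocation of Lemma \ref{sect5-lemma-outer-velocity-prop}. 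Conceptually, the explicit $\varepsilon$-factor in the decomposition \eqref{sect5-bfu-decompose-1} is chosen precisely so that the $\varepsilon^{\gamma-2}$ decay of ${\bf u}^{\text{i}}$ becomes the $\varepsilon^{\gamma-1}$ rate claimed in \eqref{sect5-prop55-conclu1} and is dimensionally compatible with the $O(1)$ outer contribution measured by $\|\mathbb F\|_{S,\gamma-2,2+a}$.
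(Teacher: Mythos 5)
Your proposal is correct and takes essentially the same route as the paper, which states Proposition \ref{sect5-inner-outer-combine-prop} as a direct consequence of combining Lemma \ref{sect5-lemmainner-interior} and Lemma \ref{sect5-lemma-outer-velocity-prop} via the ansatz ${\bf u}=\varepsilon{\bf u}^{\text{i}}\chi+{\bf u}^{\text{o}}$ without an explicit written proof. One small point worth flagging: when you absorb the outer contribution into the inner bound on $B_\delta(\xi)$, the stated conclusion \eqref{sect5-outer-sol-eq-conlusion-2} gives $\|{\bf u}^{\text{o}}\|_{C^\alpha}\lesssim\|\mathbb F\|_{S,\gamma-2,2+a}$, which at the edge $|y|\sim\delta/\varepsilon$ is not obviously dominated by the inner bound $\varepsilon^\gamma/\delta$; you should instead track the sharper intermediate estimates \eqref{sect5-outer-sol-eq-1}--\eqref{sect5-outer-sol-eq-6} from the proof of Lemma \ref{sect5-lemma-outer-velocity-prop}, which show that all source terms carry a factor $\varepsilon^\gamma$, so that ${\bf u}^{\text{o}}=O(\varepsilon^\gamma)$ and the absorption does go through.
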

Focusing on the results stated in Proposition \ref{sect5-inner-outer-combine-prop}, we give some remarks.
\begin{remark}\label{sect5-remark-5.6}
~
\begin{itemize}
    \item For the outer problem (\ref{sect5-outer-problem-velocity}), we find the only difference between (\ref{sect5-outer-problem-velocity}) and (\ref{sect5-outer-problem-velocity-plus-beta-1}) is the Lagrange multiplier term $d_1\boldsymbol{\beta}.$ In fact, by solving the reduced problem in Section \ref{inn-out-gluing-sect}, one can readily see that $d_1=0$.
    \item \eqref{sect5-prop55-conclu1} in Proposition \ref{sect5-inner-outer-combine-prop} implies that 
    $$\Vert {\bf u}\Vert_{S,\gamma-1,1}\lec \Vert \mathbb F\Vert_{S,\gamma-2,2+a}.$$
    \item Noting that ${\bf u}$ is divergence-free, we can write ${\bf u}\cdot \nabla {\bf u}=\nabla\cdot ({\bf u}\otimes {\bf u})$, where $\otimes$ represents the tensor product
defined by $(v \otimes w)_{ij} = v_iw_j$.  We shall solve ${\bf u}$ in the class $\Vert {\bf u}\Vert_{S,\gamma-1,1}<\infty$.  Then the advection term ${\bf u}\cdot \nabla_x {\bf u}$ satisfies
$$|{\bf u}\cdot \nabla_x {\bf u}|\lec\frac{\e^{2\gamma-3}}{1+\big|\frac{x-\xi}{\e}\big|^3},$$
which is a perturbation compared to $\nabla\cdot \mathbb F.$  In this case, we are able to solve $\bf u$ by the fixed point argument and the detailed discussion will be shown in Section \ref{inn-out-gluing-sect}.
\end{itemize}
\end{remark}
With the help of Proposition \ref{sect5-inner-outer-combine-prop}, we are able to study the concentration phenomenon in Section \ref{inn-out-gluing-sect}.

\section{Inner--outer gluing system: existence of solution}\label{inn-out-gluing-sect}
This section is devoted to the construction of the boundary spot in stationary problem (\ref{PKSNS-ss-equiv}) via the \textit{inner-outer} gluing method.  Before performing the inner-outer gluing procedure, we collect some notations and definitions.  Recall that the inner operator $L_{W}[\Phi]$ is given by
$$L_{W}[\Phi]=\Delta_y\Phi-\nabla_y\cdot(W\nabla_y \Psi)-\nabla_y\cdot(\Phi\nabla_y\Gamma),~~-\Delta_y^{-1} \Phi=\Psi;$$
the outer operator is defined as
\begin{align*}
 \bar L^o[\varphi]=\Delta_y \varphi-\nabla_y \varphi\cdot \nabla_y \bar V-\varepsilon^2\bar V \varphi, \ \  \bar V:=\Gamma+ H^{\varepsilon},
\end{align*}
where the inner and outer norms $\|\cdot\|_{\delta_1,\nu_1}$ and $\|\cdot\|_{\nu,o}$ are 
 \begin{equation*}
      \|h\|_{\delta_1,\nu_1} := \e^{-\delta_1}\sup_{y\in \R^2}|h(y)|(1 + |y|)^{\nu_1}~\text{and}~\|h\|_{\nu, o}: = \sup_{y \in \Omega_{\e}} \e^{-\delta_3}{\vert h(y)\vert}{(1 + |y - \xi'|)^{\nu_3}},
 \end{equation*}
 with $y=\frac{x}{\e}$ and $\xi'=\frac{\xi}{\e}$.  In addition, $\phi$ is decomposed as
\begin{align*}
\phi(x)=\frac{1}{\e^2}\Phi^{\text{i}}(y)\chi(|y-\xi'|)+\varphi^{\text{o}},
\end{align*}
where $\chi$ is
\begin{align*}
\chi(r):=\left\{\begin{array}{ll}
1,&r\leq \frac{\delta}{\e},\\
0,&r\geq \frac{2\delta}{\e},
\end{array}
\right.
\end{align*}
and $\delta>0$ is fixed but small constant.  However, the center of the boundary spot is located at the boundary of $\Omega.$  To tackle this, as shown in Section \ref{sect3}, we define the graph $\rho(x_1)$ as $\{(x_1,x_2)=(x_1,\rho(x_1)\}$ with $\rho(0)=\rho'(0)=0$ and transform $(x_1,x_2)$ and $(y_1,y_2)$ as
\EQN{
X_1=&x_1-\xi_{1},~~X_{2}=x_2-\xi_{2}-\rho(x-\xi_1),\\
Y_1=&y_1-\xi'_{1},~~Y_{2}=y_2-\xi'_{2}-\frac{1}{\e}\rho(\e (Y_1-\xi'_{1})),
}
where $y_1=x_1/\e$ and $y_2=x_2/\e.$  Moreover, for any function $w,$
\EQ{\label{sect6-new-neumann-boundary-X}
\Delta_xw=\Delta_{X}w+(\rho'(X_1))^2\partial_{X_2X_2}w-2\rho'(X_1)\partial_{X_2X_1}w-\rho''(X_1)\partial_{X_2}w,\\
 (1+(\rho'(X_1))^2)\frac{\partial w}{\partial\boldsymbol{\nu}}=(\rho'(X_1))\partial_{X_1}w-\partial_{X_2}w-(\rho'(X_1))^2\partial_{X_2}w,
}
and
\EQ{\label{sect6-new-neumann-boundary-Y}
 \Delta_y w =& \Delta_{Y} w + (\rho'(\e Y_1))^2 \partial_{Y_2 Y_2} w - 2\rho'(\e Y_1) \partial_{Y_1Y_2} w -\e \rho''(\e Y_1) \partial_{Y_2}w,\\
 \sqrt{1+(\rho'(\e Y_1))^2}\frac{\partial w}{\partial\boldsymbol{\nu}_{\e}}=&\rho'(\e Y_1)\partial_{Y_1}w-[1+(\rho'(\e Y_1))^2]\partial_{Y_2}w.
}
  For the sake of convenience, we denote the flatten operator $\bar P_{\rho, \xi'}$ such that for any function $w,$
  \begin{equation*}
      {\bar P}_{\rho,\xi}w(x_1,x_2)=w(X_1,X_2),\ \ \bar P_{\rho, \xi'}w(y_1, y_2) = w(Y_1, Y_2).
  \end{equation*}
In addition, we are able to compute $\nabla_{x}({\bar P}_{\rho,\xi}w)$ and $\Delta_{y}(\bar P_{\rho, \xi'}w)$ in the $X$ and $Y$ variable by using (\ref{sect6-new-neumann-boundary-X}) and (\ref{sect6-new-neumann-boundary-Y}), respectively.  With the flatten operator ${\bar P}_{\rho,\xi}$, we further define the inner norm in the half space $\R^2_{+}$ as      
\begin{equation*}
     \| h\|_{\delta_2,\nu_2,H} = \varepsilon^{-\delta_2}\sup_{y \in \R^2_{+}} |h|(1 + |y|)^{\nu_2},~~\delta_2,\nu_2>0.
  \end{equation*}

We can get the desired solution $(\phi,\psi,{\bf u})$ of (\ref{PKSNS-ss-equiv}) if $(\Phi^{\text{i}}_H,\varphi^{\text{o}},{\bf u},\xi)$ solves the following inner-outer gluing system:
\begin{align}\label{sect7-stokes-problem-solve}
\left\{\begin{array}{ll}
{\bf u}\cdot\nb {\bf u} + \nb P = \De {\bf u} - \e_0\nabla\cdot \mathcal F\big( \Phi^{\text{i}}_H,\varphi^{\text{o}},{\bf u}, \xi\big)\ \ &\text{in} \ \ \Omega,\\
\nabla\cdot {\bf u}=0\ \ &\text{in} \ \ \Omega,\\
{\bf u}\cdot \boldsymbol{\nu}=0,\quad ({\mathbb S}{\bf u}\cdot \boldsymbol{\nu})_{\boldsymbol{\tau}}=0\ \ &\text{on} \ \ \partial\Omega,
\end{array}
\right.
\end{align}
and
\begin{align}\label{sect6-inn-outer-gluing-firsttwo}
\left\{\begin{array}{ll}
 L_{W}\big[ \Phi_H^{\text{i}}\big] =\mathcal H\big( \Phi_H^{\text{i}},\varphi^{\text{o}},{\bf u}, \xi_{H}\big)  \ \ &\text{in} \ \ \R^2_{+}, \\
 \frac{\partial \Phi^{\text{i}}_H}{\partial Y_2}-W\frac{\partial \bar\Psi^{\text{i}}_H}{\partial Y_2}=\bar\beta\big(\Phi_H^{\text{i}},\varphi^{\text{o}},{\bf u}, \xi_{H}\big)  \ \ &\text{on} \ \ \pd \R^2_{+}, \\
 \e^2{\bar L}^{\text{o}}[\varphi^o]= \mathcal G\big( \Phi_H^{\text{i}},\varphi^{\text{o}},{\bf u}, {\xi}\big) \ \ &\text{in} \ \  \Omega_{\e},
\end{array}
\right.
\end{align}
where $\Phi_H^{\text{i}}=\bar P_{\rho, \xi'}\Phi^{\text{i}}$, ${\bar\Psi}_H^{\text{i}}=-\Delta^{-1}_{Y}\Phi_H^{\text{i}}$, $\xi_{H}=(\xi_{H,1},0)^T$,
$$ \mathcal F\big( \bar P_{\rho, \xi'}\Phi^{\text{i}},\varphi^{\text{o}},{\bf u}, \xi\big)=\nabla c\otimes \nabla c-\bigg( \frac{\vert \nabla c\vert^2}{2}\bigg)-\Big(\frac{c^2}{2}\Big),$$
\begin{align*}
\mathcal H\big( \bar P_{\rho, \xi'}\Phi^{\text{i}},\varphi^{\text{o}},{\bf u}, \xi_H\big):=&\e\nabla_y\cdot\big(W\nabla_x H^{\varepsilon}\big)\chi+\e {\bf u}\cdot\nabla_y W\chi+{\varepsilon^2}\nabla_x\cdot(W\nabla_x\psi^{\text{o}})\chi\nonumber\\
&+{\varepsilon^2}\nabla_y\cdot\Big(\Big(\frac{1}{\varepsilon^2}\Phi^{\text{i}}\chi+\varphi^{\text{o}}\Big)\nabla_y\psi\Big)\chi+{\varepsilon}\nabla_y\cdot\big(\Phi^{\text{i}}\nabla_x H^{\varepsilon}\big)\chi\nonumber\\
&+{\varepsilon}{\bf u}\cdot\nabla_y\Phi^{\text{i}}\chi-\nabla_y\cdot\big(W\nabla_y\bar\Psi^{\text{i}}\big)\chi+\nabla_y\cdot(W\nabla_y\Psi^{\text{i}})\chi\nonumber\\
&+[\nabla_y\cdot(W\nabla_y\hat\Psi^{\text{i}})-\nabla_y\cdot(W\nabla_y\Psi^{\text{i}})\chi]\chi,
\end{align*}
\begin{align*}
\bar\beta\big(\Phi_H^{\text{i}},\varphi^{\text{o}},{\bf u}, \xi_{H}\big):=&\Bigg[\rho''(0)\varepsilon Y_1\bigg(\frac{\partial \Phi^{\text{i}}_H}{\partial Y_1}-W\frac{\partial \bar\Psi^{\text{i}}_H}{\partial Y_1}\bigg)-[\rho''(0)\varepsilon Y_1]^2\bigg(\frac{\partial \Phi^{\text{i}}_H}{\partial Y_2}-W\frac{\partial \bar\Psi^{\text{i}}_H}{\partial Y_2}\bigg)\nonumber\\
&-\Phi^{\text{i}}_H\Big(\frac{\partial \Gamma}{\partial Y_1}\rho''(0)\varepsilon Y_1-\frac{\partial \Gamma}{\partial Y_2}-\frac{\partial\Gamma}{\partial Y_2}(\rho''(0)\varepsilon Y_1)^2\Big)\nonumber\\
&-\Phi^{\text{i}}_H\Big(\frac{\partial \bar\Psi^{\text{i}}_H}{\partial Y_1}\rho''(0)\varepsilon Y_1-\frac{\partial \bar\Psi^{\text{i}}_H}{\partial Y_2}-\frac{\partial\bar\Psi^{\text{i}}_H}{\partial Y_2}(\rho''(0)\varepsilon Y_1)^2\Big)\Bigg]\nonumber\\
&+\varepsilon(W+\Phi^{\text{i}}_H)\Big(\frac{\partial H}{\partial X_2}-\rho'(\varepsilon Y_1)\frac{\partial H}{\partial X_1}+[\rho'(\varepsilon Y_1)]^2\frac{\partial H}{\partial X_2}\Big)+\text{H. O. T. ,}
\end{align*}
and
\begin{align*}
\mathcal G\big(\bar P_{\rho, \xi'}\Phi^{\text{i}},\varphi^{\text{o}},{\bf u}, \xi\big):=&\e^2\nabla_x\cdot\big(W\nabla_x H^{\varepsilon}\big)(1-\chi)+\e^2 {\bf u}\cdot\nabla_x W(1-\chi)+{\varepsilon^2}\nabla_x\cdot(W\nabla_x\psi^{\text{o}})(1-\chi)\nonumber\\
&-2\nabla_y\Phi^{\text{i}}\cdot\nabla_y\chi-\Phi^{\text{i}}\Delta_y\chi+\Phi^{\text{i}}\nabla_y\Gamma\cdot\nabla_y\chi\nonumber\\
&+{\varepsilon^2}\nabla_y\cdot(\phi\nabla_y\psi)(1-\chi)+{\varepsilon}\Phi^{\text{i}}(\nabla_x H^{\varepsilon}\cdot\nabla_y\chi )\nonumber\\
&+\e ({\bf u}\cdot\nabla_y\chi) \Phi^{\text{i}}+\e^4 {\bf u}\cdot \nabla_x\varphi^{\text{o}}\nonumber\\
&+[\nabla_y\cdot(W\nabla_y\hat\Psi^{\text{i}})-\nabla_y\cdot(W\nabla_y\Psi^{\text{i}})\chi](1-\chi).
\end{align*}
In particular, as shown in Section \ref{sect3}, all terms involving the curvature are readily perturbations, which implies  
\begin{align*}
\mathcal H\big( \bar P_{\rho, \xi'}\Phi^{\text{i}},\varphi^{\text{o}},{\bf u}, \xi\big)=&\e\nabla_Y\cdot\big(W\nabla_X H^{\varepsilon}\big)\chi_{H}+\e {\bf u}\cdot\nabla_Y W\chi_{H}+{\varepsilon^2}\nabla_X\cdot(W\nabla_X\psi^{\text{o}})\chi_{H}\nonumber\\
&+{\varepsilon^2}\nabla_Y\cdot\Big(\Big(\frac{1}{\varepsilon^2}\Phi^{\text{i}}\chi_{H}+\varphi^{\text{o}}\Big)\nabla_Y\psi\Big)\chi_{H}+{\varepsilon}\nabla_Y\cdot\big(\Phi^{\text{i}}\nabla_X H^{\varepsilon}\big)\chi\nonumber\\
&+{\varepsilon}{\bf u}\cdot\nabla_Y\Phi^{\text{i}}\chi_{H}-\nabla_Y\cdot\big(W\nabla_Y\bar\Psi^{\text{i}}\big)\chi_H+\nabla_Y\cdot(W\nabla_Y\Psi^{\text{i}})\chi_{H}\nonumber\\
&+[\nabla_Y\cdot(W\nabla_Y\hat\Psi^{\text{i}})-\nabla_Y\cdot(W\nabla_Y\Psi^{\text{i}})\chi_{H}]\chi_H+\text{H.O.T.},
\end{align*}
 where the cut-off function $\chi_{H}$ is defined by
      \begin{equation*}
            \chi_{H}(y) = 1 \ \  \text{for} \ \  y \in \bar\R^2_{+}\cap \bar B_{\delta/\e}(0) \ \ \ \  \text{and} \ \ \ \    \chi_{H}(y) = 0 \ \   \text{for} \ \  y \in \R^2_{+}\cap B^c_{2\delta/\e}(0),
            \end{equation*}
            with $\delta>0$ is a small constant.  For operator $L_{W}\big[ \bar P_{\rho, \xi'}\Phi^{\text{i}}\big]$, we rewrite it in the $Y$-variable to get 
            \begin{align*}
            L_{W}[\bar P_{\rho, \xi'}\Phi^{\text{i}}]=L_{W,Y}[\Phi^{\text{i}}_H]+\frac{1}{\e^2}N_{\rho}[\Phi^{\text{i}}_H],
            \end{align*}
            where 
            $$L_{W,Y}[\Phi]=\Delta_Y\Phi-\nabla_Y\cdot(W\nabla_Y \Psi)-\nabla_Y\cdot(\Phi\nabla_Y\Gamma),~~-\Delta_Y^{-1} \Phi=\Psi,$$
            and $N_{\rho}[\Phi^{\text{i}}_{H}]$ is given by
    \begin{equation}\label{sect7-inner-new-error}
   \begin{split}
       N_{\rho}[\Phi^{\text{i}}_{H}]=&  (\rho'(\e Y_1))^2 \Big[\frac{\pd^2( \Phi_{H}\chi_{H,j})}{\pd Y_2^2} - \Big(\frac{\pd W}{\pd Y_2}\frac{\pd \bar{\Psi}_{H}}{\pd Y_2}+ \frac{\pd^2 \bar{\Psi}_{H}}{\pd Y_2^2}W  \\
       & + \frac{\pd (\phi_{H}\chi_{H})}{\pd Y_2} \frac{\pd \Gamma}{\pd Y_2}
       + \frac{\pd^2 \Gamma}{\pd Y_2^2}(\Phi_{H}\chi_{H})\Big)  \Big] \\
       &   - \rho'(\e Y_1)\Big[\frac{\pd^2(\phi_{H}\chi_{H})}{\pd Y_1 \pd Y_2} - \Big(\frac{\pd W}{\pd Y_1}\frac{\pd \bar{\Psi}_{H}}{\pd Y_2} + \frac{\pd W}{\pd Y_2}\frac{\pd \bar{\Psi}_{H}}{\pd Y_1}\Big) - \frac{\pd^2 \bar{\Psi}_{H}}{\pd Y_1 \pd Y_2}W  \\
         &  - \Big(\frac{\pd (\Phi_{H}\chi_{H})}{\pd Y_1} \frac{\pd \Gamma}{\pd Y_2} + \frac{\pd \Gamma}{\pd Y_2}\frac{\pd (\phi_{H}\chi_{H})}{\pd Y_1}\Big) - \frac{\pd^2 \Gamma}{\pd Y_1 \pd Y_2} (\Phi_{H}\chi_{H}) \Big]\\
         &   -\e\rho''(\e Y_1)\Big[\frac{ \pd (\Phi_{H}\chi_{H})}{\pd Y_2} - W\frac{\pd \bar{\Psi}_{H}}{\pd Y_2} - (\Phi_{H}\chi_{H}) \frac{\pd \Gamma}{\pd Y_2} \Big],
       \end{split}
   \end{equation}    
with $\bar\Psi_{H}:=-(\Delta+\e^2)^{-1}(\Phi_{H}\chi_{H}).$

To use Lemma \ref{sect4-linear-theory-bdry}, we have to impose the orthogonality condition on \eqref{sect6-inn-outer-gluing-firsttwo}.  To begin with, we define compactly supported radial functions $W_{0}(r)$ such that
$$\int_{\R^2_{+}} W_{0}(|Y|)\, dY=1,$$
and compactly supported radial functions $W_{1,1}$ such that
$$\int_{\R^2_{+}}W_{1,1}\big(|Y|\big)Y_1\, dY=1.$$
Next, we modify (\ref{sect6-inn-outer-gluing-firsttwo}) to formulate the following problem:
\begin{align}\label{sect6-inn-outer-gluing-firsttwo-modify}
\left\{\begin{array}{ll}
 L_{W,Y}\big[ \Phi^{\text{i}}_H\big] ={\tilde{\mathcal H}}\big(\Phi_H^{\text{i}},\varphi^{\text{o}},{\bf u}, \xi_{H}\big) -m_{0}[\tilde{\mathcal H}] W_{0}-m_{1}[\tilde{\mathcal  H}] W_{1,1}  \ \ &\text{in} \ \ \R^2_{+}, \\
  \frac{\partial \Phi^{\text{i}}_H}{\partial Y_2}-W\frac{\partial \bar\Psi^{\text{i}}_H}{\partial Y_2}=\bar\beta\big(\Phi_H^{\text{i}},\varphi^{\text{o}},{\bf u}, \xi_{H}\big)  \ \ &\text{on} \ \ \pd \R^2_{+}, \\
 \e^2{\bar L}^{\text{o}}[\varphi^o]= \mathcal G\big( \Phi^{\text{i}}_H,\varphi^{\text{o}},{\bf u}, {\xi}\big) \ \ &\text{in} \ \  \Omega_{\e},
\end{array}
\right.
\end{align}
where 
$$\tilde {\mathcal H}(\Phi_H^{\text{i}},\varphi^{\text{o}},{\bf u}, \xi_{H}):=\mathcal H(\Phi_H^{\text{i}},\varphi^{\text{o}},{\bf u}, \xi_{H})-N_{\rho}[\Phi^{\text{i}}_H];$$
$m_{0}[h]$ and $m_{1}[h]$ are given by
   \begin{equation}\label{m0eq0}
       m_{0}[h] = \int_{\R^2_+}h\, dY-\int_{\pd \R^{2}_+} \bar\beta \, dY  \ \ \text{and} \ \  m_{1}[h] = \int_{\R^2_+}h Y_1\,dY-\int_{\pd\R^2_+}{\bar \beta}Y_1\,dY.
   \end{equation}  
Given the velocity field $\bf u$, we are able to find the solution to (\ref{sect6-inn-outer-gluing-firsttwo-modify}) by invoking Lemma \ref{sect4-linear-theory-bdry} and Lemma \ref{sect4-outer-problem-linear-theory}.  Indeed, based the linear theories developed in Section \ref{sect4}, we shall solve the inner and outer problems arising from the transported Keller-Segel model in the norms below.
\begin{itemize}
    \item We use the norm $\Vert\cdot \Vert_{\delta_2,4+\sigma,H}$ to measure the right hand side $\mathcal {\tilde H}$ in (\ref{sect6-inn-outer-gluing-firsttwo-modify}), where
    \begin{align*}
    \big\Vert {\tilde h}\big\Vert_{\delta_2,4+\sigma,H}:= \varepsilon^{-\delta_2}\sup_{y \in \R^2_{+}} \big|{\tilde h}\big|(1 + |y|)^{4+\sigma}
    \end{align*}
    with $\sigma>0$ and $\delta_2\in(0,1).$
    \item We use the norm $\Vert\cdot \Vert_{\delta_2,2+\sigma,H}$ to measure the inner solution $\Phi^{\text{i}}_H$ in (\ref{sect6-inn-outer-gluing-firsttwo-modify}), where
        \begin{align}\label{sect7-inner-norm-def}
    \big\Vert \Phi^{\text{i}}_H\big\Vert_{\delta_2,2+\sigma,H}:= \varepsilon^{-\delta_2}\sup_{y \in \R^2_{+}} \big|\Phi^{\text{i}}_H\big|(1 + |y|)^{2+\sigma}.
    \end{align}
    \item We use the norm $\Vert \cdot \Vert_{\delta_3,2+b,o}$ to measure the right hand side $\mathcal G$ in (\ref{sect6-inn-outer-gluing-firsttwo-modify}), where
    \begin{align*}
     \Vert g \Vert_{\delta_3,2+b,o}:=\varepsilon^{-\delta_3}\sup_{y \in \Omega_{\varepsilon}}{ |g|}{ (1 + |y - \xi'|)^{2+b}}
     \end{align*}  
     with $\delta_3\in(0,1)$ and $b\in(2,3).$
     \item  We use the norm $\Vert \cdot \Vert_{\delta_3,b,o}$ to measure the solution ${\varphi}^{\text{o}}$ in (\ref{sect6-inn-outer-gluing-firsttwo-modify}), where
 \begin{equation*}
      \|\varphi^{\text{o}}\|_{\delta_3,b, o}: = \varepsilon^{2-\delta_3}\sup_{y \in \Omega_{\varepsilon}}{ |\varphi^{\text{o}}|}{ (1 + |y - \xi'|)^{b}}.
    \end{equation*}
\end{itemize}

Based on linear theory developed in Section \ref{sect-model-stokes}, we shall solve the incompressible Navier-Stokes equation with the free-slip boundary condition in the following norms.
\begin{itemize}
    \item We use the norm $\Vert \cdot \Vert_{S,\gamma-2,2+a}$ to measure the forcing term $\mathcal F$, where
     \begin{align*}
 \Vert  F\Vert_{S,\gamma-2,a+2}:={\varepsilon^{-(\gamma-2)}}\sup_{x\in \Omega}| F(x)|\Big(1+\Big|\frac{x-\xi}{\e}\Big|^{a+2}\Big)
 \end{align*}
 with $\gamma\in(0,1)$ and $a\in(0,1).$
    \item We use the norm $\Vert \cdot \Vert_{S,\gamma-1,1}$ to measure the solution $\bf u$, where
       \begin{align}\label{sect7-inner-norm-velocity}
 \Vert  {\bf u}\Vert_{S,\gamma-1,1}:={\varepsilon^{-(\gamma-1)}}\sup_{x\in \Omega}|{\bf u}(x)|\Big(1+\Big|\frac{x-\xi}{\e}\Big|\Big).
 \end{align}
\end{itemize}

We then define the spaces for the inner problem, outer problem and parameters as
  \begin{equation}\label{sect7-space-collect1}
  \begin{split}
E_{\text{i}}= \Big\{\Phi^{\text{i}}_H\in &L^{\infty}(\R^2_{+}):\nabla_y\Phi^{\text{i}}_H\in L^{\infty}(\R^2_{+}); \, \big\Vert\Phi^{\text{i}}_H \big\Vert_{\delta_2,2+\sigma,H}<\infty\Big\},
        \end{split}
  \end{equation}
   \begin{equation}\label{sect7-space-collect2}
 E_o = \Big\{\varphi^{\text{o}} \in L^{\infty}(\Omega_\e):\,  \nabla_y \varphi^{\text{o}} \in L^{\infty}(\Omega_\e); \, \, \|\varphi^{\text{o}}\|_{\delta_3,b, o}< \infty, \ \  \frac{\pd \varphi^{\text{o}}}{\pd {\boldsymbol{\nu}}_{\e}} =0\text{~on~}\partial\Omega_{\e}  \Big\},
 \end{equation}
 and
   \begin{equation}\label{sect7-space-collect3}
     E_p= \{\xi \in\R^2:    \|\xi\|_p = |\xi| < \infty \}.
    \end{equation}
Moreover, we define $E_{\phi}$ and solution ${\boldsymbol{\phi}}$ as 
\begin{align*}
E_{\phi}=E_{\text{i}}\times E_{\text{o}}, \ \ {\boldsymbol{\phi}}=(\Phi^{\text{i}}_H,\varphi^{\text{o}})^T
\end{align*}
with the norm $\Vert \cdot \Vert_{E_{\phi}}$ is given by
\begin{align*}
\Vert \boldsymbol{\phi} \Vert_{E_{\phi}}=  \big\|\Phi^{\text{i}}_H\big\|_{\delta_2,2 + \sigma,H} + \|\varphi^{\text{o}}\|_{\delta_3,b, o}.
\end{align*}
    For the incompressible Navier-Stokes equation (\ref{sect7-stokes-problem-solve}), we shall solve $\bf u$ in the following space:
    \begin{align}\label{sect7-space-collect4}
    E_{u}=\{ u\in L^2(\Omega):\nabla \cdot {\bf u}=0\text{~in~}\Omega ,\ \   \Vert {\bf u}\Vert_{S,\gamma-1,1}< M\e_0\},
    \end{align}
where $\bf u$ satisfies the boundary conditions, $\e_0>0$ is the sufficiently small but fixed number and $M>0$ is some fixed constant. 

We collect (\ref{sect7-space-collect1}), (\ref{sect7-space-collect2}), (\ref{sect7-space-collect3}) and (\ref{sect7-space-collect4}) to define space $\mathcal X$ as
\begin{align}\label{sect-top-X}
\mathcal X:=E_{\text{i}}\times E_{\text{o}}\times E_{p}\times E_{u}
\end{align}
In conclusion, we will solve (\ref{sect6-inn-outer-gluing-firsttwo-modify}) and (\ref{sect7-stokes-problem-solve}) in the space $\mathcal X$ by the fixed point theorem.

\medskip

\subsection{Estimate of remainder term $\phi$}\label{subsect-esimate-phi}
~

The coupled system (\ref{PKSNS-ss-equiv})$_1$--(\ref{PKSNS-ss-equiv})$_2$ is close in spirit to the classical minimal Keller-Segel model.  To find the desired steady state, it suffices to show ${\bf u}\cdot \nabla n$ is a perturbation term in the topology given above.  In this case, we are able to show the existence of $\phi=\frac{1}{\e^2}\Phi^{\text{i}}_H\chi_H+\varphi^{\text{o}}$ by performing the argument shown in \cite{KWX2022} with the slight modification.

\medskip

\textbf{Effect of the transport term ${\bf u}\cdot \nabla n$ in the inner problem.}

As discussed in Section \ref{sect0-intro}, the scaling invariance leads to the fully coupled property of system (\ref{PKSNS-ss-equiv}).  Concerning the linear theory established in Section \ref{sect4}, we find the term ${\bf u}\cdot \nabla n$ gets coupled in each mode.  More precisely, the mode $k$ of velocity field ${\bf u}$ solved from (\ref{sect7-stokes-problem-solve}) with the forcing term $-\e_0\nabla\cdot (\nabla \Gamma\otimes \nabla \psi_k)$ enters the inner problem of transported Keller-Segel model.  To study the role of advection $({\bf u}\cdot \nabla n)_k$, we note
\begin{align*}
{\bf u}\cdot \nabla_x n={\bf u}\cdot \nabla_x \Big(\frac{1}{\e^2}W+\frac{1}{\e^2}\Phi^{\text{i}}_H\chi_H+\varphi^{\text{o}}\Big),
\end{align*}
then use the topology defined in (\ref{sect-top-X}) and the norms given by  (\ref{sect7-inner-norm-def}) and (\ref{sect7-inner-norm-velocity}) to obtain in $Y$-variable and the inner region,
$$|\e^4 ({\bf u} \cdot \nabla n)_k| \leq \e^{\gamma}\frac{M\e_0}{1+|Y|^{4+\sigma}}.$$
It follows that 
$$\Vert \e^4 ({\bf u} \cdot \nabla n)_k\Vert_{\delta_2,4+\sigma,H}\leq \e^{\gamma-\delta_2}M\e_0.$$
If we choose $\delta_2=\gamma\in(0,1)$ and let $\e_0$ be fixed but sufficiently small number, we have ${\bf u}\cdot \nabla n$ is readily a perturbation compared to the other terms in right hand side $\mathcal H.$ 

\medskip

\textbf{Effect of the transport term ${\bf u}\cdot \nabla n$ in the outer problem.}
~

In the outer region, we can see the leading term in advection ${\bf u}\cdot \nabla_x n$ is ${\bf u}\cdot \nabla_x \varphi^{\text{o}}$.  Then, thanks to the cut-off function, we similarly substitute $n=\frac{1}{\e^2}W+\frac{1}{\e^2}\Phi^{\text{i}}_H\chi_H+\varphi^{\text{o}}$ into ${\bf u}\cdot \nabla n$ to get
\begin{align*}
|\e^4(1-\chi_H){\bf u}\cdot \nabla n|\lec &\e{(1-\chi_H)}|{\bf u}\cdot\nabla_y W|+{\e}{(1-\chi_H)}|W{\bf u}\cdot\nabla_y \chi_H|\nonumber\\
&+\e{(1-\chi_H)}|{\bf u}\cdot \nabla\Phi^{\text{i}}_H|+\e{(1-\chi_H)}|\Phi^{\text{i}}_H{\bf u}\cdot\nabla_y \chi_H|\nonumber\\
&+\e^3{(1-\chi_H)}|{\bf u}\cdot \nabla_y\varphi^{\text{o}}|\lec \frac{\e^{\gamma+\delta_2}(1-\chi_H)}{(1+|y-\xi'|)^{b+2}}\leq M\e_0 \e^{\gamma+b+2},
\end{align*}
which implies
$$\Vert \e^4(1-\chi_H) ({\bf u} \cdot \nabla n)\Vert_{\delta_3,b+2,o}\leq M\e_0\e^{\gamma+\delta_2-\delta_3}.$$
If we choose $\gamma>\delta_3-\delta_2\in(0,1)$, we find the drift term ${\bf u}\cdot \nabla n$ is a perturbation in the outer problem due to the smallness of $\e_0.$

We have shown that the transport term ${\bf u}\cdot \nabla n$ can be regarded as the perturbation and hence does not influence the fixed point argument.  Next, we focus on the incompressible Navier-Stokes equation (\ref{sect7-stokes-problem-solve}) and estimate the velocity field ${\bf u}.$

   \medskip
   
   \subsection{Estimate of the velocity field ${\bf u}$}\label{subsect71-stokes}
   ~

   For the analysis of solution ${\bf u}$ to (\ref{sect7-stokes-problem-solve}), the key is to estimate the coupled forcing term $-\e_0\nabla\cdot \mathcal F,$ which is
   \begin{align}\label{sect7-forcing-glue}
-\e_0\nabla\cdot\mathcal F=-\e_0\nabla\cdot (\nabla c\otimes \nabla c)+\e_0\nabla\bigg( \frac{\vert \nabla c\vert^2}{2}\bigg)+\e_0\nabla\Big(\frac{c^2}{2}\Big).
\end{align}
First of all, by using the $c$-equation of (\ref{PKSNS-ss-equiv}), we obtain the right hand side of (\ref{sect7-forcing-glue}) has the following equivalent form:
\begin{align}\label{sect7-forcing-glue-1}
-\e_0\nabla\cdot (\nabla c\otimes \nabla c)+\e_0\nabla\bigg( \frac{\vert \nabla c\vert^2}{2}\bigg)+\e_0\nabla\Big(\frac{c^2}{2}\Big)=\e_0(c-\Delta c)\nabla c.
\end{align}
Then one observes that in (\ref{sect7-forcing-glue-1}), the main contribution terms come from $\Gamma+H^{\e}+\big({\bar\Psi}^{\text{i}}_{H,0}+{\bar\Psi}^{\text{i}}_{H,1}+{\bar\Psi}^{\text{i}}_{H,\perp}\big)\chi$, where ${\bar\Psi}^{\text{i}}_{H,0}$, ${\bar\Psi}^{\text{i}}_{H,1}$ and ${\bar\Psi}^{\text{i}}_{H,\perp}$ are mode $0$, mode $1$ and higher modes in the remainder term, respectively.  Noting that $\Gamma$, $H^{\e}$ and ${\bar \Psi}^{\text{i}}_{H,0}$ dominate other small terms thanks to Lemma \ref{sect4-linear-theory-bdry}, we compute from $-\Delta_x H^{\e}+H^{\e}=-\Gamma$ in $\Omega$ that
\begin{align*}
(c-\Delta c)\nabla c\approx& (\Gamma+H^{\e}+{\bar \Psi}^{\text{i}}_{H,0}-\Delta \Gamma-\Delta H^{\e}-\Delta {\bar \Psi}^{\text{i}}_{H,0})(\nabla\Gamma+\nabla H^{\e}+\nabla {\bar \Psi}^{\text{i}}_{H,0})\nonumber\\
=&-\Delta_x\Gamma\nabla_x \Gamma-\Delta_x\Gamma\nabla_x H^{\e}\nonumber\\
&-\nabla\cdot \big(\nabla  {\bar \Psi}^{\text{i}}_{H,0}\otimes \nabla  {\bar \Psi}^{\text{i}}_{H,0}\big)+\nabla\Bigg( \frac{\big\vert \nabla  {\bar \Psi}^{\text{i}}_{H,0}\big\vert^2}{2}\Bigg)+\nabla\Bigg(\frac{( {\bar \Psi}^{\text{i}}_{H,0})^2}{2}\Bigg).
\end{align*}
Hence, by using $-\Delta_x\Gamma=\frac{1}{\e^2}W$, we next only need to evaluate
\begin{align}\label{sect7-7.30}
\nabla_x\cdot (\nabla_x\Gamma\otimes \nabla_x\Gamma)\text{~and~}\nabla_x\cdot \big(\nabla_x\Gamma\otimes \nabla_x{\bar \Psi}^{\text{i}}_{H,0}\big),
\end{align}
and
\begin{align}\label{sect7-7.30-continue}
\Delta_x\Gamma\nabla_x H^{\e}.
\end{align}
First of all, noting that ${\bar\Psi}^{\text{i}}_{H,0}$ and $\Gamma$ are both radial, we claim $\nabla_x\cdot(\nabla_x \Gamma\otimes \nabla_x\Gamma)$ and $\nabla_x\cdot \big(\nabla_x\Gamma\otimes \nabla_x{\bar \Psi}^{\text{i}}_{H,0}\big)$ can be written as potentials.  To show this, we shall prove the following lemma:
\begin{lemma}\label{sect7-aux-lemma-potential}
Assume $R_1(|y|)$ and $R_2(|y|)$, $y\in \mathbb R^2$ are radial functions.  Then we have there exists a scalar function $\omega(y)$ such that 
\begin{align}\label{sect7-lemma-Stokes}
\nabla_y\cdot (\nabla_y R_1\otimes \nabla_yR_1)=\nabla_y\omega.
\end{align}
\end{lemma}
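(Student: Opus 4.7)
\textbf{Proof proposal for Lemma \ref{sect7-aux-lemma-potential}.} The plan is to exploit the algebraic identity
\begin{equation*}
\nabla_y\cdot(\nabla_y R_1\otimes\nabla_y R_1) \;=\; \Delta_y R_1\,\nabla_y R_1 \;+\; \tfrac12\nabla_y\bke{|\nabla_y R_1|^2},
\end{equation*}
which holds for any $C^2$ scalar function $R_1$ and can be verified componentwise by the product rule: $\sum_j\pd_j(\pd_i R_1\,\pd_j R_1)=\sum_j\pd_{ij}R_1\,\pd_j R_1+\pd_iR_1\,\pd_j^2R_1=\tfrac12\pd_i(|\nabla R_1|^2)+\Delta R_1\,\pd_iR_1$. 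The second summand on the right is already a gradient, so it suffices to show that the first summand $\Delta_y R_1\,\nabla_y R_1$ is a gradient.

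Since $R_1=R_1(|y|)$ is radial with $r=|y|$, we have $\nabla_y R_1 = R_1'(r)\,\hat y$ where $\hat y=y/r$, and $\Delta_y R_1 = R_1''(r)+r^{-1}R_1'(r)$ is a radial scalar. Hence
\begin{equation*}
\Delta_y R_1\,\nabla_y R_1 \;=\; F(r)\,\hat y, \qquad F(r):=R_1'(r)\bkt{R_1''(r)+r^{-1}R_1'(r)}.
\end{equation*}
Any radial vector field of the form $F(r)\hat y$ admits a radial potential: setting $\Phi(r):=\int_0^r F(s)\,ds$ (well-defined under mild smoothness of $R_1$ near $0$), one checks directly that $\nabla_y \Phi(|y|)=\Phi'(r)\hat y=F(r)\hat y$. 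Therefore taking
\begin{equation*}
\omega(y) \;=\; \tfrac12|\nabla_y R_1(y)|^2 + \Phi(|y|)
\end{equation*}
yields $\nabla_y\cdot(\nabla_y R_1\otimes\nabla_y R_1)=\nabla_y\omega$, as desired.

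There is essentially no obstacle: the identity and the radial-to-potential step are both elementary. The only mild subtlety is integrability of $F$ at $r=0$, which is ensured as soon as $R_1\in C^2$ near the origin (since then $R_1'(r)=O(r)$ and $\Delta_y R_1$ is bounded, so $F(r)=O(r)$ is integrable). For the intended application to $R_1=\Gamma$ and to $\bar\Psi^{\mathrm i}_{H,0}$, this smoothness holds on the relevant domains, so the construction of $\omega$ goes through directly. If the lemma (or a companion version) is also needed for $\nabla_y\cdot(\nabla_y R_1\otimes \nabla_y R_2)$ with both $R_1,R_2$ radial, the same argument applies after symmetrizing: write $\nabla_y R_1\otimes\nabla_y R_2 = \tfrac12(\nabla R_1\otimes\nabla R_2+\nabla R_2\otimes\nabla R_1)+\tfrac12(\nabla R_1\otimes\nabla R_2-\nabla R_2\otimes\nabla R_1)$; the symmetric part gives $\Delta R_2\nabla R_1 + \Delta R_1\nabla R_2 + \nabla(\nabla R_1\cdot\nabla R_2)$ up to a factor, all of which are radial vector fields and hence gradients by the same antiderivative construction, while the antisymmetric part contributes zero after taking the divergence since its components cancel pairwise.
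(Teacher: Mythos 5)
Your proof is correct and takes a genuinely different route from the paper's. The paper works directly in polar coordinates $(\rho,\theta)$: it expands each Cartesian component of $\nabla_y\cdot(\nabla_y R_1\otimes\nabla_y R_2)$, checks by hand that the angular dependence collapses to a factor of $\cos\theta$ (resp.\ $\sin\theta$) times a common radial expression, and then reads off the potential $\omega$ as an antiderivative of that radial expression. You instead factor through the identity $\nabla\cdot(\nabla R\otimes\nabla R)=\Delta R\,\nabla R+\tfrac12\nabla|\nabla R|^2$, which disposes of half the expression immediately and reduces the remaining term to the elementary fact that a radial vector field $F(r)\hat y$ is the gradient of $\int_0^r F$. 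Your route is more conceptual and shorter, and it makes the integrability hypothesis transparent; the paper's computation is more mechanical but directly produces the same $\omega$. You also correctly flagged the mismatch in the lemma statement (two radial functions $R_1,R_2$ are declared but the display only involves $R_1$): the paper's application in fact needs the mixed form $\nabla\cdot(\nabla\Gamma\otimes\nabla\bar\Psi^{\text i}_{H,0})$, and your symmetrization handles it. For what it's worth, the symmetrization is not really needed: for radial $R_1,R_2$ one has $\nabla R_1\otimes\nabla R_2=R_1'R_2'\,\hat y\otimes\hat y$, already a symmetric matrix, so the antisymmetric part you mention vanishes identically and you could pass directly to the polarization $\nabla(\nabla R_1\cdot\nabla R_2)+\Delta R_1\nabla R_2+\Delta R_2\nabla R_1$ and reuse the radial-potential step twice.
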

\begin{proof}
We rewrite the left hand side of (\ref{sect7-lemma-Stokes}) in the polar coordinate $(\rho,\theta)$ and study by component to get
\begin{align}\label{sect7-lemma-combine-1}
[\nabla_y\cdot (\nabla_y R_1\otimes \nabla_yR_1)]_1=&\partial_{y_1}(\partial_{y_1}R_1\partial_{y_1}R_2)+\partial_{y_2}(\partial_{y_2}R_1\partial_{y_1}R_2)\nonumber\\
=&\cos\theta \partial_{\rho}(\partial_{\rho}R_1\partial_{\rho}R_2\cos^2\theta)-\frac{\sin\theta}{\rho}\partial_{\theta}(\cos^2\theta)\partial_{\rho}R_1\partial_{\rho}R_2\nonumber\\
&+\sin\theta\partial_{\rho}(\partial_{\rho}R_1\partial_{\rho}R_2\sin\theta\cos\theta)+\frac{\cos\theta}{\rho}\partial_{\theta}(\partial_{\rho}R_1\partial_{\rho}R_2\sin\theta\cos\theta)\nonumber\\
=&\cos\theta\Big[\partial_{\rho}(\partial_{\rho}R_1\partial_{\rho}R_2)+\frac{1}{\rho}\partial_{\rho}R_1\partial_{\rho}R_2\Big].
\end{align}
Similarly, we obtain the second component satisfies
\begin{align}\label{sect7-lemma-combine-2}
[\nabla_y\cdot (\nabla_y R_1\otimes \nabla_yR_1)]_2=&\partial_{y_1}(\partial_{y_1}R_1\partial_{y_2}R_2)+\partial_{y_2}(\partial_{y_2}R_1\partial_{y_2}R_2)\nonumber\\
=&\sin\theta\Big[\partial_{\rho}(\partial_{\rho}R_1\partial_{\rho}R_2)+\frac{1}{\rho}\partial_{\rho}R_1\partial_{\rho}R_2\Big].
\end{align}
Combining (\ref{sect7-lemma-combine-1}) and (\ref{sect7-lemma-combine-2}), one lets
$$\omega:=\Big(\partial_{\rho}R_1\partial_{\rho}R_2+\int\frac{1}{\rho}\partial_{\rho}R_1\partial_{\rho}R_2\, d\rho\Big),$$
then has
$$\nabla_y\cdot (\nabla_y R_1\otimes \nabla_yR_1)=\nabla_y\omega,$$
which finishes the proof.
\end{proof}
To show our claim, we compute (\ref{sect7-7.30}) to get
\EQ{\label{sect7-7.34}
\nabla_x\cdot (\nabla_x\Gamma\otimes \nabla_x\Gamma)=&\frac{1}{\e^3}\nabla_y\cdot (\nabla_y\Gamma\otimes \nabla_y\Gamma) \\
\nabla_x\cdot \big(\nabla_x\Gamma\otimes \nabla_x{\bar \Psi}^{\text{i}}_0\big)=&\frac{1}{\e^3}\nabla_y\cdot \big(\nabla_y\Gamma\otimes \nabla_y{\bar \Psi}^{\text{i}}_0\big). 
}
Noting the facts that $\Gamma$ and ${\bar\Psi}^{\text{i}}_0$ are radial, we apply Lemma \ref{sect7-aux-lemma-potential} on (\ref{sect7-7.34}) to readily obtain $\nabla_x\cdot (\nabla_x\Gamma\otimes \nabla_x\Gamma)$ and $\nabla_x\cdot \Big(\nabla_x\Gamma\otimes \nabla_x{\bar \Psi}^{\text{i}}_0\Big)$ are potentials, which can be absorbed in the pressure $\tilde P$ of problem \eqref{sect7-stokes-problem-solve}.  It is obvious that other terms such as $\nabla\Big( \frac{\vert \nabla  {\bar \Psi}^{\text{i}}_{H,0}\vert^2}{2}\Big)$ and $\nabla\Big(\frac{( {\bar \Psi}^{\text{i}}_{H,0})^2}{2}\Big)$ are potentials.  On the other hand, we have to estimate (\ref{sect7-7.30-continue}), which is
\begin{align*}
\vert \Delta_x\Gamma\nabla_x H^{\e}\vert \lec \frac{1}{\e^2}\frac{1}{(1+|Y|)^4},
\end{align*}  
so that
\begin{align*}
\Vert \Delta_x\Gamma\nabla_x H^{\e}\Vert_{S,\gamma-3,3+a}\ll 1
\end{align*}
with $\gamma\in(0,1).$

In conclusion, the leading order term in $\mathcal F$ defined by (\ref{sect7-forcing-glue}) is 
\begin{align*}
    \big|\nabla_x \Gamma\otimes \nabla_x{\bar\Psi}^{\text{i}}_{1}\big|\leq \frac{\e^{\delta_2-2}}{(1+|Y|)^{2+\sigma}}\big\Vert {\bar\Psi}^{\text{i}}_1\big\Vert_{\delta_2,\sigma,H},
\end{align*}
from which we conclude that
\begin{align*}
\big\Vert \e_0\nabla_x \Gamma\otimes \nabla_x{\bar\Psi}^{\text{i}}_{1}\big\Vert_{S,\gamma-2,2+a}\leq \e_0,
\end{align*}
where we have chosen $\delta_2=\gamma\in(0,1)$ and $a=\sigma\in(0,1).$  For the estimate of ${\bf u}$, we give the following remarks:
\begin{remark}
~
\begin{itemize}
    \item As shown in Lemma \ref{sect4-linear-theory-bdry}, we have constructed the mode $0$ solution ${\bar\Psi}_0^{\text{i}}$ with the logarithmic growth owing to the existence of non-trivial kernel $Z_0$, which may cause the slow decay difficulty in the fixed point argument.
    \item The forcing involving radial modes can be absorbed into the pressure $P$ thanks to Lemma \ref{sect7-aux-lemma-potential}.
    \item The equation $-\Delta_x H^{\e}+H^{\e}=-\Gamma$ helps us rule out the slow decay problem caused by $H^{\e}$.
    \item The smallness of $\e_0$ and the decay property of mode $1$ solution guarantee the required estimate of the coupled forcing $\mathcal F$.
\end{itemize}
\end{remark}

It is necessary to discuss the advection ${\bf u}\cdot \nabla {\bf u}$. 
 As mentioned in Remark \ref{sect5-remark-5.6}, considering the solution ${\bf u}\in E_u$ with $E_u$ defined by (\ref{sect7-space-collect4}), the nonlinear term ${\bf u}\cdot \nabla {\bf u}$ in (\ref{sect7-stokes-problem-solve}) can be regarded as a perturbation compared to the forcing $\e_0\nabla\cdot \mathcal F$.  Indeed, since ${\bf u}\in E_{u}$, we get
\begin{align*}
|{\bf u}\cdot \nabla {\bf u}|\lec \frac{\e^{2\gamma-3}}{(1+|Y|)^{3}},
\end{align*}
which implies 
\begin{align*}
\Vert {\bf u}\cdot \nabla {\bf u}\Vert_{S,\gamma-3,a+3} \lec \e^{\gamma-a}\ll 1
\end{align*}
if we choose $0<a<\gamma<1.$

In summary, we obtain the incompressible Navier-Stokes equation (\ref{sect7-stokes-problem-solve}) serves as the following perturbed Stokes system:
\begin{align*}
\nabla\tilde P=\Delta {\bf u}-\e_0\nabla \cdot \mathcal F_1(\Phi^{\text{i}}_H,\varphi^{\text{o}},{\bf u}, \xi_H),
\end{align*}
with
\begin{align*}
\mathcal F_1(\Phi^{\text{i}}_H,\varphi^{\text{o}},{\bf u}, \xi_H)=\mathcal F(\Phi^{\text{i}}_H,\varphi^{\text{o}},{\bf u}, \xi_H)+{\bf u}\otimes {\bf u},
\end{align*}
where we have rewritten ${\bf u}\cdot\nabla{\bf u}=\nabla\cdot ({\bf u}\otimes {\bf u})$ by using the fact that ${\bf u}$ is divergence-free.

\medskip

\subsection{Fixed point argument: proof of Theorem \ref{thm11}}
~

We are ready to perform the gluing procedure and prove Theorem \ref{thm11}.  Similarly as in \cite{KWX2022}, we use Lemma \ref{sect4-linear-theory-bdry}, Lemma \ref{sect4-outer-problem-linear-theory} and Proposition \ref{sect5-inner-outer-combine-prop} to rewrite the solution $\boldsymbol{\mathcal E}=({\bf u},\Phi^{\text{i}}_H,\varphi^{\text{o}}, \xi_H)^T$ as 
\EQN{
{\bf u}= \mathcal{A}_{u}({\bf u},\Phi^{\text{i}}_H, \varphi^{\text{o}}, \xi_{H}), \ \ \Phi^{\text{i}}_H = \mathcal{A}_{\text{i}}({\bf u},\Phi^{\text{i}}_H, \varphi^{\text{o}}, \xi_{H}),   \ \ \varphi^{\text{o}} = \mathcal{A}_o({\bf u},\Phi^{\text{i}}, \varphi^{\text{o}}, \xi_H),}
and 
\EQN{\xi_H= \mathcal{A}_{p}({\bf u},\Phi^{\text{i}}, \varphi^{\text{o}}, \xi_{H}).}
Recall that space $\mathcal X$ is given by \eqref{sect-top-X}, then we define the norm $\Vert\cdot\Vert_{ X}$ as
\begin{align*}
\Vert{\mathcal E} \Vert_{ X}= \big\|\Phi^{\text{i}}\big\|_{\delta_2,2 + \sigma,H} + \|\varphi^{\text{o}}\|_{\delta_3,b, o}+\Vert {\bf u}\Vert_{S,\gamma-1,1}+\Vert \xi_H\Vert_{p}.
\end{align*}

Now, we formulate the fixed point problem as
\begin{align*}
\boldsymbol{\mathcal E}=\mathcal A(\boldsymbol{\mathcal E}),
\end{align*}
where $\mathcal A(\boldsymbol{\mathcal E})$ is given by
\begin{align*}
\mathcal A(\boldsymbol{\mathcal E})=\big(\mathcal A_u(\boldsymbol{\mathcal E}), \mathcal A_{\text{i}}(\boldsymbol{\mathcal E}),\mathcal A_{\text{o}}(\boldsymbol{\mathcal E}),\mathcal A_{p}(\boldsymbol{\mathcal E})\big), \ \ \mathcal A: {\bar {\mathcal B}}_1\subset \mathcal X\rightarrow   \mathcal X
\end{align*}
with 
$$\mathcal B_1:=\big\{\boldsymbol{\mathcal E}\in \mathcal X:\Vert\boldsymbol{\mathcal E}\Vert_{X}< 1\big\}.$$

We claim that $\mathcal A$ is a contraction mapping from ${\bar {\mathcal B}}_1$ onto itself.  The proof is based on Section 4 and Section 5 of \cite{KWX2022}, we only need to perform the slight modification.  For the sake of completeness, we give the sketch of arguments.  First of all,  we shall show for $\|\boldsymbol{\mathcal E}\|_X \leq 1$,
    \begin{equation*}
     \|\mathcal{A}(\boldsymbol{\mathcal E})\|_{X} \leq 1.
          \end{equation*}
For the inner operator $\mathcal A_{\text{i}}$, since we have shown the drift term ${\bf u}\cdot \nabla n$ is a small perturbation, one finds for $Y \in B_{2\delta/\e}(0)$, the leading term in $\mathcal H$ satisfies
 $$\e\vert\nabla_Y\Phi^{\text{i}}\cdot\nabla_X  H^{\e}(\xi)\vert\leq \frac{C\delta\e^{\delta_2}}{( 1+\vert Y\vert)^{4+\sigma}},$$
 where $C>0$ is some constant.  On the other hand, since we flatten the boundary locally near the location $\xi$, it is necessary to estimate the new error $N_{\rho}$ given by \eqref{sect7-inner-new-error}.  However, according to \eqref{sect3-rhoY1-expand}, \eqref{sect3-rhoprime-Y1} and \eqref{sect3-rhodouble-Y1}, one has $N_{\rho}$ is a small perturbation compared to error $\mathcal H$ shown in \eqref{sect6-inn-outer-gluing-firsttwo-modify}. 
 
 For the outer operator $\mathcal A_{\text{o}}$, due to the smallness of ${\bf u}\cdot \nabla_x n$ in the outer region, we note that the error term involving the inner solution $\Phi^{\text{i}}$ are the leading one, then obtain for $\nabla_Y \Phi^{\text{i}}\cdot \nabla_Y \chi$,
  \begin{align*}
      \e^{\delta_2-\delta_3} \vert  \nabla_Y \Phi^{\text{i}} \cdot \nabla_Y \chi_H\vert \leq C_1 \e^{\delta_2-\delta_3}\frac{\e^{\delta_2}}{(1+\vert Y\vert)^{4+\sigma}}\leq C_1\frac{\e^{\delta_3-\delta_2}}{\delta^{2\delta_2-2\delta_3}} \frac{\e^{\delta_2}}{(1+\vert Y\vert)^{4+\sigma+2\delta_2-2\delta_3}},
    \end{align*}
    where $C_1>0$ is some constant.  For the Stokes operator $\mathcal A_{u}$, as discussed in Subsection \ref{subsect71-stokes}, we have from the smallness of $\e_0$ that $\Vert \mathcal A_u\Vert_{X}\leq 1.$  For the parameter operator $\mathcal A_p$, since ${\bf u}\cdot \nabla_x n$ is proved to be the perturbation in the inner problem, we only need to adjust $\xi_H$ to eliminate Lagrange multipliers $m_0$ and $m_1$ given by \eqref{m0eq0}, which will be discussed later on.
    
    As shown in \cite{KWX2022}, by choosing suitable $\delta$, $\delta_2,$ $\delta_3,$ $\sigma$, $b$ $\gamma$ and $a$, we prove that $\mathcal A$ maps from ${\bar {\mathcal B}}_1$ into itself.  Here we choose $\delta$, $\delta_2,$ $\delta_3,$ $\sigma$ and $b$ such that $\delta \sim \sqrt{\e}$, $\sigma=a\in\big(0,\frac{4}{5}\big)$, $\gamma=\delta_2\in(\sigma,1)$, $0<\delta_3=\delta_2+\frac{\sigma}{4}<1$ and $b=2+\frac{\sigma}{2}$.  Moreover, under the same restriction of parameters, we can similarly show that there exist constants $\tau_1,\tau_2, \tau_3\in(0,1)$ such that 
\begin{align}\label{sect8-operator-contraction-combine1}
    \left\{\begin{array}{ll}
\Vert \mathcal A_{\text{i}}[\boldsymbol{\mathcal E}_1]-\mathcal A_{\text{i}}[\boldsymbol{\mathcal E}_2]\Vert_{\delta_2,2+\sigma,H} \leq \tau_1\Vert \boldsymbol{\mathcal E}_1-\boldsymbol{\mathcal E}_2\Vert_{X},\\
\Vert \mathcal A_{o}[[\boldsymbol{\mathcal E}_1]-\mathcal A_o[[\boldsymbol{\mathcal E}_2]\Vert_{\delta_3,b,o} \leq \tau_2\Vert\boldsymbol{\mathcal E}_1-\boldsymbol{\mathcal E}_2\Vert_{X},\\
\Vert \mathcal A_{u}[\boldsymbol{\mathcal E}_1]-\mathcal A_u[\boldsymbol{\mathcal E}_2]\Vert_{S,\gamma-1,1} \leq \tau_3\Vert\boldsymbol{\mathcal E}_1-\boldsymbol{\mathcal E}_2\Vert_{X}
    \end{array}
    \right.
\end{align}
for any $\boldsymbol{\mathcal E}_1$, $\boldsymbol{\mathcal E}_2 \in\mathcal X$ with $\|\boldsymbol{\mathcal E}_1\|_X$, $\|\boldsymbol{\mathcal E}_2\|_X  \le 1$.  Focusing on the operator $\mathcal A_p$, we claim there exists $\tau_4\in(0,1)$ such that 
\begin{align}\label{sect8-operator-contraction-combine2}
\Vert \mathcal A_{p}[\boldsymbol{\mathcal E}_1]-\mathcal A_p[\boldsymbol{\mathcal E}_2]\Vert_{p} \leq \tau_4\Vert\boldsymbol{\mathcal E}_1-\boldsymbol{\mathcal E}_2\Vert_{X}.
\end{align}
Combining (\ref{sect8-operator-contraction-combine1}) with (\ref{sect8-operator-contraction-combine2}), one finds
       \begin{equation*}
            \mathcal{A}\big(\bar{\mathcal{B}}_1\big) \subset \bar{\mathcal{B}}_1 \ \  \text{and} \ \  \|\mathcal{A}(\mathcal E_1) - \mathcal{A}(\mathcal E_2)\|_X \le \hat\tau\|\mathcal E_1 - \mathcal E_2\|_X  \ \ \text{,} \ \ \forall \mathcal E_1, \, \mathcal E_2 \in \bar{\mathcal{B}}_1,
       \end{equation*}
       where constant $\hat\tau\in(0,1)$.  It immediately follows that there exist the solution $\mathcal E$ satisfying $\mathcal E = \mathcal{A}(\mathcal E)$ by the fixed point theorem.

Now, we prove the claim (\ref{sect8-operator-contraction-combine2}), i.e. the contraction property of the operator $\mathcal A_p$.  It suffices to adjust the location $\xi_H$ such that $m_0$ and $m_1$ defined by (\ref{m0eq0}) satisfy $m_0=m_1=0$.  We shall show $\xi_H$ only takes care the first moment orthogonality condition and the mass orthogonality condition will be satisfied automatically.  To begin with, by using the Neumann boundary conditions of $(n,c)$, we integrate the $n$-equation in \eqref{PKSNS-ss} with the Lagrange multiplier by parts to get
\begin{align*}
0=\int_{\partial\Omega}\Big(\frac{\partial n}{\partial \boldsymbol{\nu}}-n\frac{\partial c}{\partial \boldsymbol{\nu}}\Big)\, dS+\int_{\Omega}(\nabla\cdot {\bf u})n \, dx-\int_{\partial\Omega}({\bf u}\cdot \boldsymbol{\nu}) n\, dS+m_0,
\end{align*}
which implies $m_0=0$ since ${\bf u}$ is divergence-free and ${\bf u}$ satisfies the no-slip boundary condition.

We next consider the first moment orthogonality condition stated in (\ref{331}).  We have shown that ${\bf u}\cdot \nabla n$ can be regarded as a small perturbation in Subsection \ref{subsect-esimate-phi}.  Thus, similarly as in \cite{KWX2022}, we have the leading term in the $Y$-variable is $\nabla \cdot(W \nabla_X H^{\e})$.  Then by using the integration by parts, one gets
\begin{equation}\label{firstbd}
\begin{split}
    &\e\int_{\R^2_+}\nabla_Y\cdot(W \nabla H^{\e})Y_1 \chi_{H}\, dY\\
     =&\e\int_{\partial \R_+^2}W\nabla H^{\e}\cdot\boldsymbol{\nu}Y_1\chi_{H}\, dS  + \e\int_{\R^2_+}W\nabla_x H^{\e} \cdot e_1 \chi_{H} \, d Y\\
     &+ \e\int_{\R^2_+}W \nabla_x H^{\e} \cdot  Y_1 \nabla \chi_{H}\, dY,
    \end{split}
\end{equation}
where $e_1=(1,0)^T.$  Then we expand $\nabla_XH^{\e}$ as 
\begin{align}\label{slowH-expansion}
\nabla_X H^{\e}(x)=\nabla_X H^{\e}(\xi)+\e\nabla^2_X H^{\e}(\xi)Y+O(\e^2).
\end{align}
Upon substituting (\ref{slowH-expansion}) into (\ref{firstbd}), we find the boundary integral in (\ref{firstbd}) exactly matches the corresponding error which comes from $\bar \beta$ given in (\ref{sect6-inn-outer-gluing-firsttwo-modify}).  Proceeding term  $\nabla_x\cdot(\Phi^{\text{i}}\nabla_x H)$ with the same argument, we finally obtain from the first moment condition that 
$$\xi_{H,1}=O(\e^{\gamma_p}),$$
where $0<\gamma_p<1$ but $\gamma_p\approx 1.$  In addition, we have $\xi_{H,2}\equiv 0$ since the centre of boundary spot is located at $\partial \Omega.$  As a consequence, the leading term of $\xi_{H}$ is precisely given by the critical point of (\ref{jm}).  

Now, we have established the existence of desired single boundary spot shown in Theorem \ref{thm11} via the fixed point theorem.  For the fixed point argument shown in Section \ref{inn-out-gluing-sect}, we give the following remarks:
\begin{remark}
~
\begin{itemize}
    \item For the mass orthogonality condition arising from the study of $n$-equation in (\ref{PKSNS-ss-equiv}), we show it must be satisfied without adjusting any parameter by integrating the $n$-equation by parts. 
    \item While solving the incompressible Navier-Stokes equation (\ref{sect7-stokes-problem-solve}), we also impose the compatibility condition due to the existence of non-trivial kernel $\beta.$  However, as discussed in Section \ref{sect-model-stokes}, we find \eqref{sect5-solvability-conditions-eq} holds without adjusting any parameter.
    \item the smallness of $\e_0$ guarantee that the operator $\mathcal A_p$ has the contraction property. 
\end{itemize}
\end{remark}
For the multi-spot case, we only need to modify the ansatz as
\begin{align*}
 n_{\e}(x)=\frac{1}{\e^2}\sum_{j =1}^m W\bigg(\frac{x - \xi_j}{\e}\bigg) +\bigg(\frac{1}{\e^2}\sum_{j=1}^m \Phi^{\text{i}}_{j}(y-\xi'_j) \chi_{j}(y-\xi'_j) + \varphi^{\text{o}}(x)\bigg)
\end{align*}
 \begin{align*}
  c_{\e}(x) = \sum_{j =1}^{m}\Big[\Gamma\Big(\frac{x-\xi_j}{\e}\Big) +  \hat c_j H^{\e}(x, \xi_j)\Big]  +\bigg(\sum_{j=1}^m \Psi^{\text{i}}_{j}(y-\xi'_j) \chi_{j}(y-\xi'_j) + \psi^{\text{o}}(x)\bigg),
 \end{align*}
  where $\xi_j'=\xi_j/\e$, $(\Phi^{\text{i}}_j,\Psi^{\text{i}}_{j}, \varphi^{\text{o}},\psi^{\text{o}})$ are remainder terms, $\xi_j\in \Omega$ and $\hat c_j=8\pi$ for $j\leq k$; $\xi_j\in \partial\Omega$ and $\hat c_j=4\pi$ for $k<j\leq m$.  Next, we can solve the transported Keller-Segel models and the Navier-Stoke equation together, then perform the argument to construct the desired multi-spots in the same manner.  We omit the details.

\appendix \section{Appendix: Local-in-time Existence}\label{appen-local-in-time}  
 In this appendix, we shall adapt the argument shown in \cite{winkler2012global-NS-PKS} and take the slight modification to prove the local well-posedness of system (\ref{PKSNS-time-dependent}), which is
 
\medskip

 \textit{Proof of Lemma \ref{sect3-lemma-local-in-time-1}:}
 
\textbf{Existence:}  Our strategy is to employ the contraction mapping theorem.  To this end, we fix $T\in(0,1)$ and $R>0$.  Concerning $\iota>0,$ we define the Banach space $ X$ as 
\begin{align*}
 X:=L^{\infty }((0,T); C^0(\bar\Omega)\times W^{1,\infty}(\Omega)\times {\bf D_1}(A_q)),~q>2.
\end{align*}
where ${\bf D}_1$ is defined by
\begin{align}\label{appendix-bfD1-operator-space-final}
{\bf D}_1(A_q) = \bket{{\bf u}\in{\bf W}^{1,q}(\Om)\cap{\bf L}^q_\si(\Om): \bkt{\mathbb S({\bf u})\boldsymbol{\nu}}_{\boldsymbol{\tau}}=0,~{\bf u}\cdot \boldsymbol{\nu}=0\ \text{ on }\pd\Om}.
\end{align}
Let the closed set $S$ be 
 \begin{align*}
    S:=\{(n,c,{\bf u})\in X|\Vert n(\cdot, t)\Vert_{L^\infty}+\Vert c\Vert_{W^{1,\infty}}+\Vert {\bf  u}(\cdot, t)\Vert_{{\bf W}^{1,q}}\leq R\text{ for a.e. }t\in(0,T)\}.
 \end{align*}
Then we introduce a mapping $\hat {\boldsymbol{\Phi}}=(\hat{{\Phi}}_1,\hat {{\Phi}}_2,\hat {{\Phi}}_3)$ on $S$ by defining 
\begin{align}\label{appendix-A1-hatphi-1}
\hat \Phi_1(n,c,{\bf u})(\cdot,t):=e^{t\Delta }n_0-\int_0^{t} e^{(t-s)\Delta}\{\nabla \cdot(n\nabla c)+{\bf u}\cdot \nabla n\}(\cdot,s)\, ds,
\end{align}
\begin{align*}
\hat \Phi_2(n,c,{\bf u})(\cdot,t):=e^{t(\iota\Delta-1)}c_0-\int_0^{t} e^{(t-s)(\iota\Delta-1)}n(\cdot,s)\, ds,
\end{align*}
and 
\begin{align}\label{appendix-A1-hatphi-3}
\hat \Phi_3(n,c,{\bf u})(\cdot,t):=e^{-tA_q}{\bf u}_0+\int_0^{t} e^{-(t-s)A_q}\mathbb P[({\bf u}\cdot \nabla) {\bf u}+n\nabla c ](\cdot,s)\, ds
\end{align}
for $(n,c,\bf u)\in S$ and $t\in(0,T).$  Here and below, $(e^{t\Delta})_{t\geq 0}$, $(e^{-tA_q})_{t\geq 0}$ and $\mathbb P$ are the Neumann heat semigroup, the Stokes semigroup with the Navier boundary condition and the Helmholtz projection in ${\bf L}^2(\Omega),$ respectively.

For $q>2$, we define $B$ as the sectorial operator $-\Delta+1$ in $L^q(\Omega)$ with the homogeneous Neumann boundary condition.  Then, one has the fact that $D(B^{\beta})\hookrightarrow C^0(\bar\Omega)$ continuously, where we pick $\beta\in(0,1)$ such that $2q<\beta<1.$  Noting that $\nabla\cdot(n{\bf u})={\bf u}\cdot\nabla n$, we similarly obtain as in \cite{winkler2012global-NS-PKS} that there exist constants $c_1,$ $c_2,$ $c_3(R)>0$ such that 
\begin{align*}
\Vert \hat \Phi_1(n,c,{\bf u})(\cdot,t)\Vert_{L^\infty}\leq &\Vert e^{t\Delta} n_0\Vert_{L^\infty}+c_1\int_0^t \Vert B^{\beta}e^{-(t-s)(B-1)}n(\cdot,s)\Vert_{L^q}\, ds\nonumber\\
\leq & \Vert n_0\Vert_{L^\infty}+c_2\int_0^t (t-s)^{-\beta-\frac{1}{2}}\Vert (n\nabla c+n {\bf u})(\cdot,s)\Vert_{L^q} \, ds\nonumber\\
\leq &\Vert n_0\Vert_{L^\infty}+c_3(R) T^{\frac{1}{2}-\beta},
\end{align*}
for all $t\in(0,T).$  Here we have used Theorem \ref{thm-ghosh-3.5.4} to get ${\bf u}\in {\bf L}^q$ since $\alpha\in\big(\frac{1}{2},1\big).$

For $\hat \Phi_2(n,c,{\bf u})(\cdot,t),$ we perform the similar argument to find there exist constants $c_4$, $c_5$, $c_6(R)$ such that
\begin{align*}
\Vert \hat \Phi_2(n,c,{\bf u})(\cdot,t)\Vert_{W^{1,\infty}}\leq &\Vert e^{t(\iota\Delta-1)} c_0\Vert_{W^{1,\infty}}+c_4\int_0^t \Vert \hat B^{\beta_1}e^{-(t-s)(\hat B-1)}\nabla\cdot(n\nabla c+n{\bf u})(\cdot,s)\Vert_{L^q}\, ds\nonumber\\
\leq & \Vert c_0\Vert_{W^{1,\infty}}+c_5\int_0^t (t-s)^{-\beta_1}\Vert n(\cdot,s)\Vert_{L^q} \, ds\nonumber\\
\leq &\Vert c_0\Vert_{W^{1,\infty}}+c_6(R) T^{{1}-\beta_1},
\end{align*}
where $\hat B=-\iota\Delta+1$, $\beta_1$ is chosen such that $\beta_1\in\big(\frac{1}{2},1\big)$ and $t\in(0,T)$.

Finally, we proceed $ \hat \Phi_3(n,c,{\bf u})(\cdot,t) $ and use \eqref{eq-ghosh-3.66} in Theorem \ref{sect3-semigroup-estimate-pre} to obtain
\begin{align*}
\Vert \hat \Phi_3(n,c,{\bf u})(\cdot,t)\Vert_{W^{1,q}}\leq &\Vert e^{-tA_q} {\bf u}_0\Vert_{W^{1,q}}+c_7\int_0^t \Vert e^{-(t-s)A_q}[({\bf u}\cdot 
\nabla) {\bf u}+n\nabla c](\cdot,s)\Vert_{L^q}\, ds\nonumber\\
\leq & \Vert {\bf u}_0\Vert_{W^{1,q}}+c_8\int_0^t (t-s)^{-\frac{1}{2}}(\Vert ({\bf u}\cdot 
\nabla){\bf u}\Vert_{{\bf L}^q}+\Vert n\nabla c\Vert_{L^q})(\cdot,s) \, ds\nonumber\\
\leq &\Vert  {\bf u}_0\Vert_{W^{1,q}}+c_9(R) T^{\frac{1}{2}},
\end{align*}
where constants $c_7$, $c_8$ and $c_9(R)$ are positive.  Here we have applied the H\"{o}lder's inequality to find
\begin{align*}
\Vert ({\bf u}\cdot 
\nabla){\bf u}\Vert_{{\bf L}^q}\leq \Vert {\bf u}\Vert_{{\bf L}^\infty}\Vert \nabla {\bf u}\Vert_{{\bf L}^q}\leq c_{10}\Vert {\bf u}\Vert^2_{W^{1,q}},
\end{align*}
and
\begin{align*}
\Vert n 
\nabla c\Vert_{ L^q}\leq c_{11}\Vert n\Vert_{L^\infty}\Vert c\Vert_{W^{1,\infty}},
\end{align*}
with $c_{10}$ and $c_{11}$ are positive constants. 

Now, we choose $R>0$ large enough at first, then take $T>0$ be sufficiently small to prove that $\hat {\boldsymbol{\Phi}}$ maps $S$ into itself and is further a contraction mapping.  Then by the Banach fixed point theorem that there exists $(n, c, {\bf u})\in S$ such that $\hat  {\boldsymbol{\Phi}}(n, c, {\bf u}) = (n, c, {\bf u})$.  Thanks to the standard bootstrap arguments, parabolic regularity theories and de Rham theory, we have $(n, c,{\bf u},P)$ readily solves (\ref{PKSNS-time-dependent}) classically in $\Omega\times(0, T)$.  Noting that $T$ only depends on $\Vert n_0\Vert_{L^\infty}$, $\Vert c_0\Vert_{W^{1,\infty}}$ and $\Vert {\bf u}_0\Vert_{W^{1,q}}$ with $q>2$, one further obtains \eqref{eq-in-lemma3.1} holds. 

Consider the case of $\iota=0$, then we similarly define
\begin{align*}
 \hat X:=L^{\infty }((0,T); C^0(\bar\Omega)\times {\bf D_1}(A_q)),~q>2.
\end{align*}
where ${\bf D}_1$ is given by (\ref{appendix-bfD1-operator-space-final}).  Let the closed set $\hat S$ be 
 \begin{align*}
    \hat S:=\{(n,{\bf u})\in \hat X|\Vert n(\cdot, t)\Vert_{L^\infty}+\Vert {\bf  u}(\cdot, t)\Vert_{{\bf W}^{1,q}}\leq R\text{ for a.e. }t\in(0,T)\},
 \end{align*}
 and introduce a mapping $\tilde  {\boldsymbol{\Phi}}=(\tilde {{\Phi}}_1,\tilde {{\Phi}}_3)$ on $\hat S$ with $\tilde {{\Phi}}_1$ and $\tilde {{\Phi}}_3$ defined by (\ref{appendix-A1-hatphi-1}) and (\ref{appendix-A1-hatphi-3}), respectively.  To show the mapping $\tilde {\boldsymbol{\Phi}}$ has the contraction property, we first notice that by applying the standard elliptic estimate on the $c$-equation, one has for $t\in(0,T),$ there exists $c\in W^{1,\infty}(\Omega)$ since $n$ is assumed to satisfy $n\in C^0(\bar\Omega)$.  Then, proceeding in a similar way as the case of $\iota>0$, one can show $\tilde  {\boldsymbol{\Phi}}$ is the contraction mapping if $R$ is chosen large enough then $T$ is chosen sufficiently small, and further there exists $(n,c,{\bf u})$ satisfying (\ref{eq-in-lemma3.1}) solves (\ref{PKSNS-time-dependent}) classically.

In summary, we obtain the local-in-time existence of the classical solution $(n,c,\bf{u})$ to system (\ref{PKSNS-time-dependent}) with $\iota\geq 0$ satisfying (\ref{eq-in-lemma3.1}).

The positivity of $(n,c)$ is the direct consequence of the parabolic weak and strong maximum principles.

\medskip

\textbf{Uniqueness:} we shall argue by contradiction.  Assume there are two solutions $(n_1,c_1,{\bf u}_1,P_1)$ and $(n_2,c_2,{\bf u}_2,P_2)$ satisfying (\ref{PKSNS-time-dependent}) in $\Omega\times (0,T)$ for some $T>0.$  We subtract the $n_1$-equation and $n_2$-equation then multiply it by $n_1-n_2$ to get for $t\in(0,T_0)$ with $T_0<T,$
\begin{align}
&\frac{1}{2}\frac{d}{dt}\int_{\Omega}(n_1-n_2)^2\, dx+\int_{\Omega}|\nabla(n_1-n_2)|^2\, dx\nonumber\\
=&\int_{\Omega} (n_1-n_2)\nabla c\cdot \nabla(n_1-n_2)\, dx+\int_{\Omega}n_2 \nabla(c_1-c_2)\cdot\nabla(n_1-n_2)\, dx\nonumber\\
&-\int_{\Omega}(u_1-u_2)\nabla n(n_1-n_2)\, dx-\int_{\Omega}u_2\nabla(n_1-n_2)(n_1-n_2)\, dx\nonumber.
\end{align}
Since $T_0<T,$ one has for some $q>2,$
\begin{align*}
\Vert n_1(\cdot,t)\Vert_{L^\infty}+\Vert n_2(\cdot,t)\Vert_{L^\infty}+\Vert\nabla c_1(\cdot,t)\Vert_{L^\infty}+\Vert\nabla c_2(\cdot,t)\Vert_{L^\infty}+\Vert {\bf u}_1\Vert_{{\bf W}^{1,q}}+\Vert {\bf u}_2\Vert_{{\bf W}^{1,q}}\lesssim
1.\end{align*}

Similarly as discussed in \cite{winkler2012global-NS-PKS}, we apply $\nabla\cdot {\bf u}_1=\nabla\cdot {\bf u}_2=0$ and $W^{1,q}\hookrightarrow C^{0}(\bar\Omega)$ for $q>2$ to finally arrive at 
\begin{align}\label{collect1-appendixA}
&\frac{1}{2}\frac{d}{dt}\int_{\Omega}(n_1-n_2)^2\, dx+\frac{1}{2}\int_{\Omega}|\nabla(n_1-n_2)|^2\, dx\nonumber\\
\leq &\frac{1}{2}\int_{\Omega}|\nabla(c_1-c_2)|^2\, dx+{\hat C}_1\int_{\Omega}(n_1-n_2)^2\, dx+{\hat C}_1\int (c_1-c_2)^2\, dx+{\hat C}_1\int_{\Omega} |{\bf u}_1-{\bf u}_2|^2\, dx,
\end{align}
for $t\in(0,T_0)$ with positive constant ${\hat C}_1$.  Proceeding the $c$-equation with the similar argument, one finds
\begin{align}\label{collect2-appendixA}
\frac{\iota}{2}\frac{d}{dt}\int_{\Omega} (c_1-c_2)^2\, dx+\frac{1}{2}\int_{\Omega}(c_1-c_2)^2\, dx+\int_{\Omega}|\nabla(c_1-c_2)|^2\, dx\leq\frac{1}{2}\int_{\Omega}(n_1-n_2)^2\, dx.
\end{align}

For the ${\bf u}$-equation, we test the subtraction of ${\bf u}_1$ and ${\bf u}_2$ equation against ${\bf u}_1-{\bf u}_2$ and integrate it by parts to have
\begin{align}
\frac{1}{2}\int_{\Omega}({\bf u}_1-{\bf u}_2)^2\, dx+\int_{\Omega}|\nabla ({\bf u}_1-{\bf u}_2)|^2\, dx=&-\int_{\Omega} [({\bf u}_1-{\bf u}_2)\cdot \nabla]({\bf u}_1-{\bf u}_2)\cdot {\bf u}_1\, dx\nonumber\\
&+\int_{\Omega}({\bf u}_1\cdot\nabla)({\bf u}_1-{\bf u}_2)({\bf u}_1-{\bf u}_2)\, dx\nonumber\\
&+\int_{\Omega}(n_1-n_2) \nabla c_1({\bf u}_1-{\bf u}_2)\, dx\nonumber\\
&+\int_{\Omega} n_2\nabla(c_1-c_2)\cdot ({\bf u}_1-{\bf u}_2)\, dx\nonumber.
\end{align}
By using the H\"{o}lder's inequality, one further obtains for $t\in(0,T_0),$
\begin{align}\label{collect3-appendixA}
&\frac{1}{2}\frac{d}{dt}\int_{\Omega}|{\bf u}_1-{\bf u}_2|\, dx+\frac{1}{2}\int_{\Omega}|\nabla ({\bf u}_1-{\bf u}_2)|^2\, dx\nonumber\\
\leq & \hat C_2\int_{\Omega}(n_1-n_2)^2\, dx+\frac{1}{2}\int_{\Omega}|\nabla (c_1-c_2)|^2\, dx+\hat C_2\int_{\Omega}|{\bf u}_1-{\bf u}_2|^2\, dx,
\end{align}
where $\hat C_2>0$ is a constant.

Define $y=\int_{\Omega}(n_1-n_2)^2\,dx+\iota \int_{\Omega}(c_1-c_2)^2\,dx+\int_{\Omega}|{\bf u}_1-{\bf u}_2|^2\, dx$, then we collect (\ref{collect1-appendixA}), (\ref{collect2-appendixA}) and (\ref{collect3-appendixA}) to find $y'\leq {\hat C}_3 y$ with constant $\hat C_3>0$, which implies $y\equiv 0$ for $t\in(0,T_0)$.  Then if $\iota\not=0,$ we immediately obtain a contradiction.  If $\iota=0,$ $y\equiv 0$ implies
$$\int_{\Omega} |n_1-n_2|^2\,dx\equiv 0,\ \ \forall t\in(0,T_0).$$
Then it follows from (\ref{collect2-appendixA}) that 
\begin{align*}
\int_{\Omega} |\nabla(c_1-c_2)|^2\,dx+\int_{\Omega} (c_1-c_2)^2\, dx\equiv 0,
\end{align*}
which also reaches a contradiction.  As a consequence, we have (\ref{PKSNS-time-dependent}) admits the unique solution locally under the condition of Theorem \ref{thm-global-existence-PKS-NS}.
\qed

\section*{Acknowledgements}
We thank Prof. T. Tsai for pointing out the useful references and critical discussions.  The research of J. Wei is partially supported by NSERC of Canada.  The research of C. Lai is supported in part by the Simons Foundation Math + X Investigator Award \#376319 (Michael I. Weinstein).

\bibliographystyle{plain}
\bibliography{ref}

\end{document}